\documentclass[11pt]{article}
\usepackage[latin1]{inputenc}
\usepackage{amsmath,amsthm,amssymb}
\usepackage{amsfonts}
\usepackage{amsmath,amsthm,amssymb,amscd}
\usepackage{latexsym}
\usepackage{color}
\usepackage{graphicx}
\usepackage{mathrsfs}
\usepackage{cite}

%%%%%%%%%%%%%%%%%%%%%%%%%%%%%%%%%%%%%%%%%%%%ÒýÓÃÎÄÏ×Ê½×Ó±êºì%%%%%%%%%%%%%%%%%%%%%%%%%%%%%%%%%%%%%%%%%%%%%%%
\usepackage{color,enumitem,graphicx}
\usepackage[colorlinks=true,urlcolor=black,
citecolor=black,linkcolor=black,linktocpage,pdfpagelabels,
bookmarksnumbered,bookmarksopen]{hyperref}

%\usepackage[hyperpageref]{backref}

%%%%%%%%%%%%%%%%%%%%%%%%%%%%%%%%%%%%%%%%%%%%%%%%%%%%%%%%%%%%%%%%%%%%%%%%%%%%%%%%%%%%%%%%%%%%%%%%

\textwidth172mm \textheight22cm \hoffset-24mm \voffset-20mm

\makeatletter \@addtoreset{equation}{section} \makeatother

\setlength{\parindent}{1em}

\newtheorem{theorem}{Theorem}[section]
\newtheorem{definition}{Definition}[section]
\newtheorem{proposition}{Proposition}[section]
\newtheorem{lemma}{Lemma}[section]
\newtheorem{remark}{Remark}[section]

\allowdisplaybreaks

\begin{document}
\title{Normalized ground states for a biharmonic Choquard system in $\mathbb{R}^4$}

\author{Wenjing Chen\footnote{Corresponding author.}\ \footnote{E-mail address:\, {\tt wjchen@swu.edu.cn} (W. Chen), {\tt zxwangmath@163.com} (Z. Wang).}\  \ and Zexi Wang\\
\footnotesize  School of Mathematics and Statistics, Southwest University,
Chongqing, 400715, P.R. China}

\date{ }
\maketitle

\begin{abstract}
{In this paper, we study the existence of normalized ground state solutions for the following biharmonic Choquard system
\begin{align*}
  \begin{split}
  \left\{
  \begin{array}{ll}
   \Delta^2u=\lambda_1 u+(I_\mu*F(u,v))F_u (u,v),
    \quad\mbox{in}\ \ \mathbb{R}^4,  \\
    \Delta^2v=\lambda_2 v+(I_\mu*F(u,v)) F_v(u,v),
    \quad\mbox{in}\ \ \mathbb{R}^4,  \\
    \displaystyle\int_{\mathbb{R}^4}|u|^2dx=a^2,\quad \displaystyle\int_{\mathbb{R}^4}|v|^2dx=b^2,\quad u,v\in H^2(\mathbb{R}^4),\\
    \end{array}
    \right.
  \end{split}
  \end{align*}
where $a,b>0$ are prescribed, $\lambda_1,\lambda_2\in \mathbb{R}$, $I_\mu=\frac{1}{|x|^\mu}$ with $\mu\in (0,4)$, $F_u,F_v$ are partial derivatives of $F$ and $F_u,F_v$ have exponential subcritical or critical growth in the sense of the Adams inequality.
By using a minimax principle and analyzing the behavior of the ground
state energy with respect to the prescribed mass,
we obtain the existence of ground state solutions for the above problem. }

\smallskip
\emph{\bf Keywords:} Normalized solution; Biharmonic system; Choquard nonlinearity; Exponential subcritical or critical growth.

\smallskip
\emph{\bf 2020 Mathematics Subject Classification:} 31B30, 35J35, 35J61, 35J91.

\end{abstract}

 \section{Introduction and statement of main results}
This work is devoted to the following biharmonic nonlinear Schr\"{o}dinger system with  a general Choquard nonlinear term
\begin{align}\label{back}
  \begin{split}
  \left\{
  \begin{array}{ll}
   i\frac{\partial\psi_1}{\partial t}-\Delta^2\psi_1+(I_\mu*F(\psi_1,\psi_2))F_{\psi_1}(\psi_1,\psi_2)
   =0,\quad \text{in $\mathbb{R}^N\times\mathbb{R}$},\\
   i\frac{\partial\psi_2}{\partial t}-\Delta^2\psi_2+(I_\mu*F(\psi_1,\psi_2))F_{\psi_2}(\psi_1,\psi_2)=0,\quad \text{in $\mathbb{R}^N\times\mathbb{R}$},
    \end{array}
    \right.
  \end{split}
  \end{align}
where $N\geq1$, $i$ denotes the imaginary unit, $I_\mu=\frac{1}{|x|^\mu}$ with $\mu\in [0,N)$, $\Delta^2$ denotes the biharmonic operator, $F_{\psi_1},F_{\psi_2}$ are partial derivatives of $F$. System \eqref{back} is derived in \cite{FIP,K,KS} to reveal the effects of a small fourth-order dispersion term in the Schr\"{o}dinger equation. For convenience, we write $z=(z_1,z_2)$, and $F$ satisfies:

$(H_1)$ For $j=1,2$, $ F_{z_j}(z)\in \mathbb{R}$ for $z_j\in \mathbb{R}$,
$F_{z_j}(e^{i\theta_1}z_1,e^{i\theta_2}z_2)
=e^{i\theta_j}F_{z _j}(z_1,z_2)$
for
any
$\theta_j\in \mathbb{R}$, $z_j\in \mathbb{C}$;

$(H_2)$ $\displaystyle F(z_1,z_2)=\int_{0}^{|z_1|}\int_{0}^{|z_2|}\frac{\partial^2F(s,t)}{\partial s\partial t}dsdt$ for any $z_1,z_2\in \mathbb{C}$.\\

When looking for standing waves solutions of \eqref{back}, that is, solutions for \eqref{back} of the form $\psi_1(t,x)=e^{-i\lambda_1 t}u(x)$ and $\psi_2(t,x)=e^{-i\lambda_2 t}v(x)$ with $\lambda_1,\lambda_2\in \mathbb{R}$ and $u,v\in H^2(\mathbb{R}^N)$ are time-independent real valued functions. Then $u,v$ satisfy
\begin{align}\label{e1.1}
  \begin{split}
  \left\{
  \begin{array}{ll}
   \Delta^2u=\lambda_1 u+(I_\mu*F(u))F_u(u,v),\quad \text{in $\mathbb{R}^N$},\\
   \Delta^2v=\lambda_2 v+(I_\mu*F(u))F_v(u,v),\quad \text{in $\mathbb{R}^N$}.
    \end{array}
    \right.
  \end{split}
  \end{align}
If $\lambda_1, \lambda_2\in \mathbb{R}$ are fixed parameters, there are many results for \eqref{e1.1} by variational methods, see e.g. \cite{CLZ,FS,MSV,Sa,WLW}.

Another interesting way to find solutions of \eqref{e1.1} is to search for solutions
with prescribed mass, and $\lambda_1,\lambda_2\in \mathbb{R}$ arise as Lagrange multipliers. This type of solution is called normalized solution, and this approach is particularly meaningful from the physical point of view, since in addition to there is a conservation of mass by $(H_1)$ and $(H_2)$, the mass has often an important physical meaning, e.g. it shows the power supply in nonlinear optics, or the total number of atoms in Bose-Einstein condensation. In this case, particular attention is also devoted to the least energy solutions which are also called ground state solutions, namely solutions minimizing the associated energy functional among all nontrivial solutions, and the associated energy is called ground state energy.

For the nonlinear Schr\"{o}dinger equation with a normalization constraint
\begin{align}\label{e1.3}
  \begin{split}
  \left\{
  \begin{array}{ll}
   -\Delta u=\lambda u+f(u),
    \quad\mbox{in}\ \ \mathbb{R}^N,  \\
    \displaystyle\int_{\mathbb{R}^N}|u|^2dx=a^2,\quad u\in H^1(\mathbb{R}^N).
    \end{array}
    \right.
  \end{split}
  \end{align}
If $f(u)=|u|^{p-2}u$, the associated energy functional of \eqref{e1.3} is given by $$
\mathcal{J}_1(u)=\frac{1}{2}\int_{\mathbb{R}^N}|\nabla u|^2dx-\frac{1}{p}\int_{\mathbb{R}^N}|u|^pdx.
$$
From the variational point of view,
$\mathcal{J}_1$ is bounded from below on $
\widehat{S}(a)=\{u\in H^1(\mathbb{R}^N):\int_{\mathbb{R}^N}|u|^2dx=a^2\}
$ for $p\in(2,2+\frac{4}{N})$ ($L^2$-subcritical case), here $2+\frac{4}{N}$ is called the $L^2$-critical exponent, which comes from the Gagliardo-Nirenberg inequality \cite{Nirenberg1}.
Thus, a ground state solution of \eqref{e1.3} can be found as a global minimizer of $\mathcal{J}_1$ on $\widehat{S}(a)$, see e.g. \cite{S1,S2,CL,Lions2,Shibata}. %\cite{Shibata}.

If $p\in(2+\frac{4}{N},2^*)$ ($L^2$-supercritical case), on the contrary, $\mathcal{J}_1$ is unbounded from below on $
\widehat{S}(a)$, so it seems impossible to search for a global minimizer to obtain a solution of \eqref{e1.3},  where $2^*=\infty$ if $N\leq2$ and $2^*=\frac{2N}{N-2}$ if $N\geq3$.
Furthermore,  since $\lambda\in \mathbb{R}$ is unknown and $H^1(\mathbb{R}^N)\hookrightarrow L^2(\mathbb{R}^N)$ is not compact (even $H^1_{rad}(\mathbb{R}^N)$), some classical methods cannot be directly used to prove the boundedness and compactness of any $(PS)$ sequence.
Jeanjean \cite{Jeanjean} first showed that a normalized ground state solution for \eqref{e1.3} does exist in this case by showing that the mountain pass geometry of $\mathcal{J}_1|_{\widehat{S}(a)}$ can construct a $(PS)$ sequence related to the Pohozaev identity, which gives the boundedness.
By using a minimax principle based on the homotopy stable family, Bartsch and Soave \cite{BS1,BS2} also presented a
new approach that is based on a natural
constraint associated to the problem and proved the existence of normalized solutions for \eqref{e1.3}.
For more related results of normalized solutions for \eqref{e1.3} in the $L^2$-supercritical case, the reader may refer to
\cite{AJM,BM,Li1,JL,Soave1,Soave2,WW} and references therein.

For the mixed dispersion nonlinear
Schr\"{o}dinger equation with a prescribed $L^2$-norm constraint
\begin{align}\label{e1.4}
  \begin{split}
  \left\{
  \begin{array}{ll}
   \Delta^2u-\beta\Delta u=\lambda u+f(u),
    \quad\mbox{in}\ \ \mathbb{R}^N,  \\
    \displaystyle\int_{\mathbb{R}^N}|u|^2dx=a^2,\quad u\in H^2(\mathbb{R}^N).
    \end{array}
    \right.
  \end{split}
  \end{align}
This kind of problem gives a new $L^2$-critical exponent $2+\frac{8}{N}$. When $\beta>0$, $f(u)=|u|^{q-2}u$ and $q\in(2,2+\frac{8}{N})$,
 Bonheure et al. \cite{BCdN} studied \eqref{e1.4} and established the existence, qualitative properties of minimizers. By using the minimax principle, Bonheure et al. \cite{BCGJ} obtained the existence of ground state solutions, and the multiplicity of radial solutions for \eqref{e1.4} when $q\in(2+\frac{8}{N},4^*)$, where $4^*=\infty$ if $N\leq4$ and $4^*=\frac{2N}{N-4}$ if $N\geq5$. More recently, Fernandez et al. have made some improvements to \cite{BCdN} and \cite{BCGJ}, we refer to \cite{FJMM} for more details. When $\beta<0$, the problem is more involved, see \cite{LZZ,BFJ} for $q\in(2,2+\frac{8}{N})$ and \cite{LY2} for $q\in(2+\frac{8}{N},4^*)$.
Moreover, Luo and Zhang \cite{LZ1} studied normalized solutions of \eqref{e1.4} with a general nonlinear term $f$, and obtained the existence and orbital stability of minimizers,
when $\beta\in \mathbb{R}$ and $f$ satisfies the suitable $L^2$-subcritical assumptions. For normalized solutions of \eqref{e1.4} with a general Choquard nonlinear term, we refer the readers to \cite{CW, CW1}.

Considering the following system with the constraints
\begin{align}\label{Bose}
  \begin{split}
  \left\{
  \begin{array}{ll}
   %\Big(a+b\displaystyle\int_{\mathbb{R}^4}|\Delta u|^2dx\Big)
   -\Delta u=\lambda_1 u+F_u(u,v),\quad \text{in $\mathbb{R}^N$},\\
   -\Delta v=\lambda_2 v+F_v(u,v),\quad \text{in $\mathbb{R}^N$},\\
    \displaystyle\int_{\mathbb{R}^N}|u|^2dx=a^2,\quad \displaystyle\int_{\mathbb{R}^N}|v|^2dx=b^2,\quad u,v\in H^1(\mathbb{R}^N).\\
    \end{array}
    \right.
  \end{split}
  \end{align}
This system has an important physical significance in nonlinear optics and Bose-Einstein condensation. The most famous case is that of coupled Gross-Pitaevskii
equations in dimension $N\leq3$ with $F_u(u,v)=\mu_1|u|^{p-2}u+ r_1\kappa|v|^{r_2}|u|^{r_1-2}u $, $F_v(u,v)=\mu_2|u|^{q-2}u+ r_2\kappa|u|^{r_1}|v|^{r_2-2} v$, $p=q=4$, $r_1=r_2=2$, and $\mu_1,\mu_2,\kappa>0$, which models
Bose-Einstein condensation.
The particular case in $\mathbb{R}^3$ was investigated in the companion paper \cite{BJS}, and has been further developed by many scholars, see \cite{BS0,BS1,BS2,BZZ} for normalized solutions of \eqref{Bose} in $\mathbb{R}^3$, \cite{BJ,BLZ,GJ,LZ,LWYZ,WWW} in $\mathbb{R}^N$, and \cite{NTV} in bounded domain. It is worth pointing out that in \cite{DY}, the authors first considered normalized solutions of \eqref{Bose} with general nonlinear terms involving exponential critical growth in $\mathbb{R}^2$.

The study of normalized solutions for \eqref{Bose} is a hot topic in nonlinear PDEs nowadays. However, as far as we know, there are only a few papers dealing with such problems with general nonlinear terms besides the one already mentioned \cite{DY}.
In this work, we study normalized solutions to the following biharmonic Choquard system with general nonlinear terms in $\mathbb{R}^4$
\begin{align}\label{abs}
  \begin{split}
  \left\{
  \begin{array}{ll}
   %\Big(a+b\displaystyle\int_{\mathbb{R}^4}|\Delta u|^2dx\Big)
   \Delta^2u=\lambda_1 u+(I_\mu*F(u)) F_u(u,v),\quad \text{in $\mathbb{R}^4$},\\
   \Delta^2v=\lambda_2 v+(I_\mu*F(u)) F_v(u,v),\quad \text{in $\mathbb{R}^4$},\\
    \displaystyle\int_{\mathbb{R}^4}|u|^2dx=a^2,\quad \displaystyle\int_{\mathbb{R}^4}|v|^2dx=b^2,\quad u,v\in H^2(\mathbb{R}^4),\\
    \end{array}
    \right.
  \end{split}
  \end{align}
where $a,b>0$,  $\lambda_1,\lambda_2\in \mathbb{R}$, $I_\mu=\frac{1}{|x|^\mu}$ with $\mu\in (0,4)$, and $F_u,F_v$ are partial derivatives of $F$ with $F_u,F_v$ have exponential subcritical or critical growth in the sense of the Adams inequality. To our best knowledge, in literature, there is no contribution devoted to the study of coupled Choquard systems with general nonlinear terms involving exponential critical growth in $\mathbb{R}^4$ (even without the $L^2$-norm constraints). Compared with the single equation, there are some difficulties to deal with:

$(i)$ We need to establish the Adams inequality for vector functions. Commonly, using the Adams inequality \cite{MSV}, Young's inequality, we can obtain the Adams inequality for vector functions. However, using this Adams inequality, we cannot get a good  upper bound of the ground state energy (the optimal upper bound is $\frac{8-\mu}{16}$) in the exponential critical growth, which is crucial to
rule out the trivial case for the weak limit of a $(PS)_{E(a,b)}$ sequence, where $E(a,b)$ is the ground state energy defined in \eqref{gse}.
Fortunately, inspired by an useful algebraic inequality given in \cite{DT}, we overcome this difficulty and establish the Adams inequality for vector functions that exactly we need in Lemma \ref{modadams}.

$(ii)$ In \cite{A1,A2,DY}, the authors considered a coupled system involving exponential critical growth in $\mathbb{R}^2$, and  the condition for the estimation on energy level is only given by an algebraic form:
\begin{equation*}
  F(u,v)\geq \kappa|(u,v)|^\sigma\quad \text{for any $(u,v)\in \mathbb{R}^2$ and some $\kappa,\sigma>0.$}
\end{equation*}
In this paper, based on the Adams functions \cite{LY3}, we will give a more natural growth condition in the exponential critical case. However, this adams functions cannot be directly applied to study normalized solutions, because the functions are unknown in some ranges and the $L^2-$ norms of $u,\nabla u, \Delta u$ are given as $O(\frac{1}{\log n})$, after a normalization, it is unfavorable to make a refined estimation. Hence, we use the modified Adams functions introduced in \cite{CW1} to complete the estimation.

Now, we introduce the precise assumptions on what our problems are studied.
Assume
that $F$ satisfies:

$(F_1)$ For $j=1,2$, $F_{z_j} (z)\in C(\mathbb{R}\times\mathbb{R},\mathbb{R})$, and $F_{z_j} (z)=o(|z|^\tau)$ as $|z|\rightarrow 0$ for some $\tau>2-\frac{\mu}{4}$;%, where $z=(z_1,z_2)$. %?$f$ is odd in $\mathbb{R}$, and % $\displaystyle\lim\limits_{t\rightarrow0} \frac{|f(t)|}{|t|^\nu}=0$ ;

$(F_2)$ There exists a constant $\theta>3-\frac{\mu}{4}$ such that
\begin{equation*}
  0<\theta F(z)\leq z\cdot \nabla F(z), \,\,\,\text{for all $z\in (\mathbb{R}\times \mathbb{R})\backslash (0,0)$}, \quad \text{where $\nabla F(z)=(F_{z_1} (z),F_{z_2} (z));$}%,\,\, where $F(t)=\int_{0}^tf(s)ds$};
\end{equation*}

$(F_3)$ $F_{z_1}(0,z_2)\neq0$ for all $z_2\in \mathbb{R}\backslash\{0\}$ and $F_{z_2}(z_1,0)\neq0$ for all $z_1\in \mathbb{R}\backslash\{0\}$;

$(F_4)$ For any $z\in \mathbb{R}\backslash \{0\}\times \mathbb{R}\backslash \{0\}$, $0< F_{z_j}(z)z_j <(2-\frac{\mu}{4})F(z)$, $j=1,2$; %$j=1,2$;

$(F_{5})$ For any $z\in \mathbb{R}\backslash \{0\}\times \mathbb{R}\backslash \{0\}$, let $\mathfrak{F}(z)=z\cdot\nabla F(z)-(2-\frac{\mu}{4})F(z)$,
$\nabla \mathfrak{F}(z)$%=(\overline{F}_{z_1} (z),\overline{F}_{z_2} (z))$
exists,
and
\begin{equation*}
(3-\frac{\mu}{4})F(\hat{z})\mathfrak{F}(\tilde{z})<
F(\hat{z})\tilde{z}\cdot\nabla\mathfrak{F}(\tilde{z})+\mathfrak{F}(\hat{z})(\mathfrak{F}(\tilde{z})-F(\tilde{z})), \quad\text{for any
$\hat{z},\tilde{z}\in \mathbb{R}\backslash\{0\}\times \mathbb{R}\backslash \{0\}$};
\end{equation*}

Our main results state as follows:
\begin{theorem}\label{th3}
Assume that $F_{z_j}$($j=1,2$) has exponential subcritical growth at $\infty$, that is,
\begin{align*}
  \lim\limits_{|z|\rightarrow+\infty}\frac{|F_{z_j} (z)|}{e^{\alpha |z|^2}}=0,\quad\text{for all $\alpha>0$}.
  \end{align*}
Moreover, assume $F$ satisfies $(F_1)-(F_5)$, then
problem \eqref{abs} admits a ground state solution.
\end{theorem}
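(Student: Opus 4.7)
The plan is to recast \eqref{abs} as a constrained minimax problem on
$$S(a,b)=\{(u,v)\in H^2(\mathbb{R}^4)\times H^2(\mathbb{R}^4):\|u\|_2^2=a^2,\ \|v\|_2^2=b^2\}$$
with energy
$$E(u,v)=\frac{1}{2}\int_{\mathbb{R}^4}(|\Delta u|^2+|\Delta v|^2)\,dx-\frac{1}{2}\int_{\mathbb{R}^4}(I_\mu*F(u,v))F(u,v)\,dx.$$
Since in $\mathbb{R}^4$ the dilation $(u,v)\star t:=(t^{2}u(tx),t^{2}v(tx))$ is an $L^2$-isometry and sends $\|\Delta u\|_2^2$ to $t^{4}\|\Delta u\|_2^2$, my first step is to study the fiber map $h_{(u,v)}(t):=E((u,v)\star t)$. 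The assumption $\theta>3-\mu/4$ in $(F_2)$ is precisely the $L^2$-supercritical threshold for the biharmonic-Choquard scaling in dimension four, so $h_{(u,v)}(t)\to0^+$ as $t\to0^+$ by $(F_1)$ and $h_{(u,v)}(t)\to-\infty$ as $t\to\infty$ by $(F_2)$; condition $(F_5)$ is tailored to give strict unimodality of $h_{(u,v)}$, yielding a unique critical point $t^{*}(u,v)>0$ at its global maximum. This identifies the natural Pohozaev-type manifold
$$\mathcal{P}(a,b)=\{(u,v)\in S(a,b):P(u,v)=0\},\qquad P(u,v):=h'_{(u,v)}(1),$$
and the ground-state candidate value $E(a,b):=\inf_{\mathcal{P}(a,b)}E$.

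The second step is to produce a Palais--Smale sequence for $E|_{S(a,b)}$ at level $E(a,b)$ carrying the Pohozaev information. I would follow the Jeanjean/Bartsch--Soave approach, lifting to $\widetilde{E}(t,u,v):=E((u,v)\star t)$ on $\mathbb{R}\times S(a,b)$ and applying the minimax principle to a suitable homotopy-stable family, obtaining $\{(u_n,v_n)\}\subset S(a,b)$ with
$$E(u_n,v_n)\to E(a,b),\qquad \bigl.E\bigr|_{S(a,b)}'(u_n,v_n)\to0,\qquad P(u_n,v_n)\to0.$$
Combining $P(u_n,v_n)\to0$ with $(F_2)$ and $(F_4)$ gives $\|\Delta u_n\|_2^2+\|\Delta v_n\|_2^2\le C$, hence $(u_n,v_n)\weakto(u_0,v_0)$ in $H^2\times H^2$ along a subsequence, with $L^p_{\mathrm{loc}}$ and pointwise a.e.\ convergence. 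Extracting the Lagrange multipliers $\lambda_{i,n}$ from the constraint and testing against $(u_n,v_n)$, I would verify that $\lambda_{i,n}\to\lambda_i$ with $\lambda_1,\lambda_2<0$, again exploiting $(F_2)$--$(F_4)$; this sign information is what ultimately excludes vanishing of the weak limit.

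The third step is to pass to the limit in the equation. Here the exponential subcritical growth of $F_u,F_v$ combined with the vector Adams inequality in Lemma~\ref{modadams} and the Hardy--Littlewood--Sobolev inequality gives uniform integrability of the Riesz potential $I_\mu*F(u_n,v_n)$ in appropriate Lebesgue spaces, so that the nonlocal terms $(I_\mu*F(u_n,v_n))F_u(u_n,v_n)$ and $(I_\mu*F(u_n,v_n))F_v(u_n,v_n)$ converge when tested against compactly supported $(\varphi,\psi)$. Hence $(u_0,v_0)$ solves \eqref{abs} with multipliers $(\lambda_1,\lambda_2)$, and $(F_3)$ rules out the semi-trivial cases $u_0\equiv 0$ or $v_0\equiv 0$.

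The main obstacle will be showing $(u_0,v_0)\in S(a,b)$ (no loss of mass) and upgrading to strong $H^2$ convergence. The strategy is to prove strict monotonicity and strict subadditivity of $(a,b)\mapsto E(a,b)$; this is exactly the purpose of the technical condition $(F_5)$, which controls how the fiber maximum varies when $(a,b)$ changes. A Brezis--Lieb decomposition of the biharmonic norm paired with the nonlocal Brezis--Lieb lemma for the Choquard energy would yield, in the event of mass splitting, a decomposition $E(a,b)\ge E(\alpha,\beta)+E(a',b')+o(1)$ with $(\alpha,\beta)+(a',b')=(a,b)$ and both summands nontrivial; strict subadditivity contradicts this, forcing $\|u_n\|_2\to a$ and $\|v_n\|_2\to b$. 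Once mass is preserved, $P(u_n,v_n)\to 0$ together with the norm convergence of the Choquard energy gives $\|\Delta u_n\|_2\to\|\Delta u_0\|_2$ and similarly for $v_n$, so $(u_n,v_n)\to(u_0,v_0)$ strongly in $H^2\times H^2$; thus $(u_0,v_0)\in\mathcal{P}(a,b)$ attains $E(a,b)$ and is the required ground state. The hardest part of the whole argument is handling the cross-component Choquard interactions in the splitting step, where the algebraic refinement underlying Lemma~\ref{modadams} is needed to keep the energy estimates sharp.
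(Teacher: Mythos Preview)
Your proposal tracks the paper's strategy closely through the first three steps: the dilation fiber map, the Pohozaev manifold $\mathcal{P}(a,b)$, the minimax construction of a Palais--Smale sequence with $P(u_n,v_n)=0$ via Ghoussoub's homotopy-stable-family principle, boundedness, extraction of Lagrange multipliers, and passage to the limit in the equation. One small point of order: in the paper the nontriviality of the weak limit (via Lions' vanishing lemma together with $E(a,b)>0$, plus $(F_3)$ to exclude the semi-trivial case) is established \emph{before} the sign analysis of the multipliers; the conclusion $\lambda_i<0$ is deduced from $(F_4)$ and Fatou only once $u_a\not\equiv0$, $v_b\not\equiv0$ are already known, not the other way round.

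The genuine divergence is in the mass-recovery step. You propose a Brezis--Lieb splitting of both the biharmonic norm and the Choquard energy, leading to an inequality of the type $E(a,b)\ge E(\alpha,\beta)+E(a',b')$ and a strict-subadditivity contradiction, followed by strong $H^2$ convergence. The paper takes a shorter route that bypasses any energy splitting of the remainder: since the weak limit $(u_a,v_b)$ is itself a weak solution, it satisfies $P(u_a,v_b)=0$; writing $\mathcal{J}-\tfrac12 P$ as a single nonlocal integral with nonnegative integrand (by $(F_2)$), Fatou's lemma gives $\mathcal{J}(u_a,v_b)\le E(a,b)$ directly. On the other hand $(u_a,v_b)\in\mathcal{P}(a_1,b_1)$ with $a_1=\|u_a\|_2\le a$, $b_1=\|v_b\|_2\le b$, so $\mathcal{J}(u_a,v_b)\ge E(a_1,b_1)\ge E(a,b)$ by the non-increasing monotonicity of $E$ (Lemma~\ref{nonincreasing0}). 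Equality throughout then gives $E(a_1,b_1)=E(a,b)$, and the \emph{strict} monotonicity (Lemma~\ref{conclusion0}, which is precisely where $\lambda_1,\lambda_2<0$ enters) rules out $a_1<a$ or $b_1<b$. No nonlocal Brezis--Lieb lemma, no control of the remainder's Pohozaev defect, and no strong $H^2$ convergence is needed: once $\|u_a\|_2=a$ and $\|v_b\|_2=b$, the pair $(u_a,v_b)$ is already a ground state and the proof ends. Your route can in principle be made to work, but for a two-constraint system it would require a nonlocal Brezis--Lieb splitting for the Choquard term and a matching Pohozaev splitting for the remainder, which is considerably more labor than the paper's Fatou-plus-monotonicity shortcut.
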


\begin{theorem}\label{th5}
Assume that $F_{z_j}$($j=1,2$) has exponential critical growth at $\infty$, that is,
\begin{align*}
  \begin{split}
  \lim\limits_{|z|\rightarrow+\infty}\frac{|F_{z_j} (z)|}{e^{\alpha |z|^2}}=\left\{
  \begin{array}{ll}
  0,&\quad \text {for all} \,\,\,\alpha>32\pi^2,\\
  +\infty,&\quad \text {for all} \,\,\,0<\alpha<32\pi^2.
    \end{array}
    \right.
  \end{split}
  \end{align*}
Moreover, assume $F$ satisfies $(F_1)-(F_5)$, and

$(F_6)$ There exists $\varrho>0$ such that $\liminf\limits_{|z_1|,|z_2|\rightarrow+\infty}\frac{F(z)[z\cdot \nabla F(z)]}{e^{64\pi^2 |z|^2}}\geq \varrho$,
\\then
 problem \eqref{abs} admits a ground state solution.
\end{theorem}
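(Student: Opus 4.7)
\textbf{Proof plan for Theorem \ref{th5}.}\quad
I would realize the ground state as a minimizer of
\[
J(u,v)=\tfrac12\int_{\R^4}\!\bigl(|\Delta u|^2+|\Delta v|^2\bigr)dx-\tfrac12\int_{\R^4}\!\bigl(I_\mu\!*\!F(u,v)\bigr)F(u,v)\,dx
\]
restricted to the Pohozaev manifold $\mathcal P(a,b)\subset S(a,b)$ obtained by differentiating the $L^2$-preserving fibre map
\[
\Psi_{(u,v)}(s):=J(s\star u,\,s\star v),\qquad (s\star u)(x):=e^{2s}u(e^s x).
\]
Since $(F_2)$ puts the problem in the $L^2$-supercritical regime (the critical Choquard exponent in $\R^4$ is $3-\mu/4$), conditions $(F_2)$--$(F_4)$ endow $\Psi_{(u,v)}$ with a Jeanjean-type geometry: a unique strict maximum together with $\Psi_{(u,v)}(s)\to 0^+$ as $s\to -\infty$ and $\Psi_{(u,v)}(s)\to -\infty$ as $s\to +\infty$. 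Consequently, the level
\[
E(a,b)=\inf_{\mathcal P(a,b)}J
\]
is strictly positive and coincides with the minimax level of $J|_{S(a,b)}$; hypothesis $(F_5)$ is precisely the convexity condition making $\mathcal P(a,b)$ a natural constraint.

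Using a homotopy-stable family and the minimax scheme of \cite{BS1,BS2} together with Schwarz symmetrization, I would extract a radial Palais--Smale sequence $(u_n,v_n)\subset S(a,b)\cap H^2_{\mathrm{rad}}(\R^4)^2$ with $J(u_n,v_n)\to E(a,b)$, $P(u_n,v_n)\to 0$, and $J'|_{S(a,b)}(u_n,v_n)\to 0$. The constraint $P(u_n,v_n)\to 0$ combined with $(F_2)$ and $(F_4)$ yields an $H^2$-bound; the vector Adams inequality of Lemma \ref{modadams} and the Hardy--Littlewood--Sobolev inequality make $J$ and $J'$ well defined on this bounded set; and testing the system against $(u_n,v_n)$ produces bounded Lagrange multipliers $\lambda_{1,n},\lambda_{2,n}$. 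Up to a subsequence, $(u_n,v_n)\weakto(u_0,v_0)$ in $H^2$ and a.e., with $\lambda_{i,n}\to\lambda_i\in\R$.

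The heart of the argument is the strict bound
\[
0<E(a,b)<\frac{8-\mu}{16},
\]
the $\R^4$-vector analogue of the sub-critical Moser--Trudinger threshold. I would establish it by inserting the \emph{modified} Adams test functions of \cite{CW1} into $\Psi_{(u,v)}$ and concentrating mass in both components: the pointwise lower bound furnished by $(F_6)$ at the exponent $64\pi^2=2\cdot 32\pi^2$ (the doubled Adams constant reflects the two copies of $F$ in the Choquard integral) makes the convolution term dominate the biharmonic one along the family, pushing $\sup_s\Psi_{(u,v)}(s)$ strictly below $\tfrac{8-\mu}{16}$. This is the principal technical obstacle, since the usual Adams extremals of \cite{LY3} are inadequate here: the $L^2$-normalization required by the constraint destroys the delicate logarithmic cancellation, and the modified family of \cite{CW1} is tailored precisely to restore it.

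Armed with the sub-critical bound, Lemma \ref{modadams} provides an $L^q$-bound ($q>1$) on $\exp\!\bigl(\alpha(u_n^2+v_n^2)\bigr)$ for some $\alpha$ slightly larger than $32\pi^2$. Combined with the compact radial embeddings $H^2_{\mathrm{rad}}(\R^4)\hookrightarrow L^p(\R^4)$ for $2<p<\infty$ and a Vitali convergence argument, this yields $F_u(u_n,v_n)\to F_u(u_0,v_0)$ and $F_v(u_n,v_n)\to F_v(u_0,v_0)$ in $L^q$, and Hardy--Littlewood--Sobolev upgrades these to strong convergence of the Choquard nonlinearity in $H^{-2}$. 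The final delicate step is to rule out vanishing of either component of the weak limit: if, say, $v_0\equiv 0$, then passing the equation for $v$ to the limit forces $(I_\mu\!*\!F(u_0,0))F_v(u_0,0)\equiv 0$; but $F(u_0,0)>0$ by $(F_2)$ and $F_v(u_0,0)\neq 0$ whenever $u_0\neq 0$ by $(F_3)$, hence $u_0\equiv 0$, contradicting $E(a,b)>0$ (the symmetric case is analogous). Strong convergence $(u_n,v_n)\to(u_0,v_0)$ in $H^2$ then follows from $J(u_0,v_0)=E(a,b)$ combined with the Pohozaev identity, and $(u_0,v_0,\lambda_1,\lambda_2)$ is the desired normalized ground state.
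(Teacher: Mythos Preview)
Your overall architecture---Pohozaev manifold, minimax construction of a Palais--Smale sequence on $\mathcal P(a,b)$, and the critical threshold $E(a,b)<\frac{8-\mu}{16}$ established via $(F_6)$ and the modified Adams functions of \cite{CW1}---matches the paper. Two steps, however, are genuine gaps.

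First, \emph{Schwarz symmetrization is not available for the bilaplacian}: the P\'olya--Szeg\H{o} inequality $\|\Delta u^*\|_2\le\|\Delta u\|_2$ fails in general, so you cannot manufacture a radial Palais--Smale sequence this way. Working directly in $H^2_{\mathrm{rad}}\times H^2_{\mathrm{rad}}$ via the principle of symmetric criticality is an option, but it only yields a critical point at level $E_r(a,b)$, and without symmetrization there is no mechanism to show $E_r(a,b)=E(a,b)$, so the result would not be a \emph{ground} state. The paper avoids radiality altogether.

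Second, you do not explain how to recover the full masses $\|u_0\|_2=a$, $\|v_0\|_2=b$. Even with radial compactness, $H^2_{\mathrm{rad}}(\R^4)\hookrightarrow L^2(\R^4)$ is not compact, so weak convergence only gives $\|u_0\|_2\le a$, $\|v_0\|_2\le b$; your assertion that ``strong convergence follows from $J(u_0,v_0)=E(a,b)$ combined with the Pohozaev identity'' leaves the equality $J(u_0,v_0)=E(a,b)$ unjustified and does not address the $L^2$-norms. The paper handles this by a monotonicity argument: it proves that $a\mapsto E(a,b)$ and $b\mapsto E(a,b)$ are non-increasing (Lemma~\ref{nonincreasing}), uses Fatou together with $P(u_a,v_b)=0$ to get $\mathcal J(u_a,v_b)\le E(a,b)$ and hence $\mathcal J(u_a,v_b)=E(a_1,b_1)=E(a,b)$ with $(a_1,b_1)=(\|u_a\|_2,\|v_b\|_2)$, and then---crucially using $(F_4)$ to show $\lambda_1,\lambda_2<0$ (Lemma~\ref{lam})---establishes \emph{strict} monotonicity of $E$ at any mass admitting a ground state with negative multipliers (Lemma~\ref{conclusion}). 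This forces $a_1=a$ and $b_1=b$. You mention bounded Lagrange multipliers but not their sign, and it is precisely the negativity that closes the argument.
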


\begin{remark}
{\rm By $(F_2)$ and $(F_4)$, we have $3-\frac{\mu}{4}<\theta<4-\frac{\mu}{2}$.}
\end{remark}

\begin{remark}
{\rm

(i) $(F_3)$ is a technical condition to rule out the semitrivial case for the weak limit of a $(PS)_{E(a,b)}$ sequence.

(ii) By using $(F_4)$, we can prove that the Lagrange multipliers sequences $\{\lambda_{1,n}\}$ and $\{\lambda_{2,n}\}$ are bounded in $\mathbb{R}$, and up to a subsequence, $\lambda_{1,n}\rightarrow \lambda_{1}<0$, $\lambda_{2,n}\rightarrow \lambda_{2}<0$ as $n\rightarrow\infty$.

(iii) Under the condition $(F_{5})$, we can prove that there exists a unique $s_{(u,v)}\in \mathbb{R}$ such that $\mathcal{H}((u,v),s_{(u,v)})\in \mathcal{P}(a,b)$ for any $(u,v)\in \mathcal{S}$, where $\mathcal{H}((u,v),s)$, $\mathcal{P}(a,b)$ and $\mathcal{S}$ are defined later.}
\end{remark}

Our paper is arranged as follows. Section \ref{sec preliminaries} contains some preliminary results.  In Section \ref{fra}, we give the variational framework of problem \eqref{abs}.
In section \ref{sub}, we study problem \eqref{abs} in the exponential subcritical case. Section \ref{cri} is devoted to the exponential critical case.

\section{Preliminaries}\label{sec preliminaries}
In this section, we give some preliminaries.
For the nonlocal type problems with Riesz potential, an important inequality due to the Hardy-Littlewood-Sobolev inequality will be used in the following.
\begin{proposition}\cite[Theorem 4.3]{LL}\label{HLS}
Assume that $1<r$, $t<\infty$, $0<\mu<4$ and
$
\frac{1}{r}+\frac{\mu}{4}+\frac{1}{t}=2.
$
Then there exists $C(\mu,r,t)>0$ such that
\begin{align}\label{HLSin}
\Big|\int_{\mathbb R^{4}}(I_{\mu}\ast g(x))h(x)dx\Big|\leq
C(\mu,r,t)\|g\|_r
\|h\|_t
\end{align}
for all $g\in L^r(\mathbb{R}^4)$ and $h\in L^t(\mathbb{R}^4)$.
\end{proposition}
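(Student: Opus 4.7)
Since Proposition~\ref{HLS} is cited from Lieb--Loss, I would sketch the following self-contained route. The plan is to reduce the bilinear estimate to a single-variable convolution estimate of weak-Young type, and then close with H\"older's inequality. Set $K(x) = |x|^{-\mu}$. First I would note that $K$ lies in the Lorentz space $L^{4/\mu,\infty}(\mathbb{R}^4)$: the distribution function
\[
\bigl|\{x\in\mathbb{R}^4 : |x|^{-\mu} > s\}\bigr| \;=\; \bigl|B_{s^{-1/\mu}}\bigr|
\]
is a constant multiple of $s^{-4/\mu}$, so $\|K\|_{4/\mu,\infty}$ is finite and depends only on $\mu$.

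Next I would invoke O'Neil's convolution inequality with a weak-$L^p$ kernel: for $g \in L^r(\mathbb{R}^4)$ and $q\in(1,\infty)$ defined by $\tfrac{1}{q}+1 = \tfrac{1}{r}+\tfrac{\mu}{4}$ one has
\[
\|K*g\|_{L^q(\mathbb{R}^4)} \;\leq\; C(\mu,r)\,\|K\|_{4/\mu,\infty}\,\|g\|_{L^r(\mathbb{R}^4)}.
\]
The hypothesis $\tfrac{1}{r}+\tfrac{1}{t}+\tfrac{\mu}{4}=2$ rearranges to $\tfrac{1}{q}+\tfrac{1}{t}=1$, so $t=q'$, and the constraint $\tfrac{1}{r}+\tfrac{\mu}{4}>1$ needed for O'Neil to apply is equivalent to $t<\infty$, which is part of the assumptions. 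H\"older's inequality then yields
\[
\Bigl|\int_{\mathbb{R}^4}(K*g)(x)\,h(x)\,dx\Bigr| \;\leq\; \|K*g\|_q\,\|h\|_{q'} \;\leq\; C(\mu,r,t)\,\|g\|_r\,\|h\|_t,
\]
which is exactly \eqref{HLSin}.

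The main obstacle is O'Neil's inequality itself, and this is where I would spend the bulk of the work. I would prove it by Marcinkiewicz real interpolation: decompose $K = K\chi_{\{|K|\le R\}}+K\chi_{\{|K|>R\}}$, apply standard strong Young's inequality on each piece at two nearby triples of exponents where the strong-type bounds are immediate, and optimize in $R$. An alternative route, closer to Lieb--Loss, is to first reduce to nonnegative $g,h$, replace them by their symmetric decreasing rearrangements (the bilinear form does not decrease by Riesz's rearrangement inequality and the $L^r$, $L^t$ norms are preserved), exploit the homogeneity of $K$ to eliminate one scaling parameter, and evaluate the remaining radial integrals via a layer-cake decomposition. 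The rearrangement route is sharper in that it yields the best constant, but it is considerably heavier and unnecessary here, since Proposition~\ref{HLS} only asserts the existence of some constant $C(\mu,r,t)$.
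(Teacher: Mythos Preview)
The paper does not prove Proposition~\ref{HLS} at all; it is simply quoted from Lieb--Loss \cite{LL} as a standard tool, so there is no ``paper's own proof'' to compare against. Your sketch via weak-$L^{4/\mu}$ membership of $|x|^{-\mu}$, the weak-type Young/O'Neil inequality, and H\"older is a correct and standard route to the inequality with some constant, and your remark that the rearrangement approach of Lieb--Loss is sharper but unnecessary here is accurate.

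One small slip worth fixing: the condition $\tfrac{1}{r}+\tfrac{\mu}{4}>1$ that you need for $q\in(1,\infty)$ is equivalent to $t>1$, not to $t<\infty$ (since $\tfrac{1}{r}+\tfrac{\mu}{4}=2-\tfrac{1}{t}$). The condition $t<\infty$ instead corresponds to $q>1$. Both are covered by the hypothesis $1<r,t<\infty$, so the argument goes through, but the labeling of which endpoint assumption does which job is swapped.
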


\begin{lemma}(Cauchy-Schwarz type inequality) \cite{Matt}
For $g,h\in L_{loc}^1(\mathbb{R}^4)$, there holds
\
\begin{equation}\label{CS}
  \int_{\mathbb R^{4}}(I_{\mu}\ast |g(x)|)|h(x)|dx\leq \Big(\int_{\mathbb R^{4}}(I_{\mu}\ast |g(x)|)|g(x)|dx\Big)^{\frac{1}{2}}\Big(\int_{\mathbb R^{4}}(I_{\mu}\ast |h(x)|)|h(x)|dx\Big)^{\frac{1}{2}}.
\end{equation}
\end{lemma}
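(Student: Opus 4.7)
The plan is to recognize the inequality as an ordinary Cauchy--Schwarz estimate for a symmetric positive semi-definite bilinear form on $L^1_{\rm loc}(\mathbb{R}^4)$. Concretely, by Fubini/Tonelli we rewrite
\begin{equation*}
B(g,h):=\int_{\mathbb{R}^4}(I_\mu\ast |g|)\,|h|\,dx=\iint_{\mathbb{R}^4\times\mathbb{R}^4}\frac{|g(x)|\,|h(y)|}{|x-y|^\mu}\,dx\,dy,
\end{equation*}
which is clearly symmetric and bilinear in $(|g|,|h|)$. Once positive semi-definiteness is established, the stated inequality will be nothing more than the standard quadratic-form Cauchy--Schwarz bound applied to the vectors $|g|$ and $|h|$.

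For the key positivity step I would use the Fourier representation of the Riesz kernel on $\mathbb{R}^4$: for $\mu\in(0,4)$ there exists a constant $c(\mu)>0$ such that $|x|^{-\mu}$ is the Fourier transform (in the distributional sense) of $c(\mu)|\xi|^{\mu-4}$. Hence, first assuming $f\in C^\infty_c(\mathbb{R}^4)$ with $f\ge 0$, Plancherel yields
\begin{equation*}
B(f,f)=c(\mu)\int_{\mathbb{R}^4}|\widehat{f}(\xi)|^2\,|\xi|^{\mu-4}\,d\xi\ge 0.
\end{equation*}
For general $g,h\in L^1_{\rm loc}(\mathbb{R}^4)$ with the right-hand side of \eqref{CS} finite, I would pass to the limit by truncation/mollification of $|g|$ and $|h|$ using monotone convergence (since the kernel is non-negative), so that both sides of the inequality behave monotonically under truncation.

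With positivity in hand, I would conclude by the classical polarization trick: for every $\lambda\in\mathbb{R}$,
\begin{equation*}
0\le B(|g|+\lambda|h|,|g|+\lambda|h|)=B(|g|,|g|)+2\lambda B(|g|,|h|)+\lambda^2 B(|h|,|h|).
\end{equation*}
Viewing the right-hand side as a non-negative quadratic polynomial in $\lambda$, its discriminant must be non-positive, which gives exactly $B(|g|,|h|)^2\le B(|g|,|g|)\,B(|h|,|h|)$ and hence \eqref{CS}.

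The main technical point is the positivity of $B(f,f)$ for signed $f$, since the kernel is obviously non-negative only when applied to non-negative functions; the Fourier identity for $|\xi|^{\mu-4}$ (with $\mu\in(0,4)$ ensuring local integrability of the Fourier side near the origin and the integrability of $|\widehat f|^2|\xi|^{\mu-4}$ for nice $f$) is what makes positivity work. If one wishes to avoid distributional Fourier calculus, an alternative route is to use the subordination identity $|x|^{-\mu}=C_\mu\int_0^\infty t^{\mu/2-1}e^{-t|x|^2}\,dt$, writing $B(f,f)$ as a positive superposition of Gaussian convolutions $\int f\,(G_t\ast f)\,dx=\|G_{t/2}\ast f\|_2^2\ge 0$; this gives the same conclusion without Fourier transforms and would be my backup strategy if approximation issues in the Fourier argument become delicate.
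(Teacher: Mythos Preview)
The paper does not supply its own proof of this lemma; it simply cites Mattner \cite{Matt}. Your argument is correct and complete in outline: symmetry of the bilinear form $B(\cdot,\cdot)$ by Tonelli, positive semi-definiteness via the Fourier representation of the Riesz kernel (or the subordination formula), and then the discriminant argument. The only cosmetic slip is the index in the Gaussian factorisation---one has $e^{-t|x|^2}=c_t\,(G_{2t}\ast G_{2t})(x)$ rather than with parameter $t/2$---but this does not affect the conclusion $\int f\,(G_t\ast f)\,dx\ge 0$.

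For comparison, Mattner's route in \cite{Matt} is essentially your backup strategy: the function $r\mapsto r^{-\mu/2}$ is completely monotone on $(0,\infty)$, so by Bernstein's theorem $|x|^{-\mu}=(|x|^2)^{-\mu/2}=\int_0^\infty e^{-s|x|^2}\,d\nu(s)$ for a positive measure $\nu$, and positive definiteness of each Gaussian kernel then yields positive definiteness of the Riesz kernel. Your primary Fourier argument is the other standard proof (the one usually given for positivity in Hardy--Littlewood--Sobolev theory); it has the advantage of being one line once Plancherel is available, while the complete-monotonicity route avoids distributional Fourier analysis and the attendant approximation bookkeeping.
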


\begin{lemma}(Gagliardo-Nirenberg inequality) \cite{Nirenberg1}
For any $u\in H^2(\mathbb{R}^4)$ and $p\geq2$, then it holds
\begin{equation}\label{GNinequality}
  \|u\|_p\leq B_{p}\|\Delta u\|_2^{\frac{p-2}{p}}\| u\|_2^{\frac{2}{p}},
\end{equation}
where $B_{p}$ is a constant depending on $p$.
\end{lemma}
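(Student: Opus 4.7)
The plan is to derive the inequality from the critical Sobolev embedding $H^2(\mathbb{R}^4)\hookrightarrow L^p(\mathbb{R}^4)$ (valid for every $p\in[2,\infty)$ because $s=2$ equals $N/2$ in dimension four) combined with a scaling optimization. The case $p=2$ is trivial with $B_2=1$, so I focus on $p>2$.

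First I would establish the non-homogeneous bound $\|u\|_p\le C_p(\|u\|_2+\|\Delta u\|_2)$ for every $u\in H^2(\mathbb{R}^4)$, which is exactly the critical Sobolev embedding in $\mathbb{R}^4$. A clean way to obtain it is a Fourier splitting argument: writing $u=\mathcal{F}^{-1}(\hat u\,\chi_{|\xi|\le R})+\mathcal{F}^{-1}(\hat u\,\chi_{|\xi|>R})$, the low-frequency piece is controlled in $L^\infty$ and $L^2$ by $\|u\|_2$ via Cauchy--Schwarz, while the high-frequency piece is controlled in $L^2$ by $R^{-2}\|\Delta u\|_2$; interpolating and optimizing $R$ then yields the desired bound for any finite $p$.

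Next I would exploit the dilation $u_\lambda(x):=u(\lambda x)$. In $\mathbb{R}^4$ one has $\|u_\lambda\|_p=\lambda^{-4/p}\|u\|_p$, $\|u_\lambda\|_2=\lambda^{-2}\|u\|_2$ and, crucially, $\|\Delta u_\lambda\|_2=\|\Delta u\|_2$, since the biharmonic $L^2$-norm is scale-invariant precisely in the critical dimension $N=4$. Applying the non-homogeneous bound to $u_\lambda$ and rearranging gives, for every $\lambda>0$,
\[\|u\|_p\le C_p\bigl(\lambda^{4/p}\|\Delta u\|_2+\lambda^{4/p-2}\|u\|_2\bigr).\]
Minimizing the right-hand side in $\lambda$ forces $\lambda\sim(\|u\|_2/\|\Delta u\|_2)^{1/2}$; plugging this choice back and collecting powers yields $\|u\|_p\le B_p\|\Delta u\|_2^{(p-2)/p}\|u\|_2^{2/p}$ with an explicit $B_p$ depending only on $C_p$ and $p$.

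The main obstacle is the first step: in the critical Sobolev regime $s=N/2$, the embedding into $L^p$ cannot be deduced from a plain Sobolev inequality (indeed $H^2(\mathbb{R}^4)\not\hookrightarrow L^\infty$), and the constant $C_p$ blows up as $p\to\infty$. One must either carry out the frequency-splitting argument sketched above or invoke Adams-type estimates, which the paper anyway needs for exponential bounds. Once the embedding is in hand, the scaling optimization is purely algebraic and delivers the sharp homogeneous form of the Gagliardo--Nirenberg inequality.
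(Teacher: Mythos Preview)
The paper does not prove this lemma; it is quoted from Nirenberg's classical work, so there is no argument in the paper to compare against. Your two-step strategy is correct. The scaling step is exactly right: in dimension four the seminorm $\|\Delta u\|_2$ is invariant under $u\mapsto u(\lambda\,\cdot)$, so optimizing the non-homogeneous bound $\|u\|_p\le C_p(\|u\|_2+\|\Delta u\|_2)$ over dilations produces the homogeneous inequality with the stated exponents $(p-2)/p$ and $2/p$.

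One point deserves tightening in the first step. From the splitting you describe you obtain $\|u_1\|_\infty\le CR^2\|u\|_2$ for the low-frequency piece and $\|u_2\|_2\le R^{-2}\|\Delta u\|_2$ for the high-frequency piece, but an $L^2$ bound alone on $u_2$ is not enough to place it in $L^p$ for $p>2$; ``interpolating and optimizing $R$'' from only these two ingredients gives at best a weak-$L^4$ estimate. To close the frequency argument one applies Bernstein's inequality on dyadic shells $\{2^j\le|\xi|<2^{j+1}\}$ to get $\|P_ju\|_p\le C\,2^{-4j/p}\|P_j\Delta u\|_2$, and summing in $j$ via Cauchy--Schwarz yields $\|u_2\|_p\le C_p\|\Delta u\|_2$ directly. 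Alternatively, as you note, the critical embedding $H^2(\mathbb{R}^4)\hookrightarrow L^p$ for every finite $p$ follows at once from the Adams inequality already recorded in the paper, after which your scaling optimization is purely algebraic and delivers \eqref{GNinequality}.
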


\begin{lemma}\label{adams}
(i) \cite[Theorem 1.2]{Y} If $\alpha>0$ and $u\in H^2(\mathbb{R}^4)$, then
\begin{equation*}
  \int_{\mathbb{R}^4}(e^{\alpha u^2}-1)dx<+\infty;
\end{equation*}
(ii) \cite[Proposition 7]{MSV} There exists a constant $C>0$ such that
\begin{equation*}
  \sup\limits_{u\in H^2(\mathbb{R}^4), \|\Delta u\|_2\leq1}\int_{\mathbb{R}^4}(e^{\alpha u^2}-1)dx\leq C
\end{equation*}
for all $0<\alpha\leq 32\pi^2$.
\end{lemma}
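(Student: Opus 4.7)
The plan is to prove (ii) first as the substantive input, then deduce (i) from (ii) by a density and truncation argument. For (ii), the sharp Adams inequality on $\mathbb{R}^4$, I would follow the classical approach: use the Riesz-type representation $u = K \ast \Delta u$ via the Green's function for $\Delta$ in $\mathbb{R}^4$ to express $u$ pointwise as a convolution, then pass to symmetric decreasing rearrangements. Since the P\'olya--Szeg\H{o} principle for $\|\nabla u\|_2$ does not transfer cleanly to $\|\Delta u\|_2$, the comparison between $u$ and $u^*$ is effected through an O'Neil-type convolution rearrangement inequality, yielding a pointwise bound on $u^*$ in terms of a one-dimensional integral operator applied to $(\Delta u)^*$. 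This reduces the problem to a one-dimensional Trudinger-type inequality on $(0,\infty)$, proved by splitting the critical integral into short- and long-range contributions and balancing them. The sharp constant $32\pi^2$ emerges from the surface area $|S^3|=2\pi^2$ of the unit three-sphere combined with the precise coefficient in the Green's function. To pass from the bounded-domain case of Adams to $\mathbb{R}^4$, I would approximate by cutoffs and estimate the tail via the full $H^2$ norm, in the spirit of the Ruf--Sani argument.

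For part (i), fix $u\in H^2(\mathbb{R}^4)$ and $\alpha>0$. By density of $C_c^\infty(\mathbb{R}^4)$ in $H^2(\mathbb{R}^4)$, choose $\phi\in C_c^\infty(\mathbb{R}^4)$ supported in some ball $B_R$ so that $w:=u-\phi$ satisfies $2\alpha\|\Delta w\|_2^2\leq 32\pi^2$. I would then split
$$\int_{\mathbb{R}^4}(e^{\alpha u^2}-1)\,dx \;=\; \int_{B_R}(e^{\alpha u^2}-1)\,dx \;+\; \int_{B_R^c}(e^{\alpha w^2}-1)\,dx.$$
The exterior piece is finite by (ii) applied to $w/\|\Delta w\|_2$, since $\alpha\|\Delta w\|_2^2\leq 16\pi^2<32\pi^2$. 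For the interior piece, use $u^2\leq 2\phi^2+2w^2$ on $B_R$ and the boundedness of $\phi$ on its compact support to dominate $e^{\alpha u^2}$ pointwise by $e^{2\alpha\|\phi\|_\infty^2}\cdot e^{2\alpha w^2}$; the remaining integral of $e^{2\alpha w^2}-1$ over $B_R$ is finite again by (ii) applied with the scaled exponent $2\alpha$, and adding back $|B_R|$ completes the bound.

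The main obstacle is the sharpness of the constant $32\pi^2$ in (ii): the absence of a direct P\'olya--Szeg\H{o} principle for the biharmonic operator forces the proof through the integral representation and an O'Neil-type convolution rearrangement inequality, where any inefficiency in these steps would degrade the constant below the sharp value. A secondary delicate point is that, in extending from bounded domains to $\mathbb{R}^4$, the constraint $\|\Delta u\|_2\leq 1$ alone is not scale-invariant under $u(x)\mapsto u(x/\lambda)$, so the Ruf--Sani-type extension must leverage the full $H^2$ control implicit in $u\in H^2(\mathbb{R}^4)$ to rule out concentration at infinity; by comparison, (i) is a soft consequence once (ii) is in hand.
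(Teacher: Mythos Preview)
The paper does not prove this lemma: both parts are quoted from the literature (part (i) from \cite{Y}, part (ii) from \cite{MSV}) and are used as black-box inputs throughout. There is therefore no in-paper argument to compare your proposal against. Your sketch follows the standard Adams route (Green-kernel representation of $u$ in terms of $\Delta u$, an O'Neil-type convolution rearrangement inequality, reduction to a one-dimensional Moser--Trudinger-type estimate, and a Ruf--Sani extension from bounded domains to the whole space), and your derivation of (i) from (ii) via density and the splitting $u=\phi+w$ with $\phi\in C_c^\infty$ is sound.

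One minor correction to your commentary: in $\mathbb{R}^4$ the dilation $u(x)\mapsto u(x/\lambda)$ actually \emph{preserves} $\|\Delta u\|_2$, while it multiplies $\int_{\mathbb{R}^4}(e^{\alpha u^2}-1)\,dx$ by $\lambda^4$. So the obstruction you identify is real, but the phrasing is off: the issue is not that the constraint fails to be scale-invariant, but precisely that it \emph{is} invariant while the functional is not. This is why the whole-space version genuinely requires control of the full $H^2$ norm (or equivalently an $L^2$ bound in addition to $\|\Delta u\|_2\le 1$), exactly the ingredient you say the Ruf--Sani argument supplies.
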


%{\color{blue}
\begin{lemma}\cite[Lemma 2.3]{DT}\label{alge}
Suppose that $a_1, a_2, \ldots, a_k\geq 0$ with $a_1+a_2+\cdots +a_k<1$, then there exist $p_1, p_2, \ldots, p_k>1$ satisfying $\frac{1}{p_1}+\frac{1}{p_2}+\cdots +\frac{1}{p_k}=1$ such that $p_ia_i<1$ for any $i=1, 2, \ldots, k$. Moreover, if $a_1, a_2, \ldots, a_k\geq 0$ satisfying $a_1+a_2+\cdots +a_k=1$, then we can take $p_i=\frac{1}{a_i}$ such that $\frac{1}{p_1}+\frac{1}{p_2}+\cdots +\frac{1}{p_k}=1$ and $p_ia_i=1$ for any $i=1, 2, \ldots, k$.
\end{lemma}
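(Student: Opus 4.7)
My plan is to reformulate the problem in terms of the reciprocals $q_i := 1/p_i$: the three requirements (that $p_i > 1$, $\sum_i 1/p_i = 1$, and $p_i a_i < 1$) become the equivalent conditions $0 < q_i < 1$, $\sum_i q_i = 1$, and $a_i < q_i$. Since $\sum_i a_i < 1$ by hypothesis, there is a strictly positive slack $\delta := 1 - \sum_i a_i > 0$, and the natural construction is to distribute this slack uniformly by setting $q_i := a_i + \delta/k$ and then $p_i := 1/q_i$.

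I would then verify each condition in turn. First, $q_i \geq \delta/k > 0$, so $p_i$ is well-defined and positive, and moreover $q_i > a_i$ gives $p_i a_i = a_i/q_i < 1$. Second, $\sum_i q_i = \sum_i a_i + k \cdot \delta/k = 1$, so $\sum_i 1/p_i = 1$. Third, $q_i = a_i + \delta/k \leq (1-\delta) + \delta/k = 1 - \delta(1 - 1/k) < 1$ whenever $k \geq 2$, which yields $p_i > 1$. The equality case $\sum_i a_i = 1$ is handled directly by the prescription $p_i := 1/a_i$, which is legitimate provided every $a_i \in (0,1)$; one checks $\sum_i 1/p_i = \sum_i a_i = 1$ and $p_i a_i = 1$ tautologically.

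The only real obstacle is the trivial boundary case $k = 1$, where the demand $p_1 > 1$ together with $1/p_1 = 1$ is self-contradictory, so the statement must be read as implicitly requiring $k \geq 2$. Beyond this, no analytical difficulty arises: the result is essentially the geometric fact that any point strictly interior to the standard probability simplex is coordinatewise strictly dominated by some other point on that simplex, a fact made transparent by the uniform redistribution of slack above.
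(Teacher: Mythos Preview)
Your argument is correct. The paper does not supply its own proof of this lemma---it is quoted verbatim from \cite{DT}---so there is nothing to compare against; your uniform redistribution of the slack $\delta = 1 - \sum_i a_i$ is a clean and standard way to establish the result, and you have correctly flagged the degenerate boundary cases ($k=1$, or some $a_i \notin (0,1)$ in the equality part) where the statement as written breaks down.
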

Let $\mathcal{X}=H^2(\mathbb{R}^4)\times H^2(\mathbb{R}^4)$ with the norm
\begin{equation*}
  \|(u,v)\|:=(\|u\|^2+\|v\|^2)^{\frac{1}{2}}=\Big(\int_{\mathbb{R}^4}|\Delta u|^2+|\Delta v|^2+|u|^2+|v|^2dx\Big)^{\frac{1}{2}},
\end{equation*}
Similar to $H^2(\mathbb{R}^4)$, $\mathcal{X}$ is a Hilbert space and satisfies
\begin{equation}\label{embedding}
  \mathcal{X}\hookrightarrow L^p(\mathbb{R}^4,\mathbb{R}^2):=L^p(\mathbb{R}^4)\times L^p(\mathbb{R}^4)\quad\text{and}\quad \mathcal{X}\hookrightarrow \hookrightarrow L^q_{Loc}(\mathbb{R}^4,\mathbb{R}^2):=L^q_{Loc}(\mathbb{R}^4)\times L^q_{Loc}(\mathbb{R}^4)
\end{equation}
for any $p\geq2$, $q\geq1$, where $\displaystyle\|(u,v)\|_{p}:=\|(u,v)\|_{{L^p(\mathbb{R}^4,\mathbb{R}^2)}}=(\|u\|_p^p+\|v\|_p^p)^{\frac{1}{p}}=\Big(\int_{\mathbb{R}^4}(|u|^p+|v|^p)dx\Big)^{\frac{1}{p}}$. Moreover, we set
\begin{equation*}
  \mathcal{S}:=\Big\{(u,v)\in \mathcal{X}:\int_{\mathbb{R}^4}|u|^2dx=a^2, \int_{\mathbb{R}^4}|v|^2dx=b^2\Big\}.
\end{equation*}

Applying Lemmas \ref{adams} and \ref{alge}, we immediately have the following lemma.
\begin{lemma}\label{modadams}
(i) If $\alpha>0$ and $(u,v)\in \mathcal{X}$, then
\begin{equation*}
  \int_{\mathbb{R}^4}(e^{\alpha |(u,v)|^2}-1)dx<+\infty;
\end{equation*}
(ii) There exists a constant $C>0$ such that
\begin{equation*}
  \sup\limits_{(u,v)\in \mathcal{X}, \|\Delta (u,v)\|_2\leq1} \int_{\mathbb{R}^4}(e^{\alpha |(u,v)|^2}-1)dx\leq C
\end{equation*}
for all $0<\alpha\leq32\pi^2$, where $\|\Delta (u,v)\|_2=\|(\Delta u,\Delta v)\|_2=(\int_{\mathbb{R}^4}(|\Delta u|^2+|\Delta v|^2)dx)^{\frac{1}{2}}$.
\end{lemma}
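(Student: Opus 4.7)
The plan is to reduce the vector-valued Adams inequality to the scalar Adams inequality of Lemma \ref{adams} through a Young-type multiplicative splitting whose exponents are chosen by the algebraic device of Lemma \ref{alge}. The starting point is the pointwise identity $|(u,v)|^2 = u^2 + v^2$, which gives
\[
e^{\alpha|(u,v)|^2} = e^{\alpha u^2}\cdot e^{\alpha v^2}.
\]
For any $p_1,p_2>1$ with $\frac{1}{p_1}+\frac{1}{p_2}=1$, Young's inequality applied to the product above, followed by subtracting $1 = \frac{1}{p_1}+\frac{1}{p_2}$, yields the fundamental pointwise bound
\[
e^{\alpha|(u,v)|^2}-1 \;\le\; \frac{1}{p_1}\bigl(e^{p_1\alpha u^2}-1\bigr) \;+\; \frac{1}{p_2}\bigl(e^{p_2\alpha v^2}-1\bigr),
\]
which I will integrate and combine with the scalar estimates of Lemma \ref{adams}.

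For part (i), I would simply fix any admissible pair, for instance $p_1=p_2=2$, integrate the inequality above over $\mathbb{R}^4$, and apply Lemma \ref{adams}(i) to each scalar factor with exponent $2\alpha>0$ and functions $u,v\in H^2(\mathbb{R}^4)$. Both resulting integrals are finite, hence so is the left-hand side.

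For part (ii), the exponents $p_1,p_2$ must be chosen depending on $(u,v)$ in order to respect the sharp threshold $32\pi^2$. Set $a_1=\|\Delta u\|_2^2$ and $a_2=\|\Delta v\|_2^2$, so that the constraint $\|\Delta(u,v)\|_2\le 1$ reads $a_1+a_2\le 1$. Lemma \ref{alge} then furnishes $p_1,p_2>1$ with $\frac{1}{p_1}+\frac{1}{p_2}=1$ and $p_i a_i\le 1$ for $i=1,2$ (strict if $a_1+a_2<1$, with equality via $p_i=1/a_i$ in the boundary case). Normalising $\tilde u = u/\|\Delta u\|_2$ and $\tilde v = v/\|\Delta v\|_2$ when these norms are positive, I note that $\|\Delta\tilde u\|_2=\|\Delta\tilde v\|_2=1$ and
\[
p_1\alpha u^2 = (p_1\alpha a_1)\tilde u^2 \le \alpha\tilde u^2 \le 32\pi^2\,\tilde u^2, \qquad p_2\alpha v^2 = (p_2\alpha a_2)\tilde v^2 \le 32\pi^2\,\tilde v^2.
\]
Integrating and invoking Lemma \ref{adams}(ii) on each scalar integral produces a constant $C>0$, independent of $(u,v)$, bounding both $\int(e^{p_1\alpha u^2}-1)\,dx$ and $\int(e^{p_2\alpha v^2}-1)\,dx$. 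Inserted into the splitting inequality, this delivers the desired uniform estimate $\int_{\mathbb{R}^4}(e^{\alpha|(u,v)|^2}-1)\,dx \le C$.

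The main technical point is the $(u,v)$-dependent choice of the Hölder exponents $(p_1,p_2)$ in part (ii); one must be slightly careful with the degenerate cases $\|\Delta u\|_2=0$ or $\|\Delta v\|_2=0$, where the corresponding factor reduces to $1$ and the splitting trivialises, as well as with the limiting situation $a_1+a_2=1$ handled by the second half of Lemma \ref{alge}. Once these cases are distinguished, the proof is a direct bookkeeping exercise combining Young's inequality, the normalisation $\tilde u_i = u_i/\|\Delta u_i\|_2$, and the scalar Adams inequality, which explains the authors' remark that the result is immediate from Lemmas \ref{adams} and \ref{alge}.
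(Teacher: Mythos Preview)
Your proposal is correct and follows essentially the same approach as the paper: the multiplicative splitting via Young's inequality together with the $(u,v)$-dependent choice of exponents $p_1,p_2$ from Lemma~\ref{alge} (with $a_i=\|\Delta u_i\|_2^2$) is exactly what the authors do, and your treatment of the degenerate cases is, if anything, more explicit than theirs.
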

\begin{proof}
$(i)$ By Lemma \ref{adams} and Young's inequality, for any $t>1$ and $t'=\frac{t}{t-1}$, we have
\begin{equation*}
 \int_{\mathbb{R}^4}(e^{\alpha |(u,v)|^2}-1)dx=\int_{\mathbb{R}^4}(e^{\alpha (u^2+v^2)}-1)dx\leq \frac{1}{t}\int_{\mathbb{R}^4}(e^{t\alpha u^2}-1)dx + \frac{1}{t'}\int_{\mathbb{R}^4}(e^{t'\alpha v^2}-1)dx<\infty.
\end{equation*}
$(ii)$
Using Lemma \ref{adams} with $k=2$, we know, for any $a_1+a_2\leq 1$, there exist $p_1, p_2>1$ satisfying $\frac{1}{p_1}+\frac{1}{p_2}=1$ such that $p_1a_1\leq1,p_2a_2\leq1$.

For any $(u,v)\in \mathcal{X}$ with $\|\Delta (u,v)\|_2\leq1$, there exist $p_1,p_2>1$ satisfying $\frac{1}{p_1}+\frac{1}{p_2}=1$ such that %{\color{blue}
$p_1\|\Delta u\|_2^2\leq1$, $p_2\|\Delta v\|_2^2\leq1$. %}%{\color{red}WHY???}
By Young's inequality, we obtain
\begin{equation*}
  \int_{\mathbb{R}^4}(e^{\alpha |(u,v)|^2}-1)dx\leq \frac{1}{p_1}\int_{\mathbb{R}^4}(e^{ p_1\alpha \|\Delta u\|_2^2(\frac{u}{\|\Delta u\|_2})^2}-1)dx+\frac{1}{p_2}\int_{\mathbb{R}^4}(e^{p_2\alpha \|\Delta v\|_2^2(\frac{v}{\|\Delta v\|_2})^2}-1)dx\leq C.
\end{equation*}
\end{proof}

\begin{definition}\cite[Definition 3.1]{G}
Let $B$ be a closed subset of $X$. A class of $\mathcal{G}$ of compact subsets of $X$ is a homotopy stable family with boundary $B$ provided

$(i)$ Every set in $\mathcal{G}$ contains $B$;

$(ii)$ For any set $A\in \mathcal{G}$ and any $\eta\in C([0,1]\times X,X)$ satisfying $\eta(t,w)=w$ for all $(t,w)\in (\{0\}\times X)\cup([0,1]\times B)$, one has $\eta(\{1\}\times A)\in\mathcal{G}$.
\end{definition}
\begin{lemma}\cite[Theorem 3.2]{G}\label{Ghouss}
Let $\psi$ be a $C^1$-functional on a complete connected $C^{1}$-Finsler manifold $X$ (without boundary), and consider a homotopy stable family $\mathcal{G}$ with a closed boundary $B$. Set $\tilde{c}=\tilde{c}(\psi,\mathcal{G})=\inf\limits_{A\in \mathcal{G}}\max\limits_{v\in A}\psi(w)$ and suppose that
\begin{equation*}
  \sup\psi(B)<\tilde{c}.
\end{equation*}
Then, for any sequence of sets $\{A_n\} \subset\mathcal{G}$ satisfying $\lim\limits_{n\rightarrow\infty}\sup\limits_{w\in A_n}\psi(w)=\tilde{c}$, there exists a sequence $\{w_n\}\subset X$ such that

$(i)$ $\lim\limits_{n\rightarrow\infty}\psi(w_n)=\tilde{c}$;

$(ii)$ $\lim\limits_{n\rightarrow\infty}\|\psi'(w_n)\|_*=0$;

$(iii)$ $\lim\limits_{n\rightarrow\infty}dist(w_n,A_n)=0$.
\end{lemma}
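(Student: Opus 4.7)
The plan is to argue by contradiction via a deformation along a pseudo-gradient flow, which is the classical strategy behind Ghoussoub's minimax principle. First I would set up the negation carefully: assume that no sequence $\{w_n\}$ with the three stated properties exists. A diagonal/selection argument then yields $\varepsilon_0>0$ such that, passing to a subsequence of $\{A_n\}$, for every $n$ large enough one has $\|\psi'(w)\|_{*}\geq \varepsilon_0$ whenever $w\in X$ satisfies $|\psi(w)-\tilde c|\leq \varepsilon_0$ and $\mathrm{dist}(w,A_n)\leq \varepsilon_0$. Shrinking $\varepsilon_0$ if necessary, I would also arrange $\sup\psi(B)<\tilde c-2\varepsilon_0$, which is possible because of the hypothesis $\sup\psi(B)<\tilde c$.

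Next, I would invoke the existence of a locally Lipschitz pseudo-gradient vector field $V:\widetilde X\to TX$ for $\psi$ on the open set $\widetilde X=\{w\in X:\psi'(w)\neq 0\}$, which exists on any $C^{1}$-Finsler manifold (this is the step that must be handled with care, since the tangent spaces are Banach rather than Hilbert — one constructs $V$ locally and glues with a partition of unity subordinate to a locally finite refinement). Using $V$, together with cut-off functions $\chi_n$ that vanish outside the $\varepsilon_0$-neighborhood of $A_n$ and outside $\psi^{-1}([\tilde c-\varepsilon_0,\tilde c+\varepsilon_0])$ and equal $1$ on a smaller sub-neighborhood, I would solve the ODE
\begin{equation*}
\frac{d}{dt}\eta_n(t,w)=-\chi_n(\eta_n(t,w))\,\frac{V(\eta_n(t,w))}{\|V(\eta_n(t,w))\|},\qquad \eta_n(0,w)=w,
\end{equation*}
on a suitably chosen time interval $[0,T]$ with $T=T(\varepsilon_0)$ small, obtaining a continuous deformation $\eta_n\in C([0,1]\times X,X)$ (after rescaling time) that (a) is the identity for $t=0$, (b) is the identity outside an $\varepsilon_0$-tube around $A_n\cap\psi^{-1}([\tilde c-\varepsilon_0,\tilde c+\varepsilon_0])$, and in particular fixes $B$ because $\sup\psi(B)<\tilde c-2\varepsilon_0$, and (c) decreases $\psi$ along orbits at rate at least $\varepsilon_0/2$ where $\chi_n=1$.

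Then I would verify the key quantitative consequence: for every $w\in A_n$, either $\psi(w)\leq \tilde c-\varepsilon_0$ and $\psi(\eta_n(1,w))\leq\psi(w)\leq \tilde c-\varepsilon_0$, or $\psi(w)\in[\tilde c-\varepsilon_0,\tilde c+\varepsilon_0]$ and the flow decreases $\psi$ by at least $\varepsilon_0 T/2$, so $\sup_{w\in A_n}\psi(\eta_n(1,w))\leq \tilde c-\varepsilon_0 T/2$ once $n$ is large enough that $\sup_{w\in A_n}\psi(w)\leq \tilde c+\varepsilon_0 T/4$ (which holds by the assumption on $\{A_n\}$). Since $\eta_n$ satisfies the boundary condition $\eta_n(t,w)=w$ on $(\{0\}\times X)\cup([0,1]\times B)$, the homotopy stability of $\mathcal{G}$ yields $\eta_n(\{1\}\times A_n)\in\mathcal{G}$, and then
\begin{equation*}
\tilde c\leq \max_{w'\in \eta_n(\{1\}\times A_n)}\psi(w')\leq \tilde c-\frac{\varepsilon_0 T}{2},
\end{equation*}
a contradiction. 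The main obstacle, as noted, is the construction of the pseudo-gradient field on a Finsler manifold together with the careful choice of the cut-off $\chi_n$ so that the flow stays inside the region where both the gradient bound and the bounded-distance-to-$A_n$ property hold; once that is in place, the rest is bookkeeping about the deformation time and the minimax inequality.
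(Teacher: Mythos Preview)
The paper does not supply its own proof of this lemma: it is quoted verbatim as \cite[Theorem 3.2]{G} and used as a black box, so there is no ``paper's proof'' to compare against. Your sketch is the standard pseudo-gradient deformation argument that underlies Ghoussoub's minimax principle, and as an outline it is correct; the only point to flag is that the passage from the negation of (i)--(iii) to a \emph{uniform} $\varepsilon_0$ valid along a subsequence of $\{A_n\}$ requires a short diagonal argument (for each $k$, either one can pick $w_n^{(k)}$ with all three quantities below $1/k$ for every large $n$, in which case a diagonal sequence does the job, or else some $k$ fails for infinitely many $n$, giving the desired $\varepsilon_0=1/k$), and the construction of the locally Lipschitz pseudo-gradient on a $C^1$-Finsler manifold is the genuinely technical ingredient, exactly as you note.
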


We observe that $B=\emptyset$ is admissible, it is sufficient to follow the usual convention of defining $\sup\psi(\emptyset)=-\infty$. Since $G_1(u):=\|u\|_2^2-a^2$, $G_2(v):=\|v\|_2^2-b^2$ are of class $C^1$ and for any $(u,v)\in \mathcal{S}$, we have $\langle G'_1(u),u\rangle=2a^2>0$, $\langle G'_2(v),v\rangle=2b^2>0$. Therefore, by the implicit function theorem, $\mathcal{S}$ is a $C^{1}$-Finsler manifold.

\begin{lemma}\cite[Lemma 4.8]{Kav}\label{weakcon}
Let $\Omega\subseteq\mathbb R^{4}$ be any open set. For $1<s<\infty$, let $\{u_n\}$ be bounded in $L^s(\Omega)$ and $u_n(x)\rightarrow u(x)$ a.e. in $\Omega$. Then $u_n(x)\rightharpoonup u(x)$  in $L^s(\Omega)$.
\end{lemma}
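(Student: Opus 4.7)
The plan is to prove the identification of the weak limit by the classical combination of Fatou's lemma, reflexivity of $L^s$, Egorov's theorem, and a density argument. First I would apply Fatou's lemma to the nonnegative sequence $|u_n|^s$ to conclude that $u\in L^s(\Omega)$ with $\|u\|_s\leq\liminf_n\|u_n\|_s<\infty$, so that the alleged weak limit is an admissible element of the space.

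Since $1<s<\infty$ the space $L^s(\Omega)$ is reflexive, so the boundedness of $\{u_n\}$ ensures that every subsequence admits a further subsequence $\{u_{n_k}\}$ converging weakly in $L^s(\Omega)$ to some $w\in L^s(\Omega)$. By the standard Urysohn subsequence principle, it is enough to show $w=u$ for every such limit; this will force the entire original sequence to satisfy $u_n\weakto u$.

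To identify $w=u$ I would test against an arbitrary $\varphi\in C_c^\infty(\Omega)$, which is dense in $L^{s'}(\Omega)$ with $s'=s/(s-1)$. Fixing such a $\varphi$, set $K=\operatorname{supp}\varphi$, which has finite Lebesgue measure. Given $\delta>0$, Egorov's theorem applied to the a.e.\ convergence $u_n\to u$ on $K$ yields a measurable set $A\subset K$ with $|K\setminus A|<\delta$ on which $u_n\to u$ uniformly. I would then split
\begin{equation*}
\int_K \varphi(u_n-u)\,dx=\int_A \varphi(u_n-u)\,dx+\int_{K\setminus A}\varphi(u_n-u)\,dx.
\end{equation*}
The first term tends to $0$ as $n\to\infty$ by uniform convergence on the finite-measure set $A$, while the second is controlled by H\"older's inequality by $\|\varphi\|_\infty(\|u_n\|_s+\|u\|_s)|K\setminus A|^{1/s'}\leq C\delta^{1/s'}$. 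Since $\delta>0$ was arbitrary, $\int\varphi u_{n_k}\to\int\varphi u$, and comparing with the weak convergence $\int\varphi u_{n_k}\to\int\varphi w$ gives $\int\varphi(w-u)\,dx=0$ for every $\varphi\in C_c^\infty(\Omega)$. Density of $C_c^\infty(\Omega)$ in $L^{s'}(\Omega)$ then yields $w=u$.

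The only genuine subtlety is that no integrable dominating function is available a priori, so the dominated convergence theorem cannot be invoked directly; the Egorov/H\"older splitting above is precisely the tool that replaces it. Everything else is classical and requires no further computation.
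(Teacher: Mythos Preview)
Your argument is correct and follows a standard route to this classical fact: Fatou for membership of $u$ in $L^s$, reflexivity for weak subsequential limits, Egorov on the compact support of a test function together with H\"older on the small-measure remainder to identify the limit, and the Urysohn subsequence principle to conclude for the full sequence. Note, however, that the paper does not supply its own proof of this lemma; it is quoted verbatim from \cite[Lemma~4.8]{Kav} and used as a black box, so there is no in-paper argument to compare against. Your write-up would serve as a perfectly acceptable self-contained justification of the cited result.
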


\section{Variational framework}\label{fra}

It is standard to see that any critical point of the energy functional
\begin{align*}
\mathcal J(u,v)
=\frac{1}{2}\int_{\mathbb{R}^4}(|\Delta u|^2+|\Delta v|^2)dx-\frac{1}{2}\int_{\mathbb{R}^4}(I_\mu*F(u,v))F(u,v)dx,
\end{align*}
restricted to $\mathcal{S}$
corresponds to a solution of \eqref{abs}.

If $F$ satisfies $(F_1)$ and $F_{z_j}$($j=1,2$) has exponential subcritical growth at $\infty$, fix $q>2$, for any $\xi>0$ and $\alpha>0$, there exists a constant $C_\xi>0$
 such that
\begin{equation*}\label{fcondition1}
  | F_{z_1}(z)|,| F_{z_2}(z)|\leq \xi|z|^\tau+C_\xi|z|^q(e^{\alpha |z|^2}-1)\quad\text{for all $z=(z_1,z_2)\in \mathbb{R}\times \mathbb{R}$},
\end{equation*}
and using $(F_2)$, we have
\begin{equation}\label{fcondition02}
  |F(z)|\leq \xi|z|^{\tau+1}+C_\xi|z|^{q+1}(e^{\alpha |z|^2}-1)\quad\text{for all $z\in \mathbb{R}\times \mathbb{R}$}.
\end{equation}
By  \eqref{fcondition02}, using the Hardy-Littlewood-Sobolev inequality \cite{LL} and Lemma \ref{modadams}, we obtain $\mathcal{J}$ is well defined in $\mathcal{X}$ and $\mathcal{J}\in C^1(\mathcal{X},\mathbb{R})$ with
\begin{align*}
\langle\mathcal J'(u,v),(\varphi,\psi)\rangle=&
\int_{\mathbb{R}^4}(\Delta u \Delta \varphi+\Delta v\Delta \psi)dx
-\int_{\mathbb{R}^4}(I_\mu*F(u,v)) F_u(u,v)\varphi dx-\int_{\mathbb{R}^4}(I_\mu*F(u,v))F_v(u,v)\psi dx,
\end{align*}
for any $(u,v), (\varphi,\psi)\in \mathcal{X}$. The very same discussion applies to that $F$ satisfies $(F_1)$ and $F_{z_j}$($j=1,2$) has exponential critical growth at $\infty$.

To understand the geometry of $\mathcal J|_{\mathcal{S}}$, for any $s\in \mathbb{R}$ and $u\in H^2(\mathbb{R}^4)$, we define
\begin{equation*}
  \mathcal{H}(u,s)(x)=e^{2s}u(e^sx).
\end{equation*}
One can easily check that $\| \mathcal{H}(u,s)\|_2=\|u\|_2$ for any $s\in \mathbb{R}$. As a consequence, for any $(u,v)\in \mathcal{S}$, it holds that $\mathcal{H}((u,v),s):=(\mathcal{H}(u,s),\mathcal{H}(v,s))\in \mathcal{S}$ for any $s\in \mathbb{R}$, and
$\mathcal{H}((u,v),s_1+s_2)=\mathcal{H}(\mathcal{H}((u,v),s_1),s_2)=\mathcal{H}(\mathcal{H}((u,v),s_2),s_1)$ for any $s_1,s_2\in \mathbb{R}$.
By Lemma \ref{minimax01} (or Lemma \ref{minimax1}), we find that $\mathcal{J}$ is unbounded from below on  $\mathcal{S}$. It is well known that solutions of \eqref{abs} satisfy the Pohozaev identity
 \begin{align*}
  P(u,v)
  =\int_{\mathbb{R}^4}(|\Delta u |^2+|\Delta v |^2) dx-
  \int_{\mathbb{R}^4}(I_\mu*F(u,v))\mathfrak{F}(u,v)dx,
\end{align*}
where $\mathfrak{F}(z)=z\cdot\nabla F(z)-(2-\frac{\mu}{4})F(z)$.
This enlightens us to consider the minimization on the natural constraint
\begin{equation*}
\mathcal{P}(a,b)=\Big\{(u,v)\in \mathcal{S}:P(u,v)=0\Big\},
\end{equation*}
 i.e.,
\begin{equation}\label{gse}
  E(a,b)=\inf\limits_{(u,v)\in \mathcal{P}(a,b)}\mathcal J(u,v).
\end{equation}
As will be show in Lemma \ref{equi0} (or Lemma \ref{equi}), $\mathcal{P}(a,b)$ is nonempty, and from Lemma \ref{Pohozaev}, we can see that any critical point of $\mathcal{J}|_{\mathcal{S}}$ stays in $\mathcal{P}(a,b)$, thus any critical point $u$ of $\mathcal{J}|_{\mathcal{S}}$ with $\mathcal{J}(u)=E(a,b)$ is a ground state solution of \eqref{abs}.

\begin{lemma}\label{biancon}
Assume that $u_n\rightarrow u$ in $H^2(\mathbb{R}^4)$, and $s_n\rightarrow s$ in $\mathbb{R}$, then $\mathcal{H}(u_n,s_n)\rightarrow \mathcal{H}(u,s)$ in $H^2(\mathbb{R}^4)$ as $n\rightarrow\infty$.
\end{lemma}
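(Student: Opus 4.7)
The plan is to decompose the difference using linearity of $\mathcal{H}(\cdot,s)$ in its first argument,
\begin{equation*}
\mathcal{H}(u_n, s_n) - \mathcal{H}(u,s) = \mathcal{H}(u_n - u,\, s_n) + \bigl(\mathcal{H}(u,s_n) - \mathcal{H}(u,s)\bigr),
\end{equation*}
and to control each piece separately in the $H^2$ norm. The first piece is the easy one: the change of variables $y = e^s x$ yields the scaling identities
\begin{equation*}
\|\mathcal{H}(w,s)\|_2^2 = \|w\|_2^2, \qquad \|\Delta \mathcal{H}(w,s)\|_2^2 = e^{4s}\|\Delta w\|_2^2,
\end{equation*}
and since $s_n \to s$ the factor $e^{4 s_n}$ is uniformly bounded by some constant $M$, so
\begin{equation*}
\|\mathcal{H}(u_n - u, s_n)\|_{H^2}^2 \le (1+M)\,\|u_n - u\|_{H^2}^2 \to 0.
\end{equation*}

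The heart of the argument is the continuity of $s \mapsto \mathcal{H}(u,s)$ in $H^2(\mathbb{R}^4)$, which I will establish by a density argument. For fixed $\varepsilon > 0$, choose $\phi \in C_c^\infty(\mathbb{R}^4)$ with $\|u-\phi\|_{H^2} < \varepsilon$ and write
\begin{equation*}
\mathcal{H}(u,s_n) - \mathcal{H}(u,s) = \mathcal{H}(u-\phi, s_n) + \bigl(\mathcal{H}(\phi,s_n) - \mathcal{H}(\phi,s)\bigr) - \mathcal{H}(u-\phi, s).
\end{equation*}
The scaling identities above bound the first and third terms by $\sqrt{1+M}\,\varepsilon$ each. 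For the middle term, the smoothness and compact support of $\phi$ make $e^{2 s_n}\phi(e^{s_n}x) \to e^{2 s}\phi(e^s x)$ and $e^{4 s_n}\Delta\phi(e^{s_n}x) \to e^{4 s}\Delta\phi(e^s x)$ pointwise, with all supports contained in one fixed compact ball and the integrands uniformly bounded, so the Lebesgue dominated convergence theorem gives $\|\mathcal{H}(\phi,s_n) - \mathcal{H}(\phi,s)\|_{H^2} \to 0$. Letting first $n \to \infty$ and then $\varepsilon \to 0$ yields the desired continuity.

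Combining the two estimates via the triangle inequality finishes the proof. The only mild obstacle is that $u$ need not be continuous, so one cannot pass pointwise limits on $u(e^{s_n}x)$ inside the $L^2$ norm directly; the density argument based on the exact invariance of the $L^2$ norm and the boundedness of the dilation factor $e^{4 s_n}$ is what makes this work.
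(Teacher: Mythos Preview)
Your proof is correct. The paper itself does not give a proof of this lemma but refers to Lemma~2.6 in \cite{CW1}; your argument---splitting via linearity into $\mathcal{H}(u_n-u,s_n)$ plus $\mathcal{H}(u,s_n)-\mathcal{H}(u,s)$, then handling the second piece by density of $C_c^\infty$ and dominated convergence---is a standard and entirely adequate route, matching what one would expect in the cited reference.
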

\begin{proof}
The proof can be seen in Lemma $2.6$ in \cite{CW1}, we omit it here.
\end{proof}

\begin{lemma}\label{Pohozaev}
If $(u,v)\in \mathcal{X}$ is a critical point of $\mathcal J|_{\mathcal{S}}$, then $(u,v)\in \mathcal{P}(a,b)$.
\end{lemma}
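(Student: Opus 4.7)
The plan is to test the criticality of $(u,v)$ against the one-parameter family of mass-preserving dilations $\mathcal{H}((u,v),s)$, rather than working with the Pohozaev differential identity via Lagrange multipliers. Since $\|\mathcal{H}(u,s)\|_2=\|u\|_2$ and $\|\mathcal{H}(v,s)\|_2=\|v\|_2$ for every $s\in\mathbb{R}$, the curve $s\mapsto\mathcal{H}((u,v),s)$ stays inside $\mathcal{S}$, and by Lemma \ref{biancon} it is of class $C^1$ into $\mathcal{X}$. As $(u,v)$ is a critical point of $\mathcal{J}|_{\mathcal{S}}$, its tangent derivative $\partial_s\mathcal{H}((u,v),s)|_{s=0}$ lies in $T_{(u,v)}\mathcal{S}$ and is therefore annihilated by $\mathcal{J}'(u,v)$, so
\[
\frac{d}{ds}\mathcal{J}(\mathcal{H}((u,v),s))\Big|_{s=0}=0.
\]

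The next step is to rewrite $\mathcal{J}(\mathcal{H}((u,v),s))$ in closed form in $s$. A direct computation yields $\|\Delta\mathcal{H}(u,s)\|_2^2=e^{4s}\|\Delta u\|_2^2$. For the Choquard term, the substitution $x'=e^sx$, $y'=e^sy$ in
\[
\int_{\mathbb{R}^4}\!\!\int_{\mathbb{R}^4}\frac{F(\mathcal{H}(u,s),\mathcal{H}(v,s))(x)\,F(\mathcal{H}(u,s),\mathcal{H}(v,s))(y)}{|x-y|^\mu}\,dx\,dy
\]
absorbs the Jacobian factor $e^{-8s}$ and produces the factor $e^{s\mu}$ from $|x-y|^{-\mu}$, so the term equals $e^{-(8-\mu)s}\iint\frac{F(e^{2s}u,e^{2s}v)(x')F(e^{2s}u,e^{2s}v)(y')}{|x'-y'|^\mu}\,dx'dy'$. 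Differentiating at $s=0$, using $\frac{d}{ds}F(e^{2s}z)|_{s=0}=2z\cdot\nabla F(z)$ together with the symmetry in $x',y'$, and combining with the kinetic contribution, I obtain
\[
\frac{d}{ds}\mathcal{J}(\mathcal{H}((u,v),s))\Big|_{s=0}=2\int_{\mathbb{R}^4}(|\Delta u|^2+|\Delta v|^2)\,dx-2\int_{\mathbb{R}^4}(I_\mu\ast F(u,v))\,\mathfrak{F}(u,v)\,dx=2P(u,v),
\]
where $\mathfrak{F}(z)=z\cdot\nabla F(z)-(2-\tfrac{\mu}{4})F(z)$. Setting this expression to zero gives $P(u,v)=0$, i.e.\ $(u,v)\in\mathcal{P}(a,b)$.

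The main technical obstacle lies in justifying the interchange of $\frac{d}{ds}$ with the double integral in the Choquard term. This calls for a dominated convergence argument with a majorant uniform in $s$ on a neighbourhood of $0$. The growth bound \eqref{fcondition02}, combined with the Hardy--Littlewood--Sobolev inequality (Proposition \ref{HLS}) and the vector-valued Adams inequality (Lemma \ref{modadams}), provides integrable envelopes for both $F(e^{2s}u,e^{2s}v)$ and its $s$-derivative $2(e^{2s}u,e^{2s}v)\cdot\nabla F(e^{2s}u,e^{2s}v)$ on any compact range of $s$. Once these uniform estimates are in place, dominated convergence legitimises the differentiation under the integral sign, and the Pohozaev identity $P(u,v)=0$ follows.
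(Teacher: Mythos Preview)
Your approach has a genuine gap at the step where you invoke the chain rule. You write that the curve $s\mapsto\mathcal{H}((u,v),s)$ is $C^1$ into $\mathcal{X}$ ``by Lemma~\ref{biancon}'', but that lemma only asserts joint \emph{continuity} of $\mathcal{H}$, not differentiability. In fact the formal tangent vector at $s=0$ is
\[
\partial_s\mathcal{H}((u,v),s)\big|_{s=0}=\big(2u+x\cdot\nabla u,\ 2v+x\cdot\nabla v\big),
\]
and for a generic $(u,v)\in H^2(\mathbb{R}^4)\times H^2(\mathbb{R}^4)$ the functions $x\cdot\nabla u$, $x\cdot\nabla v$ need not belong to $H^2(\mathbb{R}^4)$. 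Hence the curve is in general not $C^1$ in $\mathcal{X}$, the tangent vector does not lie in $T_{(u,v)}\mathcal{S}\subset\mathcal{X}$, and you cannot conclude $\frac{d}{ds}\mathcal{J}(\mathcal{H}((u,v),s))|_{s=0}=0$ from constrained criticality via the chain rule. Your dominated-convergence argument does justify the identity $\frac{d}{ds}\mathcal{J}(\mathcal{H}((u,v),s))|_{s=0}=2P(u,v)$ as a computation of a real-variable derivative, but it does not supply the missing link to $\mathcal{J}'|_{\mathcal{S}}(u,v)=0$.

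The paper circumvents exactly this regularity obstruction by first passing to the Lagrange-multiplier form of the Euler--Lagrange equation and then testing it with the \emph{truncated} variations $\widetilde{u}_\rho(x)=\varphi(\rho x)\,x\cdot\nabla u(x)$ and $\widetilde{v}_\rho(x)=\varphi(\rho x)\,x\cdot\nabla v(x)$, which are compactly supported and therefore admissible test functions. Letting $\rho\to0$ and computing the limits of each term (this is where the careful integration by parts on the convolution is needed) yields the full Pohozaev relation, which combined with the Nehari-type identity obtained by testing with $(u,v)$ eliminates the multipliers and gives $P(u,v)=0$. To salvage your dilation argument you would first need elliptic regularity for solutions of the Euler--Lagrange system sufficient to place $x\cdot\nabla u$, $x\cdot\nabla v$ in $H^2$; absent that, the cut-off strategy is the standard and necessary device.
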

\begin{proof}
If $(u,v)\in \mathcal{X}$ is a critical point of $\mathcal J|_{\mathcal{S}}$,
%{\color{blue}
there exist $\lambda_1,\lambda_2\in \mathbb{R}$ such that
\begin{align}\label{critical}
&\int_{\mathbb{R}^4}(\Delta u \Delta \varphi+\Delta v\Delta \psi)dx-\int_{\mathbb{R}^4}(I_\mu*F(u,v)) F_{u}(u,v)\varphi dx-\int_{\mathbb{R}^4}(I_\mu*F(u,v))F_{v}(u,v)\psi dx=\int_{\mathbb{R}^4}(\lambda_{1}u\varphi+\lambda_{2}v\psi)dx.%{\color{red}=0
\end{align}
Testing \eqref{critical} with $(\varphi,\psi)=(u,v)$,
 we have
\begin{align}\label{po2}
 \int_{\mathbb{R}^4}(|\Delta u |^2+|\Delta v |^2) dx
=\int_{\mathbb{R}^4} (\lambda _1|u|^2+\lambda _2|v|^2) dx+\int_{\mathbb{R}^4} (I_\mu*F(u,v))[(u,v)\cdot\nabla F(u,v)]dx.
\end{align}
On the other hand, consider a cut-off function $\varphi\in C_0^\infty(\mathbb{R}^4,[0,1])$ such that $\varphi(x)=1$ if $|x|\leq1$, $\varphi(x)=0$ if $|x|\geq2$.
For any fixed $\rho>0$, set $(\widetilde{u}_\rho(x),0)=(\varphi(\rho x)x\cdot\nabla u(x),0)$ and $(0,\widetilde{v}_\rho(x))=(0,\varphi(\rho x)x\cdot\nabla v(x))$ as test functions for \eqref{critical}, we obtain
\begin{equation}\label{test1}
  \int_{\mathbb{R}^4}\Delta u \Delta\widetilde{u}_\rho dx=\lambda_1 \int_{\mathbb{R}^4} u\widetilde{u}_\rho dx+\int_{\mathbb{R}^4} (I_\mu*F(u,v))F_u(u,v)\widetilde{u}_\rho dx
\end{equation}
and
\begin{equation}\label{test2}
  \int_{\mathbb{R}^4}\Delta v \Delta\widetilde{v}_\rho dx=\lambda_2 \int_{\mathbb{R}^4} v\widetilde{v}_\rho dx+\int_{\mathbb{R}^4} (I_\mu*F(u,v))F_v(u,v)\widetilde{v}_\rho dx.
\end{equation}
Moreover,
\begin{equation*}
  \int_{\mathbb{R}^4} (I_\mu*F(u,v))F_u(u,v)\widetilde{u}_\rho dx+\int_{\mathbb{R}^4} (I_\mu*F(u,v))F_v(u,v)\widetilde{v}_\rho dx=\int_{\mathbb{R}^4} (I_\mu*F(u,v))\varphi(\rho x)[x\cdot\nabla _xF(u,v)]dx.
\end{equation*}
By \cite{MS1}, we know
\begin{equation*}
  \lim_{\rho\rightarrow0}\int_{\mathbb{R}^4} u\widetilde{u}_\rho dx=-2\int_{\mathbb{R}^4}|u|^2 dx,\quad\lim_{\rho\rightarrow0}\int_{\mathbb{R}^4} v\widetilde{v}_\rho dx=-2\int_{\mathbb{R}^4} |v|^2 dx,
\end{equation*}
and a direct computation shows that
\begin{equation*}
  \lim_{\rho\rightarrow0}\int_{\mathbb{R}^4}\Delta u \Delta\widetilde{u}_\rho dx=0,\quad \lim_{\rho\rightarrow0}\int_{\mathbb{R}^4}\Delta v \Delta\widetilde{v}_\rho dx=0.
\end{equation*}
We claim that
\begin{equation*}
  \lim_{\rho\rightarrow0}\int_{\mathbb{R}^4} (I_\mu*F(u,v))\varphi(\rho x)[x\cdot\nabla _xF(u,v)]dx=-(4-\frac{\mu}{2})\int_{\mathbb{R}^4} (I_\mu*F(u,v))F(u,v)dx.
\end{equation*}
Indeed, integrating by parts, we find that
\begin{align*}
                   &\int_{\mathbb{R}^4} (I_\mu*F(u,v))\varphi(\rho x)[x\cdot\nabla _xF(u,v)]dx\\
                   =&\int_{\mathbb{R}^4}\int_{\mathbb{R}^4} |x-y|^{-\mu}F(u(y),v(y))\varphi(\rho x)[x\cdot\nabla _xF(u(x),v(x))]dxdy\\
                   =&\frac{1}{2}\int_{\mathbb{R}^4}\int_{\mathbb{R}^4}\Big( |x-y|^{-\mu}F(u(y),v(y))\varphi(\rho x)[x\cdot\nabla _x F(u(x),v(x))]\\&+|x-y|^{-\mu}F(u(x),v(x))\varphi(\rho y)[y\cdot\nabla_y F(u(y),v(y))]\Big)dxdy\\
                   =&-\frac{1}{2}\int_{\mathbb{R}^4}\int_{\mathbb{R}^4}\Big(-\mu|x-y|^{-\mu}F(u(y),v(y)) F(u(x),v(x))\frac{x(x-y)\varphi(\rho x)}{|x-y|^2}\\
                   &-\mu|x-y|^{-\mu}F(u(x),v(x)) F(u(y),v(y))\frac{-y(x-y)\varphi(\rho y)}{|x-y|^2}\\
                   &+\rho |x-y|^{-\mu}F(u(y),v(y)) [x\cdot\nabla_x\varphi(\rho x)]F(u(x),v(x))\\&+\rho|x-y|^{-\mu}F(u(x),v(x)) [y\cdot\nabla_y\varphi(\rho y)]F(u(y),v(y))\\
                   &+4|x-y|^{-\mu}F(u(y),v(y))\varphi(\rho x)F(u(x),v(x))\\&+4|x-y|^{-\mu}F(u(x),v(x))\varphi(\rho y)F(u(y),v(y))\Big)dxdy.
                 \end{align*}
Using the Lebesgue dominated convergence theorem, we prove the claim. %{\color{blue}Using the Lebesgue dominated convergence theorem, we prove the claim}{\color{red} How to get? Write the detail}.

Adding \eqref{test1} and \eqref{test2}, we obtain
\begin{equation*}\label{po1}
  \int_{\mathbb{R}^4}( \lambda_1|u|^2+\lambda_2|v|^2) dx+(2-\frac{\mu}{4}) \int_{\mathbb{R}^4} (I_\mu*F(u,v))F(u,v)dx=0.
\end{equation*}
This together with \eqref{po2} yields that \begin{align*}
  &\int_{\mathbb{R}^4}(|\Delta u |^2+|\Delta v |^2)dx-\int_{\mathbb{R}^4} (I_\mu*F(u,v))\mathfrak{F}(u,v) dx=0,
\end{align*}
where $\mathfrak{F}(u,v)=(u,v)\cdot\nabla F(u,v)-(2-\frac{\mu}{4})F(u,v)$. Thus, $(u,v)\in \mathcal{P}(a,b)$.
\end{proof}

 \section{Exponential subcritical case}\label{sub}
In this section, we will study problem \eqref{abs} in the exponential subcritical case, and establish the existence of ground state solutions.

\begin{lemma}\label{strong0}
Assume that $F$ satisfies $(F_1)$, $(F_2)$ and $F_{z_j}$($j=1,2$) has exponential subcritical growth at $\infty$, let $\{(u_n,v_n)\}\subset\mathcal{S}$ be a bounded sequence in $\mathcal{X}$,
up to a subsequence, if $(u_n,v_n)\rightharpoonup (u,v)$ in $\mathcal{X}$, for any $\varphi\in C_0^\infty(\mathbb{R}^4)$, we have
\begin{equation}\label{strong01}
  \int_{\mathbb{R}^4}\Delta u_n\Delta\varphi dx \rightarrow  \int_{\mathbb{R}^4}\Delta u\Delta\varphi dx,\quad \int_{\mathbb{R}^4}\Delta v_n\Delta\varphi dx \rightarrow  \int_{\mathbb{R}^4}\Delta v\Delta\varphi dx,\,\,\,\text{as}\,\,\, n\rightarrow\infty,
\end{equation}
\begin{equation}\label{strong03}
  \int_{\mathbb{R}^4}u_n\varphi dx \rightarrow  \int_{\mathbb{R}^4}u\varphi dx, \quad\int_{\mathbb{R}^4}v_n\varphi dx \rightarrow  \int_{\mathbb{R}^4}v\varphi dx,\,\,\,\text{as}\,\,\, n\rightarrow\infty,
\end{equation}
and
\begin{equation}\label{strong04}
  \int_{\mathbb{R}^4}(I_\mu*F(u_n,v_n))F_{u_n} (u_n,v_n)\varphi dx\rightarrow \int_{\mathbb{R}^4}(I_\mu*F(u,v))F_{u} (u,v)\varphi dx,\,\,\,\text{as}\,\,\, n\rightarrow\infty,
\end{equation}
\begin{equation}\label{strong05}
  \int_{\mathbb{R}^4}(I_\mu*F(u_n,v_n))F_{v_n} (u_n,v_n)\varphi dx\rightarrow \int_{\mathbb{R}^4}(I_\mu*F(u,v)) F_{v} (u,v)\varphi dx,\,\,\,\text{as}\,\,\, n\rightarrow\infty.
\end{equation}
\end{lemma}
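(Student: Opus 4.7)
The plan is to reduce the lemma to two facts: (i) weak convergence in $\mathcal{X}$ is equivalent to componentwise weak convergence in $H^2(\mathbb{R}^4)$, which instantly gives (\ref{strong01}) and (\ref{strong03}); and (ii) HLS in the form of Proposition \ref{HLS}, combined with the growth estimate (\ref{fcondition02}) and the Adams inequality for vector functions (Lemma \ref{modadams}), controls the Choquard-type integrals and yields (\ref{strong04})--(\ref{strong05}).

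For (\ref{strong01}) and (\ref{strong03}) I would just test the weak convergence in $\mathcal{X}$ against $(\varphi,0)$ and $(0,\varphi)$: since $\Delta:H^2(\mathbb{R}^4)\to L^2(\mathbb{R}^4)$ is continuous and $\Delta\varphi\in L^2(\mathbb{R}^4)$, the componentwise weak convergence $u_n\weakto u$, $v_n\weakto v$ in $H^2(\mathbb{R}^4)$ gives both limits for free. These steps are essentially free.

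The substantive work is (\ref{strong04})--(\ref{strong05}); I describe only (\ref{strong04}), the other being identical. First, by the compact embedding $\mathcal{X}\hookrightarrow\hookrightarrow L^q_{Loc}(\mathbb{R}^4,\mathbb{R}^2)$ in (\ref{embedding}), after extracting a subsequence $(u_n,v_n)\to(u,v)$ a.e.\ in $\mathbb{R}^4$ and in every $L^q_{Loc}$. Next I fix the balanced HLS exponents $r=t=\frac{8}{8-\mu}$, so that $\frac{1}{r}+\frac{\mu}{4}+\frac{1}{t}=2$. Using $(F_1)$, $(F_2)$ and the subcritical growth of $F_u$,
\begin{equation*}
|F(z)|\le \xi|z|^{\tau+1}+C_\xi|z|^{q+1}(e^{\alpha|z|^2}-1),\qquad |F_u(z)|\le \xi|z|^\tau+C_\xi|z|^q(e^{\alpha|z|^2}-1),
\end{equation*}
for any $\alpha>0$. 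The polynomial parts are harmless by the Sobolev embedding: $\tau>2-\mu/4$ forces $r(\tau+1)>2$. The exponential parts are controlled by Lemma \ref{modadams} after Hölder-splitting a factor $p_2$ off of $(e^{\alpha|\cdot|^2}-1)^r$ via $(e^y-1)^r\le e^{ry}-1$ and taking $\alpha$ small (permitted by subcriticality). This yields a uniform bound $\|F(u_n,v_n)\|_r\le C$, whence $F(u_n,v_n)\weakto F(u,v)$ in $L^r(\mathbb{R}^4)$ by Lemma \ref{weakcon}. For the test function, since $\mathrm{supp}\,\varphi$ is compact the same Adams-based uniform integrability lets Vitali's theorem apply, giving the strong convergence $F_u(u_n,v_n)\varphi\to F_u(u,v)\varphi$ in $L^t(\mathbb{R}^4)$.

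To close I split
\begin{align*}
&\int_{\mathbb{R}^4}(I_\mu*F(u_n,v_n))F_u(u_n,v_n)\varphi\,dx-\int_{\mathbb{R}^4}(I_\mu*F(u,v))F_u(u,v)\varphi\,dx\\
&\quad=\int_{\mathbb{R}^4}(I_\mu*F(u_n,v_n))\bigl(F_u(u_n,v_n)-F_u(u,v)\bigr)\varphi\,dx\\
&\qquad+\int_{\mathbb{R}^4}\bigl(F(u_n,v_n)-F(u,v)\bigr)\bigl(I_\mu*(F_u(u,v)\varphi)\bigr)\,dx,
\end{align*}
where Fubini (and the evenness of $I_\mu$) is used in the second integral. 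The first integral tends to $0$ by Proposition \ref{HLS}: $\|F(u_n,v_n)\|_r$ is bounded and $\|(F_u(u_n,v_n)-F_u(u,v))\varphi\|_t\to0$. The second integral tends to $0$ because $I_\mu*(F_u(u,v)\varphi)\in L^{r'}(\mathbb{R}^4)$ by the duality form of Proposition \ref{HLS}, tested against $F(u_n,v_n)-F(u,v)\weakto 0$ in $L^r(\mathbb{R}^4)$. The main technical obstacle is the Hölder--Adams bookkeeping in Step B, where the subcritical hypothesis is crucial: it allows $\alpha$ to be chosen arbitrarily small so that $e^{\alpha|(u_n,v_n)|^2}-1$ lies uniformly in every $L^s$; once the exponents are fixed, the remaining arguments are routine.
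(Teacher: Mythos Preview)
Your proof is correct, but it follows a genuinely different route from the paper for the nonlinear convergences \eqref{strong04}--\eqref{strong05}. The paper does \emph{not} use the balanced HLS exponent $r=t=\tfrac{8}{8-\mu}$ and a weak--strong splitting. Instead it first obtains a uniform $L^\infty$ bound on the convolution itself, namely $\|I_\mu* F(u_n,v_n)\|_\infty\le C\|F(u_n,v_n)\|_{4/(4-\mu)}\le C$, by invoking the endpoint Riesz estimate cited from \cite{CW}. With this $L^\infty$ bound in hand, the integrand $(I_\mu*F(u_n,v_n))F_{u_n}(u_n,v_n)\varphi$ is dominated pointwise by $C|(u_n,v_n)|^\tau|\varphi|+C|(u_n,v_n)|^q|\varphi|(e^{\alpha|(u_n,v_n)|^2}-1)$ on the compact set $\mathrm{supp}\,\varphi$, and the paper concludes by a variant of dominated convergence, after showing the dominating sequence itself converges in $L^1$ (using $|(u_n,v_n)|^\tau\to|(u,v)|^\tau$ in $L^2_{Loc}$, $|(u_n,v_n)|^q\to|(u,v)|^q$ in $L^{t'}_{Loc}$, and the weak convergence $(e^{\alpha|(u_n,v_n)|^2}-1)\rightharpoonup(e^{\alpha|(u,v)|^2}-1)$ in $L^t$ from Lemma~\ref{weakcon}).

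Your approach trades the endpoint $L^{4/(4-\mu)}\to L^\infty$ estimate for the interior mapping property $I_\mu*:L^{8/(8-\mu)}\to L^{8/\mu}$, together with Fubini and the pairing $\langle F(u_n,v_n)-F(u,v),\,I_\mu*(F_u(u,v)\varphi)\rangle_{L^r,L^{r'}}$. This is arguably cleaner and more self-contained, since it relies only on the standard bilinear HLS inequality (Proposition~\ref{HLS}) and its linear counterpart, rather than on an external endpoint result; it is also a template that recurs throughout the Choquard literature. The paper's route, on the other hand, localizes everything to $\mathrm{supp}\,\varphi$ immediately via the $L^\infty$ bound, so the subsequent analysis is entirely scalar dominated convergence with no operator duality needed.
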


\begin{proof}
For any fixed $v\in H^2(\mathbb{R}^4)$, define
\begin{equation*}
  f_v(u):=\int_{\mathbb{R}^4}\Delta u\Delta v dx,\quad h_v(u):=\int_{\mathbb{R}^4}u v dx,\quad \text{for every $u \in H^2(\mathbb{R}^4)$}.
\end{equation*}
Then, by the H\"{o}lder inequality, we have
\begin{equation*}
  |f_v(u)|,|h_v(u)|\leq\|v\|\|u\|,
\end{equation*}
this yields that $f_v$ and $h_v$ are continuous linear functionals on $H^2(\mathbb{R}^4)$. Thus, by $u_n\rightharpoonup u$,  $v_n\rightharpoonup v$ in $H^2(\mathbb{R}^4)$, and $C_0^\infty(\mathbb{R}^4)$ is dense in $H^2(\mathbb{R}^4)$ , we have that \eqref{strong01} and \eqref{strong03} hold.
Next, we prove \eqref{strong04}-\eqref{strong05}.
Since $\{(u_n,v_n)\}$ is bounded in $\mathcal{X}$, we can fix $\alpha>0$ close to $0$ and $t>1$ close to $1$ such that
 \begin{equation*}
 \sup\limits_{n\in \mathbb{N}^+}\frac{4\alpha t\|\Delta(u_n,v_n)\|_2^2}{4-\mu}\leq 32\pi^2.
\end{equation*}
Using Lemma \ref{modadams} and Young's inequality, it yields that
\begin{equation*}
 \sup\limits_{n\in \mathbb{N}^+} \int_{\mathbb{R}^4}(e^{\alpha |(u_n,v_n)|^2}-1)^tdx\leq \sup\limits_{n\in \mathbb{N}^+} \int_{\mathbb{R}^4}(e^{\alpha t\|\Delta(u_n,v_n)\|_2^2( \frac{|(u_n,v_n)|}{\|\Delta(u_n,v_n)\|_2})^2}-1)dx\leq C.
\end{equation*}
Therefore, $(e^{\alpha |(u_n,v_n)|^2}-1)$ is uniformly bounded in $L^t(\mathbb{R}^4)$. Since $(e^{\alpha |(u_n,v_n)|^2}-1)\rightarrow (e^{\alpha |(u,v)|^2}-1)$ a.e. in $\mathbb{R}^4$, by Lemma \ref{weakcon}, we obtain $(e^{\alpha |(u_n,v_n)|^2}-1)\rightharpoonup (e^{\alpha |(u,v)|^2}-1)$ in $L^t(\mathbb{R}^4)$.

By corollary $2.1$ of \cite{CW}, we know
\begin{equation*}
  \|I_\mu\ast\varphi\|_{\frac{4s}{4-(4-\mu) s}}\leq C
  \|\varphi\|_s
\end{equation*}
for any $\varphi\in L^s(\mathbb{R}^4)$ and $s\in(1,\frac{4}{4-\mu})$.
Hence, let $s\rightarrow \frac{4}{4-\mu}$, then $\frac{4s}{4-(4-\mu)s}\rightarrow\infty$ and
\begin{equation*}
  \|I_\mu\ast F(u_n,v_n)\|_\infty\leq C\|F(u_n,v_n)\|_{\frac{4}{4-\mu}}\quad\text{for all $n\in \mathbb{N}^+$},
\end{equation*}
and by \eqref{fcondition02},
\begin{equation*}
  |F(u_n,v_n)|\leq \xi|(u_n,v_n)|^{\tau+1}+C_\xi|(u_n,v_n)|^{q+1}(e^{\alpha |(u_n,v_n)|^2}-1)\quad\text{for all $n\in \mathbb{N}^+$}.
\end{equation*}
For $t'=\frac{t}{t-1}$, using Lemma \ref{modadams}, the H\"{o}lder inequality, and the Sobolev inequality, we have
\begin{align*}
  \|F(u_n,v_n)\|_{\frac{4}{4-\mu}}&\leq\bigg(\int_{\mathbb{R}^4}\Big(\xi|(u_n,v_n)|^{\tau+1}+C_\xi|(u_n,v_n)|^{q+1}(e^{\alpha |(u_n,v_n)|^2}-1)\Big)^{\frac{4}{4-\mu}}dx\bigg)^{\frac{4-\mu}{4}}\nonumber\\
  &\leq\bigg(\int_{\mathbb{R}^4}\Big(C|(u_n,v_n)|^{\frac{4(\tau+1)}{4-\mu}}+C|(u_n,v_n)|^{\frac{4(q+1)}{4-\mu}}(e^{\frac{4\alpha|(u_n,v_n)|^2}{4-\mu}}-1)\Big)dx\bigg)^{\frac{4-\mu}{4}}\nonumber\\
  &\leq C\|(u_n,v_n)\|_{\frac{4(\tau+1)}{4-\mu}}^{\tau+1}+C\bigg(\int_{\mathbb{R}^4}|(u_n,v_n)|^{\frac{4(q+1)}{4-\mu}}(e^{\frac{4\alpha |(u_n,v_n)|^2}{4-\mu}}-1)dx\bigg)^{\frac{4-\mu}{4}}\nonumber\\
  &\leq C\|(u_n,v_n)\|_{\frac{4(\tau+1)}{4-\mu}}^{\tau+1}+C\Big(\int_{\mathbb{R}^4}|(u_n,v_n)|^{\frac{4(q+1)t'}{4-\mu}}dx\Big)^{\frac{4-\mu}{4t'}}\Big(\int_{\mathbb{R}^4}(e^{\frac{4\alpha t|(u_n,v_n)|^2}{4-\mu}}-1)dx\Big)^{\frac{4-\mu}{4t}}\nonumber\\
  &\leq C\|(u_n,v_n)\|_{\frac{4(\tau+1)}{4-\mu}}^{\tau+1}+C\|(u_n,v_n)\|_{\frac{4(q+1)t'}{4-\mu}}^{q+1}\Big(\int_{\mathbb{R}^4}(e^{\frac{4\alpha t\|\Delta (u_n,v_n)\|_2^2 }{4-\mu}(\frac{|(u_n,v_n)|}{\|\Delta (u_n,v_n)\|_2})^2}-1)dx\Big)^{\frac{4-\mu}{4t}}\nonumber\\
  \\& \leq C\|(u_n,v_n)\|^{\tau+1}+C\|(u_n,v_n)\|^{q+1}\leq C,
\end{align*}
where we have used the fact that $\tau>2-\frac{\mu}{4}>1$, $q>2$, and $t'$ large enough.

Hence, for any $\varphi\in C_0^\infty(\mathbb{R}^4)$, we have
\begin{equation*}
  (I_\mu*F(u_n,v_n))F_{u_n}(u_n,v_n)\varphi\rightarrow (I_\mu*F(u,v))F_u(u,v)\varphi\quad \text{a.e. in $\mathbb{R}^4$},
\end{equation*}
\begin{equation*}
  |(I_\mu*F(u_n,v_n))F_{u_n}(u_n,v_n)\varphi|\leq C|F_{u_n}(u_n,v_n)||\varphi|\leq C|(u_n,v_n)|^{\tau}|\varphi|+C|(u_n,v_n)|^{q}|\varphi|(e^{\alpha |(u_n,v_n)|^2}-1)
\end{equation*}
for all $n\in \mathbb{N}^+$, and
\begin{equation*}
  |(u_n,v_n)|^{\tau}|\varphi|+|(u_n,v_n)|^{q}|\varphi|(e^{\alpha |(u_n,v_n)|^2}-1)\rightarrow |(u,v)|^{\tau}|\varphi|+|(u,v)|^{q}|\varphi|(e^{\alpha |(u,v)|^2}-1)\quad\text{a.e. in $\mathbb{R}^4$}.
\end{equation*}
Denote $\Omega:=supp \varphi$. In the following, we prove that
 \begin{equation}\label{strong06}
  \int_{\Omega}|(u_n,v_n)|^{\tau}|\varphi| dx\rightarrow \int_{\Omega}|(u,v)|^{\tau}|\varphi| dx,\,\,\,\text{as}\,\,\, n\rightarrow\infty
\end{equation}
and
\begin{equation}\label{strong07}
  \int_{\Omega}|(u_n,v_n)|^{q}|\varphi|(e^{\alpha |(u_n,v_n)|^2}-1)dx\rightarrow \int_{\Omega}|(u,v)|^{q}|\varphi|(e^{\alpha |(u,v)|^2}-1)dx,\,\,\,\text{as}\,\,\, n\rightarrow\infty.
\end{equation}
Now, we show that
\begin{equation}\label{lp}
  |(u_n,v_n)|^\tau\rightarrow |(u,v)|^\tau \quad\text{in $L^2(\Omega)$}\quad \text{and}\quad |(u_n,v_n)|^q\rightarrow |(u,v)|^q \quad\text{in $L^{t'}(\Omega)$}.
\end{equation}
Since $\tau>2-\frac{\mu}{4}>1$, we have $u_n\rightarrow u$ and $v_n\rightarrow v$ in $L^{2\tau}(\Omega)$, thus $|u_n|^2\rightarrow |u|^2$, $|v_n|^2\rightarrow |v|^2$ in $L^{\tau}(\Omega)$, and
\begin{equation*}
  \||(u_n,v_n)|^2-|(u,v)|^2\|_{L^\tau(\Omega)}=\||u_n|^2-|u|^2+|v_n|^2-|v|^2\|_{L^\tau(\Omega)}\leq \||u_n|^2-|u|^2\|_{L^\tau(\Omega)}+\||v_n|^2-|v|^2\|_{L^\tau(\Omega)},
\end{equation*}
which implies $|(u_n,v_n)|^2\rightarrow |(u,v)|^2 $ in $L^\tau(\Omega)$, and we have $|(u_n,v_n)|^\tau\rightarrow |(u,v)|^\tau $ in $L^2(\Omega)$. Similarly, we can prove that $|(u_n,v_n)|^q\rightarrow |(u,v)|^q$ in $L^{t'}(\Omega)$.
By the definition of weak convergence, we obtain \eqref{strong06}, and
\begin{align*}
  &\Big|\int_{\Omega}|(u_n,v_n)|^{q}|\varphi|(e^{\alpha |(u_n,v_n)|^2}-1)dx-\int_{\Omega}|(u,v)|^{q}|\varphi|(e^{\alpha |(u,v)|^2}-1)dx\Big|\\
  &\leq\|\varphi\|_\infty\int_{\Omega}\Big||(u_n,v_n)|^{q}-|(u,v)|^{q}\Big|(e^{\alpha |(u_n,v_n)|^2}-1)dx\\&\quad+\|\varphi\|_\infty\int_{\Omega}|(u,v)|^{q}\Big|(e^{\alpha |(u_n,v_n)|^2}-1)-(e^{\alpha |(u,v)|^2}-1)\Big|dx\\
  &\leq \|\varphi\|_\infty \Big(\int_{\Omega}\Big||(u_n,v_n)|^{q}-|(u,v)|^{q}\Big|^{t'}dx\Big)^{\frac{1}{t'}}\Big(\int_{\Omega}(e^{\alpha |(u_n,v_n)|^2}-1)^tdx\Big)^{\frac{1}{t}}\\
  &\quad+\|\varphi\|_\infty\int_{\Omega}|(u,v)|^{q}\Big|(e^{\alpha |(u_n,v_n)|^2}-1)-(e^{\alpha |(u,v)|^2}-1)\Big|dx\rightarrow 0,\,\,\,\text{as}\,\,\, n\rightarrow\infty.
\end{align*}
Applying a variant of the Lebesgue dominated convergence theorem, we obtain \eqref{strong04}. Similar argument shows that  \eqref{strong05} holds.
\end{proof}

\subsection{The behaviour of $E(a,b)$}

The goal of this subsection is to characterize the behaviour of $E(a,b)$ as $a,b>0$ vary. In particular, we will prove the continuity of the function $(a,b)\mapsto E_r(a,b)$, the monotonicity of the functions $a\mapsto E(a,b)$ and $b\mapsto E(a,b)$, and the limit behaviour of $E(a,b)$ as $(a,b)\rightarrow(0,0)$, where $E_r(a,b)$ is the radial ground state energy defined in \eqref{rgse}.
\begin{lemma}\label{minimax01}
Assume that $F$ satisfies $(F_1)$, $(F_2)$ and $F_{z_j}$($j=1,2$) has exponential subcritical growth at $\infty$, then for any fixed $(u,v)\in \mathcal{S}$, we have

$(i)$ $\mathcal{J}(\mathcal{H}((u,v),s))\rightarrow0^+$ as $s\rightarrow-\infty$;

$(ii)$ $\mathcal{J}(\mathcal{H}((u,v),s))\rightarrow-\infty$ as $s\rightarrow+\infty$.
\end{lemma}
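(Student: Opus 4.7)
The plan is to track how the rescaling $\mathcal H(\cdot,s)$ acts on each piece of $\mathcal J$ and then reduce both limits to a comparison of exponents. A direct change of variables gives
\begin{equation*}
\|\mathcal H(u,s)\|_2=\|u\|_2,\qquad \|\Delta\mathcal H(u,s)\|_2^2=e^{4s}\|\Delta u\|_2^2,
\end{equation*}
while substituting $x=e^{-s}x'$, $y=e^{-s}y'$ in the Choquard double integral factors out a pure power of $e^s$, yielding
\begin{equation*}
\mathcal J(\mathcal H((u,v),s))=\tfrac{1}{2}e^{4s}\|\Delta(u,v)\|_2^2-\tfrac{1}{2}e^{s(\mu-8)}\int_{\mathbb R^4}\bigl(I_\mu*F(e^{2s}(u,v))\bigr)F(e^{2s}(u,v))\,dx.
\end{equation*}
Both claims then reduce to comparing the kinetic exponent $4$ with the effective exponent of the nonlocal term.

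For $(i)$, as $s\to-\infty$ the positive kinetic part decays like $e^{4s}\to 0^+$. To handle the nonlocal piece I apply the Hardy--Littlewood--Sobolev inequality (Proposition \ref{HLS}) with $r=t=\tfrac{8}{8-\mu}$, reducing it to $\|F(e^{2s}(u,v))\|_r^2$. Using the pointwise estimate \eqref{fcondition02} with a fixed small $\alpha>0$ and noting that, for $s\le 0$, $e^{\alpha e^{4s}|(u,v)|^2}-1\le e^{\alpha|(u,v)|^2}-1$, the combination of Lemma \ref{modadams}, H\"older, and the Sobolev embedding \eqref{embedding} gives a bound on the $L^r$--norm of the exponential remainder that is uniform in $s\leq 0$. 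Pulling out the factor $e^{2s}$ from the argument of $F$ then shows the nonlocal contribution is of order $e^{s(4\tau+\mu-4)}+e^{s(4q+\mu)}$. Since $\tau>2-\mu/4$ forces $4\tau+\mu-4>4$ and $q>2$, $\mu>0$ force $4q+\mu>4$, both are $o(e^{4s})$ as $s\to-\infty$; hence the kinetic term dominates and $\mathcal J(\mathcal H((u,v),s))\to 0^+$.

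For $(ii)$, I need a matching lower bound on the nonlocal term as $s\to+\infty$. Hypothesis $(F_2)$ forces $F>0$ on $\mathbb R^2\setminus\{(0,0)\}$ and yields the ODE inequality $\tfrac{d}{dt}\log F(tz)\ge \theta/t$, which integrated from $1$ to $t$ gives $F(tz)\ge t^\theta F(z)$ for $t\ge 1$ and $z\neq(0,0)$. Applying this pointwise with $t=e^{2s}$, positivity of the Riesz kernel produces
\begin{equation*}
\int_{\mathbb R^4}\bigl(I_\mu*F(e^{2s}(u,v))\bigr)F(e^{2s}(u,v))\,dx\ge e^{4s\theta}\int_{\mathbb R^4}\bigl(I_\mu*F(u,v)\bigr)F(u,v)\,dx=:C_{(u,v)}\,e^{4s\theta},
\end{equation*}
with $C_{(u,v)}>0$ since $(u,v)\in\mathcal S$ is not identically zero and $F>0$ off the origin. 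Consequently,
\begin{equation*}
\mathcal J(\mathcal H((u,v),s))\le\tfrac{1}{2}e^{4s}\|\Delta(u,v)\|_2^2-\tfrac{1}{2}C_{(u,v)}\,e^{s(4\theta+\mu-8)},
\end{equation*}
and the condition $\theta>3-\mu/4$ forces $4\theta+\mu-8>4$; the negative term dominates and $\mathcal J(\mathcal H((u,v),s))\to-\infty$.

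The only delicate point is the dominated-convergence argument in $(i)$ that produces a uniform-in-$s$ $L^r$-bound on the exponential remainder, and this is exactly what the vectorial Adams inequality of Lemma \ref{modadams} was designed to support. Beyond that, the whole proof is an exponent count, and the thresholds $\tau>2-\mu/4$ and $\theta>3-\mu/4$ hard-coded into $(F_1)$ and $(F_2)$ are precisely what makes these comparisons succeed.
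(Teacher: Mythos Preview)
Your proof is correct and follows the same overall strategy as the paper: scale, compare exponents, and for $(ii)$ integrate the differential inequality coming from $(F_2)$ to get $F(tz)\ge t^\theta F(z)$. The one place you diverge slightly is in $(i)$: the paper applies the Adams inequality directly to $\mathcal H((u,v),s)$ (choosing $\alpha,t$ so that $\tfrac{8\alpha t}{8-\mu}\|\Delta\mathcal H((u,v),s)\|_2^2\le 32\pi^2$ and then using H\"older), whereas you bypass this by the monotonicity $e^{\alpha e^{4s}|(u,v)|^2}-1\le e^{\alpha|(u,v)|^2}-1$ for $s\le 0$, which freezes the exponential factor at a constant depending only on the fixed $(u,v)$ --- a small but genuine simplification. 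One harmless arithmetic slip: with your bookkeeping the second exponent is $e^{s(\mu-8)}\cdot e^{4s(q+1)}=e^{s(4q+\mu-4)}$, not $e^{s(4q+\mu)}$; since $q>2$ still gives $4q+\mu-4>4$, the conclusion stands.
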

\begin{proof} A straightforward calculation shows that for any $q>2$,
\begin{equation*}
\|\mathcal{H}(u,s)\|_2=a,\quad \|\mathcal{H}(v,s)\|_2=b,
\end{equation*}
\begin{equation*}
  \quad \|\Delta \mathcal{H}(u,s)\|_2=e^{2s}\|\Delta u\|_2,\quad \|\Delta \mathcal{H}(v,s)\|_2=e^{2s}\|\Delta v\|_2,
\end{equation*}
and
\begin{equation*}
  \|\mathcal{H}((u,v),s)\|_q=e^{\frac{2(q-2)s}{q}}\|(u,v)\|_q.
\end{equation*}
So there exists $s_1<<0$ such that
\begin{equation*}
  \|\Delta \mathcal{H}((u,v),s)\|_2^2= \|\Delta \mathcal{H}(u,s)\|_2^2+ \|\Delta\mathcal{H}(v,s)\|_2^2\leq C
\end{equation*}
for all $s\leq s_1$.
Fix $\alpha>0$ close to $0$ and $t>1$ close to $1$ such that
\begin{equation*}
\frac{8\alpha t\|\Delta \mathcal{H}((u,v),s)\|_2^2}{8-\mu}\leq 32\pi^2.
\end{equation*}
For $t'=\frac{t}{t-1}$, using Lemma \ref{modadams}, the H\"{o}lder inequality, and the Sobolev inequality, we have
\begin{align}\label{close01}
 &\int_{\mathbb{R}^4}(I_\mu*F(\mathcal{H}((u,v),s)))F(\mathcal{H}((u,v),s))dx\leq \|F(\mathcal{H}((u,v),s))\|_{\frac{8}{8-\mu}}^2\nonumber\\&\leq\bigg(\int_{\mathbb{R}^4}\Big(\xi|\mathcal{H}((u,v),s)|^{\tau+1}+C_\xi|\mathcal{H}((u,v),s)|^{q+1}(e^{\alpha |\mathcal{H}((u,v),s)|^2}-1)\Big)^{\frac{8}{8-\mu}}dx\bigg)^{\frac{8-\mu}{4}}\nonumber\\
  &\leq\bigg(\int_{\mathbb{R}^4}\Big(C|\mathcal{H}((u,v),s)|^{\frac{8(\tau+1)}{8-\mu}}+C|\mathcal{H}((u,v),s)|^{\frac{8(q+1)}{8-\mu}}(e^{\frac{8\alpha |\mathcal{H}((u,v),s)|^2}{8-\mu}}-1)\Big)dx\bigg)^{\frac{8-\mu}{4}}\nonumber\\
  &\leq C\|\mathcal{H}((u,v),s)\|_{\frac{8(\tau+1)}{8-\mu}}^{2(\tau+1)}+C\bigg(\int_{\mathbb{R}^4}|\mathcal{H}((u,v),s)|^{\frac{8(q+1)}{8-\mu}}(e^{\frac{8\alpha |\mathcal{H}((u,v),s)|^2}{8-\mu}}-1)dx\bigg)^{\frac{8-\mu}{4}}\nonumber\\
  &\leq C\|\mathcal{H}((u,v),s)\|_{\frac{8(\tau+1)}{8-\mu}}^{2(\tau+1)}+C\Big(\int_{\mathbb{R}^4}|\mathcal{H}((u,v),s)|^{\frac{8(q+1)t'}{8-\mu}}dx\Big)^{\frac{8-\mu}{4t'}}\Big(\int_{\mathbb{R}^4}(e^{\frac{8\alpha t|\mathcal{H}((u,v),s)|^2}{8-\mu}}-1)dx\Big)^{\frac{8-\mu}{4t}}\nonumber\\
  &\leq C\|\mathcal{H}((u,v),s)\|_{\frac{8(\tau+1)}{8-\mu}}^{2(\tau+1)}+C\|\mathcal{H}((u,v),s)\|_{\frac{8(q+1)t'}{8-\mu}}^{2(q+1)}\Big(\int_{\mathbb{R}^4}(e^{\frac{8\alpha t\|\Delta\mathcal{H}((u,v),s)\|_2^2}{8-\mu}(\frac{|\mathcal{H}((u,v),s)|}{\|\Delta\mathcal{H}((u,v),s)\|_2})^2}-1)dx\Big)^{\frac{8-\mu}{4t}}\nonumber\\
  &\leq C\|\mathcal{H}((u,v),s)\|_{\frac{8(\tau+1)}{8-\mu}}^{2(\tau+1)}+C\|\mathcal{H}((u,v),s)\|_{\frac{8(q+1)t'}{8-\mu}}^{2(q+1)}\nonumber\\
  &=Ce^{(4\tau+\mu-4)s}\|(u,v)\|_{\frac{8(\tau+1)}{8-\mu}}^{2(\tau+1)}+Ce^{(4q+4-\frac{8-\mu}{t'})s}\|(u,v)\|_{\frac{8(q+1)t'}{8-\mu}}^{2(q+1)}\nonumber\\
  &=Ce^{(4\tau+\mu-4)s}\Big(\|u\|_{\frac{8(\tau+1)}{8-\mu}}^{\frac{8(\tau+1)}{8-\mu}}+\|v\|_{\frac{8(\tau+1)}{8-\mu}}^{\frac{8(\tau+1)}{8-\mu}}\Big)^{\frac{8-\mu}{4}}
  +Ce^{(4q+4-\frac{8-\mu}{t'})s}\Big(\|u\|_{\frac{8(q+1)t'}{8-\mu}}^{\frac{8(q+1)t'}{8-\mu}}+\|v\|_{\frac{8(q+1)t'}{8-\mu}}^{\frac{8(q+1)t'}{8-\mu}}\Big)^{\frac{8-\mu}{4t'}},
\end{align}
for all $s\leq s_1$.
Since $\tau>2-\frac{\mu}{4}$, $q>2$ and $t'$ large enough, it follows from \eqref{close01} that
\begin{align*}
  \mathcal{J}(\mathcal{H}((u,v),s))=& \frac{1}{2}e^{4s}(\|\Delta u\|_2^2+\|\Delta v\|_2^2)-\int_{\mathbb{R}^4}(I_\mu*F(\mathcal{H}((u,v),s)))F(\mathcal{H}((u,v),s))dx
  \rightarrow 0^+,
\end{align*}
as $s\rightarrow-\infty$.

For any fixed $s>>0$,
set
\begin{equation*}
 \mathcal{M}(t)=\frac{1}{2}\int_{\mathbb{R}^4}(I_\mu*F(tu,tv))F(tu,tv)dx,\quad\text{for $t>0$}.
\end{equation*}
It follows from $(F_2)$ that
\begin{equation*}
 \frac{\frac{d\mathcal{M}(t)}{dt}}{\mathcal{M}(t)}>\frac{2\theta}{t},\quad\text{for $t>0$}.
\end{equation*}
Thus, integrating this over $[1,e^{2s}]$, we have
\begin{equation}\label{imp}
  \frac{1}{2}\int_{\mathbb{R}^4}(I_\mu*F(e^{2s}u,e^{2s}v))F(e^{2s}u,e^{2s}v)dx\geq \frac{e^{4\theta s}}{2}\int_{\mathbb{R}^4}(I_\mu*F(u,v))F(u,v)dx.
\end{equation}
Therefore,
\begin{align*}
\mathcal J(\mathcal{H}((u,v),s))
\leq&
\frac{ e^{4s}}{2}\int_{\mathbb{R}^4}(|\Delta u|^2+|\Delta v|^2)dx-\frac{e^{(4\theta+\mu-8) s}}{2}\int_{\mathbb{R}^4}(I_\mu*F(u,v)F(u,v)dx.
\end{align*}
Since $\theta>3-\frac{\mu}{4}$, the above inequality yields that $\mathcal{J}(\mathcal{H}((u,v),s))\rightarrow-\infty$ as $s\rightarrow+\infty$.
\end{proof}

\begin{lemma}\label{equi0}
Assume that $F$ satisfies $(F_1)$, $(F_2)$, $(F_{5})$ and $F_{z_j}$($j=1,2$) has exponential subcritical growth at $\infty$, then for any fixed $(u,v)\in \mathcal{S}$,
the following statements hold.

$(i)$ The function ${\mathcal J}(\mathcal{H}((u,v),s))$ achieves its maximum with positive level at a unique point $s_{(u,v)}\in \mathbb{R}$ such that $\mathcal{H}((u,v),s_{(u,v)}) \in \mathcal{P}(a,b)$.

$(ii)$ The mapping $(u,v)\mapsto s_{(u,v)}$ is continuous in $(u,v)\in \mathcal{S}$.
\end{lemma}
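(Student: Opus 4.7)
The plan is to analyze the scalar function $g(s) := \mathcal{J}(\mathcal{H}((u,v),s))$ on $\mathbb{R}$: first I obtain existence of a maximizer from the limiting behavior of $g$, then reduce uniqueness to an integral inequality coming from $(F_5)$, and finally use uniqueness to deduce continuity. Using the scaling $\mathcal{H}(u,s)(x) = e^{2s}u(e^sx)$ together with the change of variables $x'=e^sx$ in the Riesz convolution, a direct computation gives
\[
g(s) = \frac{e^{4s}}{2}\bigl(\|\Delta u\|_2^2 + \|\Delta v\|_2^2\bigr) - \frac{e^{(\mu-8)s}}{2}\int_{\mathbb{R}^4}\bigl(I_\mu*F(e^{2s}u,e^{2s}v)\bigr)F(e^{2s}u,e^{2s}v)\,dx,
\]
and differentiating (exploiting the symmetry of the Riesz kernel) yields the key identity $g'(s) = 2P(\mathcal{H}((u,v),s))$. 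By Lemma \ref{minimax01} the function $g$ tends to $0^+$ at $-\infty$ and to $-\infty$ at $+\infty$, so $g$ attains a positive supremum at some $s_{(u,v)}\in\mathbb{R}$; at this point $P(\mathcal{H}((u,v),s_{(u,v)}))=0$, i.e., $\mathcal{H}((u,v),s_{(u,v)})\in\mathcal{P}(a,b)$.

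The main obstacle is uniqueness of the critical point, which is precisely what $(F_5)$ is designed to deliver. I plan to show that at every critical point $s_0$ of $g$ one has $g''(s_0)<0$; then every critical point is a strict local maximum, which via the intermediate value theorem applied to $g'$ rules out the coexistence of two distinct critical points. A careful differentiation of $g'$, combined with the identity $w\cdot\nabla F(w) = \mathfrak{F}(w)+(2-\tfrac{\mu}{4})F(w)$ and the symmetry of the Riesz kernel, reduces $g''(s_0)<0$ (with $w = e^{2s_0}(u,v)$) to
\[
\Bigl(4-\tfrac{\mu}{4}\Bigr)\iint \frac{F(w(x))\mathfrak{F}(w(y))}{|x-y|^\mu}\,dx\,dy < \iint \frac{\mathfrak{F}(w(x))\mathfrak{F}(w(y)) + F(w(x))\,w(y)\cdot\nabla\mathfrak{F}(w(y))}{|x-y|^\mu}\,dx\,dy.
\]
Substituting $\hat{z}=w(x)$ and $\tilde{z}=w(y)$ in $(F_5)$, integrating against $|x-y|^{-\mu}$, and using the symmetry $\iint \mathfrak{F}(w(x))F(w(y))|x-y|^{-\mu}\,dx\,dy = \iint F(w(x))\mathfrak{F}(w(y))|x-y|^{-\mu}\,dx\,dy$ produces exactly this inequality.

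For part (ii), given $(u_n,v_n)\to(u,v)$ in $\mathcal{S}$, set $s_n:=s_{(u_n,v_n)}$ and $g_n(s):=\mathcal{J}(\mathcal{H}((u_n,v_n),s))$. The maximality of $s_n$ gives $g_n(s_n)\geq g_n(s_{(u,v)})$, while Lemma \ref{biancon} together with continuity of $\mathcal{J}$ yields $g_n(s_{(u,v)})\to g(s_{(u,v)})>0$. If a subsequence satisfied $s_{n_k}\to-\infty$, the estimates in the proof of Lemma \ref{minimax01} applied uniformly on the bounded set $\{(u_n,v_n)\}\subset\mathcal{X}$ would force $g_{n_k}(s_{n_k})\to 0$, a contradiction; if $s_{n_k}\to+\infty$, then $\int(I_\mu*F(u_n,v_n))F(u_n,v_n)\,dx\to\int(I_\mu*F(u,v))F(u,v)\,dx>0$ combined with \eqref{imp} and $\theta>3-\tfrac{\mu}{4}$ would force $g_{n_k}(s_{n_k})\to-\infty$, again a contradiction. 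Hence $\{s_n\}$ is bounded. For any subsequence $s_n\to s^*$, Lemma \ref{biancon} yields $\mathcal{H}((u_n,v_n),s_n)\to\mathcal{H}((u,v),s^*)$ in $\mathcal{X}$; passing to the limit in $P(\mathcal{H}((u_n,v_n),s_n))=0$ shows $\mathcal{H}((u,v),s^*)\in\mathcal{P}(a,b)$, so by the uniqueness established above, $s^*=s_{(u,v)}$. Therefore $s_n\to s_{(u,v)}$.
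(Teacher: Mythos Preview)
Your proof is correct and follows essentially the same approach as the paper: existence of a maximizer via Lemma~\ref{minimax01}, uniqueness by showing $g''(s_0)<0$ at every critical point using the pointwise inequality $(F_5)$ integrated against the Riesz kernel, and continuity via boundedness of $\{s_{(u_n,v_n)}\}$ combined with uniqueness. The paper carries out the second-derivative computation in full and arrives at the same integrand inequality (written as a quantity $A<0$), and its bounded-from-above/below arguments for part~(ii) are the same as yours.
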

\begin{proof}
$(i)$
By Lemma \ref{minimax01}, we have
\begin{equation*}
  \lim\limits_{s\rightarrow-\infty}\mathcal{J}(\mathcal{H}((u,v),s))=0^+\quad\text{and}\lim\limits_{s\rightarrow+\infty}\mathcal{J}(\mathcal{H}((u,v),s))=-\infty.
\end{equation*}
Therefore, there exists $s_{(u,v)}\in \mathbb{R}$ such that $P(\mathcal{H}({(u,v)},s_{(u,v)}))=\frac{1}{2}\frac{d}{ds}\mathcal{J}(\mathcal{H}((u,v),s))|_{s=s_{(u,v)}}=0$, and $\mathcal{J}(\mathcal{H}((u,v),s_{(u,v)}))>0$.
In the following, we prove the uniqueness of $s_{(u,v)}$ for any $(u,v)\in \mathcal{S}$.

Taking into account that $\frac{d}{ds}\mathcal{J}(\mathcal{H}((u,v),s))|_{s=s_{(u,v)}}=0$, using $(F_{5})$, we deduce that
\begin{align*}
  &\frac{d^2}{ds^2}\mathcal{J}(\mathcal{H}((u,v),s))\Big|_{s=s_{(u,v)}}\\
  &=
8 e^{4s_{(u,v)}}\int_{\mathbb{R}^4}(|\Delta u|^2+|\Delta v|^2)dx-\frac{(8-\mu)^2}{2}e^{(\mu-8)s_{(u,v)}}\int_{\mathbb{R}^4}(I_\mu*F(e^{2s_{(u,v)}}u,e^{2s_{(u,v)}}v))F(e^{2s_{(u,v)}}u,e^{2s_{(u,v)}}v)dx
\\&\quad+(28-4\mu)e^{(\mu-8)s_{(u,v)}}\int_{\mathbb{R}^4}(I_\mu*F(e^{2s_{(u,v)}}u,e^{2s_{(u,v)}}v))\\
&\quad\times\Big[(e^{2s_{(u,v)}}u,e^{2s_{(u,v)}}v)\cdot (\frac{\partial F(e^{2s_{(u,v)}}u,e^{2s_{(u,v)}}v) }{\partial(e^{2s_{(u,v)}}u)},\frac{\partial F(e^{2s_{(u,v)}}u,e^{2s_{(u,v)}}v) }{\partial(e^{2s_{(u,v)}}v)} )\Big] dx\\
&\quad-4e^{(\mu-8)s_{(u,v)}}\int_{\mathbb{R}^4}\bigg(I_\mu* \Big[(e^{2s_{(u,v)}}u,e^{2s_{(u,v)}}v)\cdot (\frac{\partial F(e^{2s_{(u,v)}}u,e^{2s_{(u,v)}}v) }{\partial(e^{2s_{(u,v)}}u)},\frac{\partial F(e^{2s_{(u,v)}}u,e^{2s_{(u,v)}}v) }{\partial(e^{2s_{(u,v)}}v)} )\Big]\bigg)\\&\quad\times\Big[(e^{2s_{(u,v)}}u,e^{2s_{(u,v)}}v)\cdot (\frac{\partial F(e^{2s_{(u,v)}}u,e^{2s_{(u,v)}}v) }{\partial(e^{2s_{(u,v)}}u)},\frac{\partial F(e^{2s_{(u,v)}}u,e^{2s_{(u,v)}}v) }{\partial(e^{2s_{(u,v)}}v)} )\Big]dx\\&\quad
-4e^{(\mu-8)s_{(u,v)}}\int_{\mathbb{R}^4}(I_\mu*F(e^{2s_{(u,v)}}u,e^{2s_{(u,v)}}v))\Big[(e^{2s_{(u,v)}}u)^2\frac{\partial^2 F(e^{2s_{(u,v)}}u,e^{2s_{(u,v)}}v)}{\partial (e^{2s_{(u,v)}}u)^2}\\
&\quad+(e^{2s_{(u,v)}}v)^2\frac{\partial^2 F(e^{2s_{(u,v)}}u,e^{2s_{(u,v)}}v)}{\partial (e^{2s_{(u,v)}}v)^2}\Big]dx\\
&=-(8-\mu)(6-\frac{\mu}{2})\int_{\mathbb{R}^4}(I_\mu*F(\mathcal{H}((u,v),s_{(u,v)})))F(\mathcal{H}((u,v),s_{(u,v)}))dx\\
&\quad+(36-4\mu)\int_{\mathbb{R}^4}(I_\mu*F(\mathcal{H}((u,v),s_{(u,v)}))) \Big[\mathcal{H}((u,v),s_{(u,v)})\cdot (\frac{\partial F(\mathcal{H}((u,v),s_{(u,v)}))}{\partial (\mathcal{H}(u,s_{(u,v)}))},\frac{\partial F(\mathcal{H}((u,v),s_{(u,v)}))}{\partial (\mathcal{H}(v,s_{(u,v)}))})\Big]dx\\
&\quad-4\int_{\mathbb{R}^4}\bigg(I_\mu* \Big[\mathcal{H}((u,v),s_{(u,v)})\cdot (\frac{\partial F(\mathcal{H}((u,v),s_{(u,v)}))}{\partial (\mathcal{H}(u,s_{(u,v)}))},\frac{\partial F(\mathcal{H}((u,v),s_{(u,v)}))}{\partial (\mathcal{H}(v,s_{(u,v)}))})\Big]\bigg)\\
&\quad\times\Big[\mathcal{H}((u,v),s_{(u,v)})\cdot (\frac{\partial F(\mathcal{H}((u,v),s_{(u,v)}))}{\partial (\mathcal{H}(u,s_{(u,v)}))},\frac{\partial F(\mathcal{H}((u,v),s_{(u,v)}))}{\partial (\mathcal{H}(v,s_{(u,v)}))})\Big]dx\\&\quad
-4\int_{\mathbb{R}^4}(I_\mu*F(\mathcal{H}((u,v),s_{(u,v)})))\Big[(\mathcal{H}(u,s_{(u,v)}))^2\frac{\partial^2 F(\mathcal{H}((u,v),s_{(u,v)}))}{\partial (\mathcal{H}(u,s_{(u,v)}))^2}+(\mathcal{H}(v,s_{(u,v)}))^2\frac{\partial^2 F(\mathcal{H}((u,v),s_{(u,v)}))}{\partial (\mathcal{H}(u,s_{(u,v)}))^2}\Big]dx \\
&=4\int_{\mathbb{R}^4}\int_{\mathbb{R}^4}\frac{A}{|x-y|^\mu}dxdy<0,
\end{align*}
where
\begin{align*}
  A=&(3-\frac{\mu}{4})F(\mathcal{H}((u(y),v(y)),s_{(u(y),v(y))}))\mathfrak{F}(\mathcal{H}((u(x),v(x)),s_{(u(x),v(x))}))\\&
  -F(\mathcal{H}((u(y),v(y)),s_{(u(y),v(y))}))\\
  &\times\Big[\mathcal{H}((u(x),v(x)),s_{(u(x),v(x))})\cdot (\frac{\partial \mathfrak{F}(\mathcal{H}((u(x),v(x)),s_{(u(x),v(x))}))}{\partial (\mathcal{H}(u(x),s_{(u(x),v(x))}))},\frac{\partial \mathfrak{F}(\mathcal{H}((u(x),v(x)),s_{(u(x),v(x))}))}{\partial (\mathcal{H}(v(x),s_{(u(x),v(x))}))})\Big]\\&
  -\mathfrak{F}(\mathcal{H}((u(y),v(y)),s_{(u(y),v(y))}))[\mathfrak{F}(\mathcal{H}((u(x),v(x)),s_{(u(x),v(x))}))-F(\mathcal{H}((u(x),v(x)),s_{(u(x),v(x))}))]<0,
\end{align*}
this prove the uniqueness of $s_{(u,v)}$.

$(ii)$ By $(i)$, the mapping $(u,v)\mapsto s_{(u,v)}$ is well defined. Let $\{(u_n,v_n)\}\subset \mathcal{S}$ be any sequence such that $(u_n,v_n)\rightarrow (u,v)$ in $\mathcal{X}$ as $n\rightarrow\infty$. We only need to prove that up to a subsequence, $s_{(u_n,v_n)}\rightarrow s_{(u,v)}$ in $\mathbb{R}$ as $n\rightarrow\infty$.

We first show that $\{s_{(u_n,v_n)}\}$ is bounded in $\mathbb{R}$. If up to a subsequence, $s_{(u_n,v_n)}\rightarrow+\infty$ as $n\rightarrow\infty$, then by \eqref{imp}, $(F_2)$ and $(u_n,v_n)\rightarrow (u,v)\neq(0,0)$ in $\mathcal{X}$ as $n\rightarrow\infty$, we have
\begin{align*}
  0&\leq \lim\limits_{n\rightarrow\infty}e^{-4s_{(u_n,v_n)}}\mathcal J(\mathcal{H}((u_n,v_n),s_{(u_n,v_n)}))\\
  &\leq \lim\limits_{n\rightarrow\infty}\frac{1}{2}\Big[\int_{\mathbb{R}^4}(|\Delta u_n|^2+|\Delta v_n|^2)dx-e^{(4\theta+\mu-12)s_{(u_n,v_n)}}\int_{\mathbb{R}^4}(I_\mu*F(u_n,v_n))F(u_n,v_n)dx\Big]=-\infty,
\end{align*}
which is a contradiction. Therefore, $\{s_{(u_n,v_n)}\}$ is bounded from above. On the other hand, by Lemma \ref{biancon}, $\mathcal{H}((u_n,v_n),s_{(u,v)})\rightarrow \mathcal{H}((u,v),s_{(u,v)})$ in $\mathcal{X}$ as $n\rightarrow\infty$, it follows from $(i)$ that
\begin{equation*}
  \mathcal J(\mathcal{H}((u_n,v_n),s_{(u_n,v_n)}))\geq \mathcal J(\mathcal{H}((u_n,v_n),s_{(u,v)}))=\mathcal J(\mathcal{H}((u,v),s_{(u,v)}))+o_n(1)
\end{equation*}
and thus
\begin{equation*}
  \liminf\limits_{n\rightarrow\infty}\mathcal J(\mathcal{H}((u_n,v_n),s_{(u_n,v_n)}))\geq \mathcal J(\mathcal{H}((u,v),s_{(u,v)}))>0.
\end{equation*}
If up to a subsequence, $s_{(u_n,v_n)}\rightarrow-\infty$ as $n\rightarrow\infty$,
using $(F_2)$,
 we get
\begin{equation*}
  \mathcal J(\mathcal{H}((u_n,v_n),s_{(u_n,v_n)}))\leq \frac{ e^{4s_{(u_n,v_n)}}}{2}(\|\Delta u_n\|_2^2+ \|\Delta v_n\|_2^2)% \mathcal{H}(w_n,s_{w_n})\|_2^2+\frac{\beta }{2}\|w_n\|_2\|\Delta \mathcal{H}(w_n,s_{w_n})\|_2
  \rightarrow0, \,\,\text{as}\,\,\, n\rightarrow\infty,
\end{equation*}
which is an absurd. Therefore, $\{s_{(u_n,v_n)}\}$ is bounded from below.
Up to a subsequence, we assume that $s_{(u_n,v_n)}\rightarrow s_*$ as $n\rightarrow\infty$. Recalling that $(u_n,v_n)\rightarrow (u,v)$ in $\mathcal{X}$ as $n\rightarrow\infty$, then $\mathcal{H}((u_n,v_n),s_{(u_n,v_n)})\rightarrow \mathcal{H}((u,v),s_{*})$ in $\mathcal{X}$ as $n\rightarrow\infty$. Since $P(\mathcal{H}((u_n,v_n),s_{(u_n,v_n)}))=0$ for any $n\in \mathbb{N}^+$, it follows that $P(\mathcal{H}((u,v),s_{*}))=0$. By the uniqueness of $s_{(u,v)}$, we get $s_{(u,v)}=s_*$, thus $(ii)$ is proved.
\end{proof}

\begin{lemma}\label{continuous0}
Assume that $F$ satisfies $(F_1)$, $(F_2)$ and $F_{z_j}$($j=1,2$) has exponential subcritical growth at $\infty$, then there exists $\delta>0$ small enough such that
\begin{equation*}
  \mathcal  J(u,v)
\geq\frac{1}{4}(\|\Delta u\|_2^2+\|\Delta v\|_2^2)
\end{equation*}
and
\begin{equation*}
  P(u,v)
\geq\|\Delta u\|_2^2+\|\Delta v\|_2^2
\end{equation*}
for all $(u,v)\in \mathcal{S}$ satisfying $\|\Delta u\|_2^2+\|\Delta v\|_2^2\leq \delta$.
\end{lemma}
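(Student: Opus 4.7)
\textbf{Proof plan for Lemma \ref{continuous0}.} The strategy is to show that on $\mathcal{S}$ the nonlocal term $\int_{\mathbb{R}^4}(I_\mu\ast F(u,v))F(u,v)\,dx$ and its analogue with $\mathfrak{F}$ are $o(\|\Delta(u,v)\|_2^2)$ as $\|\Delta(u,v)\|_2\to 0$, so that the quadratic biharmonic term dominates. The required constants then follow by choosing $\delta$ small enough.

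\emph{Step 1 (Hardy--Littlewood--Sobolev).} Apply Proposition \ref{HLS} with $r=t=\frac{8}{8-\mu}$, which satisfies $\tfrac{1}{r}+\tfrac{\mu}{4}+\tfrac{1}{t}=2$, to obtain
\[
\int_{\mathbb{R}^4}(I_\mu\ast F(u,v))F(u,v)\,dx\leq C\|F(u,v)\|_{\frac{8}{8-\mu}}^{2},
\qquad
\int_{\mathbb{R}^4}(I_\mu\ast F(u,v))\mathfrak{F}(u,v)\,dx\leq C\|F(u,v)\|_{\frac{8}{8-\mu}}\|\mathfrak{F}(u,v)\|_{\frac{8}{8-\mu}}.
\]
From $(F_4)$ and $(F_2)$ one has $0<\mathfrak{F}(z)<(2-\tfrac{\mu}{4})F(z)$, so $\|\mathfrak{F}(u,v)\|_{8/(8-\mu)}\leq C\|F(u,v)\|_{8/(8-\mu)}$ and both terms are controlled by $\|F(u,v)\|_{8/(8-\mu)}^{2}$.

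\emph{Step 2 (Estimate of $\|F\|_{8/(8-\mu)}$ via Adams).} Using \eqref{fcondition02} and the elementary inequality $(e^{x}-1)^{r}\leq e^{rx}-1$ for $r\geq 1$, split into a polynomial part and an exponential part. Since $\|\Delta(u,v)\|_2^{2}\leq\delta$, fix $\alpha>0$ small and $t>1$ close to $1$ so that $\frac{8\alpha t\,\delta}{8-\mu}\leq 32\pi^{2}$. Apply H\"older's inequality with exponents $t,t'$, then invoke Lemma \ref{modadams}(ii) on the exponential factor after normalizing by $\|\Delta(u,v)\|_2$. Exactly as in the chain of estimates leading to \eqref{close01}, this yields
\[
\|F(u,v)\|_{\frac{8}{8-\mu}}^{2}\leq C\,\|(u,v)\|_{p_{1}}^{2(\tau+1)}+C\,\|(u,v)\|_{p_{2}}^{2(q+1)},
\]
where $p_{1}=\frac{8(\tau+1)}{8-\mu}$ and $p_{2}=\frac{8(q+1)t'}{8-\mu}$, with $q>2$ to be chosen.

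\emph{Step 3 (Gagliardo--Nirenberg on $\mathcal{S}$).} For $u\in H^{2}(\mathbb{R}^4)$ with $\|u\|_2=a$, inequality \eqref{GNinequality} gives $\|u\|_{p}^{p}\leq C(a)\|\Delta u\|_2^{p-2}$ for $p\geq 2$, and similarly for $v$. Summing, $\|(u,v)\|_{p}^{p}\leq C(a,b)\|\Delta(u,v)\|_2^{p-2}$, hence $\|(u,v)\|_{p}^{k}\leq C\|\Delta(u,v)\|_2^{k(p-2)/p}$. Applied to the right-hand side of Step 2, a direct computation gives
\[
k_{1}:=2(\tau+1)\frac{p_{1}-2}{p_{1}}=2\tau-2+\tfrac{\mu}{2},\qquad k_{2}:=2(q+1)\frac{p_{2}-2}{p_{2}}=2q+2-\tfrac{8-\mu}{2t'}.
\]
Because $\tau>2-\tfrac{\mu}{4}$ one has $k_{1}>2$; choosing $q>2$ and $t'$ large enough also yields $k_{2}>2$. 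Therefore
\[
\int_{\mathbb{R}^4}(I_\mu\ast F(u,v))F(u,v)\,dx\leq C\bigl(\|\Delta(u,v)\|_2^{k_{1}}+\|\Delta(u,v)\|_2^{k_{2}}\bigr)=o\bigl(\|\Delta(u,v)\|_2^{2}\bigr)
\]
as $\|\Delta(u,v)\|_2\to 0$, uniformly in $(u,v)\in\mathcal{S}$.

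\emph{Step 4 (Conclusion).} Shrinking $\delta$, we may assume $\int(I_\mu\ast F)F\leq\tfrac{1}{2}\|\Delta(u,v)\|_2^{2}$ whenever $\|\Delta(u,v)\|_2^{2}\leq\delta$, which gives $\mathcal{J}(u,v)\geq\tfrac{1}{4}\|\Delta(u,v)\|_2^{2}$. The identical scheme, combined with $\mathfrak{F}\leq(2-\tfrac{\mu}{4})F$, controls $\int(I_\mu\ast F)\mathfrak{F}(u,v)\,dx$ by $o(\|\Delta(u,v)\|_2^{2})$ as well, producing the desired lower bound on $P(u,v)$ after possibly further shrinking $\delta$.

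The main technical hurdle is coordinating the three parameters $\alpha,t,q$ (and the resulting $t'$) so that the Adams threshold $\frac{8\alpha t\,\delta}{8-\mu}\leq 32\pi^{2}$ holds simultaneously with both exponents $k_{1},k_{2}>2$; this is precisely where the assumption $\tau>2-\tfrac{\mu}{4}$ is used in a quantitative way, and where the freedom to pick $q>2$ arbitrary and $t'$ arbitrarily large becomes essential.
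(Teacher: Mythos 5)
Your argument is correct and is essentially the paper's own proof: Hardy--Littlewood--Sobolev to reduce to $\|F(u,v)\|_{8/(8-\mu)}$, the pointwise bound \eqref{fcondition02} with H\"older and Lemma \ref{modadams} (exactly the chain of \eqref{close01}), then the Gagliardo--Nirenberg inequality \eqref{GNinequality} on $\mathcal{S}$ to extract exponents strictly larger than $2$ in $\|\Delta(u,v)\|_2$, and finally shrinking $\delta$. The only deviation worth flagging is your appeal to $(F_4)$ to dominate $\mathfrak{F}$ by $F$: $(F_4)$ is not among the hypotheses of this lemma, and it is also unnecessary, since $|z\cdot\nabla F(z)|$ (hence $\mathfrak{F}$) obeys the same pointwise bound $\xi|z|^{\tau+1}+C_\xi|z|^{q+1}(e^{\alpha|z|^2}-1)$ directly from $(F_1)$, $(F_2)$ and the exponential subcritical growth of $F_{z_j}$, which is how the paper's ``similarly'' for the $P(u,v)$ estimate should be read.
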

\begin{proof}
Arguing as \eqref{close01}, for any $\alpha\rightarrow0^+$ and $t\rightarrow1^+$, using \eqref{GNinequality}, we have
$\frac{8\alpha t\|\Delta(u,v)\|_2^2}{8-\mu}\leq 32\pi^2$ and
\begin{align*}
  &\int_{\mathbb{R}^4}(I_\mu*F(u,v))F(u,v)dx\\
 &\leq
  C\Big(\|u\|_{\frac{8(\tau+1)}{8-\mu}}^{\frac{8(\tau+1)}{8-\mu}}+\|v\|_{\frac{8(\tau+1)}{8-\mu}}^{\frac{8(\tau+1)}{8-\mu}}\Big)^{\frac{8-\mu}{4}}+C\Big(\|u\|_{\frac{8(q+1)t'}{8-\mu}}^{\frac{8(q+1)t'}{8-\mu}}+\|v\|_{\frac{8(q+1)t'}{8-\mu}}^{\frac{8(q+1)t'}{8-\mu}}\Big)^{\frac{8-\mu}{4t'}}\\
  &\leq C\Big(a^2\|\Delta u\|_2^{\frac{8\tau-8+2\mu}{8-\mu}}+b^2\|\Delta v\|_2^{\frac{8\tau-8+2\mu}{8-\mu}}\Big)^{\frac{8-\mu}{4}}\\
  &\quad+
  C\Big(a^2\|\Delta u\|_2^{\frac{8(q+1)t'-16+2\mu}{8-\mu}}+b^2\|\Delta v\|_2^{\frac{8(q+1)t'-16+2\mu}{8-\mu}}\Big)^{\frac{8-\mu}{4t'}}\\
  &\leq  C(\|\Delta u\|_2^{\frac{8\tau-8+2\mu}{4}}+\|\Delta v\|_2^{\frac{8\tau-8+2\mu}{4}})+C(\|\Delta u\|_2^{\frac{8(q+1)t'-16+2\mu}{4t'}}+\|\Delta v\|_2^{\frac{8(q+1)t'-16+2\mu}{4t'}})
  \\ &\leq\Big [C\delta^{\tau+\frac{\mu}{4}-2}+C \delta^{q-\frac{8-\mu}{4t'}}\Big]\|\Delta u\|_2^2+\Big [C\delta^{\tau+\frac{\mu}{4}-2}+C  \delta^{q-\frac{8-\mu}{4t'}}\Big]\|\Delta v\|_2^2.
\end{align*}
Since $\tau>2-\frac{\mu}{4}$, $q>2$ and $t'=\frac{t}{t-1}$ large enough, choosing $\delta>0$ small enough, we conclude the result. Similarly, we can prove the other.
\end{proof}

\begin{lemma}\label{inf0}
Assume that $F$ satisfies $(F_1)$, $(F_2)$, $(F_{5})$ and $F_{z_j}$($j=1,2$) has exponential subcritical growth at $\infty$, then we have
\begin{equation*}
\inf\limits_{(u,v)\in \mathcal{P}(a,b)}(\|\Delta u\|_2^2+\|\Delta v\|_2^2)>0\quad \text{and} \quad E(a,b)>0.
\end{equation*}
\end{lemma}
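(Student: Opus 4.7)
\textbf{Proof proposal for Lemma \ref{inf0}.} The plan is to combine Lemma \ref{continuous0} with the maximum characterization from Lemma \ref{equi0}(i); the inequality threshold $\delta$ appearing in Lemma \ref{continuous0} is what I will exploit on both ends.

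\emph{First inequality.} Let $\delta>0$ be the constant from Lemma \ref{continuous0}. Suppose, for contradiction, that there exists $(u,v)\in\mathcal{P}(a,b)$ with $\|\Delta u\|_2^2+\|\Delta v\|_2^2\le\delta$. Then by Lemma \ref{continuous0} the Pohozaev quantity satisfies $P(u,v)\ge \|\Delta u\|_2^2+\|\Delta v\|_2^2$; combined with $P(u,v)=0$ this forces $\Delta u=\Delta v=0$ in $L^2(\mathbb{R}^4)$, hence $u=v=0$, contradicting $\|u\|_2=a$ and $\|v\|_2=b$. Therefore $\inf_{\mathcal{P}(a,b)}(\|\Delta u\|_2^2+\|\Delta v\|_2^2)\ge \delta>0$.

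\emph{Second inequality.} Fix any $(u,v)\in\mathcal{P}(a,b)$. Since $(u,v)\in\mathcal{S}$ and $\mathcal{H}((u,v),0)=(u,v)\in\mathcal{P}(a,b)$, the uniqueness statement in Lemma \ref{equi0}(i) forces $s_{(u,v)}=0$, so
\begin{equation*}
\mathcal{J}(u,v)=\max_{s\in\mathbb{R}}\mathcal{J}(\mathcal{H}((u,v),s)).
\end{equation*}
Using $\|\Delta\mathcal{H}(u,s)\|_2^2=e^{4s}\|\Delta u\|_2^2$ and the analogous identity for $v$, and setting $s_*:=\tfrac{1}{4}\log\!\bigl(\delta/(\|\Delta u\|_2^2+\|\Delta v\|_2^2)\bigr)$, the first inequality ensures $s_*\le 0$ is well defined and that $\|\Delta\mathcal{H}((u,v),s_*)\|_2^2=\delta$. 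Because $\mathcal{H}((u,v),s_*)\in\mathcal{S}$, Lemma \ref{continuous0} applies and yields
\begin{equation*}
\mathcal{J}(\mathcal{H}((u,v),s_*))\ge \tfrac{1}{4}\|\Delta\mathcal{H}((u,v),s_*)\|_2^2=\tfrac{\delta}{4}.
\end{equation*}
Combining with the previous display, $\mathcal{J}(u,v)\ge \delta/4$ for every $(u,v)\in\mathcal{P}(a,b)$, so $E(a,b)\ge\delta/4>0$.

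\emph{Comments on obstacles.} There is no serious obstacle; the lemmas have been prepared exactly so that both conclusions follow by a short barrier argument at scale $\delta$. The only step requiring care is justifying $s_{(u,v)}=0$ for $(u,v)\in\mathcal{P}(a,b)$, which is precisely the uniqueness clause of Lemma \ref{equi0}(i) applied to the orbit $\{\mathcal{H}((u,v),s):s\in\mathbb{R}\}$, noting that the mass constraints are preserved along this orbit.
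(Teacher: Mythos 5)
Your proof is correct and follows essentially the same route as the paper: a barrier argument at the scale $\delta$ from Lemma \ref{continuous0} for both conclusions, combined with the uniqueness/maximum property of Lemma \ref{equi0} to get $\mathcal{J}(u,v)\geq\mathcal{J}(\mathcal{H}((u,v),s_*))\geq\delta/4$ for every $(u,v)\in\mathcal{P}(a,b)$. The only (harmless) deviation is in the first part, where you conclude $u=v=0$ directly from $\Delta u=\Delta v=0$ in $L^2(\mathbb{R}^4)$ (justified, e.g., via the Fourier transform), whereas the paper reaches the same contradiction with the mass constraint by a short detour through $(F_2)$ and $P(u_n,v_n)=0$.
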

\begin{proof}
By Lemma \ref{equi0}, we obtain $\mathcal{P}(a,b)\neq\emptyset$. Suppose that there exists a sequence $\{(u_n,v_n)\}\subset \mathcal{P}(a,b)$ such that $(\|\Delta u_n\|_2^2+\|\Delta v_n\|_2^2)\rightarrow0$ as $n\rightarrow\infty$, then by Lemma \ref{continuous0}, up to a subsequence,
\begin{equation*}
  0=P(u_n,v_n)\geq \int_{\mathbb{R}^4}(|\Delta u_n|^2+|\Delta v_n|^2)dx\geq0,
\end{equation*}
which implies that $\int_{\mathbb{R}^4}|\Delta u_n|^2dx=\int_{\mathbb{R}^4}|\Delta v_n|^2dx=0$ for any $n\in \mathbb{N}^+$. Hence, by $(F_2)$ and $P(u_n,v_n)=0$, we have
\begin{align*}
  0&=\int_{\mathbb{R}^4}(I_\mu*F(u_n,v_n))\Big(\frac{8-\mu}{4}F(u_n,v_n)-(u_n,v_n)\cdot \nabla F(u_n,v_n)\Big)dx\\
  &\leq \int_{\mathbb{R}^4}(I_\mu*F(u_n,v_n))(\frac{8-\mu}{4\theta}-1)[(u_n,v_n)\cdot \nabla F(u_n,v_n)]dx\leq0.
\end{align*}
So $u_n,v_n\rightarrow0$ a.e. in $\mathbb{R}^4$, which is contradict to $a,b>0$.

For any $(u,v)\in \mathcal{P}(a,b)$, by Lemma \ref{equi0}, we have
\begin{equation*}
  \mathcal{J}(u,v)=\mathcal{J}(\mathcal{H}((u,v),0))\geq \mathcal{J}(\mathcal{H}((u,v),s))\quad\text{for all $s\in \mathbb{R}$}.
\end{equation*}
Let $\delta>0$ be the number given by Lemma \ref{continuous0} and $4s:=\ln\frac{\delta}{(\|\Delta u\|_2^2+\|\Delta v\|_2^2)}$. Then $e^{4s}(\|\Delta u\|_2^2+\|\Delta v\|_2^2)=\delta$, by Lemma \ref{continuous0}, we deduce that
\begin{equation*}
  \mathcal{J}(u,v)\geq \mathcal{J}(\mathcal{H}((u,v),s))\geq \frac{ e^{4s}}{4}(\|\Delta u\|_2^2+\|\Delta v\|_2^2)=\frac{ \delta}{4}>0.
\end{equation*}
By the arbitrariness of $(u,v)\in \mathcal{P}(a,b)$, we derive the conclusion.
\end{proof}
Denote $\mathcal{X}_r=H^2_{rad}(\mathbb{R}^4)\times H^2_{rad}(\mathbb{R}^4)$,  $\mathcal{S}_r=\mathcal{S}\cap\mathcal{X}_r$, $\mathcal{P}_r(a,b)=\mathcal{P}(a,b)\cap \mathcal{X}_r$, $\displaystyle S(c)=\Big\{u\in H^2(\mathbb{R}^4):\int_{\mathbb{R}^4}|u|^2dx=c\Big\}$, $S_r(c)=S(c)\cap H^2_{rad}(\mathbb{R}^4)$ for any $c>0$, and the radial ground state energy
\begin{equation}\label{rgse}
 E_r(a,b)=\inf\limits_{(u,v)\in \mathcal{P}_r(a,b)}\mathcal J(u,v).
\end{equation}
Here $H^2_{rad}(\mathbb{R}^4)$ denotes the space of radially symmetric functions in $H^2(\mathbb{R}^4)$.
The next lemma is focused on showing the continuity of $E_r(a,b)$ in the exponential subcritical case.
\begin{lemma}\label{continuous0'}
Assume that $F$ satisfies $(F_1)$, $(F_2)$, $(F_{5})$ and $F_{z_j}$($j=1,2$) has exponential subcritical growth at $\infty$, then the function $(a,b)\mapsto E_{r}(a,b)$ is continuous.
\end{lemma}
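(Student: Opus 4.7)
The plan is to establish the two one-sided inequalities $\limsup_n E_r(a_n,b_n) \leq E_r(a,b)$ and $\liminf_n E_r(a_n,b_n) \geq E_r(a,b)$ for any sequence $(a_n,b_n) \to (a,b)$ with $a_n,b_n > 0$. For the \emph{upper} bound I would fix $\varepsilon > 0$, select $(u,v) \in \mathcal{P}_r(a,b)$ with $\mathcal{J}(u,v) < E_r(a,b) + \varepsilon$, and form the natural rescaling $(u_n,v_n) := ((a_n/a)u, (b_n/b)v) \in \mathcal{S}_r(a_n,b_n)$, which converges strongly to $(u,v)$ in $\mathcal{X}$. Lemma \ref{equi0}(i) provides a unique $s_n$ with $\mathcal{H}((u_n,v_n),s_n) \in \mathcal{P}_r(a_n,b_n)$. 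The proof of Lemma \ref{equi0}(ii) uses only the $\mathcal{X}$-convergence of $(u_n,v_n)$ to a nontrivial limit and the positivity $\mathcal{J}(u,v) \geq E_r(a,b) > 0$ (Lemma \ref{inf0}), not the constancy of the mass constraint, so it applies verbatim to give $s_n \to s_{(u,v)} = 0$. Lemma \ref{biancon} then yields $\mathcal{H}((u_n,v_n),s_n) \to (u,v)$ in $\mathcal{X}$, whence $E_r(a_n,b_n) \leq \mathcal{J}(\mathcal{H}((u_n,v_n),s_n)) \to \mathcal{J}(u,v) < E_r(a,b) + \varepsilon$; letting $\varepsilon \to 0$ settles this direction.

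For the \emph{lower} bound I would pick $(u_n,v_n) \in \mathcal{P}_r(a_n,b_n)$ with $\mathcal{J}(u_n,v_n) \leq E_r(a_n,b_n) + 1/n$. Using $P(u_n,v_n) = 0$ together with $(F_2)$ (which gives $\mathfrak{F}(z) \geq (\theta - 2 + \mu/4)F(z)$) yields $\mathcal{J}(u_n,v_n) \geq \tfrac{1}{2}\bigl(1 - \tfrac{1}{\theta - 2 + \mu/4}\bigr)\bigl(\|\Delta u_n\|_2^2 + \|\Delta v_n\|_2^2\bigr)$, whose coefficient is strictly positive since $\theta > 3 - \mu/4$; combined with the upper semicontinuity just proved, $\{(u_n,v_n)\}$ is bounded in $\mathcal{X}$. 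Setting $(\bar u_n, \bar v_n) := ((a/a_n)u_n, (b/b_n)v_n) \in \mathcal{S}_r(a,b)$, one has $\|(u_n - \bar u_n, v_n - \bar v_n)\|_{\mathcal{X}} \to 0$, and then arguments paralleling Lemma \ref{strong0} give $\mathcal{J}(\bar u_n,\bar v_n) - \mathcal{J}(u_n,v_n) \to 0$ and $P(\bar u_n,\bar v_n) \to 0$. Lemma \ref{equi0}(i) supplies $t_n$ with $\mathcal{H}((\bar u_n,\bar v_n),t_n) \in \mathcal{P}_r(a,b)$, hence $E_r(a,b) \leq \mathcal{J}(\mathcal{H}((\bar u_n,\bar v_n),t_n))$.

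The \emph{main obstacle} is to show $t_n \to 0$, which cannot be reduced to Lemma \ref{equi0}(ii) directly because $\{(\bar u_n,\bar v_n)\}$ is only bounded, not convergent. Boundedness of $\{t_n\}$ comes from the energy estimates underlying Lemma \ref{minimax01}: if $t_n \to +\infty$ then $(F_2)$ forces $\mathcal{J}(\mathcal{H}((\bar u_n,\bar v_n),t_n)) \to -\infty$, contradicting the lower bound $\geq E_r(a,b) > 0$, and if $t_n \to -\infty$ then $\mathcal{J}(\mathcal{H}((\bar u_n,\bar v_n),t_n)) \to 0$, the same contradiction (positivity being uniform because the constants in Lemmas \ref{continuous0} and \ref{inf0} vary continuously in the mass). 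To upgrade boundedness to $t_n \to 0$ I would combine $P(\bar u_n,\bar v_n) = o(1)$ with the \emph{uniform} strict negativity of $\tfrac{d}{ds}P(\mathcal{H}((\bar u_n,\bar v_n),s))|_{s=t_n}$, which is nothing but $\tfrac12\tfrac{d^2}{ds^2}\mathcal{J}(\mathcal{H}((\bar u_n,\bar v_n),s))|_{s=t_n}$ and hence is controlled by the $(F_5)$-based computation in Lemma \ref{equi0}(i), applied uniformly to the bounded, uniformly nondegenerate family $\{(\bar u_n,\bar v_n)\}$. Once $t_n \to 0$, the identity $\mathcal{J}(\mathcal{H}((\bar u_n,\bar v_n),t_n)) - \mathcal{J}(\bar u_n,\bar v_n) = 2\int_0^{t_n} P(\mathcal{H}((\bar u_n,\bar v_n),s))\,ds \to 0$ yields $E_r(a,b) \leq \mathcal{J}(u_n,v_n) + o(1) \leq E_r(a_n,b_n) + o(1)$, completing the $\liminf$ bound.
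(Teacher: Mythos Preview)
Your upper-bound argument matches the paper's exactly. For the lower bound the paper takes a simpler route that avoids your main difficulty entirely.

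The paper does \emph{not} attempt to prove $t_n\to 0$. After moving a near-minimizer $(u_n,v_n)\in\mathcal{P}_r(a_n,b_n)$ back onto $\mathcal{S}_r$ (the paper uses spatial dilations $(\hat u_n(\cdot/s_n),\hat v_n(\cdot/t_n))$ rather than your amplitude scaling, but this is inessential), it writes
\[
E_r(a,b)\;\leq\;\mathcal J\bigl(\mathcal H((\bar u_n,\bar v_n),t_n)\bigr)
\;\leq\;\mathcal J\bigl(\mathcal H((u_n,v_n),t_n)\bigr)+A(n)
\;\leq\;\mathcal J(u_n,v_n)+A(n),
\]
where the last inequality holds for \emph{every} $t_n$ simply because $(u_n,v_n)\in\mathcal P_r(a_n,b_n)$ makes $s=0$ the maximizer of $s\mapsto\mathcal J(\mathcal H((u_n,v_n),s))$ by Lemma~\ref{equi0}(i). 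This reduces the problem to showing $A(n)=\bigl|\mathcal J(\mathcal H((\bar u_n,\bar v_n),t_n))-\mathcal J(\mathcal H((u_n,v_n),t_n))\bigr|\to 0$, for which only an \emph{upper} bound on $t_n$ is needed, together with the compact radial embedding to control the Choquard term.

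Your strategy of forcing $t_n\to 0$ has two gaps. First, bounding $t_n$ from above via $(F_2)$ and \eqref{imp} requires $\liminf_n\int(I_\mu*F(\bar u_n,\bar v_n))F(\bar u_n,\bar v_n)>0$; you have not ruled out vanishing of this integral, and the paper does so explicitly via a Lions-type argument (using radial compactness and the uniform lower bound on $\|\Delta(u_n,v_n)\|_2^2$ coming from $P=0$). Second, and more seriously, the passage from $P(\bar u_n,\bar v_n)=o(1)$ to $t_n\to 0$ via ``uniform strict negativity of $\tfrac{d}{ds}P(\mathcal H(\cdot,s))|_{s=t_n}$'' is not justified: the $(F_5)$ computation in Lemma~\ref{equi0}(i) yields $\tfrac{d^2}{ds^2}\mathcal J(\mathcal H(\cdot,s))|_{s=t_n}<0$ only \emph{pointwise at the critical point}, with no quantitative bound, and says nothing about how far $t_n$ can be from another near-zero of $s\mapsto P(\mathcal H(\cdot,s))$. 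Since $\{(\bar u_n,\bar v_n)\}$ is merely bounded in $\mathcal X_r$, not convergent, you cannot pass to a limit either. The paper's maximality trick $\mathcal J(\mathcal H((u_n,v_n),t_n))\leq\mathcal J(u_n,v_n)$ bypasses this issue completely and is the missing ingredient in your outline.
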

\begin{proof}
For any given $a,b>0$, let $\{a_n\}$, $\{b_n\}$ be any positive sequence such that $a_n\rightarrow a$ and $b_n\rightarrow b$ as $n\rightarrow\infty$, we only need to prove that $E_{r}(a_n,b_n)\rightarrow E_{r}(a,b)$ as $n\rightarrow\infty$.

On the one hand, for any $(u,v)\in \mathcal{P}_{r}(a,b)$, then $$(\frac{a_n}{a}u,\frac{b_n}{b}v)\in S_{r}(a_n)\times S_{r}(b_n)$$ for $n\in \mathbb{N}^+$. Since $(\frac{a_n}{a}u,\frac{b_n}{b}v)\rightarrow (u,v)$ in $\mathcal{X}_{r}$ as $n\rightarrow\infty$, by Lemmas  \ref{biancon} and \ref{equi0}, we have $s_{(\frac{a_n}{a}u,\frac{b_n}{b}v)}\rightarrow s_{(u,v)}=0$ in $\mathbb{R}$ as $n\rightarrow\infty$ and
\begin{equation*}
  \mathcal{H}((\frac{a_n}{a}u,\frac{b_n}{b}v),s_{(\frac{a_n}{a}u,\frac{b_n}{b}v)})\rightarrow \mathcal{H}((u,v),s_{(u,v)})=(u,v) \quad\text{in $\mathcal{X}_r$ as $n\rightarrow\infty$}.
\end{equation*}
As a consequence, $\limsup\limits_{n\rightarrow\infty}E_{r}(a_n,b_n)\leq \limsup\limits_{n\rightarrow\infty} \mathcal J(\mathcal{H}((\frac{a_n}{a}u,\frac{b_n}{b}v),s_{(\frac{a_n}{a}u,\frac{b_n}{b}v)}))=\mathcal J(u,v)$. By the arbitrariness of $(u,v)\in \mathcal{P}_{r}(a,b)$, we get $\limsup \limits_{n\rightarrow\infty}E_{r}(a_n,b_n)\leq E_{r}(a,b)$.

On the other hand, for any $n\in \mathbb{N}^+$, by the definition of $E_{r}(a_n,b_n)$, there exists $(\hat{u}_n,\hat{v}_n)\in \mathcal{P}_r(a_n,b_n)$ such that
\begin{equation*}
  \mathcal J(\hat{u}_n,\hat{v}_n)\leq E_{r}(a_n,b_n)+\frac{1}{n},
\end{equation*}
thus $\limsup\limits_{n\rightarrow\infty}\mathcal J(\hat{u}_n,\hat{v}_n)\leq E_{r}(a,b)$. This with $P(\hat{u}_n,\hat{v}_n)=0$ and $(F_2)$ yields that $\{(\hat{u}_n,\hat{v}_n)\}$ is bounded in $\mathcal{X}_r$.
Setting
\begin{equation*}
  s_n:=\Big(\frac{a}{a_n}\Big)^{\frac{1}{2}},\quad t_n:=\Big(\frac{b}{b_n}\Big)^{\frac{1}{2}},
  \quad(\tilde{u}_n,\tilde{v}_n):=(\hat{u}_n(\frac{x}{s_n}),\hat{v}_n(\frac{x}{t_n}))\in \mathcal{S}_r,
\end{equation*}
then by Lemma \ref{equi0}, we have
\begin{align*}
   E_r(a,b)&\leq \mathcal J(\mathcal{H}((\tilde{u}_n,\tilde{v}_n),s_{(\tilde{u}_n,\tilde{v}_n)}))\\&\leq \mathcal J(\mathcal{H}((\hat{u}_n,\hat{v}_n),s_{(\tilde{u}_n,\tilde{v}_n)}))+\Big|\mathcal J(\mathcal{H}((\tilde{u}_n,\tilde{v}_n),s_{(\tilde{u}_n,\tilde{v}_n)}))-\mathcal J(\mathcal{H}((\hat{u}_n,\hat{v}_n),s_{(\tilde{u}_n,\tilde{v}_n)}))\Big|\\
   &\leq \mathcal J((\hat{u}_n,\hat{v}_n))+\Big|\mathcal J(\mathcal{H}((\tilde{u}_n,\tilde{v}_n),s_{(\tilde{u}_n,\tilde{v}_n)}))-\mathcal J(\mathcal{H}((\hat{u}_n,\hat{v}_n),s_{(\tilde{u}_n,\tilde{v}_n)}))\Big|\\
   &\leq E_r(a_n,b_n)+\frac{1}{n}+\Big|\mathcal J(\mathcal{H}((\tilde{u}_n,\tilde{v}_n),s_{(\tilde{u}_n,\tilde{v}_n)}))-\mathcal J(\mathcal{H}((\hat{u}_n,\hat{v}_n),s_{(\tilde{u}_n,\tilde{v}_n)}))\Big|\\
   &=:E_r(a_n,b_n)+\frac{1}{n}+A(n).
\end{align*}
We complete the proof if we can prove that $\limsup\limits_{n\rightarrow\infty} A(n)=0$. Indeed,
\begin{align*}
  A(n)
& =\Big|\frac{1}{2}\int_{\mathbb{R}^4}(I_\mu*F(\mathcal{H}((\hat{u}_n,\hat{v}_n),s_{(\tilde{u}_n,\tilde{v}_n)})))F(\mathcal{H}((\hat{u}_n,\hat{v}_n),s_{(\tilde{u}_n,\tilde{v}_n)}))dx\\
&\quad-\frac{1}{2}\int_{\mathbb{R}^4}(I_\mu*F(\mathcal{H}((\tilde{u}_n,\tilde{v}_n),s_{(\tilde{u}_n,\tilde{v}_n)})))F(\mathcal{H}((\tilde{u}_n,\tilde{v}_n),s_{(\tilde{u}_n,\tilde{v}_n)}))dx \Big|.
\end{align*}
We claim that
$\limsup\limits_{n\rightarrow\infty} s_{(\tilde{u}_n,\tilde{v}_n)}<+\infty$, otherwise, $\limsup\limits_{n\rightarrow\infty} s_{(\tilde{u}_n,\tilde{v}_n)}=+\infty$.

First, we prove that: up to a subsequence and up to translations in $\mathbb{R}^4$, there exists $(\tilde{u},\tilde{v})\in \mathcal{X}_r$ such that $(\tilde{u}_n,\tilde{v}_n)\rightharpoonup (\tilde{u},\tilde{v})\neq(0,0)$ in $\mathcal{X}_r$.
Since $s_n,t_n\rightarrow1$ as $n\rightarrow\infty$, and $\{(\hat{u}_n,\hat{v}_n)\}$ is bounded in $\mathcal{X}_r$, we know $\{(\tilde{u}_n,\tilde{v}_n)\}$ is bounded in $\mathcal{X}_r$, up to a subsequence, we assume that $(\hat{u}_n,\hat{v}_n)\rightharpoonup (\hat{u},\hat{v})$ and $(\tilde{u}_n,\tilde{v}_n)\rightharpoonup (\tilde{u},\tilde{v})$ in $\mathcal{X}_r$. By the uniqueness of limit, we have $(\hat{u},\hat{v})=(\tilde{u},\tilde{v})$.
For any $r>0$, let
\begin{equation*}
  \varrho:=\limsup\limits_{n\rightarrow\infty}\Big(\sup\limits_{y\in \mathbb{R}^4}\int_{B(y,r)}(|\tilde{u}_n|^2+|\tilde{v}_n|^2)dx\Big).
\end{equation*}

If $\varrho>0$, there exists $\{y_n\}\subset \mathbb{R}^4$  such that $\int_{B(y_n,1)}(|\tilde{u}_n|^2+|\tilde{v}_n|^2)dx>\frac{\varrho}{2}$, i.e., $\int_{B(0,1)}(|\tilde{u}_n(x-y_n)|^2+|\tilde{v}_n(x-y_n)|^2)dx>\frac{\varrho}{2}$. Up to a subsequence, and up to translations in $\mathbb{R}^4$, $(\tilde{u}_n,\tilde{v}_n)\rightharpoonup (\tilde{u},\tilde{v})\neq(0,0)$ in $\mathcal{X}_r$.

If $\varrho=0$, using the Lions lemma \cite[Lemma 1.21]{Wil}, $\tilde{u}_n,\tilde{v}_n\rightarrow0$ in $L^p(\mathbb{R}^4)$ for any $p>2$. As a consequence, arguing as \eqref{close01},
%with a similar proof of \cite[Lemma 3.1]{CW},
 for any $\alpha\rightarrow 0^+$, $t\rightarrow1^+$, by $\tau>2-\frac{\mu}{4}>1$, $q>2$ and $t'=\frac{t}{t-1}$ large enough, we have $\frac{8\alpha t \|\Delta(\hat{u}_n,\hat{v}_n)\|_2^2}{8-\mu}\leq 32 \pi^2$ and
 \begin{align*}
 & \int_{\mathbb{R}^4}(I_\mu*F(\hat{u}_n,\hat{v}_n)[(\hat{u}_n,\hat{v}_n)\cdot \nabla F(\hat{u}_n,\hat{v}_n) ]dx\\
\leq &
  C\Big(\|\hat{u}_n\|_{\frac{8(\tau+1)}{8-\mu}}^{\frac{8(\tau+1)}{8-\mu}}+\|\hat{v}_n\|_{\frac{8(\tau+1)}{8-\mu}}^{\frac{8(\tau+1)}{8-\mu}}\Big)^{\frac{8-\mu}{4}}
  +C\Big(\|\hat{u}_n\|_{\frac{8(q+1)t'}{8-\mu}}^{\frac{8(q+1)t'}{8-\mu}}+\|\hat{v}_n\|_{\frac{8(q+1)t'}{8-\mu}}^{\frac{8(q+1)t'}{8-\mu}}\Big)^{\frac{8-\mu}{4t'}}\\
= & C\Big(s_n^{-4}\|\tilde{u}_n\|_{\frac{8(\tau+1)}{8-\mu}}^{\frac{8(\tau+1)}{8-\mu}}+t_n^{-4}\|\tilde{v}_n\|_{\frac{8(\tau+1)}{8-\mu}}^{\frac{8(\tau+1)}{8-\mu}}\Big)^{\frac{8-\mu}{4}}
+C\Big(s_n^{-4}\|\tilde{u}_n\|_{\frac{8(q+1)t'}{8-\mu}}^{\frac{8(q+1)t'}{8-\mu}}+t_n^{-4}\|\tilde{v}_n\|_{\frac{8(q+1)t'}{8-\mu}}^{\frac{8(q+1)t'}{8-\mu}}\Big)^{\frac{8-\mu}{4t'}}\rightarrow0,
\end{align*}
as $n\rightarrow\infty$.
From the above equality and $P(\hat{u}_n,\hat{v}_n)=0$, using $(F_2)$, we deduce that $\|\Delta \hat{u}_n\|_2^2+\|\Delta \hat{v}_n\|_2^2\rightarrow 0$ as $n\rightarrow\infty$. In view of Lemma \ref{continuous0}, up to a subsequence, we obtain
\begin{equation*}
  0=P(\hat{u}_n,\hat{v}_n)\geq \int_{\mathbb{R}^4}(|\Delta \hat{u}_n|^2+|\Delta \hat{v}_n|^2)dx\geq0,
\end{equation*}
which implies that $\int_{\mathbb{R}^4}|\Delta \hat{u}_n|^2dx=\int_{\mathbb{R}^4}|\Delta \hat{v}_n|^2dx=0$ for any $n\in \mathbb{N}^+$. Hence, by $(F_2)$ and $P(\hat{u}_n,\hat{v}_n)=0$, we have
\begin{align*}
  0&=\int_{\mathbb{R}^4}(I_\mu*F(\hat{u}_n,\hat{v}_n))\Big(\frac{8-\mu}{4}F(\hat{u}_n,\hat{v}_n)-(\hat{u}_n,\hat{v}_n)\cdot \nabla F(\hat{u}_n,\hat{v}_n) \Big)dx\\&\leq \int_{\mathbb{R}^4}(I_\mu*F(\hat{u}_n,\hat{v}_n))(\frac{8-\mu}{4\theta}-1)[(\hat{u}_n,\hat{v}_n)\cdot \nabla F(\hat{u}_n,\hat{v}_n)] dx\leq0,
\end{align*}
So $\hat{u}_n,\hat{v}_n\rightarrow0$ a.e. in $\mathbb{R}^4$, which is contradict to $a_n,b_n>0$.

If $\limsup\limits_{n\rightarrow\infty} s_{(\tilde{u}_n,\tilde{v}_n)}=+\infty$, using \eqref{imp}, $(F_2)$ and $(\tilde{u}_n,\tilde{v}_n)\rightarrow(\tilde{u},\tilde{v})\neq(0,0)$ a.e. in $\mathbb{R}^4$, we have %$s_{\omega_n(\cdot-y_n)}= s_{\omega_n}\rightarrow+\infty$ as $n\rightarrow\infty$ and
\begin{align*}
  0&\leq \liminf\limits_{n\rightarrow\infty}e^{-4s_{(\tilde{u}_n,\tilde{v}_n)}}\mathcal J(\mathcal{H}((\tilde{u}_n,\tilde{v}_n),s_{(\tilde{u}_n,\tilde{v}_n)}))\\
  &\leq \liminf\limits_{n\rightarrow\infty}\frac{1}{2}\Big[\int_{\mathbb{R}^4}(|\Delta \tilde{u}_n|^2+|\Delta \tilde{v}_n|^2)dx-e^{(4\theta+\mu-12)s_{(\tilde{u}_n,\tilde{v}_n)}}\int_{\mathbb{R}^4}(I_\mu*F(\tilde{u}_n,\tilde{v}_n)F(\tilde{u}_n,\tilde{v}_n)dx\Big]=-\infty,
\end{align*}
which is an absurd. Hence, $\limsup\limits_{n\rightarrow\infty} s_{(\tilde{u}_n,\tilde{v}_n)}<+\infty$, up to a subsequence, we assume that $\sup\limits_{n\in \mathbb{N}^+} s_{(\tilde{u}_n,\tilde{v}_n)}<+\infty$.

According to $\{(\hat{u}_n,\hat{v}_n)\}$ and $\{(\tilde{u}_n,\tilde{v}_n)\}$ are bounded in $\mathcal{X}_r$, using $\sup\limits_{n\in \mathbb{N}^+} s_{(\tilde{u}_n,\tilde{v}_n)}<+\infty$, we obtain $$\sup\limits_{n\in \mathbb{N}^+} \|\Delta\mathcal{H}((\hat{u}_n,\hat{v}_n),s_{(\tilde{u}_n,\tilde{v}_n)})\|_2^2<+\infty,\quad\sup\limits_{n\in \mathbb{N}^+} \|\Delta\mathcal{H}((\tilde{u}_n,\tilde{v}_n),s_{(\tilde{u}_n,\tilde{v}_n)})\|_2^2<+\infty.$$  Fix $\alpha\rightarrow 0^+$ and $t\rightarrow1^+$ such that
\begin{equation*}
  \sup\limits_{n\in \mathbb{N}^+} \alpha t\|\Delta(\hat{u}_n,\hat{v}_n)\|_2^2\leq 32\pi^2,\quad \sup\limits_{n\in \mathbb{N}^+} \alpha t\|\Delta(\tilde{u}_n,\tilde{v}_n)\|_2^2\leq 32\pi^2,
\end{equation*}
and
\begin{equation*}
   \sup\limits_{n\in \mathbb{N}^+} \frac{4\alpha t\|\Delta\mathcal{H}((\hat{u}_n,\hat{v}_n),s_{(\tilde{u}_n,\tilde{v}_n)})\|_2^2}{4-\mu}\leq 32\pi^2,\quad \sup\limits_{n\in \mathbb{N}^+} \frac{4\alpha t\|\Delta\mathcal{H}((\tilde{u}_n,\tilde{v}_n),s_{(\tilde{u}_n,\tilde{v}_n)})\|_2^2}{4-\mu}\leq 32\pi^2.
\end{equation*}
With a similar proof of Lemma \ref{strong0}, we can prove that, for any $\alpha\rightarrow 0^+$ and $t\rightarrow1^+$,
\begin{equation*}
  (e^{\alpha |(\hat{u}_n,\hat{v}_n)|^2}-1)\rightharpoonup (e^{\alpha |(\tilde{u},\tilde{v})|^2}-1),\quad (e^{\alpha |(\tilde{u}_n,\tilde{v}_n)|^2}-1)\rightharpoonup (e^{\alpha |(\tilde{u},\tilde{v})|^2}-1),\quad\text{in $L^t(\mathbb{R}^4)$}
\end{equation*}
and
\begin{equation*}
  \|(I_\mu*F(\mathcal{H}((\hat{u}_n,\hat{v}_n),s_{(\tilde{u}_n,\tilde{v}_n)})))\|_\infty\leq C,\quad\|(I_\mu*F(\mathcal{H}((\tilde{u}_n,\tilde{v}_n),s_{(\tilde{u}_n,\tilde{v}_n)})))\|_\infty\leq C.
\end{equation*}
Hence,
\begin{align*}
  &(I_\mu*F(\mathcal{H}((\hat{u}_n,\hat{v}_n),s_{(\tilde{u}_n,\tilde{v}_n)})))F(\mathcal{H}((\hat{u}_n,\hat{v}_n),s_{(\tilde{u}_n,\tilde{v}_n)}))\\
  &-(I_\mu*F(\mathcal{H}((\tilde{u}_n,\tilde{v}_n),s_{(\tilde{u}_n,\tilde{v}_n)})))F(\mathcal{H}((\tilde{u}_n,\tilde{v}_n),s_{(\tilde{u}_n,\tilde{v}_n)}))\rightarrow 0\quad \text{a.e. in $\mathbb{R}^4$},
\end{align*}
\begin{align*}
  &|(I_\mu*F(\mathcal{H}((\hat{u}_n,\hat{v}_n),s_{(\tilde{u}_n,\tilde{v}_n)})))F(\mathcal{H}((\hat{u}_n,\hat{v}_n),s_{(\tilde{u}_n,\tilde{v}_n)}))\\
-&(I_\mu*F(\mathcal{H}((\tilde{u}_n,\tilde{v}_n),s_{(\tilde{u}_n,\tilde{v}_n)})))F(\mathcal{H}((\tilde{u}_n,\tilde{v}_n),s_{(\tilde{u}_n,\tilde{v}_n)}))|
\\ \leq& C(|(\hat{u}_n,\hat{v}_n)|^{\tau+1}+|(\tilde{u}_n,\tilde{v}_n)|^{\tau+1})+C[|(\hat{u}_n,\hat{v}_n)|^{q+1}(e^{\alpha |(\hat{u}_n,\hat{v}_n)|^2}-1)+|(\tilde{u}_n,\tilde{v}_n)|^{q+1}(e^{\alpha |(\tilde{u}_n,\tilde{v}_n)|^2}-1)]
\end{align*}
for all $n\in \mathbb{N}^+$,
and
\begin{align*}
  &|(\hat{u}_n,\hat{v}_n)|^{\tau+1}+|(\tilde{u}_n,\tilde{v}_n)|^{\tau+1}+|(\hat{u}_n,\hat{v}_n)|^{q+1}(e^{\alpha |(\hat{u}_n,\hat{v}_n)|^2}-1)+|(\tilde{u}_n,\tilde{v}_n)|^{q+1}(e^{\alpha |(\tilde{u}_n,\tilde{v}_n)|^2}-1)\\
\rightarrow& 2|(\tilde{u},\tilde{v})|^{\tau+1}+2|(\tilde{u},\tilde{v})|^{q+1}(e^{\alpha |(\tilde{u},\tilde{v})|^2}-1)\quad\text{a.e. in $\mathbb{R}^4$}.
\end{align*}
In the following, we prove that
\begin{equation}\label{e3.10}
  \int_{\mathbb{R}^4}(|(\hat{u}_n,\hat{v}_n)|^{\tau+1}+|(\tilde{u}_n,\tilde{v}_n)|^{\tau+1})dx\rightarrow 2\int_{\mathbb{R}^4}|(\tilde{u},\tilde{v})|^{\tau+1}dx,\,\,\,\text{as}\,\,\, n\rightarrow\infty
\end{equation}
and
\begin{align}\label{e3.11}
  &\int_{\mathbb{R}^4}[|(\hat{u}_n,\hat{v}_n)|^{q+1}(e^{\alpha |(\hat{u}_n,\hat{v}_n)|^2}-1)+|(\tilde{u}_n,\tilde{v}_n)|^{q+1}(e^{\alpha |(\tilde{u}_n,\tilde{v}_n)|^2}-1)]dx\nonumber\\
  &\rightarrow 2\int_{\mathbb{R}^4}|(\tilde{u},\tilde{v})|^{q+1}(e^{\alpha |(\tilde{u},\tilde{v})|^2}-1)dx,\,\,\,\text{as}\,\,\, n\rightarrow\infty.
\end{align}
 Using the compact embedding $H^2_{rad}(\mathbb{R}^4)\hookrightarrow L^p(\mathbb{R}^4)$ for any $p>2$, arguing as \eqref{lp}, we obtain \eqref{e3.10} and $|(\hat{u}_n,\hat{v}_n)|^{q+1} \rightarrow |(\tilde{u},\tilde{v})|^{q+1}$, $|(\tilde{u}_n,\tilde{v}_n)|^{q+1} \rightarrow |(\tilde{u},\tilde{v})|^{q+1}$ in $L^{t'}(\mathbb{R}^4)$, where $t'=\frac{t}{t-1}$. By the definition of weak convergence, using the H\"{o}lder inequality, we infer that
\begin{align*}
  &\Big|\int_{\mathbb{R}^4}[|(\hat{u}_n,\hat{v}_n)|^{q+1}(e^{\alpha |(\hat{u}_n,\hat{v}_n)|^2}-1)+|(\tilde{u}_n,\tilde{v}_n)|^{q+1}(e^{\alpha |(\tilde{u}_n,\tilde{v}_n)|^2}-1)]dx-\int_{\mathbb{R}^4}2|(\tilde{u},\tilde{v})|^{q+1}(e^{\alpha |(\tilde{u},\tilde{v})|^2}-1)dx\Big|\\
  &\leq \Big|\int_{\mathbb{R}^4}|(\hat{u}_n,\hat{v}_n)|^{q+1}(e^{\alpha |(\hat{u}_n,\hat{v}_n)|^2}-1)dx-\int_{\mathbb{R}^4}|(\tilde{u},\tilde{v})|^{q+1}(e^{\alpha |(\tilde{u},\tilde{v})|^2}-1)dx\Big|\\
  &\quad+\Big|\int_{\mathbb{R}^4}|(\tilde{u}_n,\tilde{v}_n)|^{q+1}(e^{\alpha |(\tilde{u}_n,\tilde{v}_n)|^2}-1)dx-\int_{\mathbb{R}^4}|(\tilde{u},\tilde{v})|^{q+1}(e^{\alpha |(\tilde{u},\tilde{v})|^2}-1)dx\Big|\\
&\leq\int_{\mathbb{R}^4}\Big||(\hat{u}_n,\hat{v}_n)|^{q+1}-|(\tilde{u},\tilde{v})|^{q+1}\Big|(e^{\alpha |(\hat{u}_n,\hat{v}_n)|^2}-1)dx+\int_{\mathbb{R}^4}|(\tilde{u},\tilde{v})|^{q+1}\Big|(e^{\alpha |(\hat{u}_n,\hat{v}_n)|^2}-1)-(e^{\alpha |(\tilde{u},\tilde{v})|^2}-1)\Big|dx\\
&\quad+\int_{\mathbb{R}^4}\Big||(\tilde{u}_n,\tilde{v}_n)|^{q+1}-|(\tilde{u},\tilde{v})|^{q+1}\Big|(e^{\alpha |(\tilde{u}_n,\tilde{v}_n)|^2}-1)dx+\int_{\mathbb{R}^4}|(\tilde{u},\tilde{v})|^{q+1}\Big|(e^{\alpha |(\tilde{u}_n,\tilde{v}_n)|^2}-1)-(e^{\alpha |(\tilde{u},\tilde{v})|^2}-1)\Big|dx\\
&\leq  \Big(\int_{\mathbb{R}^4}\Big||(\hat{u}_n,\hat{v}_n)|^{q+1}-|(\tilde{u},\tilde{v})|^{q+1}\Big|^{t'}dx\Big)^{\frac{1}{t'}}\Big(\int_{\mathbb{R}^4}(e^{\alpha |(\hat{u}_n,\hat{v}_n)|^2}-1)^tdx\Big)^{\frac{1}{t}}
 \\&\quad +\int_{\mathbb{R}^4}|(\tilde{u},\tilde{v})|^{q+1}\Big|(e^{\alpha |(\hat{u}_n,\hat{v}_n)|^2}-1)-(e^{\alpha |(\tilde{u},\tilde{v})|^2}-1)\Big|dx\\
  &\quad+\Big(\int_{\mathbb{R}^4}\Big||(\tilde{u}_n,\tilde{v}_n)|^{q+1}-|(\tilde{u},\tilde{v})|^{q+1}\Big|^{t'}dx\Big)^{\frac{1}{t'}}\Big(\int_{\mathbb{R}^4}(e^{\alpha |(\tilde{u}_n,\tilde{v}_n)|^2}-1)^tdx\Big)^{\frac{1}{t}}
 \\&\quad +\int_{\mathbb{R}^4}|(\tilde{u},\tilde{v})|^{q+1}\Big|(e^{\alpha |(\tilde{u}_n,\tilde{v}_n)|^2}-1)-(e^{\alpha |(\tilde{u},\tilde{v})|^2}-1)\Big|dx\rightarrow 0,\quad\text{as $n\rightarrow\infty$}.
\end{align*}
 Applying a variant of the Lebesgue dominated convergence theorem, we get $\limsup\limits_{n\rightarrow\infty} A(n)=0$.
Thus $E_r(a_n,b_n)\rightarrow E_r(a,b)$ as $n\rightarrow\infty$.
\end{proof}

\begin{lemma}\label{nonincreasing0}
Assume that $F$ satisfies $(F_1)$, $(F_2)$, $(F_{5})$ and $F_{z_j}$($j=1,2$) has exponential subcritical growth at $\infty$, then the functions $a\mapsto E(a,b)$ and $b\mapsto E(a,b)$ are non-increasing.
\end{lemma}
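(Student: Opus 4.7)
By the symmetric structure in the two components, it suffices to show that $a\mapsto E(a,b)$ is non-increasing; the monotonicity in $b$ then follows by exchanging the roles of $u$ and $v$. Fix $b>0$, $0<a_1<a_2$, and set $\xi:=a_2/a_1>1$. The plan is the following: for any $(u,v)\in\mathcal{P}(a_1,b)$, the pair $(\xi u,v)$ lies in $\mathcal{S}(a_2,b)$, and Lemma \ref{equi0}(i) supplies a unique $s_\ast\in\mathbb{R}$ with $\mathcal{H}((\xi u,v),s_\ast)\in\mathcal{P}(a_2,b)$. Consequently
\[E(a_2,b)\le \mathcal{J}(\mathcal{H}((\xi u,v),s_\ast)) = \max_{s\in\mathbb{R}}\mathcal{J}(\mathcal{H}((\xi u,v),s)).\]
Since Lemma \ref{equi0}(i) also gives $\max_{s\in\mathbb{R}}\mathcal{J}(\mathcal{H}((u,v),s)) = \mathcal{J}(u,v)$ for $(u,v)\in\mathcal{P}(a_1,b)$, the lemma reduces to proving the inequality
\[\max_{s\in\mathbb{R}}\mathcal{J}(\mathcal{H}((\xi u,v),s)) \le \max_{s\in\mathbb{R}}\mathcal{J}(\mathcal{H}((u,v),s)),\]
after which taking the infimum over $(u,v)\in\mathcal{P}(a_1,b)$ yields $E(a_2,b)\le E(a_1,b)$.

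To establish the central inequality, I would reparametrize by $\sigma>0$, setting $\sigma=\xi e^{2s}$ on the left-hand side and $\sigma=e^{2s}$ on the right-hand side. Both objectives become functions of $\sigma$:
\[h_\xi(\sigma)=\tfrac{\sigma^2}{2}\|\Delta u\|_2^2+\tfrac{\sigma^2}{2\xi^2}\|\Delta v\|_2^2-\tfrac{1}{2}\xi^{(8-\mu)/2}\sigma^{(\mu-8)/2}\Phi\bigl(\sigma u,(\sigma/\xi)v\bigr),\]
\[h_1(\sigma)=\tfrac{\sigma^2}{2}\|\Delta u\|_2^2+\tfrac{\sigma^2}{2}\|\Delta v\|_2^2-\tfrac{1}{2}\sigma^{(\mu-8)/2}\Phi(\sigma u,\sigma v),\]
where $\Phi(p,q):=\int_{\mathbb{R}^4}(I_\mu*F(p,q))F(p,q)\,dx$. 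At each fixed $\sigma$ the $u$-kinetic terms coincide, the $v$-kinetic term is strictly smaller in $h_\xi$, and the task reduces to showing that the Choquard factor in $h_\xi$ dominates that in $h_1$. This amounts to verifying that $\Psi(\xi):=\xi^{(8-\mu)/2}\Phi(\sigma u,(\sigma/\xi)v)$ is non-decreasing on $[1,\infty)$. A direct differentiation, together with the self-adjointness of convolution against $I_\mu$, gives
\[\frac{d}{d\xi}\log\Psi(\xi)=\frac{8-\mu}{2\xi}-\frac{2}{\xi\,\Phi(\sigma u,(\sigma/\xi)v)}\int_{\mathbb{R}^4}(I_\mu*F)\,F_{z_2}\bigl(\sigma u,(\sigma/\xi)v\bigr)\,(\sigma/\xi)v\,dx,\]
and applying $(F_4)$ (extended by continuity to the coordinate axes using $(F_1)$) yields $0\le F_{z_2}(z)z_2\le(2-\mu/4)F(z)$ pointwise, so $(d/d\xi)\log\Psi(\xi)\ge(8-\mu)/(2\xi)-(4-\mu/2)/\xi=0$. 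Hence $\Psi(\xi)\ge\Psi(1)$ for $\xi\ge 1$, which delivers $h_\xi(\sigma)\le h_1(\sigma)$ for every $\sigma>0$ and therefore $\max_\sigma h_\xi \le \max_\sigma h_1$, closing the argument.

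The main technical point is the careful extension of $(F_4)$ to the axes: the inequalities $0<F_{z_j}(z)z_j<(2-\mu/4)F(z)$ are stipulated only when both components of $z$ are non-zero, but continuity of $F$ and $F_{z_j}$ from $(F_1)$ extends $0\le F_{z_j}(z)z_j\le(2-\mu/4)F(z)$ to all of $\mathbb{R}^2$, which is exactly what is needed for the integrated bound. Positivity of $\Phi$ itself is guaranteed by $(F_2)$ together with $\|u\|_2=a_1,\|v\|_2=b>0$, so there is no risk of dividing by zero. The remainder is algebraic bookkeeping, and the argument is insensitive to the subcritical or critical regime of $F$, so the same proof will also serve for the monotonicity needed in Section \ref{cri}.
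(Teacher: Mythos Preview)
Your approach is correct but genuinely different from the paper's, and it relies on a hypothesis the lemma does not assume. The paper proves Lemma~\ref{nonincreasing0} by a \emph{cut-and-paste} construction: given $(u,v)\in\mathcal{P}(a,b)$ with energy close to $E(a,b)$, it truncates $u$ by a cutoff $\varphi(\delta x)$, then adds a compactly supported bump $\hat u_{\delta_0}$ with support disjoint from that of $u_{\delta_0}$ to raise the $L^2$-norm to $\tilde a$, and finally sends the bump's dilation parameter $\lambda\to-\infty$ so that its energy contribution vanishes. This argument uses only $(F_1)$, $(F_2)$, $(F_5)$, exactly as stated in the lemma.

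Your argument instead multiplies the first component by $\xi=a_2/a_1$ and compares the resulting fiber maxima pointwise in the dilation parameter. The computation is clean and the reparametrization $\sigma=\xi e^{2s}$ is correct, but the key step $\Psi'(\xi)\ge 0$ rests on the pointwise bound $F_{z_2}(z)z_2\le(2-\mu/4)F(z)$, which is precisely $(F_4)$. Since $(F_4)$ is not among the hypotheses of Lemma~\ref{nonincreasing0}, your proof establishes a strictly weaker statement than the one claimed. In the overall logic of the paper this is harmless---the lemma is invoked only inside the proofs of Theorems~\ref{th3} and~\ref{th5}, where $(F_1)$--$(F_5)$ are all in force---but if you want to prove the lemma exactly as stated you must either switch to the paper's disjoint-support construction (which needs no structural inequality like $(F_4)$) or add $(F_4)$ to the hypotheses. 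The trade-off: your route is shorter and more transparent when $(F_4)$ is available; the paper's route is more robust and matches the stated hypotheses.
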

\begin{proof}
For any given $a,b>0$, if $\tilde{a}>a$ and $\tilde{b}>b$, we shall prove that $E(\tilde{a},b)\leq E(a,b)$ and $E(a,\tilde{b})\leq E(a,b)$. By the definition of $E(a,b)$, for any $\varepsilon>0$, there exists $(u,v)\in \mathcal{P}(a,b)$ such that
 \begin{equation}\label{nonin00}
 \mathcal J(u,v)\leq E(a,b)+\frac{\varepsilon}{3}.
\end{equation}
Consider a cut-off function $\varphi\in C_0^\infty(\mathbb{R}^4,[0,1])$ such that $\varphi(x)=1$ if $|x|\leq1$, $\varphi(x)=0$ if $|x|\geq2$. For any small $\delta>0$, define $u_\delta(x)=\varphi(\delta x)u(x)\in H^2(\mathbb{R}^4) \backslash\{0\}$, then $(u_\delta(x),v(x))\rightarrow (u,v)$ in $\mathcal{X}$ as $\delta\rightarrow0^+$. By Lemmas \ref{biancon} and \ref{equi0}, we have $s_{(u_\delta,v)}\rightarrow s_{(u,v)}=0$ in $\mathbb{R}$ as $\delta\rightarrow0^+$ and
\begin{equation*}
  \mathcal{H}((u_\delta,v),s_{(u_\delta,v)})\rightarrow \mathcal{H}((u,v),s_{(u,v)})=(u,v) \quad\text{in $\mathcal{X}$ as $\delta\rightarrow0^+$}.
\end{equation*}
Fix $\delta_0>0$ small enough such that
\begin{equation}\label{nonin01}
  \mathcal J(\mathcal{H}((u_{\delta_0},v),s_{(u_{\delta_0},v)}))\leq \mathcal J(u,v)+\frac{\varepsilon}{3}.
\end{equation}
Let $\psi(x)\in C_0^\infty(\mathbb{R}^4)$ satisfy $supp(\psi)\subset B_{1+\frac{4}{\delta_0}}(0)\backslash B_{\frac{4}{\delta_0}}(0)$, and set
\begin{equation*}
  \hat{u}_{\delta_0}=\frac{\tilde{a}^2-\|u_{\delta_0}\|_2^2}{\|\psi\|_2^2}\psi.
\end{equation*}
Define $\hat{u}_\lambda=u_{\delta_0}+\mathcal{H}(\hat{u}_{\delta_0},\lambda)$ for any $\lambda<0$.
Since $$dist(u_{\delta_0},\mathcal{H}(\hat{u}_{\delta_0},\lambda))\geq \frac{2}{\delta_0}>0
,$$ we have $\|\hat{u}_\lambda\|_2^2=\tilde{a}^2$, i.e., $(\hat{u}_\lambda,v) \in S(\tilde{a})\times S(b)$.

We claim that $s_{(\hat{u}_\lambda,v)}$ is bounded from above as $\lambda\rightarrow-\infty$. Otherwise, by \eqref{imp}, $(F_2)$ and $(\hat{u}_\lambda,v)\rightarrow (u_{\delta_0},v)\neq(0,0)$ a.e. in $\mathbb{R}^4$ as $\lambda\rightarrow-\infty$, we have
\begin{align*}
  0&\leq \lim\limits_{n\rightarrow\infty}e^{-4s_{(\hat{u}_\lambda,v)}}\mathcal J(\mathcal{H}((\hat{u}_\lambda,v),s_{(\hat{u}_\lambda,v)}))\\
  &\leq \lim\limits_{n\rightarrow\infty}\frac{1}{2}\Big[\int_{\mathbb{R}^4}(|\Delta \hat{u}_\lambda|^2+|\Delta v|^2)dx-e^{(4\theta+\mu-12)s_{(\hat{u}_\lambda,v)}}\int_{\mathbb{R}^4}(I_\mu*F(\hat{u}_\lambda,v))F(\hat{u}_\lambda,v)dx\Big]=-\infty,
\end{align*}
which is a contradiction. Thus $s_{(\hat{u}_\lambda,v)}+\lambda\rightarrow-\infty$ as $\lambda\rightarrow-\infty$, by $(F_2)$, we get
\begin{equation}\label{nonin02}
  \mathcal J(\mathcal{H}((\hat{u}_{\delta_0},0),s_{(\hat{u}_\lambda,v)}+\lambda))\leq \frac{ e^{4(s_{(\hat{u}_\lambda,v)}+\lambda)}}{2}\|\Delta \hat{u}_{\delta_0}\|_2^2
  \rightarrow0, \quad\text{as}\,\,\, \lambda\rightarrow-\infty.
\end{equation}
Now, using Lemma \ref{equi0}, $\eqref{nonin00}-\eqref{nonin02}$, we obtain
\begin{align*}
  E(\tilde{a},b)\leq \mathcal J(\mathcal{H}((\hat{u}_\lambda,v),s_{(\hat{u}_\lambda,v)}))&=\mathcal J(\mathcal{H}((u_{\delta_0},v),s_{(\hat{u}_\lambda,v)}))+\mathcal J(\mathcal{H}((\mathcal{H}(\hat{u}_{\delta_0},0),\lambda),s_{(\hat{u}_\lambda,v)}))\\
  &=\mathcal J(\mathcal{H}((u_{\delta_0},v),s_{(\hat{u}_\lambda,v)}))+\mathcal J(\mathcal{H}((\hat{u}_{\delta_0},0),s_{(\hat{u}_\lambda,v)}+\lambda))\\
  &\leq\mathcal J(\mathcal{H}((u_{\delta_0},v),s_{(u_{\delta_0},v)}))+\mathcal J(\mathcal{H}((\hat{u}_{\delta_0},0),s_{(\hat{u}_\lambda,v)}+\lambda))\leq E(a,b)+\varepsilon.
\end{align*}
By the arbitrariness of $\varepsilon>0$, we deduce that $E(\tilde{a},b)\leq E(a,b)$. Similarly, we prove $E(a,\tilde{b})\leq E(a,b)$.
\end{proof}
\begin{lemma}\label{close to0}
Assume that $F$ satisfies $(F_1)$, $(F_2)$, $(F_{5})$ and $F_{z_j}$($j=1,2$) has exponential subcritical growth at $\infty$. Suppose that \eqref{abs} possesses a ground state solution $(u,v)$ with  $\lambda_1,\lambda_2<0$, then $E(a',b)<E(a,b)$ for any $a'>a$ close to $a$ and $E(a,b')<E(a,b)$ for any $b'>b$ close to $b$.
\end{lemma}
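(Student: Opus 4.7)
The plan is to exploit the strict negativity of $\lambda_1$ via a one-parameter perturbation of the first component. For $\tau > 1$ close to $1$, set $u_\tau := \tau u$, so that $\|u_\tau\|_2 = \tau a$ and $\|v\|_2 = b$. With $a' := \tau a$, the pair $(u_\tau, v)$ satisfies the mass constraints for $(a', b)$. By Lemma \ref{equi0}, there is a unique $s_\tau \in \mathbb{R}$ with $\mathcal{H}((u_\tau, v), s_\tau) \in \mathcal{P}(a', b)$, and so
\[
E(a', b) \leq \mathcal{J}(\mathcal{H}((u_\tau, v), s_\tau)) =: g(\tau).
\]
Because $(u, v) \in \mathcal{P}(a, b)$ forces $s_1 = 0$ and $g(1) = \mathcal{J}(u, v) = E(a, b)$, it suffices to prove $g(\tau) < g(1)$ for $\tau > 1$ close to $1$.

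Put $\phi(\tau, s) := \mathcal{J}(\mathcal{H}((\tau u, v), s))$; then $g(\tau) = \phi(\tau, s_\tau)$. An argument identical to that of Lemma \ref{equi0} (ii) shows $\tau \mapsto s_\tau$ is continuous at $\tau = 1$, so $s_\tau \to 0$ as $\tau \to 1^+$. Since $s=0$ is the global maximum of $s \mapsto \phi(1, s)$, we have $\phi(1, s_\tau) \leq \phi(1, 0) = g(1)$. The mean value theorem gives
\[
g(\tau) - g(1) \leq \phi(\tau, s_\tau) - \phi(1, s_\tau) = \phi_\tau(\xi_\tau, s_\tau)\,(\tau - 1)
\]
for some $\xi_\tau \in (1, \tau)$. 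A direct differentiation yields
\[
\phi_\tau(1, 0) = \|\Delta u\|_2^2 - \int_{\mathbb{R}^4}(I_\mu * F(u, v))\, F_u(u, v)\, u\, dx,
\]
and testing the first equation of \eqref{abs} with $u$ gives $\|\Delta u\|_2^2 = \lambda_1 a^2 + \int_{\mathbb{R}^4}(I_\mu * F(u, v))F_u(u, v)\, u\, dx$. Hence $\phi_\tau(1, 0) = \lambda_1 a^2 < 0$, and by continuity of $\phi_\tau$ we obtain $\phi_\tau(\xi_\tau, s_\tau) < \lambda_1 a^2 / 2 < 0$ for $\tau$ sufficiently close to $1$. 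Therefore $g(\tau) \leq g(1) + (\lambda_1 a^2/2)(\tau - 1) < g(1)$, yielding $E(a', b) < E(a, b)$.

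The conclusion $E(a, b') < E(a, b)$ for $b' > b$ close to $b$ follows by the symmetric argument with $(u, \tau v)$ in place of $(\tau u, v)$ and $\lambda_2 < 0$ in place of $\lambda_1 < 0$. The main delicate point is the continuity of $\tau \mapsto s_\tau$ at $\tau = 1$: this is obtained by adapting the proof of Lemma \ref{equi0} (ii) to sequences $(\tau_n u, v)$, using the Pohozaev constraint together with $(F_2)$ to exclude $s_{\tau_n} \to \pm \infty$ and invoking the uniqueness of $s_{(u,v)}$ to identify the limit as $0$. Everything else is a mean-value-plus-Euler--Lagrange computation; the crucial input from the hypothesis $\lambda_1, \lambda_2 < 0$ enters only through the sign of $\phi_\tau(1,0)$.
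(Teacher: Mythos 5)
Your argument is correct and is essentially the paper's own proof: both scale the first component ($u\mapsto \tau u$), compute the $\tau$-derivative of $\mathcal J(\mathcal H((\tau u,v),s))$, identify its value at $(\tau,s)=(1,0)$ with $\lambda_1 a^2<0$ via the first equation of \eqref{abs}, and conclude by the mean value theorem combined with the continuity $s_{(\tau u,v)}\to s_{(u,v)}=0$ and the maximality of $s=0$ from Lemma \ref{equi0}. The only difference is cosmetic bookkeeping (the paper phrases the derivative through $\langle\mathcal J'(\mathcal H((tu,v),s)),\mathcal H((tu,v),s)\rangle$ and fixes a rectangle $(1,1+\delta]\times[-\delta,\delta]$ where it is negative), so no further comment is needed.
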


\begin{proof}
For any $t>0$ and $s\in \mathbb{R}$, we know $\mathcal{H}((tu,v),s)\in S(ta)\times S(b)$ and
\begin{align*}
\mathcal J(\mathcal{H}((tu,v),s))=&\frac{  e^{4s}}{2}\int_{\mathbb{R}^4}(t^2|\Delta u|^2+|\Delta v|^2)dx-\frac{e^{(\mu-8)s}}{2}\int_{\mathbb{R}^4}(I_\mu*F(te^{2s}u,e^{2s}v))F(te^{2s}u,e^{2s}v)dx.
\end{align*}
Denote $\alpha(t,s):=\mathcal J(\mathcal{H}((tu,v),s))$, then
\begin{align*}
  \frac{\partial \alpha(t,s) }{\partial t}=& t e^{4s}\int_{\mathbb{R}^4}|\Delta u|^2dx-e^{(\mu-8)s}\int_{\mathbb{R}^4}(I_\mu*F(te^{2s}u,e^{2s}v))\frac{\partial F(te^{2s}u,e^{2s}v)}{\partial (te^{2s}u) } e^{2s}udx:=\frac{B}{t},
\end{align*}
where
\begin{align*}
  B=&\langle\mathcal{J}'(\mathcal{H}((tu,v),s)),\mathcal{H}((tu,v),s)\rangle- e^{4s}\int_{\mathbb{R}^4}|\Delta v|^2dx-e^{(\mu-8)s}\int_{\mathbb{R}^4}(I_\mu*F(te^{2s}u,e^{2s}v))\frac{\partial F(te^{2s}u,e^{2s}v)}{\partial (e^{2s}v) } e^{2s}vdx
\end{align*}
By Lemma \ref{biancon}, $\mathcal{H}((tu,v),s)\rightarrow (u,v)$ in $\mathcal{X}$ as $(t,s)\rightarrow (1,0)$, and since $\lambda_1<0$, we have
\begin{equation*}
  \langle\mathcal{J}'(u,v),(u,v)\rangle- \int_{\mathbb{R}^4}|\Delta v|^2dx-\int_{\mathbb{R}^4}(I_\mu*F(u,v))F_v(u,v) vdx=\lambda_1 \|u\|_2^2=\lambda_1 a^2<0.
\end{equation*}
Hence, one can fix $\delta>0$ small enough such that
\begin{equation*}
   \frac{\partial \alpha(t,s) }{\partial t}<0\quad\text{for any $(t,s)\in (1,1+\delta]\times [-\delta,\delta]$}.
\end{equation*}
For any $t\in (1,1+\delta]$ and $s\in [-\delta,\delta]$, using the mean value theorem, we obtain
\begin{equation*}
  \alpha(t,s)=\alpha(1,s)+(t-1) \frac{\partial \alpha(t,s) }{\partial t}\Big|_{t=\xi}<\alpha(1,s)
\end{equation*}
for some $\xi\in (1,t)$. By Lemma \ref{equi0}, $s_{(tu,v)}\rightarrow s_{(u,v)}=0$  in $\mathbb{R}$ as $t \rightarrow1^+$. For any $a'>a$ close to $a$, let $t_0=\frac{a'}{a}$, then
\begin{equation*}
  t_0\in (1,1+\delta],\quad s_{(t_0u,v)}\in [-\delta,\delta]
\end{equation*}
and thus, using Lemma \ref{equi0} again,
\begin{equation*}
  E(a',b)\leq \alpha(t_0,s_{(t_0u,v)})<\alpha(1,s_{(t_0u,v)})=\mathcal J(\mathcal{H}((u,v),s_{(t_0u,v)}))\leq \mathcal J(u,v)=E(a,b).
\end{equation*}
Analogously, we can prove that $E(a,b')<E(a,b)$ for any $b'>b$ close to $b$.
\end{proof}
From Lemmas \ref{nonincreasing0} and \ref{close to0}, we directly obtain
\begin{lemma}\label{conclusion0}
Assume that $F$ satisfies $(F_1)$, $(F_2)$, $(F_{5})$ and $F_{z_j}$($j=1,2$) has exponential subcritical growth at $\infty$. Suppose that \eqref{abs} possesses a ground state solution with  $\lambda_1,\lambda_2<0$,
then $E(a',b)<E(a,b)$ for any $a'>a$ and $E(a,b')<E(a,b)$ for any $b'>b$.
\end{lemma}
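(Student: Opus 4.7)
The plan is to bootstrap the local strict monotonicity from Lemma \ref{close to0} to a global one via a sandwiching argument that uses only the non-strict monotonicity of Lemma \ref{nonincreasing0}. Thus no new variational analysis is required: the two preceding lemmas already encode all of the work.

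Fix $a' > a$. First I would choose an intermediate mass $a'' \in (a, a')$ sufficiently close to $a$ so that Lemma \ref{close to0} applies (its hypothesis is precisely that \eqref{abs} admits a ground state with $\lambda_1,\lambda_2<0$, which is exactly what is assumed here). This yields the strict inequality
\begin{equation*}
E(a'',b) < E(a,b).
\end{equation*}
Since $a' > a''$, Lemma \ref{nonincreasing0} gives $E(a',b) \leq E(a'',b)$. Chaining these two bounds produces the desired strict inequality
\begin{equation*}
E(a',b) \leq E(a'',b) < E(a,b).
\end{equation*}
The argument for the $b$-variable is identical: pick $b'' \in (b,b')$ close to $b$, apply Lemma \ref{close to0} to get $E(a,b'')<E(a,b)$, and then apply Lemma \ref{nonincreasing0} to get $E(a,b')\leq E(a,b'')$.

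I do not anticipate any serious obstacle here, since the content of the statement is a purely formal consequence of the two prior lemmas. The only mild point of care is to note that Lemma \ref{close to0} does yield a nonempty interval of admissible intermediate values $a''$ (respectively $b''$) strictly between $a$ and $a'$ (respectively $b$ and $b'$), which is automatic because "close to $a$" in Lemma \ref{close to0} describes an open neighborhood in $(a,+\infty)$ and hence intersects $(a,a')$. This completes the plan.
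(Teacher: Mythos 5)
Your proposal is correct and coincides with the paper's own argument: the paper states this lemma as a direct consequence of Lemmas \ref{nonincreasing0} and \ref{close to0}, i.e., precisely the chaining $E(a',b)\leq E(a'',b)<E(a,b)$ with $a''\in(a,a')$ close to $a$ (and the analogous chain in $b$) that you describe.
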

\begin{lemma}
Assume that $F$ satisfies $(F_1)$, $(F_2)$, $(F_{5})$ and $F_{z_j}$($j=1,2$) has exponential subcritical growth at $\infty$, then $$\lim\limits_{(a,b)\rightarrow(0^+,0^+)}E(a,b)=+\infty.$$
\end{lemma}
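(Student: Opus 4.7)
The plan is to argue by contradiction. Suppose there exist sequences $(a_n,b_n)\to(0,0)$ and $(u_n,v_n)\in \mathcal{P}(a_n,b_n)$ with $\{\mathcal{J}(u_n,v_n)\}$ bounded. Writing $D_n:=\|\Delta u_n\|_2^2+\|\Delta v_n\|_2^2$, the strategy is to prove first that $\{D_n\}$ is bounded, and then to derive an upper bound on $D_n$ whose prefactor vanishes as $(a_n,b_n)\to(0,0)$; combining the two forces a contradiction.

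For the boundedness of $\{D_n\}$, note that $(F_2)$ gives $\theta F(z)\leq z\cdot\nabla F(z)$, so $\mathfrak{F}(z)\geq(\theta-2+\mu/4)F(z)$ on $\mathbb{R}^2\setminus\{(0,0)\}$, with $\theta-2+\mu/4>1$. Since $F,\mathfrak{F}\geq 0$ there, integrating the pointwise bound $F\leq\frac{1}{\theta-2+\mu/4}\mathfrak{F}$ against the nonnegative kernel $I_\mu*F$ and using $P(u_n,v_n)=0$ yields $\int(I_\mu*F(u_n,v_n))F(u_n,v_n)\,dx\leq \frac{D_n}{\theta-2+\mu/4}$. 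Substituting into $\mathcal{J}$ produces $\mathcal{J}(u_n,v_n)\geq \frac{\theta-3+\mu/4}{2(\theta-2+\mu/4)}D_n$, hence $\{D_n\}$ is bounded.

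For the upper bound, the boundedness of $\{D_n\}$ allows me to fix $\alpha>0$ small and $t>1$ close to $1$ such that $\frac{8\alpha tD_n}{8-\mu}\leq 32\pi^2$ holds uniformly in $n$, so Lemma \ref{modadams} is applicable along the whole sequence. Repeating the computation of Lemma \ref{continuous0} (HLS via Proposition \ref{HLS}, the growth bound \eqref{fcondition02}, H\"older with exponents $(t,t')$ on the exponential piece, Adams, and Gagliardo--Nirenberg \eqref{GNinequality} with $\|u_n\|_2=a_n$, $\|v_n\|_2=b_n$) gives
\begin{equation*}
\int(I_\mu*F(u_n,v_n))F(u_n,v_n)\,dx\leq C_1(a_n^{\alpha_1}+b_n^{\alpha_1})D_n^{\beta_1}+C_2(a_n^{\alpha_2}+b_n^{\alpha_2})D_n^{\beta_2},
\end{equation*}
with $\alpha_1,\alpha_2>0$, $\beta_1-1=\tau-2+\mu/4>0$ (the subcriticality in $(F_1)$), and $\beta_2-1>0$ provided $t'$ is taken large enough. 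The pointwise inequality $\mathfrak{F}(z)\leq (2-\mu/4)F(z)$ (which follows from $(F_4)$ on $\mathbb{R}\setminus\{0\}\times\mathbb{R}\setminus\{0\}$ and extends to the coordinate axes by continuity via $(F_1)$) then gives $D_n=\int(I_\mu*F)\mathfrak{F}\,dx\leq (2-\mu/4)\int(I_\mu*F)F\,dx$, so
\begin{equation*}
D_n\leq C_1'(a_n^{\alpha_1}+b_n^{\alpha_1})D_n^{\beta_1}+C_2'(a_n^{\alpha_2}+b_n^{\alpha_2})D_n^{\beta_2}.
\end{equation*}
Since $D_n>0$ by Lemma \ref{inf0}, dividing by $D_n$ produces $1\leq C_1'(a_n^{\alpha_1}+b_n^{\alpha_1})D_n^{\beta_1-1}+C_2'(a_n^{\alpha_2}+b_n^{\alpha_2})D_n^{\beta_2-1}\to 0$ as $n\to\infty$ (because $\{D_n\}$ is bounded, $\beta_i-1>0$, and $a_n,b_n\to 0$), a contradiction.

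The main obstacle is the refined upper estimate: extracting \emph{positive} powers of $a_n,b_n$ as coefficients is what makes the right-hand side vanish, and this requires inserting the mass constraints into Gagliardo--Nirenberg and balancing exponents carefully so that simultaneously $\alpha_i>0$ and $\beta_i-1>0$. The latter hinges exactly on the subcriticality $\tau>2-\mu/4$ from $(F_1)$ together with $q>2$ and $t'$ large. Controlling the exponential tail uniformly in $n$ through Adams is made possible only by the $D_n$-boundedness of Step 1, which itself rests on the interplay between $(F_2)$ and the definition of $\mathfrak{F}$.
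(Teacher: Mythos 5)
Your argument is essentially correct, but it follows a genuinely different route from the paper. The paper reduces to a sequence $\{(u_n,v_n)\}$ with $P(u_n,v_n)=0$ and vanishing masses, rescales to $\|\Delta\hat u_n\|_2^2+\|\Delta\hat v_n\|_2^2=1$, invokes the Lions vanishing lemma (the $L^2$-masses tend to zero, so all $L^p$-norms with $p>2$ vanish), shows as in \eqref{close01} that the nonlocal term along any fixed fiber dilation is $o_n(1)$, and then uses the maximality of $s\mapsto\mathcal J(\mathcal H((u_n,v_n),s))$ at $s=0$ from Lemma \ref{equi0} to get $\mathcal J(u_n,v_n)\geq \mathcal J(\mathcal H((\hat u_n,\hat v_n),s))\geq \tfrac12 e^{4s}+o_n(1)$ for every $s$, whence the conclusion. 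You instead combine the coercivity of $\mathcal J$ on the Pohozaev set coming from $(F_2)$ (your Step 1 is exactly the standard estimate $\mathcal J\geq \frac{\theta-3+\mu/4}{2(\theta-2+\mu/4)}D_n$, which is correct), a mass-weighted version of the estimate of Lemma \ref{continuous0} (keeping the factors $a_n^2,b_n^2$ produced by Gagliardo--Nirenberg \eqref{GNinequality} instead of absorbing them into constants; the exponents you record, $\alpha_i>0$ and $\beta_i>1$, are right, and the uniform applicability of Lemma \ref{modadams} is indeed secured by the boundedness of $D_n$), and finally $P(u_n,v_n)=0$ to get $1\leq o(1)$. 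Your route avoids both the Lions lemma and the fiber-map machinery and is more quantitative; the paper's route reuses Lemma \ref{equi0} and the computation \eqref{close01} already in place.

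One point needs fixing: in Step 3 you bound $\mathfrak F(z)\leq(2-\tfrac{\mu}{4})F(z)$ via $(F_4)$, but $(F_4)$ is not among the hypotheses of this lemma (only $(F_1)$, $(F_2)$, $(F_5)$ and subcritical growth are assumed). The appeal to $(F_4)$ is unnecessary: since $\mathfrak F(z)\leq z\cdot\nabla F(z)$ and $|z\cdot\nabla F(z)|\leq 2\xi|z|^{\tau+1}+2C_\xi|z|^{q+1}(e^{\alpha|z|^2}-1)$ by the growth of $F_{z_j}$, you can estimate $D_n=\int_{\mathbb R^4}(I_\mu*F(u_n,v_n))\mathfrak F(u_n,v_n)\,dx\leq \int_{\mathbb R^4}(I_\mu*F(u_n,v_n))[(u_n,v_n)\cdot\nabla F(u_n,v_n)]\,dx$ directly by Proposition \ref{HLS} and the same mass-weighted Gagliardo--Nirenberg/Adams computation (this is precisely what the paper does for this term in Lemmas \ref{continuous0'} and \ref{not0}), which yields the same right-hand side $C_1'(a_n^{\alpha_1}+b_n^{\alpha_1})D_n^{\beta_1}+C_2'(a_n^{\alpha_2}+b_n^{\alpha_2})D_n^{\beta_2}$ with $\alpha_i>0$, $\beta_i>1$. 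With that replacement your proof is complete under the stated hypotheses.
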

\begin{proof}
It is sufficient to prove that for any $\{(u_n,v_n)\}\in \mathcal{X}\backslash (0,0)$ such that $P(u_n,v_n)=0$ and $\lim\limits_{n\rightarrow\infty}(\|u_n\|_2^2+\|v_n\|_2^2)=0$, one has $\lim\limits_{n\rightarrow\infty}\mathcal J(u_n,v_n)=+\infty$.
Setting
 \begin{equation*}
4s_n:=\ln(\|\Delta u_n\|_2^2+\|\Delta v_n\|_2^2)\quad\text{ and} \quad
(\hat{u}_n,\hat{v}_n):=\mathcal{H}((u_n,v_n),-s_n),
\end{equation*}
then $\|\Delta \hat{u}_n\|_2^2+\|\Delta \hat{v}_n\|_2^2=1$ and $\|\hat{u}_n\|_2^2+\|\hat{v}_n\|_2^2=\|u_n\|_2^2+\|v_n\|_2^2$.
 Since $\lim\limits_{n\rightarrow\infty}(\|u_n\|_2^2+\|v_n\|_2^2)=0$, we have
\begin{equation*}
  \limsup\limits_{n\rightarrow\infty}\Big(\sup\limits_{y\in \mathbb{R}^4}\int_{B(y,r)}(|\hat{u}_n|^2+|\hat{v}_n|^2)dx\Big)=0,
\end{equation*}
for any $r>0$. Using the Lions lemma \cite[Lemma 1.21]{Wil}, $\hat{u}_n,\hat{v}_n\rightarrow0$ in $L^p(\mathbb{R}^4)$ for any $p>2$. As a consequence, arguing as \eqref{close01}, for any fixed $s>0$, as $\alpha\rightarrow0^+$ and $t\rightarrow 1^+$, by $\tau>2-\frac{\mu}{4}>1$, $q>2$, and $t'=\frac{t}{t-1}$ large enough, we have $\frac{8\alpha t\|\Delta\mathcal{H}((\hat{u}_n,\hat{v}_n),s)\|_2^2}{8-\mu}\leq 32\pi^2$ and
 \begin{align*}
  &\int_{\mathbb{R}^4}(I_\mu*F(\mathcal{H}((\hat{u}_n,\hat{v}_n),s)))F(\mathcal{H}((\hat{u}_n,\hat{v}_n),s))dx\\&\leq
  Ce^{(4\tau+\mu-4)s}\Big(\|\hat{u}_n\|_{\frac{8(\tau+1)}{8-\mu}}^{\frac{8(\tau+1)}{8-\mu}}+\|\hat{v}_n\|_{\frac{8(\tau+1)}{8-\mu}}^{\frac{8(\tau+1)}{8-\mu}}\Big)^{\frac{8-\mu}{4}}+Ce^{(4q+4-\frac{8-\mu}{t'})s}
  \Big(\|\hat{u}_n\|_{\frac{8(q+1)t'}{8-\mu}}^{\frac{8(q+1)t'}{8-\mu}}+\|\hat{v}_n\|_{\frac{8(q+1)t'}{8-\mu}}^{\frac{8(q+1)t'}{8-\mu}}\Big)^{\frac{8-\mu}{4t'}}
\end{align*}
Combing this and $P(\mathcal{H}((\hat{u}_n,\hat{v}_n),s_n))=P(u_n,v_n)=0$, using Lemma \ref{equi0}, we derive that
\begin{equation*}
  \mathcal J(u_n,v_n)=\mathcal J(\mathcal{H}((\hat{u}_n,\hat{v}_n),s_n))\geq \mathcal J(\mathcal{H}(\mathcal{H}((\hat{u}_n,\hat{v}_n),s_n),s))=
  \mathcal J(\mathcal{H}((\hat{u}_n,\hat{v}_n),s))\geq  e^{4s}+o_n(1).
\end{equation*}
By the arbitrariness of $s\in \mathbb{R}$, we deduce that $\lim\limits_{n\rightarrow\infty}\mathcal J(u_n,v_n)=+\infty$.
\end{proof}

\subsection{Palais-Smale sequence}\label{appro0}
In this section, using the minimax principle based on the homotopy stable family of compact subsets of $\mathcal{S}$, we will construct a $(PS)_{E(a,b)}$ sequence on $\mathcal{P}(a,b)$ for $\mathcal J$.
\begin{proposition}\label{pssequencehomo0}
Assume that $F$ satisfies $(F_1)$, $(F_2)$, $(F_{5})$ and $F_{z_j}$($j=1,2$) has exponential subcritical growth at $\infty$, then there exists a $(PS)_{E(a,b)}$ sequence $\{(u_n,v_n)\}\subset \mathcal{P}(a,b)$ for $\mathcal J$.
\end{proposition}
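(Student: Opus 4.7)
The plan is to adapt the Bartsch--Soave scheme to our setting: apply Ghoussoub's minimax principle (Lemma \ref{Ghouss}) to a $C^1$-auxiliary functional on the product $C^1$-Finsler manifold $\mathcal{S}\times\mathbb{R}$, and then transplant the resulting approximate critical sequence back to $\mathcal{S}$ via the mass-preserving dilation $\mathcal{H}$, finally landing on the Pohozaev natural constraint $\mathcal{P}(a,b)$.

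Concretely, set $\widetilde{\mathcal{J}}:\mathcal{S}\times\mathbb{R}\to\mathbb{R}$, $\widetilde{\mathcal{J}}((u,v),s):=\mathcal{J}(\mathcal{H}((u,v),s))$. Using the growth bound \eqref{fcondition02} and the variational framework of Section \ref{fra}, one checks $\widetilde{\mathcal{J}}\in C^1(\mathcal{S}\times\mathbb{R},\mathbb{R})$, and a direct differentiation (parallel to the computation in Lemma \ref{Pohozaev}) yields
\begin{equation*}
\partial_s\widetilde{\mathcal{J}}((u,v),s)=P(\mathcal{H}((u,v),s)),
\end{equation*}
so that a zero of the $s$-derivative exactly encodes the Pohozaev identity after dilation. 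By Lemmas \ref{minimax01}--\ref{equi0}, for every $(u,v)\in\mathcal{S}$ the slice $s\mapsto\widetilde{\mathcal{J}}((u,v),s)$ rises from $0^+$ to a unique nondegenerate positive maximum at $s_{(u,v)}$ and decays to $-\infty$, so
\begin{equation*}
E(a,b)=\inf_{(u,v)\in\mathcal{S}}\widetilde{\mathcal{J}}((u,v),s_{(u,v)})=\inf_{(u,v)\in\mathcal{S}}\max_{s\in\mathbb{R}}\widetilde{\mathcal{J}}((u,v),s).
\end{equation*}

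Next, apply Lemma \ref{Ghouss} on $\mathcal{S}\times\mathbb{R}$ with the natural homotopy-stable family and empty boundary $B$, for which $\sup\widetilde{\mathcal{J}}(B)=-\infty<E(a,b)$ by Lemma \ref{inf0}. Choosing the minimizing configurations $A_n:=\{((u_n',v_n'),s_{(u_n',v_n')})\}$ along a sequence $\{(u_n',v_n')\}\subset\mathcal{S}$ that realizes $E(a,b)$, Ghoussoub's principle produces $\{(w_n,\sigma_n)\}\subset\mathcal{S}\times\mathbb{R}$ with $\widetilde{\mathcal{J}}(w_n,\sigma_n)\to E(a,b)$, $\|\widetilde{\mathcal{J}}'(w_n,\sigma_n)\|_*\to 0$, and $\mathrm{dist}((w_n,\sigma_n),A_n)\to 0$. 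Putting $(\bar u_n,\bar v_n):=\mathcal{H}(w_n,\sigma_n)\in\mathcal{S}$, the invariances $\|\mathcal{H}(\cdot,s)\|_2=\|\cdot\|_2$ and $\mathcal{J}\circ\mathcal{H}(\cdot,s)=\widetilde{\mathcal{J}}(\cdot,s)$ translate this into $\mathcal{J}(\bar u_n,\bar v_n)\to E(a,b)$, $P(\bar u_n,\bar v_n)\to 0$, and the tangential differential of $\mathcal{J}|_{\mathcal{S}}$ at $(\bar u_n,\bar v_n)$ tending to zero in the dual norm.

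To land precisely on $\mathcal{P}(a,b)$, set $(u_n,v_n):=\mathcal{H}((\bar u_n,\bar v_n),s_{(\bar u_n,\bar v_n)})\in\mathcal{P}(a,b)$. The continuity of $(u,v)\mapsto s_{(u,v)}$ (Lemma \ref{equi0}(ii)), combined with $P(\bar u_n,\bar v_n)\to 0$ and the nondegeneracy of the peak (Lemma \ref{equi0}(i)), forces $s_{(\bar u_n,\bar v_n)}\to 0$; then Lemma \ref{biancon} gives $(u_n,v_n)-(\bar u_n,\bar v_n)\to 0$ in $\mathcal{X}$, so both the energy level and the tangential gradient bound are preserved, yielding the claimed $(PS)_{E(a,b)}$ sequence inside $\mathcal{P}(a,b)$. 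The main technical obstacle is the uniform boundedness of the scaling parameters $\sigma_n$ produced by Ghoussoub: this control is essential both for transferring the tangential derivative from $\widetilde{\mathcal{J}}$ to $\mathcal{J}|_{\mathcal{S}}$ and for concluding $s_{(\bar u_n,\bar v_n)}\to 0$. It is obtained from the distance condition in Lemma \ref{Ghouss}, the strict unimodality of $\widetilde{\mathcal{J}}((u,v),\cdot)$ around its peak, and the exponential-subcritical estimates underlying Section \ref{sub}.
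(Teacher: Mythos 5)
Your route is genuinely different from the paper's: you keep the augmented functional $\widetilde{\mathcal J}((u,v),s)=\mathcal J(\mathcal H((u,v),s))$ on the product $\mathcal S\times\mathbb R$ (which is trivially $C^1$), whereas the paper maximizes out $s$ first and works with $\mathcal I(u,v)=\mathcal J(\mathcal H((u,v),s_{(u,v)}))$ on $\mathcal S$ alone, paying for this with Lemma \ref{C0^1} (a Szulkin--Weth type argument showing $\mathcal I\in C^1$ even though $s_{(u,v)}$ is not known to be $C^1$), and then applying Lemma \ref{Ghouss} to the family of singletons of $\mathcal S$, for which $e_{\mathcal G}=\inf_{\mathcal S}\mathcal I=E(a,b)$. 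Your approach could in principle work, but as written it has a genuine gap in the application of Lemma \ref{Ghouss}. That theorem requires a specified homotopy stable family $\mathcal G$ and a sequence of sets $A_n\in\mathcal G$ with $\sup_{A_n}\widetilde{\mathcal J}\to\tilde c(\widetilde{\mathcal J},\mathcal G)$; it is not enough that $\sup_{A_n}\widetilde{\mathcal J}\to E(a,b)$. You never name the family. If it is the family of all singletons of $\mathcal S\times\mathbb R$ (the only one for which your $A_n=\{((u_n',v_n'),s_{(u_n',v_n')})\}$ are admissible), then by Lemma \ref{minimax01}(ii) its minimax value is $\inf_{\mathcal S\times\mathbb R}\widetilde{\mathcal J}=-\infty\neq E(a,b)$, so your $A_n$ are not a minimizing sequence of sets and the principle yields nothing. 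If instead you restrict to singletons on the graph $s=s_{(u,v)}$, that class is not homotopy stable: a deformation $\eta$ of $\mathcal S\times\mathbb R$ need not map the graph into itself. Repairing this requires either a genuine min--max class on the product whose level is identified with $E(a,b)$ (the Jeanjean/Bartsch--Soave linking construction, which is additional work you have not done), or the paper's device of passing to $\mathcal I$, which is exactly why Lemma \ref{C0^1} is needed.

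A second gap is the step ``$P(\bar u_n,\bar v_n)\to0$ together with nondegeneracy of the peak forces $s_{(\bar u_n,\bar v_n)}\to0$''. Pointwise strict concavity of $s\mapsto\widetilde{\mathcal J}((\bar u_n,\bar v_n),s)$ at its maximum (from $(F_5)$) gives no uniform quantitative control along the sequence, so $P(\bar u_n,\bar v_n)\to0$ alone does not imply the maximizing parameters tend to $0$; this implication is unproved. Note the paper never needs it: in Lemma \ref{pssequencehomo02} it deforms the minimizing sets into $D_n\subset\mathcal P(a,b)$, gets $\operatorname{dist}((\hat u_n,\hat v_n),D_n)\to0$, and then only proves the one-sided bound $e^{-4s_{(\hat u_n,\hat v_n)}}\le C$, using $\inf_{\mathcal P(a,b)}(\|\Delta u\|_2^2+\|\Delta v\|_2^2)>0$ from Lemma \ref{inf0} and the boundedness of $D_n$; that bound suffices to transfer the dual-norm estimate from $\mathcal I'$ to $\mathcal J'|_{\mathcal S}$ at $(u_n,v_n)=\mathcal H((\hat u_n,\hat v_n),s_{(\hat u_n,\hat v_n)})\in\mathcal P(a,b)$. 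You would need an analogous uniform bound on your $\sigma_n$ and $s_{(\bar u_n,\bar v_n)}$ (for instance by first normalizing the minimizing sequence into $\mathcal P(a,b)$), rather than invoking ``strict unimodality''. A minor point: by the computation in Lemma \ref{equi0}, $\partial_s\widetilde{\mathcal J}((u,v),s)=2P(\mathcal H((u,v),s))$, not $P(\mathcal H((u,v),s))$; this factor is harmless.
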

Following  by \cite{Wil}, we recall that the tangent space of $\mathcal{S}$ at $(u,v)$ is defined by
\begin{equation*}
  T_{(u,v)}\mathcal{S}=\Big\{(\varphi,\psi)\in \mathcal{X}:\int_{\mathbb{R}^4} (u\varphi+v\psi) dx=0\Big\}.
\end{equation*}
To prove Proposition \ref{pssequencehomo0}, we borrow some arguments from \cite{BS1,BS2} and consider the functional $\mathcal{S}\rightarrow\mathbb{R}$ defined by
\begin{equation*}
 \mathcal{I}(u,v):=\mathcal J(\mathcal{H}((u,v),s_{(u,v)})),
\end{equation*}
where $s_{(u,v)}\in \mathbb{R}$ is the unique number obtained in Lemma \ref{equi0} for any $(u,v)\in \mathcal{S}$. By Lemma \ref{equi0}, we know that $s_{(u,v)}$ is continuous as a mapping of $(u,v)\in \mathcal{S}$. However, it remains unknown that whether $s_{(u,v)}$ is of class $C^1$. Inspired by \cite[Proposition 2.9]{SW}, we observe that
\begin{lemma}\label{C0^1}
Assume that $F$ satisfies $(F_1)$, $(F_2)$, $(F_{5})$ and $F_{z_j}$($j=1,2$) has exponential subcritical growth at $\infty$, then $\mathcal{I}:\mathcal{S}\rightarrow\mathbb{R}$ is of class $C^1$ and
\begin{align*}
  \langle \mathcal{I}'(u,v),(\varphi,\psi)\rangle=& e^{4s_{(u,v)}}\int_{\mathbb{R}^4}(\Delta u\Delta \varphi+\Delta v\Delta\psi) dx-e^{(\mu-8)s_{(u,v)}}\int_{\mathbb{R}^4}(I_\mu*F(e^{2s_{(u,v)}}u,e^{2s_{(u,v)}}v))\\
  &\times\Big[(e^{2s_{(u,v)}}\varphi,e^{2s_{(u,v)}}\psi)\cdot(\frac{\partial F(e^{2s_{(u,v)}}u,e^{2s_{(u,v)}}v)}{\partial (e^{2s_{(u,v)}}u)},\frac{\partial F(e^{2s_{(u,v)}}u,e^{2s_{(u,v)}}v)}{\partial (e^{2s_{(u,v)}}v)}) \Big] dx\\
  =&\langle\mathcal J'(\mathcal{H}((u,v),s_{(u,v)})),\mathcal{H}((\varphi,\psi),s_{(u,v)})\rangle
\end{align*}
for any $(u,v)\in \mathcal{S}$ and $(\varphi,\psi)\in T_{(u,v)}\mathcal{S}$.
\end{lemma}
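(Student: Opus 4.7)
The obvious route is to write $\mathcal{I}(u,v)=\mathcal{J}(\mathcal{H}((u,v),s_{(u,v)}))$ and apply the chain rule, but the immediate obstacle is that Lemma \ref{equi0} only gives continuity of $(u,v)\mapsto s_{(u,v)}$, not $C^1$ regularity. The plan is to sidestep this by exploiting the fact that $s_{(u,v)}$ is an interior maximizer of $s\mapsto \mathcal{J}(\mathcal{H}((u,v),s))$, so the $s$-derivative vanishes there and the (unknown) variation of $s_{(u,v)}$ contributes nothing to first order. Concretely, define $\beta:\mathcal{X}\times\mathbb{R}\to\mathbb{R}$ by $\beta((u,v),s):=\mathcal{J}(\mathcal{H}((u,v),s))$. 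Because $\mathcal{H}((u,v),s)$ is linear in $(u,v)$ for each fixed $s$ and $\mathcal{J}\in C^1(\mathcal{X},\mathbb{R})$, one checks that $\beta$ is of class $C^1$, with $\partial_1\beta((u,v),s)\cdot(\varphi,\psi) = \langle\mathcal{J}'(\mathcal{H}((u,v),s)),\mathcal{H}((\varphi,\psi),s)\rangle$ and $\partial_2\beta((u,v),s_{(u,v)})=0$.

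Fix $(u,v)\in\mathcal{S}$ and $(\varphi,\psi)\in T_{(u,v)}\mathcal{S}$, and choose a $C^1$ curve $\gamma:(-\delta,\delta)\to\mathcal{S}$ with $\gamma(0)=(u,v)$ and $\gamma'(0)=(\varphi,\psi)$ (standard construction on the product of spheres). Using that $s_{\gamma(t)}$ maximizes $\beta(\gamma(t),\cdot)$ while $s_{\gamma(0)}$ maximizes $\beta(\gamma(0),\cdot)$, the two-sided sandwich
\[
\beta(\gamma(t),s_{\gamma(0)})-\beta(\gamma(0),s_{\gamma(0)}) \;\leq\; \mathcal{I}(\gamma(t))-\mathcal{I}(\gamma(0)) \;\leq\; \beta(\gamma(t),s_{\gamma(t)})-\beta(\gamma(0),s_{\gamma(t)})
\]
holds. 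Divide by $t>0$: the lower bound converges to $\partial_1\beta((u,v),s_{(u,v)})\cdot(\varphi,\psi)$ since $s$ is frozen; for the upper bound I write, via the fundamental theorem,
\[
\frac{\beta(\gamma(t),s_{\gamma(t)})-\beta(\gamma(0),s_{\gamma(t)})}{t}=\int_0^1 \partial_1\beta(\gamma(\tau t),s_{\gamma(t)})\cdot\gamma'(\tau t)\,d\tau,
\]
and conclude the same limit by combining continuity of $\partial_1\beta$, continuity of $\gamma'$, and continuity of $t\mapsto s_{\gamma(t)}$ from Lemma \ref{equi0}(ii), together with dominated convergence. The case $t<0$ is symmetric.

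This yields Gateaux differentiability with $\langle\mathcal{I}'(u,v),(\varphi,\psi)\rangle = \langle\mathcal{J}'(\mathcal{H}((u,v),s_{(u,v)})),\mathcal{H}((\varphi,\psi),s_{(u,v)})\rangle$. The explicit expression in the statement is then recovered by change of variables using $\mathcal{H}(u,s)(x)=e^{2s}u(e^sx)$, which gives $\Delta\mathcal{H}(u,s)(x)=e^{4s}\Delta u(e^sx)$ and hence $\int \Delta\mathcal{H}(u,s)\Delta\mathcal{H}(\varphi,s)\,dx = e^{4s}\int \Delta u\Delta\varphi\,dx$, with an analogous computation for the Choquard term. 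Finally, $C^1$ regularity (rather than merely Gateaux) follows because the right-hand side of the derivative formula is continuous in $(u,v)\in\mathcal{S}$: $(u,v)\mapsto s_{(u,v)}$ is continuous by Lemma \ref{equi0}(ii), $(u,v,s)\mapsto \mathcal{H}((u,v),s)$ is continuous by (the vector version of) Lemma \ref{biancon}, and $\mathcal{J}'\in C(\mathcal{X},\mathcal{X}^*)$. The main conceptual obstacle, namely the missing $C^1$ dependence of $s_{(u,v)}$, is thus circumvented entirely by the sandwich argument and never needs to be established.
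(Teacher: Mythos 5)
Your argument is in substance the same as the paper's (both are the Szulkin--Weth-type argument the paper cites): you never try to differentiate $(u,v)\mapsto s_{(u,v)}$, but instead sandwich the increment of $\mathcal{I}$ between two increments of $\mathcal{J}(\mathcal{H}(\cdot,s))$ with $s$ frozen at $s_{\gamma(0)}$ and at $s_{\gamma(t)}$, using the maximality of the fiber point, and then pass to the limit using only the continuity of $(u,v)\mapsto s_{(u,v)}$ from Lemma \ref{equi0}; $C^1$ regularity then follows from continuity of the resulting Gateaux derivative, exactly as in the paper (which invokes \cite[Proposition 1.3]{Wil}). The paper implements the limit with straight-line increments $(u+t\varphi,v+t\psi)$ and a pointwise mean value theorem on $F$, whereas you use a curve in $\mathcal{S}$ plus the fundamental theorem of calculus and continuity of $\partial_1\beta$; these are interchangeable technical devices. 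Two caveats. First, your ``standard construction on the product of spheres'' produces a curve in $\mathcal{S}$ with velocity $(\varphi,\psi)$ only when $\int_{\mathbb{R}^4}u\varphi\,dx=0$ and $\int_{\mathbb{R}^4}v\psi\,dx=0$ separately, while the paper's $T_{(u,v)}\mathcal{S}$ is defined by the single condition $\int_{\mathbb{R}^4}(u\varphi+v\psi)\,dx=0$; for directions of the form $(\alpha u,\beta v)$ with $\alpha a^2+\beta b^2=0$, $\alpha\neq0$, no such curve in $\mathcal{S}$ exists, so as written your proof covers only a proper subspace of the stated tangent space. The paper avoids this by taking increments that leave $\mathcal{S}$ (implicitly extending $s_{(\cdot)}$ to nearby non-normalized pairs), which is the easy repair of your argument as well. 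Second, the claim that $\beta((u,v),s)=\mathcal{J}(\mathcal{H}((u,v),s))$ is jointly $C^1$ is more than you need and is not free: differentiability in $s$ requires a separate differentiation-under-the-integral argument (and $(u,v),s)\mapsto\mathcal{H}((u,v),s)$ is not $C^1$ into $\mathcal{X}$); fortunately your sandwich only uses $\partial_1\beta$, its continuity, and the maximality of $s_{(u,v)}$, so this overstatement is harmless.
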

\begin{proof}
Let $(u,v)\in \mathcal{S}$ and $(\varphi,\psi)\in T_{(u,v)}\mathcal{S}$, then for any $|t|$ small enough, by Lemma \ref{equi0}, we obtain
\begin{align*}
  &\mathcal{I}(u+t\varphi,v+t\psi)-\mathcal{I}(u,v)\\=&\mathcal J(\mathcal{H}((u+t\varphi,v+t\psi),s_{(u+t\varphi,v+t\psi)}))-\mathcal J(\mathcal{H}((u,v),s_{(u,v)}))\\
\leq& \mathcal J(\mathcal{H}((u+t\varphi,v+t\psi),s_{(u+t\varphi,v+t\psi)}))-\mathcal J(\mathcal{H}((u,v),s_{(u+t\varphi,v+t\psi)}))\\
=&\frac{ e^{4s_{(u+t\varphi,v+t\psi)}}}{2}\int_{\mathbb{R}^4}\Big[|\Delta (u+t\varphi)|^2-|\Delta u|^2+|\Delta (v+t\psi)|^2-|\Delta v|^2\Big]dx\\ & -\frac{e^{(\mu-8)s_{(u+t\varphi,v+t\psi)}}}{2}\int_{\mathbb{R}^4}\Big[(I_\mu*F(e^{2s_{(u+t\varphi,v+t\psi)}}(u+t\varphi),e^{2s_{(u+t\varphi,v+t\psi)}}(v+t\psi)))\\&
  \times F(e^{2s_{(u+t\varphi,v+t\psi)}}(u+t\varphi),e^{2s_{(u+t\varphi,v+t\psi)}}(v+t\psi))\\
  &-(I_\mu*F(e^{2s_{(u+t\varphi,v+t\psi)}}u,e^{2s_{(u+t\varphi,v+t\psi)}}v))F(e^{2s_{(u+t\varphi,v+t\psi)}}u,e^{2s_{(u+t\varphi,v+t\psi)}}v)\Big] dx\\
=&\frac{ e^{4s_{(u+t\varphi,v+t\psi)}}}{2}\int_{\mathbb{R}^4}\Big(t^2|\Delta u|^2+t^2|\Delta v|^2+2t\Delta u \Delta \varphi+2t\Delta v\Delta\psi\Big)dx\\
  %& +\frac{\beta e^{2s_{(u+t\varphi,v+t\psi)}}}{2}\int_{\mathbb{R}^4}\Big(t^2|\nabla u|^2+t^2|\nabla v|^2+2t\nabla u \cdot \nabla \varphi+2t\nabla v \cdot \nabla \psi\Big)dx\\
  & -\frac{e^{(\mu-8)s_{(u+t\varphi,v+t\psi)}}}{2}\int_{\mathbb{R}^4}(I_\mu*F(e^{2s_{(u+t\varphi,v+t\psi)}}(u+t\varphi),e^{2s_{(u+t\varphi,v+t\psi)}}(v+t\psi)))\\&
  \times \Big[(e^{2s_{(u+t\varphi,v+t\psi)}}t\varphi,e^{2s_{(u+t\varphi,v+t\psi)}}t\psi)\cdot (F_{z_1}|_{z_1=e^{2s_{(u+t\varphi,v+t\psi)}}(u+\xi_tt\varphi)},F_{z_2}|_{z_2=e^{2s_{(u+t\varphi,v+t\psi)}}(v+\xi_tt\psi)} ) \Big] dx\\&  -\frac{e^{(\mu-8)s_{(u+t\varphi,v+t\psi)}}}{2}\int_{\mathbb{R}^4}(I_\mu*F(e^{2s_{(u+t\varphi,v+t\psi)}}u,e^{2s_{(u+t\varphi,v+t\psi)}}v))\\&
  \times \Big[(e^{2s_{(u+t\varphi,v+t\psi)}}t\varphi,e^{2s_{(u+t\varphi,v+t\psi)}}t\psi)\cdot (F_{z_1}|_{z_1=e^{2s_{(u+t\varphi,v+t\psi)}}(u+\xi_tt\varphi)},F_{z_2}|_{z_2=e^{2s_{(u+t\varphi,v+t\psi)}}(v+\xi_tt\psi)} ) \Big] dx,
\end{align*}
where $\xi_t\in (0,1)$. Analogously, we have
\begin{align*}
  &\mathcal{I}(u+t\varphi,v+t\psi)-\mathcal{I}(u,v)
  \\ \geq& \frac{ e^{4s_{(u,v)}}}{2}\int_{\mathbb{R}^4}\Big(t^2|\Delta u|^2+t^2|\Delta v|^2+2t\Delta u \Delta \varphi+2t\Delta v\Delta\psi\Big)dx\\
  %&+\frac{\beta e^{2s_{(u,v)}}}{2}\int_{\mathbb{R}^4}\Big(t^2|\nabla u|^2+t^2|\nabla v|^2+2t\nabla u \cdot \nabla \varphi+2t\nabla v \cdot \nabla \psi\Big)dx\\
  &-\frac{e^{(\mu-8)s_{(u,v)}}}{2}\int_{\mathbb{R}^4}(I_\mu*F(e^{2s_{(u,v)}}(u+t\varphi),e^{2s_{(u,v)}}(v+t\psi)))\\&
  \times \Big[(e^{2s_{(u,v)}}t\varphi,e^{2s_{(u,v)}}t\psi)\cdot (F_{z_1}|_{z_1=e^{2s_{(u,v)}}(u+\zeta_tt\varphi)},F_{z_2}|_{z_2=e^{2s_{(u,v)}}(v+\zeta_tt\psi)} ) \Big] dx\\&-\frac{e^{(\mu-8)s_{(u,v)}}}{2}\int_{\mathbb{R}^4}(I_\mu*F(e^{2s_{(u,v)}}u,e^{2s_{(u,v)}}v))\\&
  \times \Big[(e^{2s_{(u,v)}}t\varphi,e^{2s_{(u,v)}}t\psi)\cdot (F_{z_1}|_{z_1=e^{2s_{(u,v)}}(u+\zeta_tt\varphi)},F_{z_2}|_{z_2=e^{2s_{(u,v)}}(v+\zeta_tt\psi)} ) \Big] dx,
\end{align*}
where $\zeta_t\in (0,1)$. By Lemma \ref{equi0}, $\lim\limits_{t\rightarrow0}s_{(u+t\varphi,v+t\psi)}=s_{(u,v)}$, from the above inequalities, we conclude
\begin{align*}
  &\lim\limits_{t\rightarrow0}\frac{\mathcal{I}(u+t\varphi,v+t\psi)-\mathcal{I}(u,v)}{t}\\
=& e^{4s_{(u,v)}}\int_{\mathbb{R}^4}(\Delta u\Delta \varphi+\Delta v\Delta\psi) dx-e^{(\mu-8)s_{(u,v)}}\int_{\mathbb{R}^4}(I_\mu*F(e^{2s_{(u,v)}}u,e^{2s_{(u,v)}}v))\\
  &  \times\Big[(e^{2s_{(u,v)}}\varphi,e^{2s_{(u,v)}}\psi)\cdot(\frac{\partial F(e^{2s_{(u,v)}}u,e^{2s_{(u,v)}}v)}{\partial (e^{2s_{(u,v)}}u)},\frac{\partial F(e^{2s_{(u,v)}}u,e^{2s_{(u,v)}}v)}{\partial (e^{2s_{(u,v)}}v)}) \Big] dx.
\end{align*}
Using Lemma \ref{equi0}, we find that the G\^{a}teaux derivative of $\mathcal{I}$ is continuous linear in $(\varphi,\psi)$ and continuous in $(u,v)$. Therefore, by \cite[Proposition 1.3]{Wil}, we get $\mathcal{I}$ is of class $C^1$. Changing variables in the integrals, we prove the rest.
\end{proof}

\begin{lemma}\label{pssequencehomo02}
Assume that $F$ satisfies $(F_1)$, $(F_2)$, $(F_{5})$ and $F_{z_j}$($j=1,2$) has exponential subcritical growth at $\infty$. Let $\mathcal{G}$  be a homotopy stable family of compact subsets of $\mathcal{S}$ without boundary (i.e., $B=\emptyset$) and set
\begin{equation*}
  e_{\mathcal{G}}:=\inf\limits_{A\in \mathcal{G}}\max\limits_{(u,v)\in A}\mathcal I(u,v).
\end{equation*}
If $e_{\mathcal{G}}>0$, then there exists a $(PS)_{e_{\mathcal{G}}}$ sequence $\{(u_n,v_n)\}\subset \mathcal{P}(a,b)$ for $\mathcal J$.
\end{lemma}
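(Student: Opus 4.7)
The plan is to apply Ghoussoub's minimax principle (Lemma \ref{Ghouss}) to the auxiliary $C^1$ functional $\mathcal{I}$ on the Finsler manifold $\mathcal{S}$, and then transport the resulting abstract Palais-Smale sequence into a $(PS)_{e_{\mathcal{G}}}$ sequence for $\mathcal{J}|_{\mathcal{S}}$ that is forced to lie on the Pohozaev manifold $\mathcal{P}(a,b)$ via the scaling $\mathcal{H}$.

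First I would use homotopy stability to replace an arbitrary almost-minimizing sequence by one sitting in $\mathcal{P}(a,b)$. Pick $D_n \in \mathcal{G}$ with $\max_{D_n} \mathcal{I} \leq e_{\mathcal{G}} + 1/n$; the map $\eta(t, (u,v)) := \mathcal{H}((u,v), t\, s_{(u,v)})$ is continuous on $[0,1]\times\mathcal{S}$ by Lemmas \ref{biancon} and \ref{equi0}(ii), is the identity at $t=0$, and carries $\mathcal{S}$ into $\mathcal{P}(a,b)$ at $t=1$. Hence $A_n := \eta(1, D_n) \in \mathcal{G}$ and $A_n \subset \mathcal{P}(a,b)$. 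The group-like identity $\mathcal{H}(\mathcal{H}((u,v), s_1), s_2) = \mathcal{H}((u,v), s_1 + s_2)$ combined with the uniqueness of $s_{(\cdot)}$ from Lemma \ref{equi0}(i) gives $\mathcal{I}(\mathcal{H}((u,v), t)) = \mathcal{I}(u,v)$, so $\max_{A_n} \mathcal{I} = \max_{D_n} \mathcal{I} \leq e_{\mathcal{G}} + 1/n$. Applying Lemma \ref{Ghouss} to $\mathcal{I}$ with $\{A_n\}$ produces $\{(w_n, z_n)\} \subset \mathcal{S}$ satisfying $\mathcal{I}(w_n, z_n) \to e_{\mathcal{G}}$, $\|\mathcal{I}'(w_n, z_n)\|_* \to 0$, and $\mathrm{dist}((w_n, z_n), A_n) \to 0$. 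Setting $(u_n, v_n) := \mathcal{H}((w_n, z_n), s_{(w_n, z_n)}) \in \mathcal{P}(a,b)$, one has $\mathcal{J}(u_n, v_n) = \mathcal{I}(w_n, z_n) \to e_{\mathcal{G}}$ by construction.

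For the derivative transfer, fix $(\varphi, \psi) \in T_{(u_n, v_n)}\mathcal{S}$ with $\|(\varphi, \psi)\|_{\mathcal{X}} = 1$ and define $(\tilde\varphi_n, \tilde\psi_n) := \mathcal{H}((\varphi, \psi), -s_{(w_n, z_n)})$. A change of variables shows $(\tilde\varphi_n, \tilde\psi_n) \in T_{(w_n, z_n)}\mathcal{S}$ with
\[
\|(\tilde\varphi_n, \tilde\psi_n)\|^2 = \|(\varphi, \psi)\|_2^2 + e^{-4 s_{(w_n, z_n)}} \|\Delta(\varphi, \psi)\|_2^2,
\]
and Lemma \ref{C0^1} then identifies $\langle \mathcal{J}'(u_n, v_n), (\varphi, \psi)\rangle$ with $\langle \mathcal{I}'(w_n, z_n), (\tilde\varphi_n, \tilde\psi_n)\rangle$. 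Thus the claim reduces to showing $s_{(w_n, z_n)}$ is bounded below. Combining $P(u_n, v_n) = 0$ with $(F_2)$ yields the Pohozaev coercivity $\mathcal{J}(u,v) \geq \frac{\theta - 3 + \mu/4}{2(\theta - 2 + \mu/4)} \|\Delta(u,v)\|_2^2$ on $\mathcal{P}(a,b)$, so $\|\Delta(u_n, v_n)\|_2$ stays bounded; the same estimate applied to elements of $A_n$, together with $\mathrm{dist}((w_n, z_n), A_n) \to 0$, renders $\{(w_n, z_n)\}$ bounded in $\mathcal{X}$. Then if $s_{(w_n, z_n)} \to -\infty$ along a subsequence, $\|\Delta(u_n, v_n)\|_2^2 = e^{4 s_{(w_n, z_n)}} \|\Delta(w_n, z_n)\|_2^2 \to 0$, contradicting $\inf_{\mathcal{P}(a,b)} \|\Delta(u,v)\|_2^2 > 0$ from Lemma \ref{inf0}.

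The main obstacle is precisely this scaling calibration: a priori, the pullback $\mathcal{H}(\cdot, -s_{(w_n, z_n)})$ between tangent spaces could distort $H^2$-norms unboundedly, which would prevent the smallness of $\mathcal{I}'(w_n, z_n)$ from translating into smallness of $\mathcal{J}'|_{\mathcal{S}}(u_n, v_n)$. The two ingredients that neutralize this are the Pohozaev-based coercivity supplied by $(F_2)$ on $\mathcal{P}(a,b)$ and the strictly positive infimum $\inf_{\mathcal{P}(a,b)} \|\Delta(u,v)\|_2^2 > 0$ from Lemma \ref{inf0}; together they give the two-sided control on $\|\Delta\|_2$ that forces $s_{(w_n, z_n)}$ to be bounded below and thereby closes the argument.
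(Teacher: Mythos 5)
Your proposal is correct and follows essentially the same route as the paper: deform a minimizing family into $\mathcal{P}(a,b)$ via $\eta(t,(u,v))=\mathcal{H}((u,v),ts_{(u,v)})$ using the scaling-invariance of $\mathcal{I}$, apply Ghoussoub's principle to $\mathcal{I}$, transfer the Palais--Smale information to $\mathcal{J}$ through Lemma \ref{C0^1}, and control the norm distortion of the tangent-space pullback by showing $e^{-4s_{(w_n,z_n)}}$ is bounded, using the boundedness of the sequence (from $(F_2)$ coercivity on $\mathcal{P}(a,b)$ plus $\mathrm{dist}((w_n,z_n),A_n)\to0$) together with $\inf_{\mathcal{P}(a,b)}\|\Delta(u,v)\|_2^2>0$ from Lemma \ref{inf0}. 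This matches the paper's proof in all essential steps.
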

\begin{proof}
Let $\{A_n\}\subset \mathcal{G}$ be a minimizing sequence of $e_{\mathcal{G}}$. We define the mapping
\begin{equation*}
  \eta:[0,1]\times \mathcal{S}\rightarrow \mathcal{S}, \quad\eta(t,(u,v))=\mathcal{H}((u,v),ts_{(u,v)}).
\end{equation*}
By Lemma \ref{equi0}, $\eta(t,(u,v))$ is continuous in $[0,1]\times \mathcal{S}$, and satisfies $\eta(t,(u,v))=(u,v)$ for all $(t,(u,v))\in \{0\}\times \mathcal{S}$. Thus by the definition of $\mathcal{G}$, one has
\begin{equation*}
  D_n:=\eta(1,A_n)=\{\mathcal{H}((u,v),s_{(u,v)}):(u,v)\in A_n\}\subset \mathcal{G}.
\end{equation*}
Obviously, $D_n\subset \mathcal{P}(a,b)$ for any $n\in \mathbb{N}^+$. Since $\mathcal{I}(\mathcal{H}((u,v),s))=\mathcal{I}(u,v)$ for any $(u,v)\in \mathcal{S}$ and $s\in \mathbb{R}$, then
\begin{equation*}
  \max\limits_{(u,v)\in D_n}\mathcal{I}(u,v)=\max\limits_{(u,v)\in A_n}\mathcal{I}(u,v)\rightarrow e_{\mathcal{G}},\quad\text{as $n\rightarrow\infty$.}
\end{equation*}
which implies that $\{D_n\}\subset \mathcal{G}$ is another minimizing sequence of $e_{\mathcal{G}}$. By Lemma \ref{Ghouss}, we obtain a $(PS)_{e_{\mathcal{G}}}$ sequence $\{(\hat{u}_n,\hat{v}_n)\}\subset \mathcal{S}$ for $\mathcal I$ such that $\lim\limits_{n\rightarrow\infty}dist((\hat{u}_n,\hat{v}_n),D_n)=0$.
Let
\begin{equation*}
  (u_n,v_n):=\mathcal{H}((\hat{u}_n,\hat{v}_n),s_{(\hat{u}_n,\hat{v}_n)}),
\end{equation*}
then we prove that $\{(u_n,v_n)\}\subset \mathcal{P}(a,b)$ is the desired sequence.

We claim that there exists $C>0$ such that $e^{-4s_{(\hat{u}_n,\hat{v}_n)}}\leq C$ for any $n\in \mathbb{N}^+$. Indeed, we observe that
\begin{equation*}
  e^{-4s_{(\hat{u}_n,\hat{v}_n)}}=\frac{\|\Delta\hat{u}_n\|_2^2+\|\Delta\hat{v}_n\|_2^2}{\|\Delta u_n\|_2^2+\|\Delta v_n\|_2^2}.
\end{equation*}
Since $\{(u_n,v_n)\}\subset \mathcal{P}(a,b)$, by Lemma \ref{inf0}, there exists a constant $\widetilde{C}>0$ such that $\|\Delta u_n\|_2^2+\|\Delta v_n\|_2^2\geq \widetilde{C}$ for any $n\in \mathbb{N}^+$. Regarding the term of $\{(\hat{u}_n,\hat{v}_n)\}$, since $D_n\subset \mathcal{P}(a,b)$ for any $n\in \mathbb{N}^+$ and for any $(u,v)\in \mathcal{P}(a,b)$, one has $\mathcal{J}(u,v)=\mathcal{I}(u,v)$, thus
\begin{equation*}
   \max\limits_{(u,v)\in D_n}\mathcal{J}(u,v)= \max\limits_{(u,v)\in D_n}\mathcal{I}(u,v)\rightarrow e_{\mathcal{G}},\quad\text{as $n\rightarrow\infty$.}
\end{equation*}
This together with  $D_n\subset \mathcal{P}(a,b)$ and $(F_2)$ yields $\{D_n\}$ is uniformly bounded in $\mathcal{X}$, thus from
$$\lim\limits_{n\rightarrow\infty}dist((\hat{u}_n,\hat{v}_n),D_n)=0,
$$
we obtain $\sup\limits_{n\geq1}\|(\hat{u}_n,\hat{v}_n)\|^2<\infty$. This prove the claim.

From $\{(u_n,v_n)\}\subset \mathcal{P}(a,b)$, it follows that
\begin{equation*}
  \mathcal{J}(u_n,v_n)=\mathcal{I}(u_n,v_n)=\mathcal{I}(\hat{u}_n,\hat{v}_n)\rightarrow e_{\mathcal{G}},\quad\text{as $n\rightarrow\infty$.}
\end{equation*}
%Denote the dual norm of $(T_u)^*$ by $\|\cdot\|_{u,\ast}$, then
For any $(\varphi,\psi)\in T_{(u_n,v_n)}\mathcal{S}$, we have
\begin{align*}
  &\int_{\mathbb{R}^4}\Big(\hat{u}_n e^{-2s_{(\hat{u}_n,\hat{v}_n)}}\varphi(e^{-s_{(\hat{u}_n,\hat{v}_n)}}x) +\hat{v}_n e^{-2s_{(\hat{u}_n,\hat{v}_n)}}\psi(e^{-s_{(\hat{u}_n,\hat{v}_n)}}x)\Big) dx\\
=&\int_{\mathbb{R}^4}\Big(\hat{u}_n(e^{s_{(\hat{u}_n,\hat{v}_n)}}y) e^{2s_{(\hat{u}_n,\hat{v}_n)}}\varphi(y) +\hat{v}_n (e^{s_{(\hat{u}_n,\hat{v}_n)}}y) e^{2s_{(\hat{u}_n,\hat{v}_n)}}\psi(y)\Big)dy=\int_{\mathbb{R}^4}(u_n\varphi+v_n\psi)dx=0,
\end{align*}
which means $\mathcal{H}((\varphi,\psi),-s_{(\hat{u}_n,\hat{v}_n)})\in T_{(\hat{u}_n,\hat{v}_n)}\mathcal{S}$. Also, we have
\begin{align*}
 &\|(e^{-2s_{(\hat{u}_n,\hat{v}_n)}}\varphi(e^{-s_{(\hat{u}_n,\hat{v}_n)}}x) ,e^{-2s_{(\hat{u}_n,\hat{v}_n)}}\psi(e^{-s_{(\hat{u}_n,\hat{v}_n)}}x) )\|^2\\
=&e^{-4s_{(\hat{u}_n,\hat{v}_n)}}(\|\Delta \varphi\|_2^2+\|\Delta \psi\|_2^2)+2e^{-2s_{(\hat{u}_n,\hat{v}_n)}}(\|\nabla \varphi\|_2^2+\|\nabla \psi\|_2^2)+(\|\varphi\|_2^2+\|\psi\|_2^2)\\
\leq& C(\|\Delta \varphi\|_2^2+\|\Delta \psi\|_2^2)+2\sqrt{C}(\|\nabla \varphi\|_2^2+\|\nabla \psi\|_2^2)+(\|\varphi\|_2^2+\|\psi\|_2^2)\leq \max\{1,C\}\|(\varphi,\psi)\|^2.
\end{align*}
By Lemma \ref{C0^1}, for any $(\varphi,\psi)\in T_{(u_n,v_n)}\mathcal{S}$, we deduce that
\begin{align*}
  |\langle\mathcal{J}'(u_n,v_n),(\varphi,\psi)\rangle|&=|\langle\mathcal{J}'(\mathcal{H}((\hat{u}_n,\hat{v}_n),s_{(\hat{u}_n,\hat{v}_n)})),\mathcal{H}(\mathcal{H}((\varphi,\psi),-s_{(\hat{u}_n,\hat{v}_n)}),s_{(\hat{u}_n,\hat{v}_n)})\rangle|\\
  &=|\langle\mathcal{I}'(\hat{u}_n,\hat{v}_n),\mathcal{H}((\varphi,\psi),-s_{(\hat{u}_n,\hat{v}_n)})\rangle|\\
  &\leq \|\mathcal{I}'(\hat{u}_n,\hat{v}_n)\|_*\cdot \|\mathcal{H}((\varphi,\psi),-s_{(\hat{u}_n,\hat{v}_n)})\|\\&\leq \max\{1,\sqrt{C}\}\|\mathcal{I}'(\hat{u}_n,\hat{v}_n)\|_*\cdot  \|(\varphi,\psi)\|,
\end{align*}
which implies that $\|\mathcal{J}'(u_n,v_n)\|_*\leq \max\{1,\sqrt{C}\}\|\mathcal{I}'(\hat{u}_n,\hat{v}_n)\|_*\rightarrow0$ as $n\rightarrow\infty$. Thus $\{(u_n,v_n)\}\subset \mathcal{P}(a,b)$ is a $(PS)_{e_{\mathcal{G}}}$ sequence for $\mathcal J$.
\end{proof}
\noindent
{\bf Proof of Proposition \ref{pssequencehomo0}:} Noted that the class $\mathcal{G}$ of all singletons included in  $\mathcal{S}$ is a homotopy stable family of compact subsets of $\mathcal{S}$ without boundary. By Lemma \ref{pssequencehomo02}, if $e_{\mathcal{G}}>0$, then there exists a $(PS)_{e_{\mathcal{G}}}$ sequence $\{(u_n,v_n)\}\subset \mathcal{P}(a,b)$ for $\mathcal J$. By Lemma \ref{inf0}, we know $E(a,b)>0$, so if we can prove that $e_{\mathcal{G}}=E(a,b)$, then we complete the proof.

In fact, by the definition of $\mathcal{G}$, we have
\begin{equation*}
  e_{\mathcal{G}}=\inf\limits_{A\in \mathcal{G}}\max\limits_{(u,v)\in A}\mathcal I(u,v)=\inf\limits_{(u,v)\in \mathcal{S}}\mathcal I(u,v)=\inf\limits_{(u,v)\in \mathcal{S}}\mathcal I(\mathcal{H}((u,v),s_{(u,v)}))=\inf\limits_{(u,v)\in \mathcal{S}}\mathcal J(\mathcal{H}((u,v),s_{(u,v)})).
\end{equation*}
For any $(u,v)\in \mathcal{S}$, it follows from $\mathcal{H}((u,v),s_{(u,v)})\in \mathcal{P}(a,b)$ that $\mathcal J(\mathcal{H}((u,v),s_{(u,v)}))\geq E(a,b)$, by the arbitrariness of $(u,v)\in \mathcal{S}$, we get $e_{\mathcal{G}}\geq E(a,b)$. On the other hand, for any $(u,v)\in \mathcal{P}(a,b)$, by Lemma \ref{equi0}, we deduce that $s_{(u,v)}=0$ and $\mathcal{J}(u,v)=\mathcal J(\mathcal{H}((u,v),0))\geq \inf\limits_{(u,v)\in \mathcal{S}}\mathcal J(\mathcal{H}((u,v),s_{(u,v)}))$, by the arbitrariness of $(u,v)\in \mathcal{P}(a,b)$, we have $E(a,b)\geq e_{\mathcal{G}}$.
\qed

For the sequence $\{(u_n,v_n)\}$ obtained in Proposition \ref{pssequencehomo0}, by $(F_2)$, we know that $\{(u_n,v_n)\}$ is bounded in $\mathcal{X}$, up to a subsequence, we assume that $(u_n,v_n)\rightharpoonup (u_{a},v_{b})$ in $\mathcal{X}$. % otherwise, $u_n\rightarrow0$ a.e. in $\mathbb{R}^4$, which is contradict to $\{u_n\}\subset S_r(c)$ for $c>0$.
Furthermore, by ${\mathcal J}'|_{\mathcal{S}}(u_n,v_n)\rightarrow0$ as $n\rightarrow\infty$ and Lagrange multiplier rule, there exist two sequences $\{\lambda_{1,n}\}, \{\lambda_{2,n}\}\subset \mathbb{R}$ such that
\begin{align}\label{lagrange0}
&\int_{\mathbb{R}^4}(\Delta u_n \Delta \varphi+\Delta v_n\Delta \psi)dx-\int_{\mathbb{R}^4}(I_\mu*F(u_n,v_n)) F_{u_n}(u_n,v_n)\varphi dx-\int_{\mathbb{R}^4}(I_\mu*F(u_n,v_n))F_{v_n}(u_n,v_n)\psi dx \nonumber \\
=&\int_{\mathbb{R}^4}(\lambda_{1,n}u_n\varphi+\lambda_{2,n}v_n\psi)dx+o_n(1)\|(\varphi,\psi)\|
\end{align}
for every $(\varphi,\psi)\in \mathcal{X}$.
\begin{lemma}\label{not0}
Assume that $F$ satisfies $(F_1)-(F_3)$, $(F_{5})$ and $F_{z_j}$($j=1,2$) has exponential subcritical growth at $\infty$, then up to a subsequence and up to translations in $\mathbb{R}^4$, $u_a\neq0$ and $v_b\neq0$.
\end{lemma}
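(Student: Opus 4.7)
The plan is to establish nontriviality of the weak limit $(u_a, v_b)$ in three steps: first, rule out full vanishing of $(u_n, v_n)$ by a Lions-type concentration argument; second, pass to the limit in the Euler--Lagrange equation after bounding the Lagrange multipliers; third, rule out the semitrivial case by exploiting $(H_2)$, which forces $F$ to vanish whenever either argument is zero.

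For the nonvanishing step, I would set
$\varrho := \limsup_{n\to\infty} \sup_{y\in \mathbb{R}^4} \int_{B(y,1)}(|u_n|^2+|v_n|^2)\,dx$. If $\varrho = 0$, the Lions lemma gives $u_n, v_n \to 0$ in $L^p(\mathbb{R}^4)$ for every $p > 2$. Applying Lemma \ref{modadams} and the Hardy--Littlewood--Sobolev inequality exactly as in the estimates of Lemma \ref{minimax01} (exploiting $\tau > 2 - \mu/4$ and $q > 2$ together with a judicious choice of $\alpha$ near $0$ and $t$ near $1$), I obtain $\int (I_\mu * F(u_n,v_n))F(u_n,v_n)\,dx \to 0$ and $\int (I_\mu * F(u_n,v_n))\mathfrak{F}(u_n,v_n)\,dx \to 0$. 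Since $P(u_n,v_n)=0$, this forces $\|\Delta u_n\|_2^2+\|\Delta v_n\|_2^2 \to 0$, so $\mathcal{J}(u_n,v_n) \to 0$, contradicting $\mathcal{J}(u_n,v_n) \to E(a,b) > 0$ from Lemma \ref{inf0}. Hence $\varrho > 0$, and selecting $\{y_n\} \subset \mathbb{R}^4$ with $\int_{B(y_n,1)}(|u_n|^2+|v_n|^2)\,dx > \varrho/2$ and translating, the compact embedding in \eqref{embedding} ensures $(u_a, v_b) \neq (0,0)$.

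Next, I would test \eqref{lagrange0} with $(\varphi,\psi)=(u_n,0)$ and $(\varphi,\psi)=(0,v_n)$, giving
\[
\lambda_{1,n}a^2 = \|\Delta u_n\|_2^2 - \int_{\mathbb{R}^4}(I_\mu * F(u_n,v_n))F_u(u_n,v_n)u_n\,dx + o_n(1),
\]
and the analogous identity for $\lambda_{2,n}b^2$. By the $\mathcal{X}$-boundedness of $\{(u_n,v_n)\}$, combined with HLS and Lemma \ref{modadams}, the right-hand sides are bounded; hence $\{\lambda_{1,n}\}, \{\lambda_{2,n}\}$ are bounded in $\mathbb{R}$, and up to a subsequence $\lambda_{i,n} \to \lambda_i$.

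Finally, assume without loss of generality that $u_a \neq 0$ (the other case is symmetric) and, for contradiction, that $v_b = 0$. By $(H_2)$, $F(s,0) \equiv 0$, so $F(u_a, v_b) = F(u_a, 0) \equiv 0$ and therefore $I_\mu * F(u_a, v_b) \equiv 0$. Testing \eqref{lagrange0} with $(\varphi, 0)$ for $\varphi \in C_c^\infty(\mathbb{R}^4)$ and passing to the limit via Lemma \ref{strong0} for the convolution term yields
\[
\int_{\mathbb{R}^4} \Delta u_a \Delta \varphi\, dx = \lambda_1 \int_{\mathbb{R}^4} u_a \varphi \, dx \quad\text{for all } \varphi \in C_c^\infty(\mathbb{R}^4),
\]
so $\Delta^2 u_a = \lambda_1 u_a$ in $\mathbb{R}^4$ distributionally. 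Taking the Fourier transform gives $(|\xi|^4 - \lambda_1)\widehat{u_a}(\xi) = 0$; whatever the sign of $\lambda_1$, the set $\{|\xi|^4 = \lambda_1\}$ has Lebesgue measure zero, so $\widehat{u_a} = 0$ a.e. and $u_a = 0$, a contradiction. The main obstacle I expect is the passage to the limit of the Choquard-type convolution term in the last step, which hinges on Lemma \ref{strong0} and on the decoupling built into $(H_2)$; condition $(F_3)$ buttresses the argument by preventing degeneracies of $F_u, F_v$ on the coordinate axes, which would otherwise obstruct the strong-convergence machinery.
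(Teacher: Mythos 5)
Your first two steps (ruling out vanishing via the Lions lemma together with the Adams/HLS estimates and $E(a,b)>0$, then translating and passing to the limit in \eqref{lagrange0} after bounding the multipliers) match the paper's argument; the explicit boundedness of $\{\lambda_{1,n}\},\{\lambda_{2,n}\}$ via HLS and Lemma \ref{modadams} is fine in the subcritical setting, where the paper leaves this implicit (it is recorded separately in Lemma \ref{lam0}).

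The genuine gap is in your exclusion of the semitrivial case. You invoke $(H_2)$ to claim $F(s,0)\equiv 0$, hence $I_\mu*F(u_a,0)\equiv 0$, and reduce to $\Delta^2 u_a=\lambda_1 u_a$. But $(H_2)$ is not among the hypotheses of this lemma (the lemma assumes only $(F_1)$--$(F_3)$, $(F_5)$; $(H_1)$--$(H_2)$ belong to the motivating evolution problem), and it is in fact incompatible with $(F_2)$: since $0<\theta F(z)\le z\cdot\nabla F(z)$ for every $z\neq(0,0)$, one has $F(z_1,0)>0$ for all $z_1\neq 0$, so $F(u_a,0)$ is strictly positive on $\{u_a\neq0\}$ and $I_\mu*F(u_a,0)>0$ everywhere; your reduction to the linear equation therefore fails, and with it the Fourier-transform step. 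The correct mechanism — and the one the paper uses — is to plug the semitrivial limit into the \emph{other} equation of \eqref{e1.1}: if $v_b=0$ and $u_a\neq0$, the second equation gives $(I_\mu*F(u_a,0))\,F_{z_2}(u_a,0)=0$ a.e.; by $(F_2)$ the Riesz potential is strictly positive, while $(F_3)$ gives $F_{z_2}(u_a(x),0)\neq 0$ on the positive-measure set $\{u_a\neq0\}$, a contradiction (and symmetrically $u_a=0$ forces $v_b=0$ via the first equation). This is precisely the role of $(F_3)$, which in your write-up is only mentioned as a safeguard but never actually used; as written, your Step 3 does not prove the statement under the stated hypotheses.
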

\begin{proof}
For any $r>0$, let
\begin{equation*}
  \varrho:=\limsup\limits_{n\rightarrow\infty}\Big(\sup\limits_{y\in \mathbb{R}^4}\int_{B(y,r)}(|u_n|^2+|v_n|^2)dx\Big).
\end{equation*}

If $\varrho>0$, then there exists $\{y_n\}\subset \mathbb{R}^4$  such that $\int_{B(y_n,1)}(|u_n|^2+|v_n|^2)dx>\frac{\varrho}{2}$, i.e., $\int_{B(0,1)}(|u_n(x-y_n)|^2+|v_n(x-y_n)|^2)dx>\frac{\varrho}{2}$. Up to a subsequence, and up to translations in $\mathbb{R}^4$, $(u_n,v_n)\rightharpoonup (u_a,v_b)\neq(0,0)$ in $\mathcal{X}$. From \eqref{lagrange0} and Lemma \ref{strong0}, we can see that $(u_a,v_b)$ is a weak solution of \eqref{e1.1} with $N=4$. Assume that $u_{a}=0$, then by $(F_2)$ and $(F_3)$, we know that $v_{b}=0$. Similarly, $v_{b}=0$ implies $u_{a}=0$. This is impossible, since $(u_a,v_b)\neq(0,0)$.

If $\varrho=0$, using the Lions lemma \cite[Lemma 1.21]{Wil}, $u_n,v_n\rightarrow0$ in $L^p(\mathbb{R}^4)$ for any $p>2$. As a consequence, arguing as \eqref{close01}, since $\{(u_n,v_n)\}$ is bounded in $\mathcal{X}$, for any $\alpha\rightarrow 0^+$ and $t\rightarrow1^+$, by $\tau>2-\frac{\mu}{4}>1$, $q>2$, and $t'=\frac{t}{t-1}$ large enough, we have $\sup\limits_{n\in \mathbb{N}^+}\frac{8 \alpha t\|\Delta (u_n,v_n)\|_2^2}{8-\mu}\leq 32\pi^2$ and
\begin{align*}
 & \int_{\mathbb{R}^4}(I_\mu*F(u_n,v_n))[(u_n,v_n)\cdot \nabla F(u_n,v_n) ] dx\\
\leq &
 C\Big(\|u_n\|_{\frac{8(\tau+1)}{8-\mu}}^{\frac{8(\tau+1)}{8-\mu}}+\|v_n\|_{\frac{8(\tau+1)}{8-\mu}}^{\frac{8(\tau+1)}{8-\mu}}\Big)^{\frac{8-\mu}{4}}+C\Big(\|u_n\|_{\frac{8(q+1)t'}{8-\mu}}^{\frac{8(q+1)t'}{8-\mu}}+\|v_n\|_{\frac{8(q+1)t'}{8-\mu}}^{\frac{8(q+1)t'}{8-\mu}}\Big)^{\frac{8-\mu}{4t'}}
 \rightarrow0,\quad\text{as $n\rightarrow\infty$}.
\end{align*}
From the above equality and $P(u_n,v_n)=0$, we deduce that $\|\Delta u_n\|_2^2+\|\Delta v_n\|_2^2\rightarrow 0$ as $n\rightarrow\infty$, hence $(F_2)$ implies $\lim\limits_{n\rightarrow\infty}{\mathcal J}(u_n,v_n)= 0$, which is an absurd, since $E(a,b)>0$.
\end{proof}

\begin{lemma}\label{lam0}
Assume that $F$ satisfies $(F_1)-(F_5)$ and $F_{z_j}$($j=1,2$) has exponential subcritical growth at $\infty$, then $\{\lambda_{1,n}\}$ and $\{\lambda_{2,n}\}$ are bounded in $\mathbb{R}$. Furthermore, up to a subsequence,
$\lambda_{1,n}\rightarrow\lambda_1<0$ and $\lambda_{2,n}\rightarrow\lambda_2<0$ in $\mathbb{R}$ as $n\rightarrow\infty$.
\end{lemma}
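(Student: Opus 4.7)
The plan is to read off $\lambda_{1,n}$ and $\lambda_{2,n}$ from \eqref{lagrange0} by testing separately with $(\varphi,\psi)=(u_n,0)$ and $(\varphi,\psi)=(0,v_n)$, which gives
\begin{align*}
\lambda_{1,n}a^2 &= \|\Delta u_n\|_2^2-\int_{\mathbb{R}^4}(I_\mu*F(u_n,v_n))F_u(u_n,v_n)u_n\,dx+o_n(1),\\
\lambda_{2,n}b^2 &= \|\Delta v_n\|_2^2-\int_{\mathbb{R}^4}(I_\mu*F(u_n,v_n))F_v(u_n,v_n)v_n\,dx+o_n(1).
\end{align*}
Since $\{(u_n,v_n)\}$ is already known to be bounded in $\mathcal{X}$ (from $(F_2)$, the constraint $P(u_n,v_n)=0$, and $\mathcal{J}(u_n,v_n)\to E(a,b)$), both Dirichlet terms are bounded; the nonlocal terms are controlled by the Hardy--Littlewood--Sobolev inequality together with the Adams inequality of Lemma \ref{modadams} and the growth estimate \eqref{fcondition02}, exactly as in the computation leading to \eqref{strong04}. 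Hence $\{\lambda_{1,n}\}$ and $\{\lambda_{2,n}\}$ are bounded in $\mathbb{R}$, and up to a subsequence $\lambda_{1,n}\to\lambda_1$ and $\lambda_{2,n}\to\lambda_2$.

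For the strict negativity, the key is to combine the first scalar identity above with the Pohozaev identity $\|\Delta u_n\|_2^2+\|\Delta v_n\|_2^2=\int_{\mathbb{R}^4}(I_\mu*F(u_n,v_n))\mathfrak{F}(u_n,v_n)\,dx$ coming from $(u_n,v_n)\in\mathcal{P}(a,b)$, where $\mathfrak{F}=F_uu+F_vv-(2-\mu/4)F$. Substituting the Pohozaev expression for $\|\Delta u_n\|_2^2$ into the formula for $\lambda_{1,n}a^2$ and collecting terms gives
\begin{equation*}
\lambda_{1,n}a^2=-\|\Delta v_n\|_2^2-\int_{\mathbb{R}^4}(I_\mu*F(u_n,v_n))\bigl[(2-\tfrac{\mu}{4})F(u_n,v_n)-F_v(u_n,v_n)v_n\bigr]dx+o_n(1).
\end{equation*}
By $(F_4)$ the bracket is strictly positive wherever $u_nv_n\neq 0$, and by continuity of $F$ and $F_v$ (from $(F_1)$) together with $(F_2)$ the one-sided bound $(2-\mu/4)F(u,v)\geq F_v(u,v)v$ extends to the coordinate axes (it reduces to $(2-\mu/4)F\geq 0$ on $\{v_n=0\}$ and follows by letting $u_n\to 0$ in the strict inequality on $\{u_n=0,\,v_n\neq 0\}$). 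Therefore the integral is nonnegative and
\begin{equation*}
\lambda_{1,n}a^2\leq -\|\Delta v_n\|_2^2+o_n(1).
\end{equation*}

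Passing to the limit, Lemma \ref{not0} supplies a translation after which the weak limit $v_b$ is nontrivial, and weak lower semicontinuity under $\Delta v_n\rightharpoonup\Delta v_b$ in $L^2(\mathbb{R}^4)$ gives $\liminf_{n\to\infty}\|\Delta v_n\|_2^2\geq\|\Delta v_b\|_2^2$. Since any nontrivial element of $H^2(\mathbb{R}^4)$ cannot be harmonic (by Liouville applied to $L^2$ harmonic functions), $\|\Delta v_b\|_2^2>0$, whence $\lambda_1 a^2\leq -\|\Delta v_b\|_2^2<0$, so $\lambda_1<0$. The completely symmetric argument, with the roles of $u_n$ and $v_n$ interchanged, yields $\lambda_2<0$. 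The main conceptual step that I expect to require the most care is precisely the Pohozaev--Lagrange cancellation that produces the one-sided bound $\lambda_{1,n}a^2\leq-\|\Delta v_n\|_2^2+o_n(1)$: it is what allows the strict negativity of $\lambda_1$ to be inherited from the nontriviality of the \emph{other} component $v_b$, and it is where the specific exponent $2-\mu/4$ appearing in $(F_4)$ is used in an essential way.
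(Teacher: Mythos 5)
Your proposal is correct and follows essentially the same route as the paper: you test \eqref{lagrange0} with $(u_n,0)$ and $(0,v_n)$, and the key identity $\lambda_{1,n}a^2=-\|\Delta v_n\|_2^2-\int_{\mathbb{R}^4}(I_\mu*F(u_n,v_n))\bigl[\tfrac{8-\mu}{4}F(u_n,v_n)-F_v(u_n,v_n)v_n\bigr]dx+o_n(1)$ obtained from the Pohozaev constraint is exactly the formula the paper uses. The only (harmless) differences are in the concluding steps: for boundedness the paper invokes $(F_4)$ together with $P(u_n,v_n)=0$ rather than your direct HLS/Adams estimate (which is fine in the subcritical regime), and for strict negativity the paper keeps the nonlocal term and applies Fatou's lemma using $u_a\neq0$, $v_b\neq0$ and $(F_4)$, whereas you discard that (nonnegative) term and get $\lambda_1a^2\le-\liminf_n\|\Delta v_n\|_2^2\le-\|\Delta v_b\|_2^2<0$ from weak lower semicontinuity and the fact that a nontrivial $L^2$ function cannot be harmonic; both arguments are valid, and yours has the small advantage of not needing the set where both limit components are simultaneously nonzero.
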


\begin{proof}
Using $(u_n, 0)$ and $(0, v_n)$ as test functions in \eqref{lagrange0}, we have
\begin{equation*}\label{lam1}
  \lambda_{1,n}a^2=\int_{\mathbb{R}^4}|\Delta u_n|^2 dx-\int_{\mathbb{R}^4}(I_\mu*F(u_n,v_n)) F_{u_n}(u_n,v_n)u_n dx+o_n(1)\|u_n\|
\end{equation*}
and
\begin{equation*}\label{lam2}
  \lambda_{2,n}b^2=\int_{\mathbb{R}^4}|\Delta v_n|^2 dx-\int_{\mathbb{R}^4}(I_\mu*F(u_n,v_n)) F_{v_n}(u_n,v_n)v_n dx+o_n(1)\|v_n\|.
\end{equation*}
By $P(u_n,v_n)=0$, we obtain
\begin{align*}
  \int_{\mathbb{R}^4}(I_\mu*F(u_n,v_n))[(u_n,v_n) \cdot \nabla F(u_n,v_n)] dx=&\int_{\mathbb{R}^4}(|\Delta u_n |^2+|\Delta v_n |^2 ) dx+\frac{8-\mu}{4}
  \int_{\mathbb{R}^4}(I_\mu*F(u_n,v_n))F(u_n,v_n)dx.
\end{align*}
This together with $(F_4)$ and the boundedness of $\{(u_n,v_n)\}$  yields that $\{\lambda_{1,n}\}$ and $\{\lambda_{2,n}\}$ are bounded in $\mathbb{R}$. Moreover,
\begin{align*}
  -\lambda_{1,n}=&\frac{1}{a^2}\Big(\int_{\mathbb{R}^4}(I_\mu*F(u_n,v_n))(\frac{8-\mu}{4}F(u_n,v_n)-F_{v_n}(u_n,v_n)v_n)dx+\int_{\mathbb{R}^4}|\Delta v_n |^2dx\Big)
\end{align*}
and
\begin{align*}
  -\lambda_{2,n}=&\frac{1}{b^2}\Big(\int_{\mathbb{R}^4}(I_\mu*F(u_n,v_n))(\frac{8-\mu}{4}F(u_n,v_n)-F_{u_n}(u_n,v_n)u_n)dx+\int_{\mathbb{R}^4}|\Delta u_n |^2dx\Big),
\end{align*}
Thanks to $u_a\neq0,v_b\neq0$, using $(F_2)$, $(F_4)$ and Fatou lemma, we obtain $\liminf \limits_{n\rightarrow\infty}-\lambda_{1,n}>0$ and $\liminf\limits_{n\rightarrow\infty}-\lambda_{2,n}>0$, namely, $\limsup \limits_{n\rightarrow\infty}\lambda_{1,n}<0$ and $\limsup\limits_{n\rightarrow\infty}\lambda_{2,n}<0$. Since $\{\lambda_{1,n}\}$ and $\{\lambda_{2,n}\}$ are bounded, up to a subsequence, we assume that $\lambda_{1,n}\rightarrow\lambda_1<0$ and $\lambda_{2,n}\rightarrow\lambda_2<0$ in $\mathbb{R}$ as $n\rightarrow\infty$.
\end{proof}

\subsection{Proof of Theorem \ref{th3}}
\noindent
{\it Proof of Theorem \ref{th3}:}
Under the assumptions of Theorem \ref{th3}, from \eqref{lagrange0} and Lemmas \ref{strong0}, \ref{not0}, \ref{lam0}, we know $(u_a,v_b)$ is a nontrivial weak solution of \eqref{e1.1} with $\lambda_1,\lambda_2<0$, $N=4$,
and $P(u_a,v_b)=0$.
Using the Br\'{e}zis-Lieb lemma \cite[Lemma 1.32]{Wil}, we have
\begin{equation*}
  \|u_{n}\|_2^2=\|u_n-u_{a}\|_2^2+\|u_{a}\|_2^2+o_n(1)\quad\text{and}\quad \|v_{n}\|_2^2=\|v_n-v_{b}\|_2^2+\|v_{b}\|_2^2+o_n(1).
\end{equation*}
Let $a_1:=\|u_{a}\|_2>0$, $b_1:=\|v_{b}\|_2>0$, and $a_{1,n}:=\|u_n-u_{a}\|_2$, $b_{1,n}:=\|v_n-v_{b}\|_2$, then
$a^2=a_1^2+a_{1,n}^2+o_n(1)$ and $b^2=b_1^2+b_{1,n}^2+o_n(1)$. Since $P(u_a,v_b)=0$, using $(F_2)$ and Fatou lemma, we have
\begin{align*}
  \mathcal{J}(u_a,v_b)&={\mathcal J}(u_a,v_b)-\frac{1}{2}P(u_a,v_b)\\&=\frac{1}{2}\int_{\mathbb{R}^4}(I_\mu*F(u_a,v_b))\Big[ (u_a,v_b)\cdot\nabla F(u_a,v_b )-(3-\frac{\mu}{4})F(u_a,v_b)\Big] dx\nonumber\\
  &\leq\frac{1}{2}\liminf\limits_{n\rightarrow\infty}\int_{\mathbb{R}^4}(I_\mu*F(u_n,v_n))\Big[ (u_n,v_n)\cdot \nabla F(u_n,v_n)-(3-\frac{\mu}{4})F(u_n,v_n)\Big] dx\nonumber\\
  &=\liminf\limits_{n\rightarrow\infty}({\mathcal J}(u_n,v_n)-\frac{1}{2}P(u_n,v_n))=E(a,b).
\end{align*}
On the other hand, it follows from Lemma \ref{nonincreasing0} that $\mathcal{J}(u_a,v_b)\geq E(a_1,b_1)\geq E(a,b)$. Thus $\mathcal{J}(u_a,v_b)= E(a_1,b_1)= E(a,b)$, and it follows from lemmas \ref{conclusion0}, \ref{lam0} that $\|u_{a}\|_2=a$, $\|v_{b}\|_2=b$. This implies $(u_a,v_b)$ is a ground state solution of \eqref{abs}.
\qed

\section{Exponential critical case} \label{cri}

This section is devoted to study \eqref{abs} in the exponential critical case.
\begin{lemma}\label{strong}
Assume that $F$ satisfies $(F_1)$, $(F_2)$ and $F_{z_j}$($j=1,2$) has exponential critical growth at $\infty$, let $\{(u_n,v_n)\}\subset\mathcal{S}$ be a bounded $(PS)_{E(a,b)}$  sequence of $\mathcal{J}$ in $\mathcal{X}$,
up to a subsequence, if $(u_n,v_n)\rightharpoonup (u,v)$ in $\mathcal{X}$,
and \begin{equation}\label{condition}
  \int_{\mathbb{R}^4} (I_\mu*F(u_n,v_n))[(u_n,v_n)\cdot \nabla F(u_n,v_n)] dx\leq K_0
\end{equation}
for some constant $K_0>0$,
 then for any $\varphi\in C_0^\infty(\mathbb{R}^4)$, we have
\begin{equation}\label{strong1}
  \int_{\mathbb{R}^4}\Delta u_n\Delta\varphi dx \rightarrow  \int_{\mathbb{R}^4}\Delta u\Delta\varphi dx,\quad \int_{\mathbb{R}^4}\Delta v_n\Delta\varphi dx \rightarrow  \int_{\mathbb{R}^4}\Delta v\Delta\varphi dx,\,\,\,\text{as}\,\,\, n\rightarrow\infty,
\end{equation}
\begin{equation}\label{strong3}
  \int_{\mathbb{R}^4}u_n\varphi dx \rightarrow  \int_{\mathbb{R}^4}u\varphi dx, \quad\int_{\mathbb{R}^4}v_n\varphi dx \rightarrow  \int_{\mathbb{R}^4}v\varphi dx,\,\,\,\text{as}\,\,\, n\rightarrow\infty,
\end{equation}
and
\begin{equation}\label{strong4}
  \int_{\mathbb{R}^4}(I_\mu*F(u_n,v_n))F_{u_n} (u_n,v_n)\varphi dx\rightarrow \int_{\mathbb{R}^4}(I_\mu*F(u,v))F_{u} (u,v)\varphi dx,\,\,\,\text{as}\,\,\, n\rightarrow\infty,
\end{equation}
\begin{equation}\label{strong5}
  \int_{\mathbb{R}^4}(I_\mu*F(u_n,v_n))F_{v_n} (u_n,v_n)\varphi dx\rightarrow \int_{\mathbb{R}^4}(I_\mu*F(u,v)) F_{v} (u,v)\varphi dx,\,\,\,\text{as}\,\,\, n\rightarrow\infty,
\end{equation}
\begin{equation}\label{strong6}
  \int_{\mathbb{R}^4}(I_\mu*F(u_n,v_n))F (u_n,v_n) dx\rightarrow \int_{\mathbb{R}^4}(I_\mu*F(u,v)) F (u,v) dx,\,\,\,\text{as}\,\,\, n\rightarrow\infty.
\end{equation}
\end{lemma}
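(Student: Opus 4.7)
The plan is to adapt the proof of Lemma~\ref{strong0} to the exponential critical setting, with hypothesis \eqref{condition} taking over the role that the freedom to choose $\alpha$ arbitrarily small played in the subcritical argument. Convergences \eqref{strong1} and \eqref{strong3} are immediate: for every $\varphi\in C_0^\infty(\mathbb{R}^4)$ the maps $w\mapsto\int\Delta w\,\Delta\varphi\,dx$ and $w\mapsto\int w\,\varphi\,dx$ are bounded linear functionals on $H^2(\mathbb{R}^4)$, so weak convergence in $H^2(\mathbb{R}^4)$ settles both, exactly as in the opening lines of the proof of Lemma~\ref{strong0}.

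The nonlinear statements \eqref{strong4}--\eqref{strong6} would proceed via a Vitali-type argument. The compact embedding $\mathcal{X}\hookrightarrow L^q_{Loc}(\mathbb{R}^4,\mathbb{R}^2)$ yields, up to a subsequence, a.e.\ convergence $(u_n,v_n)\to(u,v)$ and strong $L^p(\Omega)$ convergence for every compact $\Omega$ and every $p\geq1$. The exponential critical growth together with $(F_1)$ gives, for any $\alpha>32\pi^2$, any $q>2$ and any $\xi>0$,
\begin{equation*}
|F_{z_j}(z)|\leq\xi|z|^\tau+C_\xi|z|^q\bigl(e^{\alpha|z|^2}-1\bigr),\qquad |F(z)|\leq\xi|z|^{\tau+1}+C_\xi|z|^{q+1}\bigl(e^{\alpha|z|^2}-1\bigr).
\end{equation*}
Hypothesis \eqref{condition} combined with $(F_2)$ yields $\int(I_\mu*F(u_n,v_n))F(u_n,v_n)\,dx\leq K_0/\theta$, while the $(PS)_{E(a,b)}$ property forces $P(u_n,v_n)\to0$, so $\|\Delta(u_n,v_n)\|_2^2=\int(I_\mu*F(u_n,v_n))\mathfrak{F}(u_n,v_n)\,dx+o_n(1)\leq K_0+o_n(1)$. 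In the regime in which this lemma will be applied to prove Theorem~\ref{th5}, the sharp upper bound $E(a,b)<\frac{8-\mu}{16}$ obtained from modified Adams functions will force $\|\Delta(u_n,v_n)\|_2^2$ strictly below the critical threshold, so one can pick $t>1$ close to $1$ and $\alpha>32\pi^2$ close to the critical value so that $\frac{8\alpha t\|\Delta(u_n,v_n)\|_2^2}{8-\mu}\leq32\pi^2$; Lemma~\ref{modadams} then bounds $\{e^{\alpha|(u_n,v_n)|^2}-1\}$ uniformly in $L^t(\mathbb{R}^4)$, and Lemma~\ref{weakcon} upgrades a.e.\ convergence to weak convergence in $L^t$.

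With these ingredients, \eqref{strong4} and \eqref{strong5} follow by mimicking Lemma~\ref{strong0}: the Hardy--Littlewood--Sobolev inequality gives $\|I_\mu*F(u_n,v_n)\|_\infty\leq C\|F(u_n,v_n)\|_{4/(4-\mu)}$ uniformly bounded; on $\Omega=\mathrm{supp}\,\varphi$ one obtains $|(u_n,v_n)|^\tau\to|(u,v)|^\tau$ in $L^2(\Omega)$ and $|(u_n,v_n)|^q\to|(u,v)|^q$ in $L^{t'}(\Omega)$, and combining these with the weak $L^t$ convergence of the exponential term via a generalized Lebesgue dominated convergence theorem yields the required passage to the limit. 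For \eqref{strong6}, which involves a global integral with no cutoff, I would split $\mathbb{R}^4=B_R\cup B_R^c$, treat $B_R$ as in \eqref{strong4}--\eqref{strong5}, and estimate the tail uniformly using the Cauchy--Schwarz-type inequality \eqref{CS} together with the uniform bound $\int(I_\mu*F(u_n,v_n))F(u_n,v_n)\,dx\leq K_0/\theta$.

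The principal obstacle will be \eqref{strong6}: unlike \eqref{strong4}--\eqref{strong5}, where the compactly supported test function localizes everything, here a tightness estimate at infinity is needed to prevent mass of $(I_\mu*F(u_n,v_n))F(u_n,v_n)$ from escaping. Establishing uniform smallness of the tail from \eqref{condition} alone is delicate because the exponential-critical growth of $F$ makes the integrand highly sensitive to where $|(u_n,v_n)|$ is large; one should exploit the decay of $I_\mu*F(u_n,v_n)$ at infinity coming from the bound $\|F(u_n,v_n)\|_{4/(4-\mu)}\leq C$, and then use the Vitali convergence theorem on $B_R$ with $R\to\infty$ to close the argument.
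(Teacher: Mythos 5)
There is a genuine gap in your treatment of \eqref{strong4}--\eqref{strong6}: you obtain the uniform $L^t$ bound on $e^{\alpha|(u_n,v_n)|^2}-1$ by asserting that ``in the regime in which this lemma will be applied'' the bound $E(a,b)<\frac{8-\mu}{16}$ forces $\|\Delta(u_n,v_n)\|_2^2$ strictly below the Adams threshold $\frac{8-\mu}{8}$. This is not a hypothesis of the lemma, and it cannot be extracted from the stated assumptions: boundedness in $\mathcal{X}$ plus \eqref{condition} only give $\|\Delta(u_n,v_n)\|_2^2\leq K_0+o_n(1)$ with $K_0$ arbitrary, which says nothing relative to $\frac{8-\mu}{8}$. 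Worse, in the paper's application (Lemma \ref{not01}) the below-threshold estimate $\limsup_n(\|\Delta u_n\|_2^2+\|\Delta v_n\|_2^2)\leq 2E(a,b)<\frac{8-\mu}{8}$ is derived only \emph{after} invoking \eqref{strong6} of this very lemma (to kill the nonlocal term when the weak limit is assumed zero), so building it into the proof of the lemma is circular; and in the second application of the lemma, where the weak limit is nontrivial and one identifies it as a weak solution via \eqref{strong4}--\eqref{strong5}, no such below-threshold bound is available at all. This is precisely why the exponential-critical case cannot be handled by simply rerunning the subcritical proof with $\alpha$ slightly above $32\pi^2$, and why hypothesis \eqref{condition} appears.

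The paper's proof takes a different route designed to avoid any uniform exponential integrability. For \eqref{strong4}--\eqref{strong5} it tests the equation (satisfied up to $o_n(1)$ by the $(PS)$ sequence) with a cut-off to show that $\omega_n:=(I_\mu*F(u_n,v_n))F_{u_n}(u_n,v_n)$ is bounded in $L^1(\Omega)$ on every compact set, extracts weak* convergence to a Radon measure, uses the $(PS)$ relation again to see that this measure is absolutely continuous with respect to Lebesgue measure, and identifies its density via the Radon--Nikodym theorem and a.e.\ convergence (the argument of \cite{ACTY}). For \eqref{strong6} it uses $(F_2)$, the Cauchy--Schwarz-type inequality \eqref{CS} and hypothesis \eqref{condition} to reduce the problem to $\int(I_\mu*(F(u_n,v_n)-F(u,v)))(F(u_n,v_n)-F(u,v))dx\to0$, and then performs a truncation at level $S$ with a decomposition into the five terms $I_1,\dots,I_5$, controlled by the Hardy--Littlewood--Sobolev inequality and dominated convergence (the argument of \cite{AGMS}); your tail discussion, which you yourself flag as ``delicate,'' does not supply this step. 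To repair your proposal you would need to replace the Adams-based uniform $L^t$ bound by arguments of this type that use only \eqref{condition} and the $(PS)$ structure.
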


\begin{proof}
With a similar proof of Lemma \ref{strong0}, we have \eqref{strong1}-\eqref{strong3} hold.

We adopt some ideas of Lemma $2.4$ in \cite{ACTY} to prove \eqref{strong4} and \eqref{strong5}. Let $\Omega$ be any compact subset of $\mathbb{R}^4$, $\Omega'\subset\subset \Omega$ and $\phi \in C_0^\infty(\Omega)$ such that $0\leq \phi\leq 1$ and $\phi=1$ in $\Omega'$, then by taking $\phi$ as a text function in the first equation of \eqref{abs} and using H\"{o}lder inequality, we get
\begin{equation*}
  \int_{\Omega'}(I_\mu*F(u_n,v_n))F_{u_n} (u_n,v_n) dx\leq \int_{\Omega}(I_\mu*F(u_n,v_n))F_{u_n} (u_n,v_n)\phi dx\leq C\|u_n\|\|\phi\|\leq C,
\end{equation*}
which implies that $\omega _n:=(I_\mu*F(u_n,v_n))F_{u_n} (u_n,v_n) $ is bounded in $L^1(\Omega)$, up to a subsequence, $\omega_k\rightarrow \omega$ in the weak*-topology as $n\rightarrow\infty$, where $\omega$ denotes a Radon measure. So for any $\varphi\in C_0^\infty(\Omega)$, we get
\begin{equation*}
  \lim\limits_{n\rightarrow\infty}\int_{\Omega}(I_\mu*F(u_n,v_n))F_{u_n} (u_n,v_n)\varphi dx=\int_{\Omega}\varphi d\omega.
\end{equation*}
Now recall that $\{(u_n,v_n)\}\subset\mathcal{S}$ is a $(PS)_{E(a,b)}$  sequence of $\mathcal{J}$, hence, for any $\varphi\in C_0^\infty(\Omega)$,
\begin{equation*}
  \lim\limits_{n\rightarrow\infty}\int_{\mathbb{R}^4}\Delta u_n \Delta\varphi-\lambda_1 u_n \varphi dx=\int_{\Omega}\varphi d\omega,
\end{equation*}
which implies that $\omega$ is absolutely continuous with respect to the Lebesgue measure. Then, by the Radon-Nicodym theorem, there exists a function $g\in L^1(\Omega)$ such that for any $\varphi\in C_0^\infty(\Omega)$,
\begin{equation*}
  \int_{\Omega}\varphi d\omega=\int_{\Omega}\varphi g dx.
\end{equation*}
Since there holds for any compact set $\Omega\subset \mathbb{R}^4$, we have that there exists a function $g\in L^1_{loc}(\mathbb{R}^4)$ such that for any $\varphi\in C_0^\infty(\mathbb{R}^4)$,
\begin{equation*}
  \lim\limits_{n\rightarrow\infty}\int_{\mathbb{R}^4}(I_\mu*F(u_n,v_n))F_{u_n} (u_n,v_n)\varphi dx=\int_{\mathbb{R}^4}\varphi d\omega=\int_{\mathbb{R}^4}(I_\mu*F(u,v))F_{u} (u,v)\varphi dx.
\end{equation*}
So we have \eqref{strong4}. Similarly, we prove \eqref{strong5}.

Next, adopting some ideas of Lemma $3.3$ in \cite{AGMS}, we will prove \eqref{strong6}. By $(F_2)$, \eqref{CS}, \eqref{condition} and Fatou lemma, we have
 \begin{align*}
   &\Big|\int_{\mathbb{R}^4}(I_\mu*F(u_n,v_n))F (u_n,v_n) dx-\int_{\mathbb{R}^4}(I_\mu*F(u,v))F (u,v) dx\Big|\\
   &\leq \Big|\int_{\mathbb{R}^4}(I_\mu*F(u_n,v_n))(F (u_n,v_n)-F(u,v)) dx\Big|+\Big|\int_{\mathbb{R}^4}(I_\mu*F(u,v))(F (u_n,v_n)-F(u,v)) dx\Big|\\
   &\leq \Big(\int_{\mathbb{R}^4}(I_\mu*F(u_n,v_n))F (u_n,v_n) dx\Big)^{\frac{1}{2}}\Big(\int_{\mathbb{R}^4}(I_\mu*(F(u_n,v_n)-F(u,v)))(F (u_n,v_n) -F(u,v))dx\Big)^{\frac{1}{2}}\\
   &\quad+\Big(\int_{\mathbb{R}^4}(I_\mu*F(u,v))F (u,v) dx\Big)^{\frac{1}{2}}\Big(\int_{\mathbb{R}^4}(I_\mu*(F(u_n,v_n)-F(u,v)))(F (u_n,v_n) -F(u,v))dx\Big)^{\frac{1}{2}}\\
   &\leq C\Big(\int_{\mathbb{R}^4}(I_\mu*(F(u_n,v_n)-F(u,v)))(F (u_n,v_n) -F(u,v))dx\Big)^{\frac{1}{2}}.
\end{align*}
Now we only need to prove that
\begin{equation*}
\lim\limits_{n\rightarrow\infty}\int_{\mathbb{R}^4}(I_\mu*(F(u_n,v_n)-F(u,v)))(F (u_n,v_n) -F(u,v))dx=0.
\end{equation*}
For a fixed $S>0$, denote
\begin{equation*}
  A=\Big\{x\in \mathbb{R}^4:|u_n(x)|\leq S\,\,\,\text{and}\,\,\,|v_n(x)|\leq S\Big\},\quad B=\Big\{x\in \mathbb{R}^4:|u(x)|\leq S\,\,\,\text{and}\,\,\,|v(x)|\leq S\Big\},
\end{equation*}
\begin{equation*}
  C=\Big\{x\in \mathbb{R}^4:|u_n(x)|\geq S\,\,\,\text{or}\,\,\,|v_n(x)|\geq S\Big\},\quad D=\Big\{x\in \mathbb{R}^4:|u(x)|\geq S\,\,\,\text{or}\,\,\,|v(x)|\geq S\Big\}.
\end{equation*}
Then
\begin{align*}
  &\int_{\mathbb{R}^4}(I_\mu*(F(u_n,v_n)-F(u,v)))(F (u_n,v_n) -F(u,v))dx\\
\leq& \int_{\mathbb{R}^4}\int_{\mathbb{R}^4}\frac{|F(u_n,v_n)\chi_A(y)-F(u,v)\chi_B(y)| |F(u_n,v_n)\chi_A(x)-F(u,v)\chi_B(x)|}{|x-y|^\mu}dxdy\\
  &\quad + 2\int_{\mathbb{R}^4}\int_{\mathbb{R}^4}\frac{(F(u_n,v_n)\chi_A(y)+F(u,v)\chi_B(y)+F(u,v)\chi_D(y))F(u_n,v_n)\chi_C(x)}{|x-y|^\mu}dxdy\\
   &\quad + 2\int_{\mathbb{R}^4}\int_{\mathbb{R}^4}\frac{(F(u_n,v_n)\chi_A(y)+F(u,v)\chi_B(y))F(u,v)\chi_D(x)}{|x-y|^\mu}dxdy\\
   &\quad + \int_{\mathbb{R}^4}\int_{\mathbb{R}^4}\frac{F(u_n,v_n)\chi_C(y)F(u_n,v_n)\chi_C(x)}{|x-y|^\mu}dxdy+ \int_{\mathbb{R}^4}\int_{\mathbb{R}^4}\frac{F(u,v)\chi_D(y)F(u,v)\chi_D(x)}{|x-y|^\mu}dxdy\\
  :=&I_1+I_2+I_3+I_4+I_5.
\end{align*}
From \eqref{HLSin}, we obtain $I_j=o(1)$ for $j=2,\cdots,5$ when $S$ is large enough. Moreover
\begin{align*}
%  &\int_{\mathbb{R}^4}\int_{\mathbb{R}^4}\frac{|F(u_n,v_n)\chi_A(y)-F(u,v)\chi_B(y)| |F(u_n,v_n)\chi_A(x)-F(u,v)\chi_B(x)|}{|x-y|^\mu}dxdy\\
I_1=&\int_{\Omega}\int_{\mathbb{R}^4}\frac{|F(u_n,v_n)\chi_A(y)-F(u,v)\chi_B(y)| |F(u_n,v_n)\chi_A(x)-F(u,v)\chi_B(x)|}{|x-y|^\mu}dxdy\\
  &\quad+\int_{\mathbb{R}^4\backslash\Omega}\int_{\mathbb{R}^4}\frac{|F(u_n,v_n)\chi_A(y)-F(u,v)\chi_B(y)| |F(u_n,v_n)\chi_A(x)-F(u,v)\chi_B(x)|}{|x-y|^\mu}dxdy\\
:=&I_6+I_7.
\end{align*}
Using \eqref{HLSin} and the Lebesgue dominated convergence theorem, we know $I_6=o(1)$. Furthermore, $I_7=o(1)$ when the measure of $\Omega$ is large enough. This ends the proof.
\end{proof}

\subsection{The behaviour of $E(a,b)$}

\begin{lemma}\label{minimax1}
Assume that $F$ satisfies $(F_1)$, $(F_2)$ and $F_{z_j}$($j=1,2$) has exponential critical growth at $\infty$, then for any fixed $(u,v)\in \mathcal{S}$, we have

$(i)$ $\mathcal{J}(\mathcal{H}((u,v),s))\rightarrow0^+$ as $s\rightarrow-\infty$;

$(ii)$ $\mathcal{J}(\mathcal{H}((u,v),s))\rightarrow-\infty$ as $s\rightarrow+\infty$.
\end{lemma}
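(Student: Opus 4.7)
The approach closely mirrors that of Lemma \ref{minimax01}; the only essential new subtlety is that the Choquard nonlinearity must be controlled across the Adams critical threshold. Throughout I will use the rescaling identities
\[
\|\mathcal{H}(u,s)\|_2 = \|u\|_2, \qquad \|\Delta \mathcal{H}(u,s)\|_2 = e^{2s}\|\Delta u\|_2, \qquad \|\mathcal{H}((u,v),s)\|_q = e^{2(q-2)s/q}\|(u,v)\|_q,
\]
together with the change-of-variables identity
\[
\int_{\mathbb{R}^4}(I_\mu\!*\!F(\mathcal{H}((u,v),s)))F(\mathcal{H}((u,v),s))dx = e^{(\mu-8)s}\!\int_{\mathbb{R}^4}(I_\mu\!*\!F(e^{2s}u,e^{2s}v))F(e^{2s}u,e^{2s}v)dx.
\]

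For part $(i)$, under exponential critical growth, the bound \eqref{fcondition02} still holds for any fixed $\alpha > 32\pi^2$. Since $\|\Delta \mathcal{H}((u,v),s)\|_2^2 = e^{4s}\|\Delta(u,v)\|_2^2 \to 0$ as $s \to -\infty$, I can find $s_1 \ll 0$ and $t > 1$ close to $1$ such that $\frac{8\alpha t\|\Delta\mathcal{H}((u,v),s)\|_2^2}{8-\mu} \leq 32\pi^2$ for all $s \leq s_1$, which permits invoking Lemma \ref{modadams}(ii). Running through the same chain of inequalities as in \eqref{close01} then produces an estimate of the form
\[
\int_{\mathbb{R}^4}(I_\mu\!*\!F(\mathcal{H}((u,v),s)))F(\mathcal{H}((u,v),s))\,dx \leq C e^{(4\tau+\mu-4)s} + C e^{(4q+4-(8-\mu)/t')s},
\]
where $\tau > 2-\mu/4$, $q>2$ and $t'$ large imply that both exponents strictly exceed $4$. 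Hence
\[
\mathcal{J}(\mathcal{H}((u,v),s)) = \tfrac{1}{2}e^{4s}(\|\Delta u\|_2^2 + \|\Delta v\|_2^2) + o(e^{4s}) \to 0^+ \quad\text{as }s \to -\infty.
\]

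For part $(ii)$, the argument is independent of the growth rate at infinity and follows Lemma \ref{minimax01}$(ii)$ verbatim. Setting $\mathcal{M}(t) := \frac{1}{2}\int_{\mathbb{R}^4}(I_\mu*F(tu,tv))F(tu,tv)\,dx$, assumption $(F_2)$ gives $\mathcal{M}'(t)/\mathcal{M}(t) > 2\theta/t$ for $t>0$, and integrating over $[1,e^{2s}]$ yields $\mathcal{M}(e^{2s}) \geq e^{4\theta s}\mathcal{M}(1)$. Combined with the change-of-variables identity above, this gives
\[
\mathcal{J}(\mathcal{H}((u,v),s)) \leq \tfrac{1}{2}e^{4s}(\|\Delta u\|_2^2 + \|\Delta v\|_2^2) - e^{(4\theta+\mu-8)s}\mathcal{M}(1).
\]
Since $\theta > 3-\mu/4$ forces $4\theta+\mu-8 > 4$, the negative term dominates as $s \to +\infty$, so $\mathcal{J}(\mathcal{H}((u,v),s)) \to -\infty$.

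The only delicate point, compared with the subcritical case, is part $(i)$: in Lemma \ref{minimax01} one could pick $\alpha$ arbitrarily small, but in the critical regime $\alpha$ is forced above $32\pi^2$. The decisive observation is that the scaled Dirichlet quantity $e^{4s}\|\Delta(u,v)\|_2^2$ can be made arbitrarily small by taking $s \ll 0$, so the rescaled exponent $\frac{8\alpha t e^{4s}\|\Delta(u,v)\|_2^2}{8-\mu}$ drops below $32\pi^2$ regardless of the size of $\alpha$, keeping us within the range of validity of Lemma \ref{modadams}(ii).
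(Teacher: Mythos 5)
Your proof is correct and follows essentially the same route as the paper: the paper also handles the critical case by noting that $\|\Delta\mathcal{H}((u,v),s)\|_2^2=e^{4s}\|\Delta(u,v)\|_2^2$ is small for $s\ll 0$, so one can pick $\alpha>32\pi^2$ and $t>1$ with $\frac{8\alpha t\|\Delta\mathcal{H}((u,v),s)\|_2^2}{8-\mu}\leq 32\pi^2$ and then repeat the chain of estimates of Lemma \ref{minimax01}, with part $(ii)$ unchanged via $(F_2)$. Your verification of the exponents ($4\tau+\mu-4>4$, $4q+4-\frac{8-\mu}{t'}>4$, $4\theta+\mu-8>4$) matches the paper's argument.
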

\begin{proof}
It is obvious that there exist $s_1<<0$ such that
\begin{equation*}
  \|\Delta \mathcal{H}((u,v),s)\|_2^2= \|\Delta \mathcal{H}(u,s)\|_2^2+ \|\Delta\mathcal{H}(v,s)\|_2^2< \frac{8-\mu}{8}%(\|\Delta \mathcal{H}(u,s)\|_2+a)^2+(\|\Delta \mathcal{H}(v,s)\|_2+b)^2 \leq (a+1)^2+(b+1)^2
\end{equation*}
for all $s\leq s_1$.
Fix $\alpha>32\pi^2$ close to $32\pi^2$ and $t>1$ close to $1$ such that
\begin{equation*}
\frac{8\alpha t\|\Delta \mathcal{H}((u,v),s)\|_2^2}{8-\mu}\leq 32\pi^2.
\end{equation*}
Repeating the proof of Lemma \ref{minimax01}, we derive the conclusion. %using Lemma \ref{adams},
\end{proof}

\begin{lemma}\label{equi}
Assume that $F$ satisfies $(F_1)$, $(F_2)$, $(F_{5})$ and $F_{z_j}$($j=1,2$) has exponential critical growth at $\infty$,  then for any fixed $(u,v)\in \mathcal{S}$,
the following statements hold.

$(i)$ The function ${\mathcal J}(\mathcal{H}((u,v),s))$ achieves its maximum with positive level at a unique point $s_{(u,v)}\in \mathbb{R}$ such that $\mathcal{H}((u,v),s_{(u,v)}) \in \mathcal{P}(a,b)$.

$(ii)$ The mapping $(u,v)\mapsto s_{(u,v)}$ is continuous in $(u,v)\in \mathcal{S}$.
\end{lemma}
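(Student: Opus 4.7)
The plan is to mimic the argument of Lemma \ref{equi0}, replacing Lemma \ref{minimax01} with its critical-growth counterpart Lemma \ref{minimax1}, while verifying that nothing in the uniqueness or continuity steps relies on subcriticality in an essential way. I will use throughout the identity
\[
\tfrac{d}{ds}\mathcal{J}(\mathcal{H}((u,v),s))=2\,P(\mathcal{H}((u,v),s)),
\]
which follows from a direct scaling computation and the definition of $\mathfrak{F}$.

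For part $(i)$, existence of a maximum point is immediate from Lemma \ref{minimax1}: the function $s\mapsto\mathcal{J}(\mathcal{H}((u,v),s))$ is continuous on $\mathbb{R}$, tends to $0^+$ as $s\to-\infty$ and to $-\infty$ as $s\to+\infty$, hence attains a positive maximum at some $s_{(u,v)}\in\mathbb{R}$. At this point the derivative vanishes, so $P(\mathcal{H}((u,v),s_{(u,v)}))=0$ and $\mathcal{H}((u,v),s_{(u,v)})\in\mathcal{P}(a,b)$. For uniqueness, I will compute $\frac{d^2}{ds^2}\mathcal{J}(\mathcal{H}((u,v),s))$ at an arbitrary critical point $s_{(u,v)}$ exactly as in Lemma \ref{equi0}$(i)$ (the calculation is purely algebraic and does not feel the growth of $F$), and rewrite the result as a double integral with kernel $|x-y|^{-\mu}$ whose integrand is the quantity
\[
(3-\tfrac{\mu}{4})F(\hat z)\mathfrak{F}(\tilde z)-F(\hat z)\,\tilde z\cdot\nabla\mathfrak{F}(\tilde z)-\mathfrak{F}(\hat z)\bigl(\mathfrak{F}(\tilde z)-F(\tilde z)\bigr)
\]
with $\hat z=\mathcal{H}((u,v),s_{(u,v)})(y)$, $\tilde z=\mathcal{H}((u,v),s_{(u,v)})(x)$. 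Assumption $(F_5)$ makes this strictly negative whenever both entries of $\hat z$ and $\tilde z$ are nonzero, so every critical point is a strict local maximum, which forces uniqueness.

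For part $(ii)$, given $(u_n,v_n)\to(u,v)$ in $\mathcal{X}$, I will first show $\{s_{(u_n,v_n)}\}$ is bounded. Boundedness from above uses the pointwise inequality \eqref{imp} (from $(F_2)$): if $s_{(u_n,v_n)}\to+\infty$, then
\[
0\le e^{-4s_{(u_n,v_n)}}\mathcal{J}(\mathcal{H}((u_n,v_n),s_{(u_n,v_n)}))\le \tfrac{1}{2}\|\Delta(u_n,v_n)\|_2^2-\tfrac{1}{2}e^{(4\theta+\mu-12)s_{(u_n,v_n)}}\!\!\int_{\mathbb{R}^4}(I_\mu*F(u_n,v_n))F(u_n,v_n)dx,
\]
and since $\theta>3-\frac{\mu}{4}$ and $(u,v)\ne(0,0)$ (using Fatou on the Choquard term), the right-hand side tends to $-\infty$, a contradiction. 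Boundedness from below uses that along $(u_n,v_n)\to(u,v)$, Lemma \ref{biancon} gives $\mathcal{H}((u_n,v_n),s_{(u,v)})\to\mathcal{H}((u,v),s_{(u,v)})$ in $\mathcal{X}$, whence by part $(i)$ applied to $(u,v)$,
\[
\liminf_{n\to\infty}\mathcal{J}(\mathcal{H}((u_n,v_n),s_{(u_n,v_n)}))\ge\mathcal{J}(\mathcal{H}((u,v),s_{(u,v)}))>0;
\]
if $s_{(u_n,v_n)}\to-\infty$ this would contradict $\mathcal{J}(\mathcal{H}((u_n,v_n),s))\le\tfrac{e^{4s}}{2}\|\Delta(u_n,v_n)\|_2^2\to 0$. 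With $\{s_{(u_n,v_n)}\}$ bounded, extract a subsequence converging to some $s_\ast$; by Lemma \ref{biancon} and continuity of $P$, $P(\mathcal{H}((u,v),s_\ast))=0$, and the uniqueness from $(i)$ forces $s_\ast=s_{(u,v)}$.

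The main obstacle, compared with the subcritical analog, is the Fatou-type passage to the limit in the Choquard nonlinearity when showing the limit $(u,v)\ne(0,0)$ in the upper-boundedness step, and making sure the estimates that bound $\mathcal{J}(\mathcal{H}((u_n,v_n),s))$ from above near $s=-\infty$ remain valid in the critical regime. Both are handled by choosing $\alpha>0$ small enough and $t>1$ close to $1$ so that the exponent $\tfrac{8\alpha t\|\Delta\mathcal{H}((u,v),s)\|_2^2}{8-\mu}$ stays below $32\pi^2$ for all $s$ in the relevant range, which is precisely the content already exploited in Lemma \ref{minimax1}; the rest of the argument is then a structural repetition of the subcritical case.
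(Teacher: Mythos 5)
Your proposal is correct and takes essentially the same route as the paper, whose proof of Lemma \ref{equi} is precisely to repeat the argument of Lemma \ref{equi0} (existence and positivity of the maximizer from the asymptotics, uniqueness via the second-derivative computation and $(F_5)$, continuity via boundedness of $s_{(u_n,v_n)}$ and uniqueness) with Lemma \ref{minimax1} in place of Lemma \ref{minimax01}. One minor wording slip in your closing remark: in the exponential critical regime one cannot take ``$\alpha>0$ small''; the growth estimate forces $\alpha>32\pi^2$ close to $32\pi^2$, and the exponent $\frac{8\alpha t\|\Delta\mathcal{H}((u,v),s)\|_2^2}{8-\mu}$ is kept below $32\pi^2$ by the smallness of $\|\Delta\mathcal{H}((u,v),s)\|_2^2<\frac{8-\mu}{8}$ for $s\ll 0$, exactly as arranged in Lemma \ref{minimax1}.
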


\begin{proof}
Using Lemma \ref{minimax1}, with a similar proof of Lemma \ref{equi0}, we finish the proof.% the lemma.
\end{proof}

\begin{lemma}\label{continuous}
Assume that $F$ satisfies $(F_1)$, $(F_2)$ and $F_{z_j}$($j=1,2$) has exponential critical growth at $\infty$, then there exists $\delta>0$ small enough such that
\begin{equation*}
  \mathcal  J(u,v)
\geq\frac{1}{4}(\|\Delta u\|_2^2+\|\Delta v\|_2^2)
\end{equation*}
and
\begin{equation*}
  P(u,v)
\geq\|\Delta u\|_2^2+\|\Delta v\|_2^2
\end{equation*}
for all $(u,v)\in \mathcal{S}$ satisfying $\|\Delta u\|_2^2+\|\Delta v\|_2^2\leq \delta$.
\end{lemma}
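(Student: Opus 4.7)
The plan is to imitate the strategy used for the subcritical counterpart, Lemma \ref{continuous0}, but to exploit the Adams-type inequality of Lemma \ref{modadams} in its sharp form $0<\alpha\le 32\pi^2$ rather than letting $\alpha\to 0^+$. The critical growth hypothesis only gives control of $F_{z_j}$ by $e^{\alpha|z|^2}-1$ for $\alpha>32\pi^2$, so we must trade a slight enlargement of $\alpha$ against a smallness condition on $\|\Delta(u,v)\|_2^2$; this is where $\delta$ enters.

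First I would fix $\alpha>32\pi^2$ close to $32\pi^2$ and $t>1$ close to $1$, and then choose $\delta\in(0,1)$ so small that every $(u,v)\in\mathcal S$ with $\|\Delta u\|_2^2+\|\Delta v\|_2^2\le\delta$ automatically satisfies
\begin{equation*}
\frac{8\alpha t\,\|\Delta(u,v)\|_2^{2}}{8-\mu}\le 32\pi^{2}.
\end{equation*}
With this calibration, Lemma \ref{modadams}(ii), applied to the normalized pair $(u,v)/\|\Delta(u,v)\|_2$, yields a uniform bound
\begin{equation*}
\int_{\mathbb R^{4}}\Bigl(e^{\frac{8\alpha t}{8-\mu}|(u,v)|^{2}}-1\Bigr)\,dx\le C.
\end{equation*}

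Next I would reproduce the Hardy--Littlewood--Sobolev / Hölder / Adams / Gagliardo--Nirenberg chain of estimates that appears in the proof of Lemma \ref{continuous0}. Inserting the growth bound \eqref{fcondition02} (which is equally valid in the critical case for the chosen $\alpha$), together with Proposition \ref{HLS} with $r=t=8/(8-\mu)$ and the Sobolev embedding $\mathcal X\hookrightarrow L^{p}(\mathbb R^{4},\mathbb R^{2})$ for $p\ge 2$, gives
\begin{equation*}
\int_{\mathbb R^{4}}(I_{\mu}\ast F(u,v))F(u,v)\,dx
\le C\bigl(\|\Delta u\|_2^{\frac{8\tau-8+2\mu}{4}}+\|\Delta v\|_2^{\frac{8\tau-8+2\mu}{4}}\bigr)
+C\bigl(\|\Delta u\|_2^{\frac{8(q+1)t'-16+2\mu}{4t'}}+\|\Delta v\|_2^{\frac{8(q+1)t'-16+2\mu}{4t'}}\bigr),
\end{equation*}
where $t'=t/(t-1)$ is taken large and $q>2$ is the exponent fixed when writing \eqref{fcondition02}. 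Since $\tau>2-\mu/4$, the first two exponents exceed $2$; since $q>2$ and $t'$ can be chosen large, the last two exponents also exceed $2$. Bounding the extra powers of $\|\Delta(u,v)\|_2$ by $\delta^{\tau+\mu/4-2}$ and $\delta^{q-(8-\mu)/(4t')}$ respectively, I obtain
\begin{equation*}
\int_{\mathbb R^{4}}(I_{\mu}\ast F(u,v))F(u,v)\,dx
\le \bigl[C\delta^{\tau+\frac{\mu}{4}-2}+C\delta^{q-\frac{8-\mu}{4t'}}\bigr]\bigl(\|\Delta u\|_2^{2}+\|\Delta v\|_2^{2}\bigr).
\end{equation*}
Shrinking $\delta$ further so that the bracket is at most $1/2$ yields the required inequality for $\mathcal J$.

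For the bound on $P(u,v)$ I would proceed identically, replacing $F(u,v)$ by $\mathfrak F(u,v)=(u,v)\cdot\nabla F(u,v)-(2-\mu/4)F(u,v)$ in the HLS step and using the growth of $F_{z_j}$ together with \eqref{fcondition02} to bound $|(I_\mu*F(u,v))\mathfrak F(u,v)|$ by the same type of sum of powers; assumption $(F_{2})$ ensures $\mathfrak F$ has the same structural control as $F$. The main subtlety, and what I view as the only genuinely new point compared with the subcritical Lemma \ref{continuous0}, is the simultaneous choice of $\alpha,t,\delta$ so that the Adams threshold $32\pi^{2}$ is respected; once this calibration is fixed, all remaining steps are the same as in the subcritical argument and the conclusion follows.
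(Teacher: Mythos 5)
Your proposal is correct and follows essentially the same route as the paper: the paper's proof likewise fixes $\alpha>32\pi^2$ close to $32\pi^2$ and $t>1$ close to $1$, restricts $\delta<\frac{8-\mu}{8}$ so that $\frac{8\alpha t\|\Delta (u,v)\|_2^2}{8-\mu}\leq 32\pi^2$, and then repeats the chain of estimates from Lemma \ref{continuous0} word for word. The only difference is the (immaterial) order in which $\alpha$, $t$ and $\delta$ are calibrated, so there is nothing to add.
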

\begin{proof}
For $\delta<\frac{8-\mu}{8}$, fix $\alpha>32\pi^2$ close to $32\pi^2$ and $t>1$ close to $1$ such that
\begin{equation*}
\frac{8\alpha t\|\Delta (u,v)\|_2^2}{8-\mu}\leq 32\pi^2,
\end{equation*}
with a similar proof of Lemma \ref{continuous0}, we complete the proof.
\end{proof}
\begin{lemma}\label{inf}
Assume that $F$ satisfies $(F_1)$, $(F_2)$, $(F_{5})$ and $F_{z_j}$($j=1,2$) has exponential critical growth at $\infty$, then we have
\begin{equation*}
\inf\limits_{(u,v)\in \mathcal{P}(a,b)}(\|\Delta u\|_2^2+\|\Delta v\|_2^2)>0 \quad \text{and}\quad E(a,b)>0.
\end{equation*}
\end{lemma}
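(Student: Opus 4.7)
\textbf{Proof plan for Lemma \ref{inf}.} My approach mirrors the argument used in Lemma \ref{inf0} for the subcritical case, since the only facts we truly need are the variational identities generated by the fibering $\mathcal{H}((u,v),s)$, the coercivity estimate of Lemma \ref{continuous}, and the fact that $\mathcal{P}(a,b)$ is nonempty, which Lemma \ref{equi} provides. The critical-growth hypothesis enters only indirectly, through the validity of these preparatory lemmas.

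First I would argue $\inf_{\mathcal{P}(a,b)}(\|\Delta u\|_2^2+\|\Delta v\|_2^2)>0$ by contradiction. Suppose there exists $\{(u_n,v_n)\}\subset \mathcal{P}(a,b)$ with $\|\Delta u_n\|_2^2+\|\Delta v_n\|_2^2\to 0$. Then for $n$ large we have $\|\Delta u_n\|_2^2+\|\Delta v_n\|_2^2\le \delta$, where $\delta>0$ is the number produced by Lemma \ref{continuous}. The lemma gives
\[
0=P(u_n,v_n)\ge \|\Delta u_n\|_2^2+\|\Delta v_n\|_2^2\ge 0,
\]
so $\Delta u_n\equiv 0\equiv \Delta v_n$ for all large $n$. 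Inserting this into $P(u_n,v_n)=0$, together with $(F_2)$, which yields $(u,v)\cdot \nabla F(u,v)\ge \theta F(u,v)>0$ and $\theta>3-\tfrac{\mu}{4}$, so $\tfrac{8-\mu}{4\theta}-1<0$, we deduce
\[
0=\int_{\mathbb{R}^4}(I_\mu\ast F(u_n,v_n))\Big(\tfrac{8-\mu}{4}F(u_n,v_n)-(u_n,v_n)\cdot\nabla F(u_n,v_n)\Big)dx\le 0,
\]
which forces $F(u_n,v_n)\equiv 0$ a.e., hence $u_n=v_n=0$ a.e. in $\mathbb{R}^4$. This contradicts $\|u_n\|_2=a>0$ and $\|v_n\|_2=b>0$.

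For the second assertion, fix an arbitrary $(u,v)\in \mathcal{P}(a,b)$. By Lemma \ref{equi}(i), $s_{(u,v)}=0$ realizes the maximum of $s\mapsto\mathcal{J}(\mathcal{H}((u,v),s))$, so $\mathcal{J}(u,v)\ge \mathcal{J}(\mathcal{H}((u,v),s))$ for every $s\in\mathbb{R}$. Choose $s\in\mathbb{R}$ so that $e^{4s}(\|\Delta u\|_2^2+\|\Delta v\|_2^2)=\delta$, namely $4s=\ln\frac{\delta}{\|\Delta u\|_2^2+\|\Delta v\|_2^2}$; the rescaled pair $\mathcal{H}((u,v),s)\in\mathcal{S}$ then satisfies $\|\Delta\mathcal{H}((u,v),s)\|_2^2=\delta$, and Lemma \ref{continuous} applies to give
\[
\mathcal{J}(u,v)\ge \mathcal{J}(\mathcal{H}((u,v),s))\ge \tfrac{1}{4}\,e^{4s}(\|\Delta u\|_2^2+\|\Delta v\|_2^2)=\tfrac{\delta}{4}>0.
\]
Taking the infimum over $(u,v)\in\mathcal{P}(a,b)$ yields $E(a,b)\ge\delta/4>0$.

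The main obstacle, as in the subcritical argument, is the first step: once $\Delta u_n\equiv 0\equiv\Delta v_n$, one must rule out $F(u_n,v_n)\ne 0$, and this is where the full strength of $(F_2)$ (in particular $\theta>3-\mu/4$, so that the coefficient $\tfrac{8-\mu}{4\theta}-1$ is strictly negative) is used together with the positivity of the Riesz convolution. Everything else is a direct translation of the subcritical proof with Lemma \ref{continuous} and Lemma \ref{equi} replacing their subcritical analogues, so no additional difficulty arises from the critical growth.
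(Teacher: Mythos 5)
Your proof is correct and follows essentially the same route as the paper, which itself proves Lemma \ref{inf} by repeating the argument of Lemma \ref{inf0} with Lemma \ref{continuous} and Lemma \ref{equi} in place of their subcritical analogues. Both the contradiction argument for the infimum of $\|\Delta u\|_2^2+\|\Delta v\|_2^2$ (using $(F_2)$ and the strict negativity of $\tfrac{8-\mu}{4\theta}-1$) and the rescaling $4s=\ln\frac{\delta}{\|\Delta u\|_2^2+\|\Delta v\|_2^2}$ for $E(a,b)\ge\delta/4$ match the intended proof.
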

\begin{proof}
Using Lemma \ref{continuous}, a similar argument of Lemma \ref{inf0} completes the proof.
\end{proof}

Using Lemma \ref{equi}, following the arguments of Lemmas \ref{nonincreasing0}-\ref{conclusion0}, we can prove the following lemmas  hold.

\begin{lemma}\label{nonincreasing}
Assume that $F$ satisfies $(F_1)$, $(F_2)$, $(F_{5})$ and $F_{z_j}$($j=1,2$) has exponential critical growth at $\infty$, then the functions $a\mapsto E(a,b)$ and $b\mapsto E(a,b)$ are non-increasing.
\end{lemma}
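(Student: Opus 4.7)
My plan is to mimic the subcritical proof of Lemma \ref{nonincreasing0} almost verbatim, substituting Lemma \ref{equi} for Lemma \ref{equi0} where needed, and checking that every step survives under exponential critical growth. By symmetry it suffices to prove $E(\tilde a,b)\leq E(a,b)$ for $\tilde a>a$; the argument for $b$ is identical.

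Fix $\varepsilon>0$ and choose $(u,v)\in\mathcal{P}(a,b)$ with $\mathcal{J}(u,v)\leq E(a,b)+\varepsilon/3$. Truncate $u$ by $u_\delta(x):=\varphi(\delta x)u(x)$ with $\varphi\in C_0^\infty(\mathbb{R}^4,[0,1])$ equal to $1$ on $B_1$ and $0$ off $B_2$. Then $(u_\delta,v)\to(u,v)$ in $\mathcal{X}$ as $\delta\to0^+$, so by Lemma \ref{biancon} and the continuity part of Lemma \ref{equi}, $s_{(u_\delta,v)}\to s_{(u,v)}=0$ and $\mathcal{H}((u_\delta,v),s_{(u_\delta,v)})\to(u,v)$ in $\mathcal{X}$. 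Pick $\delta_0>0$ small so that $\mathcal{J}(\mathcal{H}((u_{\delta_0},v),s_{(u_{\delta_0},v)}))\leq\mathcal{J}(u,v)+\varepsilon/3$. Next take $\psi\in C_0^\infty(\mathbb{R}^4)$ with $\mathrm{supp}\,\psi\subset B_{1+4/\delta_0}\setminus B_{4/\delta_0}$, rescale it to $\hat u_{\delta_0}:=\frac{\tilde a^2-\|u_{\delta_0}\|_2^2}{\|\psi\|_2^2}\psi$, and form $\hat u_\lambda:=u_{\delta_0}+\mathcal{H}(\hat u_{\delta_0},\lambda)$ for $\lambda<0$. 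Since the supports of $u_{\delta_0}$ and $\mathcal{H}(\hat u_{\delta_0},\lambda)$ remain disjoint, $\|\hat u_\lambda\|_2^2=\tilde a^2$, so $(\hat u_\lambda,v)\in S(\tilde a)\times S(b)$.

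The key auxiliary claim is that $s_{(\hat u_\lambda,v)}$ stays bounded above as $\lambda\to-\infty$. If not, along a subsequence
\[
0\leq e^{-4s_{(\hat u_\lambda,v)}}\mathcal{J}(\mathcal{H}((\hat u_\lambda,v),s_{(\hat u_\lambda,v)}))\leq\tfrac12\|\Delta(\hat u_\lambda,v)\|_2^2-\tfrac{e^{(4\theta+\mu-12)s_{(\hat u_\lambda,v)}}}{2}\int_{\mathbb{R}^4}(I_\mu*F(\hat u_\lambda,v))F(\hat u_\lambda,v)\,dx,
\]
where I used inequality \eqref{imp}; since $\theta>3-\mu/4$ and $(\hat u_\lambda,v)\to(u_{\delta_0},v)\neq(0,0)$ a.e., the right hand side tends to $-\infty$, a contradiction. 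Hence $s_{(\hat u_\lambda,v)}+\lambda\to-\infty$, and by $(F_2)$,
\[
\mathcal{J}(\mathcal{H}((\hat u_{\delta_0},0),s_{(\hat u_\lambda,v)}+\lambda))\leq\tfrac{e^{4(s_{(\hat u_\lambda,v)}+\lambda)}}{2}\|\Delta\hat u_{\delta_0}\|_2^2\to 0\quad\text{as }\lambda\to-\infty.
\]
Using the disjoint supports to split $\mathcal{J}$ additively and applying Lemma \ref{equi}(i),
\[
E(\tilde a,b)\leq\mathcal{J}(\mathcal{H}((u_{\delta_0},v),s_{(\hat u_\lambda,v)}))+\mathcal{J}(\mathcal{H}((\hat u_{\delta_0},0),s_{(\hat u_\lambda,v)}+\lambda))\leq\mathcal{J}(\mathcal{H}((u_{\delta_0},v),s_{(u_{\delta_0},v)}))+o_\lambda(1)\leq E(a,b)+\varepsilon
\]
for $|\lambda|$ large. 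Since $\varepsilon>0$ was arbitrary, $E(\tilde a,b)\leq E(a,b)$.

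The only place where the exponential critical hypothesis intervenes is in the continuity of $(u,v)\mapsto s_{(u,v)}$ and the boundedness-from-above of $s_{(\hat u_\lambda,v)}$; both are already supplied by Lemma \ref{equi} and inequality \eqref{imp} (which depends only on $(F_2)$), so no new Adams-type estimate is needed here. The subtlety I expect to verify carefully is that the truncation $u_{\delta_0}$ combined with the bump $\mathcal{H}(\hat u_{\delta_0},\lambda)$ genuinely gives an additive decomposition of $\mathcal{J}$ (true because $\mathrm{supp}\,u_{\delta_0}$ and $\mathrm{supp}\,\mathcal{H}(\hat u_{\delta_0},\lambda)$ are disjoint, so the Choquard cross terms vanish only up to the nonlocal interaction $I_\mu*F$; however, since $F$ vanishes on the zero component, one has $F(u_{\delta_0}+\mathcal{H}(\hat u_{\delta_0},\lambda),v)=F(u_{\delta_0},v)+F(\mathcal{H}(\hat u_{\delta_0},\lambda),0)$ pointwise), so the splitting reduces, after discarding the nonlocal cross term $\int(I_\mu*F(u_{\delta_0},v))F(\mathcal{H}(\hat u_{\delta_0},\lambda),0)\,dx$ which vanishes as $\lambda\to-\infty$ by dominated convergence and the Hardy-Littlewood-Sobolev inequality. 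This is the main obstacle, but it is handled exactly as in Lemma \ref{nonincreasing0}.
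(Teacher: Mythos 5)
Your overall strategy is exactly the paper's: for this lemma the authors simply say "follow the arguments of Lemma \ref{nonincreasing0}, using Lemma \ref{equi} in place of Lemma \ref{equi0}", and your steps (the truncation $u_{\delta_0}$, the far-away bump $\mathcal{H}(\hat u_{\delta_0},\lambda)$, the boundedness from above of $s_{(\hat u_\lambda,v)}$ via \eqref{imp} and $(F_2)$, and the final chain of inequalities through Lemma \ref{equi}) reproduce that argument; you are also right that no new Adams-type estimate is needed at this stage.

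The one point where you elaborate beyond the paper, however, is not correct as written. The pointwise identity $F(u_{\delta_0}+\mathcal{H}(\hat u_{\delta_0},\lambda),v)=F(u_{\delta_0},v)+F(\mathcal{H}(\hat u_{\delta_0},\lambda),0)$ fails on the support of the bump: there the left-hand side is $F(\mathcal{H}(\hat u_{\delta_0},\lambda),v)$ while the right-hand side is $F(0,v)+F(\mathcal{H}(\hat u_{\delta_0},\lambda),0)$, and these do not coincide for a general $F$; moreover the premise "$F$ vanishes on the zero component" is not available under $(F_1)$--$(F_5)$ alone, since $(F_2)$ forces $F(z)>0$ for every $z\neq(0,0)$ (it would require the structural hypothesis $(H_2)$, which the theorems do not list). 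Note also that, because of the convolution $I_\mu*$, disjointness of supports by itself never annihilates the nonlocal cross terms. To be fair, the paper's own proof of Lemma \ref{nonincreasing0} asserts the additive splitting as an exact identity without justification, so you are reproducing its step rather than introducing a new error; but if you want a watertight version, observe that only an upper bound is needed: the biharmonic part splits exactly by disjointness of supports, and since $F\geq 0$ and $t\mapsto F(t,z_2)$ is nondecreasing in $|t|$ (for instance by $(F_4)$, available in the setting of the theorems, together with continuity), one has $F(\hat u_\lambda,v)\geq F(u_{\delta_0},v)$ pointwise, hence the Choquard term of $(\hat u_\lambda,v)$ dominates that of $(u_{\delta_0},v)$ and therefore $\mathcal J(\mathcal{H}((\hat u_\lambda,v),s))\leq \mathcal J(\mathcal{H}((u_{\delta_0},v),s))+\frac{e^{4(s+\lambda)}}{2}\|\Delta\hat u_{\delta_0}\|_2^2$, which is all the final chain requires, since the last term tends to $0$ as $\lambda\to-\infty$.
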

\begin{lemma}\label{close to}
Assume that $F$ satisfies $(F_1)$, $(F_2)$, $(F_{5})$ and $F_{z_j}$($j=1,2$) has exponential critical growth at $\infty$, suppose that \eqref{abs} possesses a ground state solution  with  $\lambda_1,\lambda_2<0$, then $E(a',b)<E(a,b)$ for any $a'>a$ close to $a$ and $E(a,b')<E(a,b)$ for any $b'>b$ close to $b$.
\end{lemma}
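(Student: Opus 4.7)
The plan is to follow the subcritical argument of Lemma~\ref{close to0} almost verbatim, substituting every appeal to Lemma~\ref{equi0} by its critical counterpart Lemma~\ref{equi}. The argument is essentially local around the fixed pair $(u,v)$, so the exponential critical growth does not obstruct it.

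Let $(u,v)$ be the ground state with Lagrange multipliers $\lambda_1,\lambda_2<0$. For $t>0$ and $s\in\mathbb{R}$, set
\[
\alpha(t,s):=\mathcal{J}(\mathcal{H}((tu,v),s))=\frac{e^{4s}}{2}(t^2\|\Delta u\|_2^2+\|\Delta v\|_2^2)-\frac{e^{(\mu-8)s}}{2}\int_{\mathbb{R}^4}(I_\mu*F(te^{2s}u,e^{2s}v))F(te^{2s}u,e^{2s}v)\,dx,
\]
so that $\mathcal{H}((tu,v),s)\in S(ta)\times S(b)$. Differentiating in $t$ at $(1,0)$ and testing the Euler--Lagrange system satisfied by $(u,v)$ against $(u,0)$, I would obtain
\[
\left.\frac{\partial\alpha}{\partial t}\right|_{(1,0)}=\|\Delta u\|_2^2-\int_{\mathbb{R}^4}(I_\mu*F(u,v))F_u(u,v)u\,dx=\lambda_1 a^2<0.
\]
By continuity of $\partial_t\alpha$, one then fixes $\delta>0$ such that $\partial_t\alpha(t,s)<0$ for every $(t,s)\in(1,1+\delta]\times[-\delta,\delta]$.

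For $a'>a$ close to $a$, put $t_0:=a'/a\in(1,1+\delta]$. By Lemma~\ref{equi}(ii) one has $s_{(t_0 u,v)}\to s_{(u,v)}=0$ as $t_0\to 1^+$, hence $s_{(t_0 u,v)}\in[-\delta,\delta]$ once $a'$ is close enough to $a$. The mean value theorem, combined with Lemma~\ref{equi}(i), then yields
\[
E(a',b)\le\alpha(t_0,s_{(t_0 u,v)})<\alpha(1,s_{(t_0 u,v)})=\mathcal{J}(\mathcal{H}((u,v),s_{(t_0 u,v)}))\le\mathcal{J}(u,v)=E(a,b),
\]
which proves the first inequality. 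The estimate $E(a,b')<E(a,b)$ is obtained by exchanging the roles of $u$ and $v$.

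The only point requiring care in the critical setting is the $C^1$-dependence of $\alpha$ on $(t,s)$ near $(1,0)$, i.e.\ differentiation under the convolution integral. Because $(u,v)\in\mathcal{X}$ is fixed, Lemma~\ref{modadams}(i) guarantees $\int_{\mathbb{R}^4}(e^{\beta|(u,v)|^2}-1)\,dx<+\infty$ for every $\beta>0$; coupled with the growth control \eqref{fcondition02}, this produces dominating bounds that are uniform in $(t,s)$ on compact neighborhoods of $(1,0)$, so the Lebesgue dominated convergence theorem yields the required smoothness. This is the only new element to check; once it is in hand, the scheme of Lemma~\ref{close to0} transfers without modification to the critical case.
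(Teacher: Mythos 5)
Your proposal is correct and follows essentially the same route as the paper, which proves this lemma by repeating the subcritical argument of Lemma \ref{close to0} with Lemma \ref{equi} in place of Lemma \ref{equi0}; your identity $\partial_t\alpha|_{(1,0)}=\lambda_1a^2<0$ is exactly the paper's computation after testing the equation with $(u,0)$. Your extra remark on differentiating under the integral is a reasonable justification of the continuity of $\partial_t\alpha$, which the paper obtains implicitly from $\mathcal J\in C^1(\mathcal{X},\mathbb{R})$ (valid also in the critical case) together with Lemma \ref{biancon}.
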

\begin{lemma}\label{conclusion}
Assume that $F$ satisfies $(F_1)$, $(F_2)$, $(F_{5})$ and $F_{z_j}$($j=1,2$) has exponential critical growth at $\infty$, suppose that \eqref{abs} possesses a ground state solution with  $\lambda_1,\lambda_2<0$, then $E(a',b)<E(a,b)$ for any $a'>a$ and $E(a,b')<E(a,b)$ for any $b'>b$.
\end{lemma}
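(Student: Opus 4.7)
The statement of Lemma \ref{conclusion} is the exponential critical analog of Lemma \ref{conclusion0}, and its proof should follow the same bridging scheme: combine the global monotonicity from Lemma \ref{nonincreasing} with the local strict decrease from Lemma \ref{close to}. The plan is therefore to argue by interpolating between $a$ and an arbitrary $a'>a$ using an intermediate mass $a''\in(a,a')$ chosen close enough to $a$ so that Lemma \ref{close to} is applicable.

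More concretely, given $a'>a$, I would fix any $a''\in(a,a')$ sufficiently close to $a$. By Lemma \ref{close to}, since \eqref{abs} admits a ground state solution at mass $(a,b)$ with $\lambda_1,\lambda_2<0$, we get the strict inequality
\begin{equation*}
E(a'',b)<E(a,b).
\end{equation*}
By Lemma \ref{nonincreasing}, the function $a\mapsto E(a,b)$ is non-increasing, so from $a''<a'$ we obtain
\begin{equation*}
E(a',b)\leq E(a'',b).
\end{equation*}
Chaining the two gives $E(a',b)\leq E(a'',b)<E(a,b)$, which is the desired strict monotonicity. The symmetric argument for the second variable uses Lemmas \ref{nonincreasing} and \ref{close to} with the roles of $a$ and $b$ interchanged, yielding $E(a,b')<E(a,b)$ for every $b'>b$.

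Since both ingredients, Lemma \ref{nonincreasing} and Lemma \ref{close to}, are already available, there is no genuine obstacle here; the only care needed is to ensure that Lemma \ref{close to} indeed applies to the base pair $(a,b)$, which is guaranteed by the standing assumption that \eqref{abs} possesses a ground state solution with negative Lagrange multipliers. Thus the lemma follows immediately, and the proof amounts to recording the two-step interpolation described above.
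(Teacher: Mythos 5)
Your proposal is correct and matches the paper's argument: the paper obtains Lemma \ref{conclusion} (as in the subcritical analogue, Lemma \ref{conclusion0}) directly by combining the non-increasing property of Lemma \ref{nonincreasing} with the local strict decrease of Lemma \ref{close to}, exactly the two-step interpolation you describe.
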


\subsection{The estimation for the upper bound of $E(a,b)$}
In this subsection, we use the modified Adams functions introduced in \cite{CW1} to estimate the upper bound of $E(a,b)$, which is important for exponential critical growth problems.

For any $\varphi(t)\in C_0^\infty([0,\infty),[0,1])$ such that $\varphi(t)=1$ if $0\leq t\leq1$, $\varphi(t)=0$ if $t\geq2$. Define a sequence of functions $\tilde{\omega}_n$ by
\begin{align*}
  \begin{split}
 \tilde{\omega}_n(x)=\left\{
  \begin{array}{ll}
  \sqrt{\frac{\log n}{8\pi^2}}+\frac{1-n^2|x|^2}{\sqrt{32\pi^2\log n}},&\quad \text {for} \,\,\,|x|\leq \frac{1}{n},\\
  -\frac{\log |x|}{\sqrt{8\pi^2\log n}},&\quad \text {for} \,\,\,\frac{1}{n}<|x|\leq1,\\
  -\frac{\varphi(|x|)\log |x|}{\sqrt{8\pi^2\log n}},&\quad \text {for} \,\,\,1<|x|<2,\\
  0,&\quad \text {for} \,\,\,|x|\geq2.
    \end{array}
    \right.
  \end{split}
  \end{align*}
One can check that $\tilde{\omega}_n\in H^2(\mathbb{R}^4)$. A straightforward calculation shows that
\begin{equation*}
  \|\tilde{\omega}_n\|_2^2=\frac{1+32M_1}{128\log n}-\frac{1}{96 n^4}-\frac{1}{192 n^4 \log n}=\frac{1+32M_1}{128\log n}+o(\frac{1}{\log ^4n}),
\end{equation*}
\begin{equation*}
  \|\nabla\tilde{\omega}_n\|_2^2=\frac{1+2M_2}{8\log n}-\frac{1}{12n^2 \log n}=\frac{1+2M_2}{8\log n}+o(\frac{1}{\log ^3n}),
\end{equation*}
and
\begin{equation*}
  \|\Delta\tilde{\omega}_n\|_2^2=1+\frac{4+M_3}{4\log n},
\end{equation*}
where
 \begin{equation*}
 M_1=\int_1^2\varphi^2(r)r^3\log ^2rdr,
\end{equation*}
 \begin{equation*}
 M_2=\int_1^2\Big(\varphi'(r)\log r+\frac{\varphi(r)}{r}\Big)^2r^3dr,
\end{equation*}
and
\begin{equation*}
M_3=\int_1^2\Big(\varphi''(r)\log r+\frac{-\varphi(r)+3\varphi'(r)+2r\varphi'(r)+3r\varphi'(r)\log r}{r^2}\Big)r^3dr.
\end{equation*}
For any $c>0$, let $\omega_n^c=\frac{c\tilde{\omega}_n}{\|\tilde{\omega}_n\|_2}$. Then $\omega_n^c\in S(c)$ and
 \begin{equation}\label{t omega}
 \|\nabla \omega_n^c\|_2^2=\frac{c^2(\frac{1+2M_2}{8\log n}+o(\frac{1}{\log ^3n}))}{\frac{1+32M_1}{128\log n}+o(\frac{1}{\log ^4n})}=\frac{16c^2(1+2M_2)}{1+32M_1}\Big(1+o(\frac{1}{\log ^2n})\Big),
\end{equation}
 \begin{equation}\label{l omega}
 \|\Delta \omega_n^c\|_2^2=\frac{c^2(1+\frac{4+M_3}{4\log n})}{\frac{1+32M_1}{128\log n}+o(\frac{1}{\log ^4n})}=\frac{128c^2}{1+32M_1}\Big(\frac{4+M_3}{4}+\log n+o(\frac{1}{\log ^2n})\Big).
\end{equation}
Furthermore, we have
\begin{align}\label{deomega}
  \begin{split}
 \omega_n^c(x)=\left\{
  \begin{array}{ll}
 \frac{c(1+o(\frac{1}{\log ^3n}))}{\sqrt{\frac{1+32M_1}{128}}}\Big( \frac{\log n}{\sqrt{8\pi^2}}+\frac{1-n^2|x|^2}{\sqrt{32\pi^2}}\Big),&\quad \text {for} \,\,\,|x|\leq \frac{1}{n},\\
  -\frac{c(1+o(\frac{1}{\log ^3n}))}{\sqrt{\frac{1+32M_1}{128}}}\frac{\log |x|}{\sqrt{8\pi^2}},&\quad \text {for} \,\,\,\frac{1}{n}<|x|\leq1,\\
  -\frac{c(1+o(\frac{1}{\log ^3n}))}{\sqrt{\frac{1+32M_1}{128}}}\frac{\varphi(|x|)\log |x|}{\sqrt{8\pi^2}},&\quad \text {for} \,\,\,1<|x|<2,\\
  0,&\quad \text {for} \,\,\,|x|\geq2.
    \end{array}
    \right.
  \end{split}
  \end{align}
For any $t>0$, let
\begin{align*}
  g_n(t):=
\mathcal J(t\omega_n^a(t^{\frac{1}{2}}x),t\omega_n^b(t^{\frac{1}{2}}x))=&\frac{ t^2}{2}\int_{\mathbb{R}^4}(|\Delta \omega_n^a|^2+|\Delta \omega_n^b|^2)dx-\frac{t^{\frac{\mu}{2}-4}}{2}\int_{\mathbb{R}^4}(I_\mu*F(t\omega_n^a,t\omega_n^b))F(t\omega_n^a,t\omega_n^b)dx.
\end{align*}
By Lemmas \ref{equi} and \ref{inf}, we know $E(a,b)=\inf\limits_{(u,v)\in \mathcal{S}}\max\limits_{s\in \mathbb{R}}\mathcal J(\mathcal{H}((u,v),s))>0$, this together with $(\omega_n^a,\omega_n^b)\in \mathcal{S}$ yields that
\begin{equation*}
  0<E(a,b)\leq\max\limits_{s\in \mathbb{R}}\mathcal J(\mathcal{H}((\omega_n^a,\omega_n^b),s))=\max\limits_{t>0}g_n(t).
\end{equation*}
\begin{lemma}\label{attain}
Assume that $F$ satisfies $(F_1)$, $(F_2)$ and $F_{z_j}$($j=1,2$) has exponential critical growth at $\infty$, then for any fixed $n\in \mathbb{N}^+$, $\max\limits_{t>0}g_n(t)$ is attained at some $t_n>0$.
\end{lemma}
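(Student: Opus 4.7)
The plan is to show that Lemma \ref{attain} is essentially a bookkeeping reformulation of Lemma \ref{equi} applied to the specific element $(\omega_n^a,\omega_n^b)\in\mathcal{S}$. The first step is to match the two parametrizations. Setting $t=e^{2s}$, the dilation introduced earlier becomes $\mathcal{H}(u,s)(x)=e^{2s}u(e^s x)=t\,u(t^{1/2}x)$, so from the very definition of $g_n$,
\[
g_n(t)=\mathcal{J}\bigl(t\omega_n^a(t^{1/2}\cdot),\,t\omega_n^b(t^{1/2}\cdot)\bigr)=\mathcal{J}\bigl(\mathcal{H}((\omega_n^a,\omega_n^b),s)\bigr).
\]
Since $s\mapsto e^{2s}$ is a homeomorphism of $\mathbb{R}$ onto $(0,\infty)$, the supremum of $g_n$ over $(0,\infty)$ is attained at some $t_n>0$ if and only if the supremum of $s\mapsto\mathcal{J}(\mathcal{H}((\omega_n^a,\omega_n^b),s))$ over $\mathbb{R}$ is attained at some $s_n\in\mathbb{R}$, in which case $t_n=e^{2s_n}$.

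The second step is to verify the hypothesis of Lemma \ref{equi}. By construction of the modified Adams functions, $\omega_n^c$ satisfies $\|\omega_n^c\|_2=c$ for every $c>0$; in particular $\|\omega_n^a\|_2=a$ and $\|\omega_n^b\|_2=b$, so $(\omega_n^a,\omega_n^b)\in\mathcal{S}$. Lemma \ref{equi}(i) therefore produces a unique $s_n:=s_{(\omega_n^a,\omega_n^b)}\in\mathbb{R}$ at which the map $s\mapsto\mathcal{J}(\mathcal{H}((\omega_n^a,\omega_n^b),s))$ achieves its (strictly positive) maximum. Defining $t_n:=e^{2s_n}\in(0,\infty)$, the identification above yields $\max_{t>0}g_n(t)=g_n(t_n)$, which is precisely the assertion.

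I do not anticipate any serious obstacle here. The entire analytic content — the existence of a positive maximizer along the dilation path, based on Lemma \ref{minimax1} (giving the limits $0^+$ and $-\infty$ at the two ends of the parameter range) and on the monotonicity/convexity structure encoded by $(F_5)$ — has already been carried out in Lemma \ref{equi}. The only thing to check is the elementary algebraic identity linking the scaling $(t,\,t\omega(t^{1/2}\cdot))$ to the operator $\mathcal{H}(\cdot,s)$ with $t=e^{2s}$, which is immediate. If instead one preferred a self-contained argument bypassing Lemma \ref{equi}, one would use continuity of $g_n$ on $(0,\infty)$ (trivially valid for fixed $n$, since $\omega_n^a,\omega_n^b$ are bounded and compactly supported, so the Choquard integrand depends continuously on $t$ by dominated convergence), the two boundary limits from Lemma \ref{minimax1}, and the lower bound $\sup_{t>0}g_n(t)\ge E(a,b)>0$ from Lemma \ref{inf}, in order to confine the supremum to a compact interval and then invoke Weierstrass.
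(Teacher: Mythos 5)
Your reduction is mathematically sound and not circular (Lemma \ref{equi} is established before this point and does not rely on Lemma \ref{attain}), but it takes a genuinely different route from the paper, and it quietly strengthens the hypotheses. Lemma \ref{attain} is stated under $(F_1)$, $(F_2)$ and critical growth only, whereas Lemma \ref{equi} — and also Lemma \ref{inf}, which you invoke in your fallback argument for the lower bound $\sup_{t>0}g_n(t)\geq E(a,b)>0$ — additionally require $(F_5)$, which is the ingredient that yields uniqueness of $s_{(u,v)}$. The paper instead proves the lemma directly with only $(F_1)$--$(F_2)$: for $t$ small it estimates the Choquard term as in \eqref{close01} (Hardy--Littlewood--Sobolev plus the Adams inequality of Lemma \ref{modadams}, exploiting that $\|\Delta(t\omega_n^a,t\omega_n^b)\|_2^2$ is small), obtaining a bound of order $t^{2\tau-2+\mu/2}+t^{2q-2+\mu/2}=o(t^2)$, so $g_n(t)>0$ near $0$; for $t$ large it uses the Ambrosetti--Rabinowitz-type inequality \eqref{imp} coming from $(F_2)$ and $\theta>3-\frac{\mu}{4}$ to get $g_n(t)<0$; continuity then gives a maximizer $t_n>0$. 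So your approach buys brevity by recycling Lemma \ref{equi} (and via the change of variable $t=e^{2s}$, which you identify correctly), at the price of importing $(F_5)$ — harmless for the application to Theorem \ref{th5}, where $(F_5)$ is assumed, but not a proof of the lemma as stated. Your self-contained fallback is essentially the paper's argument; to keep it within the stated hypotheses you should replace the appeal to Lemma \ref{inf} by the positivity of $g_n$ for small $t$ (equivalently, the $0^+$ limit in Lemma \ref{minimax1}), which already confines the supremum to a compact interval together with the $-\infty$ behaviour at $+\infty$.
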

\begin{proof}
For any fixed $n\in \mathbb{N}^+$, as $t>0$ small, fix $\alpha>32\pi^2$ close to $32\pi^2$ and $m>1$ close to $1$ such that
\begin{equation*}
  \frac{8\alpha m\| \Delta(t\omega_n^a,t\omega_n^b)\|_2^2}{8-\mu}\leq 32\pi^2.
\end{equation*}
Arguing as \eqref{close01}, for $m'=\frac{m}{m-1}$, we have
\begin{align*}
 & \frac{t^{\frac{\mu}{2}-4}}{2}\int_{\mathbb{R}^4}(I_\mu*F(t\omega_n^a,t\omega_n^b))F(t\omega_n^a,t\omega_n^b)dx\\
\leq&
 Ct^{\frac{\mu}{2}-4}\Big(\|t\omega_n^a\|_{\frac{8(\tau+1)}{8-\mu}}^{\frac{8(\tau+1)}{8-\mu}}+\|t\omega_n^b\|_{\frac{8(\tau+1)}{8-\mu}}^{\frac{8(\tau+1)}{8-\mu}}\Big)^{\frac{8-\mu}{4}}+Ct^{\frac{\mu}{2}-4}
 \Big(\|t\omega_n^a\|_{\frac{8(q+1)m'}{8-\mu}}^{\frac{8(q+1)m'}{8-\mu}}+\|t\omega_n^b\|_{\frac{8(q+1)m'}{8-\mu}}^{\frac{8(q+1)m'}{8-\mu}}\Big)^{\frac{8-\mu}{4m'}} \\
=&Ct^{2\tau-2+\frac{\mu}{2}}\Big(\|\omega_n^a\|_{\frac{8(\tau+1)}{8-\mu}}^{\frac{8(\tau+1)}{8-\mu}}+\|\omega_n^b\|_{\frac{8(\tau+1)}{8-\mu}}^{\frac{8(\tau+1)}{8-\mu}}\Big)^{\frac{8-\mu}{4}}
  +Ct^{2q-2+\frac{\mu}{2}}\Big(\|\omega_n^a\|_{\frac{8(q+1)m'}{8-\mu}}^{\frac{8(q+1)m'}{8-\mu}}+\|\omega_n^b\|_{\frac{8(q+1)m'}{8-\mu}}^{\frac{8(q+1)m'}{8-\mu}}\Big)^{\frac{8-\mu}{4m'}}
\end{align*}
where $\tau>2-\frac{\mu}{4}$ and $q>2$. So $g_n(t)>0$ for $t>0$ small enough.
By \eqref{imp}, we obtain
\begin{equation*}
   \frac{t^{\frac{\mu}{2}-4}}{2}\int_{\mathbb{R}^4}(I_\mu*F(t\omega_n^a,t\omega_n^b))F(t\omega_n^a,t\omega_n^b)dx\geq \frac{t^{2\theta +\frac{\mu}{2}-4 }}{2}\int_{\mathbb{R}^4}(I_\mu*F(\omega_n^a,\omega_n^b))F(\omega_n^a,\omega_n^b)dx.
\end{equation*}
Since $\theta>3-\frac{\mu}{4}$, we have that $g_n(t)<0$ for $t>0$ large enough. Thus $\max\limits_{t>0}g_n(t)$ is attained at some $t_n>0$.
\end{proof}
\begin{lemma}\label{control}
Assume that $F$ satisfies $(F_1)$, $(F_2)$, $(F_6)$ and $F_{z_j}$($j=1,2$) has exponential critical growth at $\infty$, then $\max\limits_{t>0}g_n(t)<\frac{8-\mu}{16}$ for $n\in \mathbb{N}^+$ large enough.
\end{lemma}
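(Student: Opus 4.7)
My plan is a proof by contradiction. Suppose that, along some subsequence, $g_n(t_n)\geq (8-\mu)/16$. Because $t_n$ maximizes $g_n$ (Lemma~\ref{attain}), the relation $g_n'(t_n)=0$ is exactly the Pohozaev identity for the rescaled pair $(t_n\omega_n^a(t_n^{1/2}\cdot),t_n\omega_n^b(t_n^{1/2}\cdot))\in\mathcal{P}(a,b)$:
\begin{equation*}
t_n^{6-\mu/2}\bigl(\|\Delta\omega_n^a\|_2^2+\|\Delta\omega_n^b\|_2^2\bigr)=\int_{\mathbb{R}^4}\bigl(I_\mu\ast F(t_n\omega_n^a,t_n\omega_n^b)\bigr)\mathfrak{F}(t_n\omega_n^a,t_n\omega_n^b)\,dx.
\end{equation*}
A preliminary observation (using $(F_2)$, which makes the $F$-term of $g_n(t)$ grow like $t^{2\theta+\mu/2-4}$ with $2\theta+\mu/2-4>2$) shows that $t_n$ stays bounded as $n\to\infty$. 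Moreover $g_n(t_n)\leq\tfrac{1}{2}t_n^2(\|\Delta\omega_n^a\|_2^2+\|\Delta\omega_n^b\|_2^2)$ combined with \eqref{l omega} forces the matching lower bound $t_n^2\geq C_\ast^2/\log n\cdot(1+o(1))$, where $C_\ast^2:=(8-\mu)(1+32M_1)/(1024(a^2+b^2))$.

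The decisive step is a sharp concentration estimate on the \emph{smaller} ball $B_{1/(2n)}$ rather than $B_{1/n}$. Evaluating \eqref{deomega} at $|x|\leq 1/(2n)$, where $1-n^2|x|^2\geq 3/4$, gives $\omega_n^c(x)^2\geq\tfrac{16c^2}{\pi^2(1+32M_1)}(\log n+\tfrac{3}{8})^2(1+o(1))$. Writing $\eta_n:=t_n^2\log n\geq C_\ast^2$, this exponentiates to $e^{64\pi^2|(t_n\omega_n^a,t_n\omega_n^b)|^2}\geq C_1\,n^{(8-\mu)\eta_n/C_\ast^2}$ on $B_{1/(2n)}$, with $C_1\to e^{3(8-\mu)/4}>1$ as $n\to\infty$. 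Since $t_n\log n\to\infty$, both $|t_n\omega_n^a|$ and $|t_n\omega_n^b|$ tend to $\infty$ uniformly on $B_{1/(2n)}$, so condition $(F_6)$ yields $F(z)[z\cdot\nabla F(z)]\geq\tfrac{\rho}{2}e^{64\pi^2|z|^2}$ there.

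I would then symmetrize the Choquard integral via the AM-GM inequality
\begin{equation*}
F(z(y))(z\cdot\nabla F)(x)+F(z(x))(z\cdot\nabla F)(y)\geq 2\sqrt{F(z(x))(z\cdot\nabla F)(x)\,F(z(y))(z\cdot\nabla F)(y)},
\end{equation*}
restrict both variables to $B_{1/(2n)}$ (so $|x-y|\leq 1/n$ and $|B_{1/(2n)}|^2=\pi^4/(1024\,n^8)$), and combine with $\mathfrak{F}\geq\tfrac{\theta-2+\mu/4}{\theta}(z\cdot\nabla F)$ from $(F_2)$. After the $n^\mu$ from the kernel cancels against $|B_{1/(2n)}|^2$, the Pohozaev identity reduces, up to positive constants, to
\begin{equation*}
\eta_n^{3-\mu/4}(\log n)^{\mu/4-2}\geq C_1\,n^{(8-\mu)(\eta_n/C_\ast^2-1)}.
\end{equation*}
If $\eta_n$ stays bounded, the left-hand side tends to $0$ (since $\mu/4-2<0$) while the right-hand side is bounded below by $C_1>1$; if $\eta_n\to\infty$ (so $t_n$ is bounded below by a positive constant), the right-hand side grows as a positive power of $n$ while the left-hand side remains $O(\log n)$. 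Either alternative yields the desired contradiction. The main obstacle is securing $C_1>1$ strictly: using the standard ball $B_{1/n}$ would give only $C_1=1+o(1)$ and the saturation case $\eta_n\to C_\ast^2$ would degenerate into the useless $0\geq 0$. It is precisely the extra term $1-n^2|x|^2$ in the modified Adams functions \eqref{deomega} — which does not vanish on $B_{1/(2n)}$ — that delivers the decisive strict factor $e^{3(8-\mu)/4}$.
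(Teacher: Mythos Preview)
Your argument is essentially correct and follows the same overall strategy as the paper---compare the Laplacian term with a concentration lower bound on the Choquard term via $g_n'(t_n)=0$ and $(F_6)$---but with two genuine refinements. First, by restricting to $B_{1/(2n)}$ rather than $B_{1/n}$ you exploit $1-n^2|x|^2\geq 3/4$ to gain the strict factor $C_1\geq e^{3(8-\mu)/4}>1$; the paper works on $B_{1/n}$ and, at the borderline $l=C_\ast^2$, must rely on the positivity of the $o(1/\log^3 n)$ normalization error in $\omega_n^c$ to force $A_n\to 0^+$ (hence $n^{A_n}\geq 1$), a more delicate device. Second, your AM--GM symmetrization is the honest way to convert $(F_6)$, which bounds only the \emph{product} $F(z)[z\cdot\nabla F(z)]$ at a single point, into a lower bound on the double integral $\int\!\!\int F(z(y))\,[z\cdot\nabla F](z(x))\,|x-y|^{-\mu}\,dx\,dy$; the paper's direct replacement by $(\varrho-\varepsilon)^2 e^{\cdots}$ tacitly assumes separate lower bounds on $F$ and $z\cdot\nabla F$. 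Two minor repairs to your write-up: the preliminary claim ``$t_n$ stays bounded'' is neither properly justified (the $(F_2)$ comparison involves the $n$-dependent constant $\int(I_\mu\ast F(\omega_n^a,\omega_n^b))F(\omega_n^a,\omega_n^b)\,dx$) nor actually used downstream; and when $\eta_n\to\infty$ the parenthetical ``so $t_n$ is bounded below by a positive constant'' is false (take $t_n\sim(\log n)^{-1/4}$), though your conclusion survives because the right-hand side grows like $e^{c\,\eta_n\log n}$ while the left is only polynomial in $\eta_n$ and $\log n$.
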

\begin{proof}
By Lemma \ref{attain}, $\max\limits_{t>0}g_n(t)$ is attained at some $t_n>0$ and thus $g'_n(t_n)=0$. By $(F_2)$,
\begin{align}\label{useuse}
  & t_n^2\int_{\mathbb{R}^4}(|\Delta \omega_n^a|^2+|\Delta \omega_n^b|^2)dx=\frac{\mu-8}{4}t_n^{\frac{\mu}{2}-4}\int_{\mathbb{R}^4}(I_\mu*F(t_n\omega_n^a,t_n\omega_n^b))F(t_n\omega_n^a,t_n\omega_n^b)dx\nonumber\\
  &+t_n^{\frac{\mu}{2}-4}\int_{\mathbb{R}^4}(I_\mu*F(t_n\omega_n^a,t_n\omega_n^b))(\frac{\partial F(t_n\omega_n^a,t_n\omega_n^b)}{\partial (t_n\omega_n^a)}t_n\omega_n^a+\frac{\partial F(t_n\omega_n^a,t_n\omega_n^b)}{\partial (t_n\omega_n^b)}t_n\omega_n^b)dx\nonumber\\
  \geq& \frac{4\theta+\mu-8}{4\theta}t_n^{\frac{\mu}{2}-4}\int_{\mathbb{R}^4}(I_\mu*F(t_n\omega_n^a,t_n\omega_n^b))(\frac{\partial F(t_n\omega_n^a,t_n\omega_n^b)}{\partial (t_n\omega_n^a)}t_n\omega_n^a+\frac{\partial F(t_n\omega_n^a,t_n\omega_n^b)}{\partial (t_n\omega_n^b)}t_n\omega_n^b)dx.
\end{align}
By $(F_6)$, for any $\varepsilon>0$. there exists $R_\varepsilon>0$ such that for any $|z_1|,|z_2|\geq R_\varepsilon$,
\begin{equation}\label{ftF}
 F(z)[z\cdot \nabla F(z)]\geq (\varrho-\varepsilon)e^{64\pi^2 |z|^2}.
\end{equation}

{\bf Case $1$:} $\lim\limits_{n\rightarrow\infty}(t_n^2\log n)=0$. Then $\lim\limits_{n\rightarrow\infty}t_n=0$, by \eqref{t omega} and \eqref{l omega}, we have that
\begin{equation*}
\frac{  t_n^2}{2}\int_{\mathbb{R}^4}(|\Delta \omega_n^a|^2+|\Delta \omega_n^b|^2)dx\rightarrow 0,\,\, \text{as $n\rightarrow\infty$}.
\end{equation*}
 Noting that $F(t_n\omega_n^a,t_n\omega_n^b)>0$ by $(F_2)$, so we have
\begin{equation*}
  0<g_n(t_n)\leq\frac{  t_n^2}{2}\int_{\mathbb{R}^4}(|\Delta \omega_n^a|^2+|\Delta \omega_n^b|^2)dx,
\end{equation*}
thus $\lim\limits_{n\rightarrow\infty}g_n(t_n)=0$, and we conclude.

{\bf Case $2$:} $\lim\limits_{n\rightarrow\infty}(t_n^2\log n)=l\in (0,\infty]$. We claim that $l<\infty$. Otherwise, if
$l=\infty$, then $\lim\limits_{n\rightarrow\infty}(t_n\log n)=\infty$.
By \eqref{t omega}-\eqref{deomega} and \eqref{useuse}-\eqref{ftF}, we have
\begin{align*}
  &\frac{128(a^2+b^2)t_n^2}{1+32M_1}\Big(\frac{4+M_3}{4}+\log n+o(\frac{1}{\log ^2n})\Big)%+\frac{8\beta t_n(a^2+b^2)(1+2M_2)}{1+32M_1}\Big(1+o(\frac{1}{\log ^2n})\Big)
  \\
\geq& \frac{4\theta+\mu-8}{4\theta}t_n^{\frac{\mu}{2}-4} \int_{B_{\frac{1}{n}}(0)}\int_{B_{\frac{1}{n}}(0)}\frac{F(t_n\omega_n^a(y),t_n\omega_n^b(y)) (\frac{\partial F(t_n\omega_n^a(x),t_n\omega_n^b(x))}{\partial (t_n\omega_n^a(x))}t_n\omega_n^a(x)+\frac{\partial F(t_n\omega_n^a(x),t_n\omega_n^b(x))}{\partial (t_n\omega_n^b(x))}t_n\omega_n^b(x)) }{|x-y|^\mu}dxdy\\
\geq& \frac{(4\theta+\mu-8)(\varrho-\varepsilon)^2}{4\theta }t_n^{\frac{\mu}{2}-4}e^{\frac{1024 (a^2+b^2) t_n^2 \log^2 n(1+o(\frac{1}{\log^3 n}))}{1+32M_1}}\int_{B_{\frac{1}{n}}(0)}\int_{B_{\frac{1}{n}}(0)}\frac{dxdy}{|x-y|^\mu}.
\end{align*}
Since $B_{\frac{1}{n}-|x|}(0)\subset B_{\frac{1}{n}}(x)$ for any $|x|\leq \frac{1}{n}$, the last integral can be estimated as follows
\begin{align*}
  \int_{B_{\frac{1}{n}}(0)}\int_{B_{\frac{1}{n}}(0)}\frac{dxdy}{|x-y|^\mu}&=\int_{B_{\frac{1}{n}}(0)}dx\int_{B_{\frac{1}{n}}(x)}\frac{dz}{|z|^\mu}\\
  &\geq\int_{B_{\frac{1}{n}}(0)}dx\int_{B_{\frac{1}{n}-|x|}(0)}\frac{dz}{|z|^\mu}\\
  &=\frac{2\pi^2}{4-\mu} \int_{B_{\frac{1}{n}}(0)}\Big(\frac{1}{n}-|x|\Big)^{4-\mu}dx\\
  &=\frac{4\pi^4}{4-\mu} \int_{0}^{\frac{1}{n}}\Big(\frac{1}{n}-r\Big)^{4-\mu}r^3dr\\
  &=\frac{24\pi^4}{(4-\mu)(5-\mu)(6-\mu)(7-\mu)(8-\mu)n^{8-\mu}}=\frac{C_\mu}{n^{8-\mu}},
\end{align*}
where
\begin{equation*}
  C_\mu=\frac{24\pi^4}{(4-\mu)(5-\mu)(6-\mu)(7-\mu)(8-\mu)}.
\end{equation*}
Consequently, we obtain
\begin{align}\label{tomainuse}
  &\frac{128 (a^2+b^2)t_n^2}{1+32M_1}\Big(\frac{4+M_3}{4}+\log n+o(\frac{1}{\log ^2n})\Big)\nonumber\\
\geq & \frac{(4\theta+\mu-8)C_\mu(\varrho-\varepsilon)^2}{4\theta }t_n^{\frac{\mu}{2}-4}e^{\Big(\frac{1024 (a^2+b^2) t_n^2 \log n(1+o(\frac{1}{\log^3 n}))}{1+32M_1}-(8-\mu)\Big)\log n},
\end{align}
which is a contradiction.
 Thus $l\in(0,\infty)$, and $\lim\limits_{n\rightarrow\infty}t_n=0$, $\lim\limits_{n\rightarrow\infty}(t_n\log n)=\infty$. By \eqref{tomainuse}, letting $n\rightarrow\infty$, we have that
\begin{equation*}
 0< l< \frac{(1+32M_1)(8-\mu)}{1024(a^2+b^2)}.
\end{equation*}
 Otherwise, if $l> \frac{(1+32M_1)(8-\mu)}{1024(a^2+b^2)}$, the right side of \eqref{tomainuse} approaches to infinity as $n\rightarrow\infty$, which is impossible.
 If $l=\frac{(1+32M_1)(8-\mu)}{1024(a^2+b^2)}$, by the definition of $\omega_n^a$ and $\omega_n^b$, we can find that
 \begin{equation*}
   A_n:=\frac{1024 (a^2+b^2) t_n^2 \log n(1+o(\frac{1}{\log^3 n}))}{1+32M_1}-(8-\mu)\rightarrow 0^+,\quad \text{as $n\rightarrow\infty$}
\end{equation*}
and using the Taylor's formula, we have
\begin{equation*}
  n^{A_n}=1+A_n\log n+\frac{A_n^2\log ^2n}{2}+\cdots\geq1.
\end{equation*}
Thus
\begin{equation*}
  \frac{128 (a^2+b^2)t_n^2 \log n}{1+32M_1}\geq \frac{(4\theta+\mu-8)C_\mu(\varrho-\varepsilon)^2}{4\theta } t_n^{\frac{\mu}{2}-4},
\end{equation*}
which is a contradiction.
Therefore,
\begin{align*}
  \lim\limits_{n\rightarrow\infty}g_n(t_n)&\leq  \lim\limits_{n\rightarrow\infty}\frac{  t_n^2}{2}\int_{\mathbb{R}^4}(|\Delta \omega_n^a|^2+|\Delta \omega_n^b|^2)dx\\
  =&\frac{64 (a^2+b^2) l }{1+32M_1}<\frac{8-\mu}{16}.
\end{align*}
This ends the proof.
\end{proof}

\subsection{Palais-Smale sequence}
Similar to the arguments in section \ref{appro0}, we apply Lemma \ref{Ghouss} to construct a $(PS)_{E(a,b)}$ sequence on $\mathcal{P}(a,b)$ for $\mathcal J$. Here, we only state the conclusion without proof.

\begin{lemma}\label{C^1}
Assume that $F$ satisfies $(F_1)$, $(F_2)$, $(F_{5})$ and $F_{z_j}$($j=1,2$) has exponential critical growth at $\infty$, then the functional $\mathcal{I}(u,v)=\mathcal J(\mathcal{H}((u,v),s_{(u,v)}))$ is of class $C^1$ and
\begin{align*}
  \langle \mathcal{I}'(u,v),(\varphi,\psi)\rangle=
  \langle\mathcal J'(\mathcal{H}((u,v),s_{(u,v)})),\mathcal{H}((\varphi,\psi),s_{(u,v)})\rangle
\end{align*}
for any $(u,v)\in \mathcal{S}$ and $(\varphi,\psi)\in T_{(u,v)}\mathcal{S}$.
\end{lemma}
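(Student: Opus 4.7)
The plan is to mimic the proof of Lemma \ref{C0^1}, adjusting only the bounds that are sensitive to the exponential growth. The strategy is to compute the Gâteaux derivative of $\mathcal{I}$ by a two-sided sandwich argument, verify its continuity, and then invoke \cite[Proposition 1.3]{Wil} to upgrade Gâteaux to Fréchet differentiability of class $C^{1}$.

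First, fix $(u,v)\in\mathcal{S}$ and $(\varphi,\psi)\in T_{(u,v)}\mathcal{S}$, and consider $(u+t\varphi,v+t\psi)\in\mathcal{X}$ for $|t|$ small. Using Lemma \ref{equi}, $s_{(u+t\varphi,v+t\psi)}$ is a maximizer of $s\mapsto\mathcal{J}(\mathcal{H}((u+t\varphi,v+t\psi),s))$ and $s_{(u,v)}$ is a maximizer of $s\mapsto\mathcal{J}(\mathcal{H}((u,v),s))$. Hence I obtain the sandwich
\begin{align*}
\mathcal{J}(\mathcal{H}((u+t\varphi,v+t\psi),s_{(u+t\varphi,v+t\psi)}))-\mathcal{J}(\mathcal{H}((u,v),s_{(u+t\varphi,v+t\psi)}))\\
\le\mathcal{I}(u+t\varphi,v+t\psi)-\mathcal{I}(u,v)\\
\le\mathcal{J}(\mathcal{H}((u+t\varphi,v+t\psi),s_{(u,v)}))-\mathcal{J}(\mathcal{H}((u,v),s_{(u,v)})).
\end{align*}
On the right-hand side I expand in the $t$-variable at fixed scale $s_{(u,v)}$, which is a straightforward derivative of $\mathcal{J}$ on $\mathcal{X}$. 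On the left-hand side I do the same at scale $s_{(u+t\varphi,v+t\psi)}$, and I use the continuity $s_{(u+t\varphi,v+t\psi)}\to s_{(u,v)}$ as $t\to0$ from Lemma \ref{equi}(ii). Applying the mean value theorem to both sides and then letting $t\to0$, both bounds converge to the same quantity.

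The convergence requires that
\begin{equation*}
\int_{\mathbb{R}^{4}}(I_{\mu}\ast F(e^{2s_{t}}(u+\xi_{t}t\varphi),e^{2s_{t}}(v+\xi_{t}t\psi)))\,e^{2s_{t}}\varphi\,F_{z_{1}}(e^{2s_{t}}(u+\xi_{t}t\varphi),e^{2s_{t}}(v+\xi_{t}t\psi))\,dx
\end{equation*}
converges to its natural limit, and similarly for the $\psi$-term; here $s_{t}:=s_{(u+t\varphi,v+t\psi)}$ or $s_{t}:=s_{(u,v)}$ according to which bound is being handled, and $\xi_{t}\in(0,1)$. This is the step where exponential critical growth matters. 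Since $s_{t}$ stays bounded as $t\to0$, the family $\{e^{2s_{t}}(u+\xi_{t}t\varphi),e^{2s_{t}}(v+\xi_{t}t\psi)\}$ is bounded in $\mathcal{X}$, with $\|\Delta(\cdot,\cdot)\|_{2}^{2}$ controlled by a quantity close to $e^{4s_{(u,v)}}(\|\Delta u\|_{2}^{2}+\|\Delta v\|_{2}^{2})$. By choosing $t$ so small that this controlled quantity is strictly less than the Adams threshold $\tfrac{8-\mu}{8}$ (possible because we can pick the test direction with norm controlled), I select $\alpha>32\pi^{2}$ close to $32\pi^{2}$ and $m>1$ close to $1$ with $\frac{8\alpha m\|\Delta(\cdot)\|_{2}^{2}}{8-\mu}\le 32\pi^{2}$, and then $(e^{\alpha|(\cdot)|^{2}}-1)$ is uniformly bounded in $L^{m}$ by Lemma \ref{modadams}. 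Arguing as in the estimates of $\|F(u_{n},v_{n})\|_{8/(8-\mu)}$ in Lemma \ref{strong0}, this gives $\|I_{\mu}\ast F(\cdot,\cdot)\|_{\infty}\le C$ uniformly in $t$, and the integrands are dominated by an $L^{1}$ function; dominated convergence then delivers the limit.

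Once the Gâteaux derivative has been identified as
\begin{equation*}
\lim_{t\to 0}\frac{\mathcal{I}(u+t\varphi,v+t\psi)-\mathcal{I}(u,v)}{t}=e^{4s_{(u,v)}}\int_{\mathbb{R}^{4}}(\Delta u\Delta\varphi+\Delta v\Delta\psi)\,dx-e^{(\mu-8)s_{(u,v)}}\int_{\mathbb{R}^{4}}(I_{\mu}\ast F(e^{2s_{(u,v)}}u,e^{2s_{(u,v)}}v))\bigl[e^{2s_{(u,v)}}\varphi\,F_{z_{1}}(\cdot)+e^{2s_{(u,v)}}\psi\,F_{z_{2}}(\cdot)\bigr]dx,
\end{equation*}
its linearity in $(\varphi,\psi)$ is obvious, and its continuity in $(u,v)$ follows from Lemma \ref{equi}(ii) together with the same Adams-type uniform bounds applied along a convergent sequence $(u_{n},v_{n})\to(u,v)$ in $\mathcal{S}$. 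Therefore \cite[Proposition 1.3]{Wil} gives $\mathcal{I}\in C^{1}(\mathcal{S},\mathbb{R})$. Finally, the change of variable $x=e^{-s_{(u,v)}}y$ identifies the formula with $\langle\mathcal{J}'(\mathcal{H}((u,v),s_{(u,v)})),\mathcal{H}((\varphi,\psi),s_{(u,v)})\rangle$, completing the statement. The main obstacle is precisely controlling the exponential critical integrand uniformly in $t$, which I expect to handle by the Adams threshold trick combined with the continuity $s_{(u+t\varphi,v+t\psi)}\to s_{(u,v)}$.
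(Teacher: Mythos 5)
Your overall route---the two-sided sandwich based on the maximality of $s_{(u,v)}$ and $s_{(u+t\varphi,v+t\psi)}$, the mean value theorem, the continuity of $(u,v)\mapsto s_{(u,v)}$ from Lemma \ref{equi}, the appeal to \cite[Proposition 1.3]{Wil}, and the final change of variables---is exactly the argument the paper intends (the paper omits the proof and refers back to Lemma \ref{C0^1}). The problem lies in the one step where the critical case genuinely differs from the subcritical one, namely your uniform control of the exponential terms. You claim that, by taking $t$ small (and ``picking the test direction with norm controlled''), the quantity $\|\Delta\mathcal{H}((u+t\varphi,v+t\psi),s_t)\|_2^2$ can be pushed strictly below the Adams threshold $\tfrac{8-\mu}{8}$, so that Lemma \ref{modadams}(ii) applies with some $\alpha>32\pi^2$ and $m>1$ close to $1$. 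This is false in general: as $t\to0$ this quantity converges to $e^{4s_{(u,v)}}(\|\Delta u\|_2^2+\|\Delta v\|_2^2)$, a fixed positive number determined by the given $(u,v)\in \mathcal{S}$, which may be arbitrarily large (the lemma makes no smallness assumption on $(u,v)$, and on $\mathcal{P}(a,b)$ one only has a positive lower bound for $\|\Delta u\|_2^2+\|\Delta v\|_2^2$, not an upper bound below the threshold); rescaling $(\varphi,\psi)$ does not change this limit. Consequently the uniform $L^m$ bound on $e^{\alpha|\cdot|^2}-1$, and with it your bound $\|I_\mu*F(\cdot)\|_\infty\le C$ and the dominated convergence, are not justified as written; the same flaw reappears when you prove continuity of the G\^{a}teaux derivative along $(u_n,v_n)\to(u,v)$.

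The step can be repaired, but it requires a different idea than norm smallness. Since $w_t:=\mathcal{H}((u+\xi_t t\varphi,v+\xi_t t\psi),s_t)\to w:=\mathcal{H}((u,v),s_{(u,v)})$ in $\mathcal{X}$ as $t\to0$, use the splitting $|w_t|^2\le(1+\varepsilon)|w|^2+C_\varepsilon|w_t-w|^2$ together with H\"older (or Young): the factor coming from $w_t-w$ satisfies $\|\Delta(w_t-w)\|_2\to0$, so for $|t|$ small Lemma \ref{modadams}(ii) gives a uniform bound for it at any prescribed exponent, while the factor coming from the fixed function $w$ is integrable for every exponent by Lemma \ref{modadams}(i). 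This yields the uniform integrability of $e^{\alpha|w_t|^2}-1$ (in $L^m$, uniformly for small $t$) that your dominated-convergence step and the continuity argument actually need; with that substitution, the rest of your proof goes through exactly as in Lemma \ref{C0^1}.
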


\begin{lemma}\label{pssequencehomo2}
Assume that $F$ satisfies $(F_1)$, $(F_2)$, $(F_{5})$ and $F_{z_j}$($j=1,2$) has exponential critical growth at $\infty$. let $\mathcal{G}$  be a homotopy stable family of compact subsets of $\mathcal{S}$ without boundary (i.e., $B=\emptyset$) and set
\begin{equation*}
  e_{\mathcal{G}}:=\inf\limits_{A\in \mathcal{G}}\max\limits_{(u,v)\in A}\mathcal I(u,v).
\end{equation*}
If $e_{\mathcal{G}}>0$, then there exists a $(PS)_{e_{\mathcal{G}}}$ sequence $\{(u_n,v_n)\}\subset \mathcal{P}(a,b)$ for $\mathcal J$.
\end{lemma}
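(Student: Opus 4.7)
The plan is to reproduce the strategy used in Lemma \ref{pssequencehomo02} of the subcritical setting, now invoking the exponential-critical analogues already proved in Section \ref{cri}, namely Lemmas \ref{equi}, \ref{inf}, and \ref{C^1}. The starting point is a minimizing sequence $\{A_n\}\subset\mathcal{G}$ for $e_{\mathcal{G}}$, which I will deform onto $\mathcal{P}(a,b)$ via the continuous map $\eta(t,(u,v))=\mathcal{H}((u,v),ts_{(u,v)})$. Lemma \ref{equi} guarantees that $s_{(u,v)}$ is continuous in $(u,v)$ and that $\mathcal{H}((u,v),s_{(u,v)})\in\mathcal{P}(a,b)$, so $\eta$ is continuous on $[0,1]\times\mathcal{S}$ with $\eta(0,\cdot)=\mathrm{id}$. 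Since $\mathcal{G}$ is homotopy-stable with empty boundary, the set $D_n:=\eta(1,A_n)=\{\mathcal{H}((u,v),s_{(u,v)}):(u,v)\in A_n\}$ belongs again to $\mathcal{G}$, and by construction $D_n\subset\mathcal{P}(a,b)$.

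Next, using the invariance $\mathcal{I}(\mathcal{H}((u,v),s))=\mathcal{I}(u,v)$, one checks $\max_{D_n}\mathcal{I}=\max_{A_n}\mathcal{I}\to e_{\mathcal{G}}$, so $\{D_n\}$ is itself a minimizing family. Applying Ghoussoub's principle (Lemma \ref{Ghouss}) to the functional $\mathcal{I}$ on the $C^1$-Finsler manifold $\mathcal{S}$, which is $C^1$ by Lemma \ref{C^1}, I obtain a sequence $\{(\hat{u}_n,\hat{v}_n)\}\subset\mathcal{S}$ with $\mathcal{I}(\hat{u}_n,\hat{v}_n)\to e_{\mathcal{G}}$, $\|\mathcal{I}'(\hat{u}_n,\hat{v}_n)\|_*\to 0$, and $\mathrm{dist}((\hat{u}_n,\hat{v}_n),D_n)\to 0$. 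I then set $(u_n,v_n):=\mathcal{H}((\hat{u}_n,\hat{v}_n),s_{(\hat{u}_n,\hat{v}_n)})\in\mathcal{P}(a,b)$, so automatically $\mathcal{J}(u_n,v_n)=\mathcal{I}(u_n,v_n)=\mathcal{I}(\hat{u}_n,\hat{v}_n)\to e_{\mathcal{G}}$.

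The main obstacle, exactly as in the subcritical case, is to show that the scaling parameters $s_{(\hat{u}_n,\hat{v}_n)}$ stay bounded, so that the rescaling used to transfer the PS condition from $\mathcal{I}$ on $\mathcal{S}$ to $\mathcal{J}$ on $\mathcal{X}$ does not degenerate. Since
\[
e^{-4s_{(\hat{u}_n,\hat{v}_n)}}=\frac{\|\Delta\hat{u}_n\|_2^2+\|\Delta\hat{v}_n\|_2^2}{\|\Delta u_n\|_2^2+\|\Delta v_n\|_2^2},
\]
the denominator is bounded from below thanks to Lemma \ref{inf}, which supplies $\inf_{\mathcal{P}(a,b)}(\|\Delta u\|_2^2+\|\Delta v\|_2^2)>0$ in the critical regime as well. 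The numerator is controlled because $D_n\subset\mathcal{P}(a,b)$, combined with $(F_2)$ and the bounded values $\max_{D_n}\mathcal{J}\to e_{\mathcal{G}}$, forces $\{D_n\}$ to be uniformly bounded in $\mathcal{X}$; together with $\mathrm{dist}((\hat{u}_n,\hat{v}_n),D_n)\to 0$ this yields $\sup_n\|(\hat{u}_n,\hat{v}_n)\|<\infty$, hence $e^{-4s_{(\hat{u}_n,\hat{v}_n)}}\leq C$.

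To finish, for any $(\varphi,\psi)\in T_{(u_n,v_n)}\mathcal{S}$ a direct change of variables shows that $\mathcal{H}((\varphi,\psi),-s_{(\hat{u}_n,\hat{v}_n)})\in T_{(\hat{u}_n,\hat{v}_n)}\mathcal{S}$ and its $\mathcal{X}$-norm is bounded by $\max\{1,\sqrt{C}\}\|(\varphi,\psi)\|$ thanks to the bound just established. The formula in Lemma \ref{C^1} then gives
\[
\langle\mathcal{J}'(u_n,v_n),(\varphi,\psi)\rangle=\langle\mathcal{I}'(\hat{u}_n,\hat{v}_n),\mathcal{H}((\varphi,\psi),-s_{(\hat{u}_n,\hat{v}_n)})\rangle,
\]
so $\|\mathcal{J}'(u_n,v_n)\|_*\leq\max\{1,\sqrt{C}\}\,\|\mathcal{I}'(\hat{u}_n,\hat{v}_n)\|_*\to 0$, completing the construction of the desired $(PS)_{e_{\mathcal{G}}}$ sequence on $\mathcal{P}(a,b)$. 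The exponential-critical growth of $F$ enters only through the preliminary Lemmas \ref{equi}, \ref{inf}, \ref{C^1}; the argument itself is purely topological-variational and parallels the subcritical proof verbatim.
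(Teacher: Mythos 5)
Your proposal is correct and follows exactly the route the paper intends: the paper omits the proof of this lemma precisely because it is the verbatim transposition of the subcritical argument of Lemma \ref{pssequencehomo02}, with Lemmas \ref{equi}, \ref{inf} and \ref{C^1} replacing their subcritical counterparts, which is what you carry out (deformation $\eta$ onto $\mathcal{P}(a,b)$, Ghoussoub's minimax principle for $\mathcal{I}$, boundedness of $e^{-4s_{(\hat{u}_n,\hat{v}_n)}}$, and transfer of the Palais--Smale property from $\mathcal{I}$ to $\mathcal{J}$ via the tangent-space rescaling).
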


\begin{proposition}\label{pssequencehomo}
Assume that $F$ satisfies $(F_1)$, $(F_2)$, $(F_{5})$ and $F_{z_j}$($j=1,2$) has exponential critical growth at $\infty$, then there exists a $(PS)_{E(a,b)}$ sequence $\{(u_n,v_n)\}\subset \mathcal{P}(a,b)$ for $\mathcal J$.
\end{proposition}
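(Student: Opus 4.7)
The plan is to mimic the proof of Proposition \ref{pssequencehomo0} from the exponential subcritical setting, which is structurally available to us thanks to the parallel Lemmas \ref{C^1} and \ref{pssequencehomo2} already stated for the critical case. The key point is that everything needed to run the Ghoussoub minimax machinery on the auxiliary $C^1$ functional $\mathcal{I}(u,v) := \mathcal{J}(\mathcal{H}((u,v),s_{(u,v)}))$ has been set up in the critical regime: Lemma \ref{equi} provides a unique, continuous maximum point $s_{(u,v)}$; Lemma \ref{C^1} makes $\mathcal{I}$ differentiable on the Finsler manifold $\mathcal{S}$; and Lemma \ref{pssequencehomo2} upgrades any positive minimax value produced on $\mathcal{I}$ into a $(PS)$ sequence for $\mathcal{J}$ lying on $\mathcal{P}(a,b)$.

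First, I take $\mathcal{G}$ to be the family of all singletons $\{(u,v)\}\subset\mathcal{S}$. This is a homotopy stable family with empty boundary $B=\emptyset$ (consistent with the convention $\sup\psi(\emptyset)=-\infty$ recorded after Lemma \ref{Ghouss}). Setting
\begin{equation*}
e_{\mathcal{G}}=\inf_{A\in\mathcal{G}}\max_{(u,v)\in A}\mathcal{I}(u,v)=\inf_{(u,v)\in\mathcal{S}}\mathcal{I}(u,v),
\end{equation*}
I next verify that $e_{\mathcal{G}}=E(a,b)$. For the $\geq$ direction, note that for each $(u,v)\in\mathcal{S}$, Lemma \ref{equi} gives $\mathcal{H}((u,v),s_{(u,v)})\in\mathcal{P}(a,b)$, so
\begin{equation*}
\mathcal{I}(u,v)=\mathcal{J}(\mathcal{H}((u,v),s_{(u,v)}))\geq E(a,b);
\end{equation*}
taking the infimum over $(u,v)\in\mathcal{S}$ yields $e_{\mathcal{G}}\geq E(a,b)$. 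Conversely, for any $(u,v)\in\mathcal{P}(a,b)$, the uniqueness part of Lemma \ref{equi} forces $s_{(u,v)}=0$, hence
\begin{equation*}
\mathcal{J}(u,v)=\mathcal{J}(\mathcal{H}((u,v),0))=\mathcal{I}(u,v)\geq e_{\mathcal{G}},
\end{equation*}
so that $E(a,b)\geq e_{\mathcal{G}}$ after taking the infimum on $\mathcal{P}(a,b)$.

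Once this chain of identities is in place, Lemma \ref{inf} guarantees $E(a,b)>0$, hence $e_{\mathcal{G}}>0$, and I can directly invoke Lemma \ref{pssequencehomo2} to produce a sequence $\{(u_n,v_n)\}\subset\mathcal{P}(a,b)$ with $\mathcal{J}(u_n,v_n)\to E(a,b)$ and $\|\mathcal{J}'|_{\mathcal{S}}(u_n,v_n)\|_{*}\to 0$, which is exactly the claim. I expect no serious obstacle here because all the delicate work (existence and regularity of the mapping $(u,v)\mapsto s_{(u,v)}$, the $C^1$-smoothness of $\mathcal{I}$, and the translation of $(PS)$ sequences from $\mathcal{I}$ on $\mathcal{S}$ to $\mathcal{J}$ on $\mathcal{P}(a,b)$ with the boundedness of the rescaling parameters $e^{-4 s_{(\hat{u}_n,\hat{v}_n)}}$) has already been handled in Lemmas \ref{equi}, \ref{inf}, \ref{C^1}, and \ref{pssequencehomo2}. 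The only subtle step worth double-checking is that those ancillary lemmas, which were stated but whose proofs are deferred to the subcritical analogues, genuinely carry over in the critical regime; the sole ingredient needed for that transfer is Lemma \ref{minimax1}, which replaces Lemma \ref{minimax01} with exactly the same conclusions, so the arguments go through verbatim.
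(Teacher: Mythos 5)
Your proposal is correct and follows exactly the route the paper intends: the paper itself omits the proof, stating that it is obtained by repeating the argument of Proposition \ref{pssequencehomo0} with the critical-case Lemmas \ref{equi}, \ref{inf}, \ref{C^1} and \ref{pssequencehomo2} in place of their subcritical counterparts, which is precisely what you do (singleton family $\mathcal{G}$, the identity $e_{\mathcal{G}}=E(a,b)$ via $\mathcal{H}((u,v),s_{(u,v)})\in\mathcal{P}(a,b)$ and $s_{(u,v)}=0$ on $\mathcal{P}(a,b)$, then Lemma \ref{pssequencehomo2}). No gaps.
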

For the sequence $\{(u_n,v_n)\}$ obtained in Proposition \ref{pssequencehomo}, by $(F_2)$, we know that $\{(u_n,v_n)\}$ is bounded in $\mathcal{X}$, up to a subsequence, we assume that $(u_n,v_n)\rightharpoonup (u_{a},v_{b})$ in $\mathcal{X}$.
Furthermore, by ${\mathcal J}'|_{\mathcal{S}}(u_n,v_n)\rightarrow0$ as $n\rightarrow\infty$ and Lagrange multiplier rule, there exist two sequences $\{\lambda_{1,n}\}, \{\lambda_{2,n}\}\subset \mathbb{R}$ such that
\begin{align}\label{lagrange}
&\int_{\mathbb{R}^4}(\Delta u_n \Delta \varphi+\Delta v_n\Delta \psi)dx-\int_{\mathbb{R}^4}(I_\mu*F(u_n,v_n)) F_{u_n}(u_n,v_n)\varphi dx-\int_{\mathbb{R}^4}(I_\mu*F(u_n,v_n))F_{v_n}(u_n,v_n)\psi dx\nonumber \\
=&\int_{\mathbb{R}^4}(\lambda_{1,n}u_n\varphi+\lambda_{2,n}v_n\psi)dx+o_n(1)\|(\varphi,\psi)\|,
\end{align}
for every $(\varphi,\psi)\in \mathcal{X}$.
\begin{lemma}\label{not01}
Assume that $F$ satisfies $(F_1)-(F_3)$, $(F_5)$, $(F_6)$ and $F_{z_j}$($j=1,2$) has exponential critical growth at $\infty$, then up to a subsequence and up to translations in $\mathbb{R}^4$, $u_a\neq0$ and $v_b\neq0$. % for any $\kappa\geq\kappa_1$.% where $u_{a,b},v_{a,b}\neq0$.
\end{lemma}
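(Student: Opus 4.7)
The argument parallels Lemma \ref{not0} closely, replacing Lemma \ref{strong0} by Lemma \ref{strong} and invoking the sharp upper bound from Lemma \ref{control} to handle the exponential critical growth. The plan is to set
\[
\varrho := \limsup_{n\to\infty}\sup_{y\in\mathbb{R}^4}\int_{B(y,1)}(|u_n|^2+|v_n|^2)\,dx
\]
and split on whether $\varrho>0$ or $\varrho=0$.

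In the non-vanishing case $\varrho>0$, I would translate to extract a weak $\mathcal{X}$-limit $(u_a,v_b)\neq(0,0)$. Before applying Lemma \ref{strong}, its hypothesis \eqref{condition} must be checked: combining $(F_4)$, which gives $z\cdot\nabla F(z)<(4-\mu/2)F(z)$, with the Pohozaev identity $P(u_n,v_n)=0$ yields
\[
\int(I_\mu*F(u_n,v_n))[(u_n,v_n)\cdot\nabla F(u_n,v_n)]\,dx < \Big(4-\tfrac{\mu}{2}\Big)\big[\|\Delta(u_n,v_n)\|_2^2-2\mathcal{J}(u_n,v_n)\big],
\]
which is uniformly bounded since $\{(u_n,v_n)\}$ is bounded in $\mathcal{X}$ and $\mathcal{J}(u_n,v_n)\to E(a,b)$. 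Lemma \ref{strong} then permits passing to the limit in \eqref{lagrange}, showing $(u_a,v_b)$ solves \eqref{e1.1}. To rule out $u_a\equiv 0$, the first equation collapses to $(I_\mu*F(0,v_b))F_{z_1}(0,v_b)=0$ a.e.; by $(F_2)$, $F(0,v_b)>0$ wherever $v_b\neq 0$, so $I_\mu*F(0,v_b)$ is strictly positive on $\{v_b\neq 0\}$, while $(F_3)$ forces $F_{z_1}(0,v_b)\neq 0$ on that set. Hence $v_b\equiv 0$, and the symmetric argument excludes $v_b\equiv 0$.

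In the vanishing case $\varrho=0$, Lions' lemma gives $u_n,v_n\to 0$ in $L^p(\mathbb{R}^4)$ for every $p>2$. The goal is to force $\|\Delta(u_n,v_n)\|_2^2\to 0$, since Lemma \ref{continuous} would then yield $\mathcal{J}(u_n,v_n)\to 0$, contradicting $E(a,b)>0$ from Lemma \ref{inf}. Using $P(u_n,v_n)=0$ together with $\mathfrak{F}\geq(\theta-2+\mu/4)F$ from $(F_2)$, this reduces to showing $\int(I_\mu*F)[(u_n,v_n)\cdot\nabla F(u_n,v_n)]\,dx\to 0$. I would exploit the sharp upper bound $\mathcal{J}(u_n,v_n)\to E(a,b)<\frac{8-\mu}{16}$ from Lemma \ref{control}, combined with the identity $\|\Delta(u_n,v_n)\|_2^2=2\mathcal{J}(u_n,v_n)+\int(I_\mu*F)F\,dx$ and a parallel Adams-HLS estimate on $\int(I_\mu*F)F\,dx$ (whose polynomial factors vanish thanks to the $L^p$-convergence), to secure $\|\Delta(u_n,v_n)\|_2^2<\frac{8-\mu}{8}$ for $n$ large. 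With this threshold bound in hand, I would fix $\alpha>32\pi^2$ close to $32\pi^2$ and $t>1$ close to $1$ so that $\frac{8\alpha t\|\Delta(u_n,v_n)\|_2^2}{8-\mu}\leq 32\pi^2$, apply Lemma \ref{modadams}(ii), and run the Adams-Hardy-Littlewood-Sobolev-Hölder chain of \eqref{close01} using the exponential growth bound on $\nabla F$; each resulting term contains a polynomial factor $\|(u_n,v_n)\|_p^p$ with $p>2$ tending to zero, delivering the desired vanishing.

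The main obstacle is precisely the bootstrap in the vanishing case: converting the energy bound $E(a,b)<\frac{8-\mu}{16}$ into the geometric bound $\|\Delta(u_n,v_n)\|_2^2<\frac{8-\mu}{8}$ demanded by Lemma \ref{modadams}(ii). This is why the constant $\frac{8-\mu}{16}$ in Lemma \ref{control} was fine-tuned via the modified Adams functions of \cite{CW1} together with hypothesis $(F_6)$, so that one has exactly $2E(a,b)<\frac{8-\mu}{8}$; without this sharp calibration the Adams-HLS estimate would yield only bounded, not vanishing, nonlinear terms, and no contradiction could be reached in the vanishing alternative.
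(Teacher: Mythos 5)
Your overall architecture (vanishing/non-vanishing dichotomy, passing to the limit in \eqref{lagrange} via Lemma \ref{strong}, and using $(F_2)$--$(F_3)$ to exclude the semitrivial cases) matches the paper, and your non-vanishing case is essentially fine, except that you invoke $(F_4)$, which is not among the hypotheses of this lemma; it is also unnecessary, since \eqref{condition} follows directly from $P(u_n,v_n)=0$, the boundedness of the sequence and $\mathcal J(u_n,v_n)\to E(a,b)$ (the paper gets it from \eqref{FF}, i.e.\ from $(F_2)$ alone).

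The genuine gap is in the vanishing case, and it is circular. You propose to obtain the threshold bound $\|\Delta(u_n,v_n)\|_2^2<\frac{8-\mu}{8}$ from the identity $\|\Delta(u_n,v_n)\|_2^2=2\mathcal J(u_n,v_n)+\int_{\mathbb{R}^4}(I_\mu*F(u_n,v_n))F(u_n,v_n)dx$ together with ``a parallel Adams--HLS estimate'' on the Choquard term whose polynomial factors vanish by the $L^p$-convergence. But that Adams--HLS estimate is exactly the step that requires choosing $\alpha>32\pi^2$ and $t>1$ with $\frac{8\alpha t\|\Delta(u_n,v_n)\|_2^2}{8-\mu}\le 32\pi^2$, i.e.\ it presupposes the very bound $\|\Delta(u_n,v_n)\|_2^2<\frac{8-\mu}{8}$ you are trying to prove; without it, Lemma \ref{modadams}(ii) gives no uniform control of the exponential factor, and the vanishing of the $L^p$ norms buys nothing (the a priori bound on $\|\Delta(u_n,v_n)\|_2^2$ coming from boundedness of the $(PS)$ sequence is not below the threshold in general). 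The paper breaks this circle differently: it first shows $\int_{\mathbb{R}^4}(I_\mu*F(u_n,v_n))F(u_n,v_n)dx=o_n(1)$ by means of \eqref{strong6} in Lemma \ref{strong}, whose proof uses only the Cauchy--Schwarz inequality \eqref{CS}, the uniform bound \eqref{condition}, the Hardy--Littlewood--Sobolev inequality and a truncation argument --- no Adams inequality and hence no threshold; only then does Lemma \ref{control} give $\limsup_n\|\Delta(u_n,v_n)\|_2^2\le 2E(a,b)<\frac{8-\mu}{8}$, after which the Adams--HLS chain of \eqref{close01} is legitimately applied to kill $\int_{\mathbb{R}^4}(I_\mu*F(u_n,v_n))[(u_n,v_n)\cdot\nabla F(u_n,v_n)]dx$ and reach the contradiction $E(a,b)=0$. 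To repair your argument you should replace your ``parallel Adams--HLS estimate'' on $\int(I_\mu*F)F$ by an appeal to \eqref{strong6} (its hypothesis \eqref{condition} you have already verified), and keep the Adams machinery only for the gradient term after the threshold is secured.
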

\begin{proof}
If $(u_a,v_b)=(0,0)$,
by ${\mathcal J}(u_n,v_n)=E(a,b)+o_n(1)$, $P(u_n,v_n)=0$ and $(F_2)$, we get
\begin{align*}
  {\mathcal J}(u_n,v_n)-\frac{1}{2}P(u_n,v_n)&=\frac{1}{2}\int_{\mathbb{R}^4}(I_\mu*F(u_n,v_n))\Big[ (u_n,v_n)\cdot\nabla F(u_n,v_n )-(3-\frac{\mu}{4})F(u_n,v_n)\Big]dx\nonumber\\
  &\geq\frac{4\theta+\mu-12}{8}\int_{\mathbb{R}^4}(I_\mu*F(u_n,v_n))F(u_n,v_n)dx.
\end{align*}
    By $\theta>3-\frac{\mu}{4}$, we have
\begin{equation}\label{FF}
 \limsup\limits_{n\rightarrow\infty} \int_{\mathbb{R}^4} (I_\mu*F(u_n,v_n))F(u_n,v_n) dx\leq \frac{8 E(a,b)}{4\theta+\mu-12}.
\end{equation}
This with $P(u_n,v_n)=0$ and the boundedness of $\{(u_n,v_n)\}$ implies that, up to a subsequence, there exists constant $L_0>0$ such that
\begin{equation}\label{L0}
  \int_{\mathbb{R}^4} (I_\mu*F(u_n,v_n))[(u_n,v_n)\cdot \nabla F(u_n,v_n)] dx\leq L_0.
\end{equation}
From Lemma \ref{strong}, we can see
\begin{equation*}
  \int\limits_{\mathbb{R}^4} (I_\mu*F(u_n,v_n))F(u_n,v_n)dx=o_n(1).
\end{equation*}
Hence, by Lemma \ref{control}, we have
\begin{equation*}
 \limsup\limits_{n\rightarrow\infty} (\|\Delta u_n\|_2^2+\|\Delta v_n\|_2^2)\leq 2E(a,b)<\frac{8-\mu}{8}.
\end{equation*}
Up to a subsequence, we assume that $\sup\limits_{n\in \mathbb{N}^+} (\|\Delta u_n\|_2^2+\|\Delta v_n\|_2^2)<\frac{8-\mu}{8}$. Using \eqref{HLS} again, we have
\begin{equation*}
  \int\limits_{\mathbb{R}^4} (I_\mu*F(u_n,v_n))[(u_n,v_n)\cdot \nabla F(u_n,v_n)]dx\leq C\|F(u_n,v_n)\|_{\frac{8}{8-\mu}}\|(u_n,v_n)\cdot \nabla F(u_n,v_n)\|_{\frac{8}{8-\mu}}.
\end{equation*}
Fix $\alpha>32\pi^2$ close to $32\pi^2$ and $m>1$ close to $1$ such that
 \begin{equation*}
   \sup\limits_{n\in \mathbb{N}^+} \frac{8\alpha m \|\Delta (u_n,v_n)\|_2^2}{8-\mu}\leq 32\pi^2.
\end{equation*}
Arguing as \eqref{close01}, for $m'=\frac{m}{m-1}$, we have
\begin{equation*}
  \|F(u_n,v_n)\|_{\frac{8}{8-\mu}}\leq C\Big(\|u_n\|_{\frac{8(\tau+1)}{8-\mu}}^{\frac{8(\tau+1)}{8-\mu}}+\|v_n\|_{\frac{8(\tau+1)}{8-\mu}}^{\frac{8(\tau+1)}{8-\mu}}\Big)^{\frac{8-\mu}{8}}+C\Big(\|u_n\|_{\frac{8(q+1)t'}{8-\mu}}^{\frac{8(q+1)t'}{8-\mu}}+\|v_n\|_{\frac{8(q+1)t'}{8-\mu}}^{\frac{8(q+1)t'}{8-\mu}}\Big)^{\frac{8-\mu}{8t'}}\rightarrow0,\quad\text{as $n\rightarrow\infty$}.
\end{equation*}
Similarly, we can prove that
\begin{equation*}
  \|(u_n,v_n)\cdot \nabla F(u_n,v_n)\|_{\frac{8}{8-\mu}}\rightarrow0,\quad\text{as $n\rightarrow\infty$}.
\end{equation*}
Therefore, we get
\begin{equation*}
  \int\limits_{\mathbb{R}^4} (I_\mu*F(u_n,v_n))[(u_n,v_n)\cdot \nabla F(u_n,v_n)]dx=o_n(1).
\end{equation*}
Recalling that $P(u_n,v_n)=0$, so $\|\Delta u_n\|_2+\|\Delta v_n\|_2=o_n(1)$, which implies $E(a,b)=0$, this is impossible, since $E(a,b)>0$.
Hence $(u_a,v_b)\neq(0,0)$.  From \eqref{lagrange} and Lemma \ref{strong}, we can see that $(u_a,v_b)$ is a weak solution of \eqref{e1.1} with $N=4$. Assume that $u_{a}=0$, then by $(F_3)$, we know that $v_{b}=0$. Similarly, $v_{b}=0$ implies $u_{a}=0$. This is impossible, since $(u_a,v_b)\neq(0,0)$.
\end{proof}

\begin{lemma}\label{lam}
Assume that $F$ satisfies $(F_1)-(F_6)$, and $F_{z_j}$($j=1,2$) has exponential critical growth at $\infty$, then $\{\lambda_{1,n}\}$ and $\{\lambda_{2,n}\}$ are bounded in $\mathbb{R}$. Furthermore, up to a subsequence,
$\lambda_{1,n}\rightarrow\lambda_1<0$ and $\lambda_{2,n}\rightarrow\lambda_2<0$ in $\mathbb{R}$ as $n\rightarrow\infty$. %with
\end{lemma}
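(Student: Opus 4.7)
The plan is to follow the strategy of Lemma \ref{lam0} essentially verbatim, with all passages to the limit in the nonlocal and exponential terms justified by Lemma \ref{strong} together with the uniform bound \eqref{FF} established inside the proof of Lemma \ref{not01}. Concretely, I would first test \eqref{lagrange} with $(u_n,0)$ and $(0,v_n)$ to extract $\lambda_{1,n}a^2$ and $\lambda_{2,n}b^2$, and then eliminate $\|\Delta u_n\|_2^2$ and $\|\Delta v_n\|_2^2$ via $P(u_n,v_n)=0$, i.e.\ the identity $\int_{\mathbb{R}^4}(I_\mu*F)[(u_n,v_n)\cdot\nabla F]\,dx = \|\Delta u_n\|_2^2+\|\Delta v_n\|_2^2 + \tfrac{8-\mu}{4}\int_{\mathbb{R}^4}(I_\mu*F)F\,dx$, to arrive at the reformulation
\[
-\lambda_{1,n}a^2 = \|\Delta v_n\|_2^2 + \int_{\mathbb{R}^4}(I_\mu*F(u_n,v_n))\Bigl[\tfrac{8-\mu}{4}F(u_n,v_n)-F_{v_n}(u_n,v_n)v_n\Bigr]dx + o_n(1),
\]
together with the symmetric identity for $-\lambda_{2,n}b^2$.

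For the boundedness of $\{\lambda_{j,n}\}$, note that $\{(u_n,v_n)\}$ is bounded in $\mathcal{X}$ by $(F_2)$ applied to $P(u_n,v_n)=0$, while \eqref{FF} provides a uniform bound on $\int_{\mathbb{R}^4}(I_\mu*F(u_n,v_n))F(u_n,v_n)\,dx$. By $(F_4)$, the integrand of the nonlocal term in the reformulation is non-negative and pointwise dominated by $\tfrac{8-\mu}{4}(I_\mu*F)F$, hence uniformly bounded; boundedness of $\lambda_{1,n}$, and similarly $\lambda_{2,n}$, follows immediately.

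The strict negativity of the subsequential limits relies on Lemma \ref{not01}, which ensures $u_a\neq 0$ and $v_b\neq 0$ after passing to a subsequence and translating. Writing the nonlocal term as a double integral on $\mathbb{R}^4\times\mathbb{R}^4$ with non-negative integrand (by $(F_2)$ and $(F_4)$), and invoking a.e.\ pointwise convergence $(u_n,v_n)\to(u_a,v_b)$, Fatou's lemma gives
\[
\liminf_{n\to\infty}\int_{\mathbb{R}^4}(I_\mu*F(u_n,v_n))\Bigl[\tfrac{8-\mu}{4}F(u_n,v_n)-F_{v_n}(u_n,v_n)v_n\Bigr]dx \geq \int_{\mathbb{R}^4}(I_\mu*F(u_a,v_b))\Bigl[\tfrac{8-\mu}{4}F(u_a,v_b)-F_{v_b}(u_a,v_b)v_b\Bigr]dx.
\]
The right-hand side is strictly positive by the strict inequality in $(F_4)$ on $(\mathbb{R}\setminus\{0\})^2$ combined with $F>0$ from $(F_2)$, so $\limsup \lambda_{1,n}<0$; an identical argument handles $\lambda_{2,n}$, and passing to a subsequence yields $\lambda_{1,n}\to\lambda_1<0$ and $\lambda_{2,n}\to\lambda_2<0$. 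The main obstacle I foresee is the Fatou step on the double nonlocal integral in the exponential-critical regime, but it is handled cleanly once one has extracted a subsequence with a.e.\ pointwise convergence on all of $\mathbb{R}^4$—which is already available from the concentration-compactness extraction performed in Lemma \ref{not01}.
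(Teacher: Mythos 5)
Your argument is correct and follows essentially the same route as the paper, whose proof of this lemma is simply the argument of Lemma \ref{lam0}: test \eqref{lagrange} with $(u_n,0)$ and $(0,v_n)$, eliminate the Laplacian terms via $P(u_n,v_n)=0$, use $(F_4)$ (plus the uniform bound on $\int_{\mathbb{R}^4}(I_\mu*F)F\,dx$ coming from the energy level, i.e.\ \eqref{FF}) for boundedness, and use $u_a\neq0$, $v_b\neq0$ with $(F_2)$, $(F_4)$ and Fatou for the strict negativity of the limits. Your explicit appeal to \eqref{FF} to control the nonlocal term in the critical regime is exactly the extra ingredient implicitly needed when transferring the subcritical proof, so nothing is missing.
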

\begin{proof}
The proof is similar to Lemma \ref{lam0}, so we omit it.
\end{proof}

\subsection{Proof of Theorem \ref{th5}}
\noindent
{\it Proof of Theorem \ref{th5}:} Using \eqref{L0}, Lemmas \ref{strong}, \ref{nonincreasing}, \ref{conclusion}, \ref{not01}, \ref{lam}, we can proceed exactly as the proof of Theorem \ref{th3}.
\qed

\subsection*{Acknowledgments}

The authors were  supported by National Natural Science Foundation of China 11971392.


\begin{thebibliography}{99}
\bibitem{A1} F.S.B. Albuquerque,
Nonlinear Schr\"{o}dinger elliptic systems involving exponential critical growth in $\mathbb{R}^2$, {\em Electron. J. Differential Equations}, 2014, No. 59, 12 pp.

\bibitem{A2} F.S.B. Albuquerque,
Standing wave solutions for a class of nonhomogeneous systems in dimension two, {\em Complex Var. Elliptic Equ.} {\bf 61} (2016), no. 8, 1157-1175.

\bibitem{ACTY} C.O. Alves, D. Cassani, C. Tarsi, M.B. Yang,
Existence and concentration of ground state solutions for a critical nonlocal Schr\"{o}dinger equation in $\mathbb{R}^2$, {\em J. Differential Equations}, {\bf 261} (3) (2016) 1933-1972.

\bibitem{AJM} C.O. Alves, C. Ji, O.H. Miyagaki,
Normalized solutions for a Schr\"{o}dinger equation with critical growth in $\mathbb{R}^N$, {\em Calc. Var. Partial Differential Equations}, {\bf 61} (2022), no. 1, Paper No. 18, 24 pp.

\bibitem{AGMS} R. Arora, J. Giacomoni, T. Mukherjee, K. Sreenadh,
 Adams-Moser-Trudinger inequality in the Cartesian product of Sobolev spaces and its applications, {\em  Rev. R. Acad. Cienc. Exactas Fis. Nat. Ser. A Mat. RACSAM},  {\bf 114} (2020), no. 3, Paper No. 111, 26 pp.




\bibitem{BJ} T. Bartsh, L. Jeanjean,
Normalized solutions for nonlinear Schr\"{o}dinger systems, {\em Proc. Roy. Soc. Edinburgh Sect. A}, {\bf 148} (2018), no. 2, 225-242.

\bibitem{BJS} T. Bartsh, L. Jeanjean, N. Soave,
Normalized solutions for a system of coupled cubic Schr\"{o}dinger equations on $\mathbb{R}^3$, {\em J. Math. Pures Appl.} (9) {\bf 106} (2016), no. 4, 583-614.


\bibitem{BLZ} T. Bartsch, H.W. Li, W.M. Zou,
Existence and asymptotic behavior of normalized ground states for Sobolev critical Schr\"{o}dinger systems, {\em Calc. Var. Partial Differential Equations}, {\bf 62} (2023), no. 1, Paper No. 9, 34 pp.




\bibitem{BS0} T. Bartsh, N. Soave,
A natural constraint approach to normalized solutions of nonlinear Schr\"{o}dinger equations and systems, {\em J. Funct. Anal.} {\bf 272} (2017), no. 12, 4998-5037.

\bibitem{BS1} T. Bartsh, N. Soave,
Correction to: ``A natural constraint approach to normalized solutions of nonlinear Schr\"{o}dinger equations and systems'' [J. Funct. Anal. 272 (12) (2017) 4998-5037], {\em J. Funct. Anal}, {\bf 275} (2018), no. 2, 516-521.

\bibitem{BS2} T. Bartsch, N. Soave,
Multiple normalized solutions for a competing system of Schr\"{o}dinger equations, {\em Calc. Var. Partial Differential Equations}, {\bf 58} (2019), no. 1, Paper No. 22, 24 pp.

\bibitem{BZZ} T. Bartsch, X.X. Zhong, W.M. Zou,
Normalized solutions for a coupled Schr\"{o}dinger system, {\em
Math. Ann.} {\bf 380} (2021), no. 3-4, 1713-1740.


\bibitem{BM} B. Bieganowski, J. Mederski,
Normalized ground states of the nonlinear Schr\"{o}dinger equation with at least mass critical growth, {\em
J. Funct. Anal.} {\bf 280} (2021), no. 11, Paper No. 108989, 26 pp.

\bibitem{BCdN} D. Bonheure, J.-B. Casteras, E. dos Santos, R. Nascimento,
Orbitally stable standing waves of a mixed dispersion nonlinear Schr\"{o}dinger equation,
{\em SIAM J. Math. Anal.} {\bf 50} (2018), no. 5, 5027-5071.

\bibitem{BCGJ} D. Bonheure, J.-B. Casteras, T.X. Gou, L. Jeanjean,
Normalized solutions to the mixed dispersion nonlinear Schr\"{o}dinger equation in the mass critical and supercritical regime, {\em Trans. Amer. Math. Soc.} {\bf 372} (2019), no. 3, 2167-2212.



\bibitem{BFJ} N. Boussa\"{i}d, A.J. Fern\'{a}ndez, L. Jeanjean,
Some remarks on a minimization problem associated to a fourth order nonlinear Schr\"{o}dinger equation, {\em arXiv preprint}, (2019),
arXiv.1910.13177.


\bibitem{CL} T. Cazenave, P.L. Lions,
Orbital stability of standing waves for some nonlinear Schr\"{o}dinger equations, {\em Comm. Math. Phys.} {\bf 85} (1982), no. 4, 549-561.

\bibitem{CLZ} L. Chen, G.Z. Lu, M.C. Zhu,
Ground states of bi-harmonic equations with critical exponential growth involving constant and trapping potentials,
{\em Calc. Var. Partial Differential Equations}, {\bf 59} (2020), no. 6, Paper No. 185, 38 pp.

\bibitem{CW} W.J. Chen, Z.X. Wang,
Normalized solutions for a biharmonic Choquard equation with exponential critical growth in $\mathbb{R}^4$, {\em arXiv preprint}, (2022), arXiv.2210.00887.

\bibitem{CW1} W.J. Chen, Z.X. Wang,
Normalized ground states for a biharmonic Choquard equation with exponential critical growth, {\em arXiv preprint}, (2022), arXiv.2211.13701.



\bibitem{DT} S.B. Deng, X.L. Tian,
On a nonhomogeneous Kirchhoff type elliptic system with the singular Trudinger-Moser growth, {\em Discrete Contin. Dyn. Syst.} {\bf 42} (2022), no. 10, 4761-4786




\bibitem{DY} S.B. Deng, J.W. Yu,
Normalized solutions for Schr\"{o}dinger systems in dimension two, {arXiv preprint}, (2022),
arXiv.2210.02331.



\bibitem{FS}
Z.S. Feng, Y. Su,
Ground state solution to the biharmonic equation, {\em Z. Angew. Math. Phys.} {\bf 73} (2022), no. 1, Paper No. 15, 24 pp.


\bibitem{FJMM} A.J. Fern\'{a}ndez, L. Jeanjean, R. Mandel, M. Mari\c{s},
Non-homogeneous Gagliardo-Nirenberg inequalities in $\mathbb{R}^N$ and application to a biharmonic non-linear Schr\"{o}dinger equation, {\em J. Differential Equations}, {\bf 330} (2022), 1-65.

\bibitem{FIP} G. Fibich, B. Ilan, G. Papanicolaou,
Self-focusing with fourth-order dispersion, {\em
SIAM J. Appl. Math.} {\bf 62} (2002), no. 4, 1437-1462.


\bibitem{G} N. Ghoussoub,
Duality and Perturbation Methods in Critical Point Theory, volume 107 of Cambridge Tracts in Mathematics. Cambridge University Press, Cambridge, 1993. With appendices by David Robinson.

\bibitem{GJ} T.X. Gou, L. Jeanjean,
Multiple positive normalized solutions for nonlinear Schr\"{o}dinger systems, {\em Nonlinearity}, {\bf 31} (2018), no. 5, 2319-2345.

\bibitem{Jeanjean} L. Jeanjean,
Existence of solutions with prescribed norm for semilinear elliptic equations, {\em Nonlinear Anal.} {\bf 28} (1997), no. 10, 1633-1659.

\bibitem{JL} L. Jeanjean, S.S. Lu,
A mass supercritical problem revisited, {\em
Calc. Var. Partial Differential Equations}, {\bf 59} (2020), no. 5, Paper No. 174, 43 pp.

\bibitem{K} V.I. Karpman,
Stabilization of soliton instabilities by higher-order dispersion: Fourth order nonlinear Schr\"{o}dinger-type equations, {\em Phys. Rev. E.} {\bf 53}, (1996) 1336-1339.

\bibitem{KS} V.I. Karpman, A.G. Shagalov,
Stability of solitons described by nonlinear Schr\"{o}dinger-type equations with higher-order dispersion,
{\em Phys. D}, {\bf 144} (2000), no. 1-2, 194-210.

\bibitem{Kav}  O. Kavian,
Introduction \`{a} la th\'{e}orie des points critiques et applications aux probl\`{e}mes elliptiques, Springer, Berlin, ISBN, 2-287-00410-6 (1993).





\bibitem{Li1} X.F. Li,
Existence of normalized ground states for the Sobolev critical Schr\"{o}dinger equation with combined nonlinearities, {\em Calc. Var. Partial Differential Equations}, {\bf 60} (2021), no. 5, Paper No. 169, 14 pp.



\bibitem{LZ} H.W. Li, W.M. Zou,
Normalized ground states for semilinear elliptic systems with critical and subcritical nonlinearities, {\em J. Fixed Point Theory Appl.} {\bf 23} (2021), no. 3, Paper No. 43, 30 pp.

\bibitem{LL}  E.H. Lieb and M. Loss,
{\em Analysis},  Graduate Studies in Mathematics, vol. 14, Amer. Math. Soc,
Providence, RI, 2001.

\bibitem{Lions2} P.L. Lions,
The concentration-compactness principle in the calculus of variations. The locally compact case, {\em  II. Ann. Inst. H. Poincar\'{e} Anal. Non Lin\'{e}aire}, {\bf 1} (1984), no. 4, 223-283.


\bibitem{LY3} G.Z. Lu, Y.Y. Yang,
Adams' inequalities for bi-Laplacian and extremal functions in dimension four,
{\em Adv. Math.} {\bf 220} (2009), no. 4, 1135-1170.

\bibitem{LWYZ} X. Luo, J.C. Wei, X.L. Yang, M.D. Zhen,
Normalized solutions for Schr\"{o}dinger system with quadratic and cubic interactions, {\em J. Differential Equations}, {\bf 314} (2022), 56-127.




\bibitem{LY2} X. Luo, T. Yang,
Normalized solutions for a fourth-order Schr\"{o}dinger equation with a positive second-order dispersion coefficient, {\em Science China Mathematics}, (2022), 1-26.


\bibitem{LZ1} H.J. Luo, Z.T. Zhang,
Existence and stability of normalized solutions to the mixed dispersion nonlinear Schr\"{o}dinger equations,
{\em Electron. Res. Arch.} {\bf 30} (2022), no. 8, 2871-2898.





\bibitem{LZZ} T.J. Luo, S.J. Zheng, S.H. Zhu,
The existence and stability of normalized solutions for a bi-harmonic nonlinear Schr\"{o}dinger equation with mixed dispersion, {\em Acta Math. Sci. Ser. B},  {\bf 43} (2023), no. 2, 539-563.

\bibitem{Matt} L. Mattner,
Strict definiteness of integrals via complete monotonicity of derivatives,
{\em Trans. Amer. Math. Soc.} {\bf 349} (1997), no. 8, 3321-3342.



\bibitem{MSV} O.H. Miyagaki, C.R. Santana, R.S. Vieira,
Schr\"{o}dinger equations in $\mathbb{R}^4$ involving the biharmonic operator with critical exponential growth, {\em Rocky Mountain J. Math.} {\bf 51} (2021), no. 1, 243-263.

\bibitem{MS1} V. Moroz and J. Van Schaftingen,
Groundstates of nonlinear Choquard
equations: existence, qualitative properties and decay asymptotics,
{\em J. Funct. Anal.}  {\bf 265} (2013), no. 2, 153-184.




\bibitem{Nirenberg1} L. Nirenberg,
On elliptic partial differential equations, {\em Ann. Scuola Norm. Sup. Pisa Cl. Sci.} {\bf 13} (3) (1959), 115-162.

\bibitem{NTV} B. Noris, H. Tavares, G. Verzini,
Normalized solutions for nonlinear Schr\"{o}dinger systems on bounded domains, {\em Nonlinearity}, {\bf 32} (2019), no. 3, 1044-1072.




\bibitem{Sa}
F. Sani,
A biharmonic equation in $\mathbb{R}^4$ involving nonlinearities with critical exponential growth, {\em Commun. Pure Appl. Anal.} {\bf 12} (2013), no. 1, 405-428.


\bibitem{Shibata} M. Shibata,
Stable standing waves of nonlinear Schr\"{o}dinger equations with a general nonlinear term, {\em
Manuscripta Math.} {\bf 143} (2014), no. 1-2, 221-237.

\bibitem{Soave1} N. Soave,
Normalized ground states for the NLS equation with combined nonlinearities, {\em J. Differential Equations}, {\bf 269} (2020), no. 9, 6941-6987.

\bibitem{Soave2} N. Soave,
Normalized ground states for the NLS equation with combined nonlinearities: the Sobolev critical case, {\em J. Funct. Anal.} {\bf 279} (2020), no. 6, 108610, 43 pp.

\bibitem{S1} C.A. Stuart,
Bifurcation from the continuous spectrum in the $L^2$-theory of elliptic equations on $\mathbb{R}^N$, {\em Recent methods in nonlinear analysis and applications}, Liguori, Naples, (1981).

\bibitem{S2} C.A. Stuart,
Bifurcation for Dirichlet problems without eigenvalues, {\em Proc. London Math. Soc.} (3) {\bf 45} (1982), no. 1, 169-192.





\bibitem{SW} A. Szulkin, T. Weth,
Ground state solutions for some indefinite variational problems, {\em
J. Funct. Anal.} {\bf 257} (2009), no. 12, 3802-3822.




\bibitem{WLW} Y.H. Wang, M. Liu, G.M. Wei,
Existence of ground state for coupled system of biharmonic Schr\"{o}dinger equations, {\em AIMS Math.} {\bf 7} (2022), no. 3, 3719-3730.

\bibitem{WWW} J. Wang, X. Wang, S. Wei,
Existence of normalized solutions for the coupled elliptic system with quadratic nonlinearity, {\em Adv. Nonlinear Stud.} {\bf 22} (2022), no. 1, 203-227.

\bibitem{WW} J.C. Wei, Y.Z. Wu,
Normalized solutions for Schr\"{o}dinger equations with critical Sobolev exponent and mixed nonlinearities, {\em J. Funct. Anal.} {\bf 283} (2022), no. 6, Paper No. 109574, 46 pp.

\bibitem{Wil}  M. Willem,
Minimax Theorems, Progress in Nonlinear Differential Equations and their Applications, 24, Birkh\"{a}user Bosten Inc, Boston, ISBN 0-8176-3913-9 (1996).







\bibitem{Y} Y.Y. Yang,
Adams type inequalities and related elliptic partial differential equations in dimension four,
{\em J. Differential Equations}, {\bf 252} (2012), no. 3, 2266-2295.





\end{thebibliography}
\end{document}